\definecolor{darkgreen}{rgb}{0,0.5,0}
\definecolor{darkred}{rgb}{0.9,0.1,0.1}
\newtheorem{proposition}{Proposition}
\newtheorem{theorem}[proposition]{Theorem}
\newtheorem{lemma}[proposition]{Lemma}
\newtheorem{corollary}[proposition]{Corollary}
\theoremstyle{definition}
\newtheorem{remark}[proposition]{Remark}
\newcommand{\cref}[1]{Corollary~\ref{c.#1}}
\numberwithin{equation}{section}
\numberwithin{proposition}{section}
\newcommand{\A}{\mathcal{A}}
\newcommand{\Ahom}{\overline{\A}}
\newcommand{\Z}{\mathbb{Z}}
\newcommand{\N}{\mathbb{N}}
\newcommand{\R}{\mathbb{R}}
\renewcommand{\S}{\mathbb{S}}
\newcommand{\E}{\mathbb{E}}
\renewcommand{\P}{\mathbb{P}}
\newcommand{\F}{\mathcal{F}}
\newcommand{\Zd}{\mathbb{Z}^d}
\newcommand{\Rd}{{\mathbb{R}^d}}
\newcommand{\ep}{\varepsilon}
\newcommand{\eps}{\varepsilon}
\newcommand{\J}{\mathcal{J}}
\renewcommand{\a}{\mathbf{a}}
\newcommand{\h}{\mathbf{h}}
\newcommand{\g}{\mathbf{g}}
\newcommand{\f}{\mathbf{f}}
\newcommand{\T}{\mathsf{\!t}}
\renewcommand{\subset}{\subseteq}
\renewcommand{\a}{\mathbf{a}}
\newcommand{\s}{\mathbf{s}}
\newcommand{\m}{\mathbf{m}}
\newcommand{\ahom}{{\overbracket[1pt][-1pt]{\a}}}  
\newcommand{\Abf}{{\mathbf{A}}}
\newcommand{\Abfh}{{\overbracket[1pt][-1pt]{{\mathbf{A}}}}}  
\renewcommand{\subset}{\subseteq}
\newcommand{\cu}{{\scaleobj{1.2	}{\square}}}
\newcommand{\cud}{{\scaleobj{1.2}{\boxdot}}}
\newcommand{\Lso}{L^2_{\mathrm{sol},0}}
\renewcommand{\fint}{\strokedint}
\newcommand{\Ll}{\left}
\newcommand{\Rr}{\right}
\DeclareMathOperator{\dist}{dist}
\DeclareMathOperator{\var}{var}
\newcommand{\X}{\mathcal{X}}
\newcommand{\ov}{\overline}
\renewcommand{\bar}{\overline}
\newcommand{\un}{\underline}
\renewcommand{\tilde}{\widetilde}
\newcommand{\td}{\widetilde}
\newcommand{\indc}{\mathds{1}}
\newcommand{\1}{\mathds{1}}
\newcommand{\mcl}{\mathcal}
\newcommand{\be}{\beta}
\renewcommand{\O}{\mathcal{O}}
\renewcommand{\S}{\mathcal{S}}
\renewcommand{\hat}{\widehat}
\newcommand{\C}{\mathcal{C}}
\newcommand{\sym}{\mathbf{s}}
\renewcommand{\skew}{\mathbf{m}}
\newcommand{\pa}{\mathrm{par}}
\newcommand{\per}{\#}
\begin{document}

\title[Stochastic homogenization of parabolic equations]{Quantitative stochastic homogenization and regularity theory of parabolic equations}

\begin{abstract}
We develop a quantitative theory of stochastic homogenization for linear, uniformly parabolic equations with coefficients depending on space and time. Inspired by recent works in the elliptic setting, our analysis is focused on certain subadditive quantities derived from a variational interpretation of parabolic equations. These subadditive quantities are intimately connected to spatial averages of the fluxes and gradients of solutions. We implement a renormalization-type scheme to obtain an algebraic rate for their convergence, which is essentially a quantification of the weak convergence of the gradients and fluxes of solutions to their homogenized limits. As a consequence, we obtain estimates of the homogenization error for the Cauchy-Dirichlet problem which are optimal in stochastic integrability. We also develop a higher regularity theory for solutions of the heterogeneous equation, including a uniform $C^{0,1}$-type estimate and a Liouville theorem of every finite order. 
\end{abstract}

\author[S. Armstrong]{Scott Armstrong}
\address[S. Armstrong]{Courant Institute of Mathematical Sciences, New York University, 251 Mercer St., New York, NY 10012}
\email{scotta@cims.nyu.edu}

\author[A. Bordas]{Alexandre Bordas}
\address[A. Bordas]{Ecole normale sup\'erieure de Lyon, 46 all\'ee d'Italie, 69007 Lyon, France}
 \email{alexandre.bordas@ens-lyon.fr}

\author[J.-C. Mourrat]{Jean-Christophe Mourrat}
\address[J.-C. Mourrat]{Ecole normale sup\'erieure de Lyon, CNRS, 46 all\'ee d'Italie, 69007 Lyon, France}
\email{jean-christophe.mourrat@ens-lyon.fr}

\keywords{stochastic homogenization, parabolic equation}
\subjclass[2010]{35B27, 35B45, 60K37, 60F05}
\date{\today}

\maketitle
\setcounter{tocdepth}{1}
\tableofcontents

\section{Introduction}

\subsection{Motivation and informal summary of results}
In this paper, we develop a quantitative theory of stochastic homogenization for linear, uniformly parabolic equations with coefficients depending on both the space and time variables. We consider equations of the form
\begin{equation}  
\label{e.lin.parab}
\partial_t u^\ep  - \nabla \cdot \left(   \a\left(\frac t{\ep^2},\frac x\ep \right) \nabla u^\ep\right) = 0 
\quad 
\mbox{in} \ I \times U,
\end{equation}
where $I \subset \R$ is an open interval, $U$ is a bounded Lipschitz domain of $\Rd$, and $(t,x) \mapsto \a(t,x)$ is a stationary random field taking values in the set of real $d$-by-$d$ matrices satisfying, for a fixed constant~$\Lambda\in [1,\infty)$,
\begin{equation} 
\label{e.unifellip-1}
\forall \xi\in\Rd, \quad 
\xi \cdot \a(t,x) \xi \geq \Lambda^{-1}  \left|\xi\right|^2
\quad \mbox{and} \quad 
\left|\a(t,x) \xi \right| \le \Lambda \left|\xi\right|.
\end{equation}
Here the symbol~$\nabla$ denotes the gradient in the space variables only, that is, $\nabla w = \left( \partial_{x_1} w , \ldots, \partial_{x_d} w \right)$. We let $\P$ be the law of the random field $\a(t,x)$, which we assume to be invariant under translations by elements of $\Z \times \Zd$ and to have a finite range of dependence. (See below in the following subsection for the precise assumptions.)

\smallskip

We are interested in the behavior of the solutions~$u^\ep(t,x)$ for $0<\ep\ll 1$. It is well-known that, under very general qualitative assumptions on the coefficients (stationarity and ergodicity), the equation~\eqref{e.lin.parab} homogenizes to an effective limiting equation of the form
\begin{equation} 
\label{e.lin.parab.homog}
\partial_t u - \nabla \cdot \left( \ahom \nabla u \right) = 0 \quad \mbox{in} \ I\times U,
\end{equation}
where $\ahom$ is a deterministic $d$-by-$d$ matrix. This principle can be formulated in various ways, but it means for example that the solutions~$u^\ep$ of~\eqref{e.lin.parab}, subject to appropriate initial-boundary conditions, converge as $\ep \to 0$, $\P$--almost surely and in some appropriate function space, to solutions of the homogenized equation~\eqref{e.lin.parab.homog}.\footnote{We remark that we are unaware of a reference which proves this specific result in the parabolic setting. Nevertheless, we maintain that it is essentially well-known, since the classical qualitative proof given in the elliptic case~(see for instance~\cite{PV1,BLP,JKO}) can be straightforwardly generalized to the parabolic setting.} Such a result is usually proved by soft arguments, using an abstract version of the ergodic theorem, which unfortunately does not give \emph{quantitative} information concerning the convergence. 

\smallskip

There has been a lot of recent interest in quantitative stochastic homogenization for elliptic equations, particularly in the case of linear, uniformly elliptic equations. This essentially began with the work of Gloria and Otto~\cite{GO1,GO2}, who proved the first quantitative results which are optimal in the scaling of the parameter~$\ep$ (see also~\cite{GNO}). Their work motivated a great number of subsequent works, and we refer to the recently completed monograph~\cite{AKMBook} for more background, references and historical information. 

\smallskip

In this paper, motivated by the desire to obtain quantitative homogenization results---in particular, explicit estimates of the homogenization error---we develop an analytic approach for parabolic equations with random coefficients based on the ideas recently introduced in~\cite{AS,AM,AKM1,AKM2}, which are perhaps best presented in~\cite{AKMBook}. Those papers developed a rather complete quantitative theory of elliptic homogenization starting from the observation that certain energy quantities---which are very natural from a variational perspective---are also rather convenient for studying the homogenization process. This is because: (i) they efficiently encode information about the weak convergence of the fluxes, gradients, and energy densities of solutions; and (ii) they are amenable to renormalization arguments in the sense that we can obtain rates of convergence for the quantities by iterating the length scale. This variational approach allows one to circumvent the need for nonlinear concentration inequalities, because it reveals a ``linear'' structure of the randomness: while the solutions are very nonlinear functions of the coefficients, the energy quantities turn out to be essentially linear. This observation greatly simplifies the theory and allows one to derive estimates which are optimal both in the scaling of~$\ep$ as well as in stochastic integrability. A related approach inspired by~\cite{AS,AM,AKM1,AKM2} has also recently been developed in~\cite{GO5,GO6}.

\smallskip

The two main results of this paper are (i) a quantitative estimate on the homogenization error for Cauchy-Dirichlet problems (Theorem~\ref{t.CDP}) and (ii) a complete large-scale regularity theory (Theorem~\ref{t.regularity}). It has already been observed in the elliptic case (see~\cite{AKMBook}) that results of this type are the first step towards optimal quantitative estimates and scaling limits for first-order correctors as well as optimal error estimate for boundary-value problems. At the same time, the results in this paper are the first quantitative  stochastic homogenization results, to our knowledge, for parabolic equations with coefficients with space-time dependence.  

\smallskip

The starting point for adapting the techniques of~\cite{AKMBook} to the parabolic case is a variational characterization of divergence-form parabolic equations that was first discovered by Brezis and Ekeland~\cite{BE1,BE2}. We give a self-contained presentation of this characterization in Appendix~\ref{s.app}, where we also give a convex analytic proof of the well-posedness of general Cauchy-Dirichlet problems inspired by~\cite{ghoussoub-tzou}. Based on this variational principle, we introduce subadditive quantities for the homogenization problem in Section~\ref{s.varstructure} and adapt the methods of~\cite{AKMBook}, using an iteration of scales and a renormation-type argument, to obtain an algebraic rate of convergence in Section~\ref{s.iteration}. Compared to the elliptic case, the main sources of additional difficulty in the iteration argument have to do with the need to control certain weak Sobolev norms of the time derivatives of the solutions. We accomplish this with the help of some functional inequalities we prove in Section~\ref{s.funineqs}. In Section~\ref{s.CDP}, we show that the convergence of the subadditive quantities gives us approximate first-order correctors with good quantitative bounds, which allows us to prove Theorem~\ref{t.CDP}. In the last section, we obtain the regularity result of Theorem~\ref{t.regularity}. In the rest of this introduction, we state the assumptions, notation and main results.

\subsection{Assumptions}
We fix a spatial dimension $d\geq 2$ and a parameter $\Lambda \in [1,\infty)$. We let $\Omega$ denote the set of all possible coefficient fields $\a(t,x)$, which are assumed to be measurable maps from $\R \times \Rd$ into the set~$\Omega_0$ of matrices~$\a$ satisfying 
\begin{equation} 
\label{e.unifellip0}
\forall \xi\in\Rd, \quad
\xi \cdot \a \xi \ge \Lambda^{-1} |\xi|^2 
\quad \mbox{and} \quad 
 \left|\a \xi \right| \le \Lambda |\xi|.
\end{equation}
That is, we define 
\begin{equation*} \label{}
\Omega_0:= \left\{ \a \in \R^{d\times d} \,:\, \a \ \mbox{satisfies~\eqref{e.unifellip0}} \right\}
\end{equation*}
and then set
\begin{equation}  
\label{e.def.Omega}
\Omega := \left\{ 
\a: \R \times \Rd \to \Omega_0 \
\mbox{is Lebesgue-measurable} \right\}.
\end{equation}
For every Borel subset~$V \subseteq \R\times \Rd$, we define~$\F_V$ to be the $\sigma$--algebra representing the information obtaining by observing the coefficient field in~$V$. Formally, 
\begin{align} 
\label{e.def.FV}
\F_V 
:= 
&  \ 
\mbox{the $\sigma$--algebra generated by the random elements}
\\ & \ \notag
\a \mapsto \int_{V} \varphi(t,x) \a(t,x) \,dt \,dx, \quad \varphi\in C^\infty_c(\R \times\Rd).
\end{align}
The largest of the~$\sigma$-algebras in this family is~$\F := \F_{\R\times\Rd}$. 
We assume that~$\P$ is a given probability measure on the measurable space~$(\Omega,\F)$ which satisfies the following two assumptions:
\begin{enumerate}

\item[(P1)] $\P$ is \emph{stationary} with respect to $\Z\times \Zd$--translations: for every $z\in \Z \times\Zd$ and event $A\in \F$,
\begin{equation*} 
\P \left[ A \right] = \P \left[ T_z A \right].
\end{equation*}

\smallskip

\item[(P2)] $\P$ has a \emph{unit range of dependence}: for every pair of Borel subsets $U, V\subseteq \R \times \Rd$, 
\begin{equation*}
\dist(U,V) \geq 1
\implies
\mbox{$\F_U$\ and \ $\F_V$\ are \ $\P$--independent.}
\end{equation*}
\end{enumerate}
Here~``$\dist$'' is defined with respect to the usual Euclidean distance on~$\R \times\Rd$. We denote by~$\E\left[X \right]$ the expectation of an~$\F$-measurable random variable~$X$ with respect to~$\P$. While we assume that the coefficient field has a finite range of dependence for simplicity, we point out that this hypothesis can be weakened using arguments similar to those exposed in \cite{AM}.

\subsection{Notation}

We unfortunately must introduce quite a bit of notation, particularly since we are working with parabolic equations which require us to define various function spaces. We collect the notation needed in this subsection, which the reader is encouraged to skim and consult as a reference. 

\subsubsection*{General notation}

We denote the set of natural numbers by $\N := \{0,1,2,\ldots\}$. We use the symbols $\wedge$ and $\vee$ to denote minimum and maximum, respectively, for example $r\wedge s = \min\{r,s\}$ for $r,s\in\R$. For every $r\in\R$, we also denote~$r_+:= r\vee 0$ and $r_- := r\wedge 0$. For any $m\in\N$ and measurable subset $E\subseteq \R^m$, the Lebesgue measure of $E$ is denoted by $|E|$, unless $E$ is a finite set, in which case $|E|$ is the cardinality of $E$. This is often used for $m\in\{1,d,1+d\}$. A slash through the integral denotes normalization by the Lebesgue measure: $\fint_E := \frac{1}{|E|} \int_E$. The mean of a function $f\in L^1(E)$ is also denoted by $\left( f \right)_E:= \fint_E f$.

\smallskip

A \emph{parabolic cylinder} is any set of the form $I\times U$ where $I = (I_-,I_+)\subseteq \R$ is a bounded open interval and $U\subseteq \Rd$ is a bounded Lipschitz domain. We denote the parabolic boundary of $I\times U$ by
\begin{equation*}  
\partial_{\sqcup}(I\times U) := \Ll(I \times \partial U\Rr) \cup \Ll( \{I_-\} \times U \Rr) .
\end{equation*}
We denote the Euclidean ball of~$\Rd$ of radius $r\in (0,\infty]$ centered at~$x\in\Rd$ by $B_r(x)$, and put $B_r:=B_r(0)$. 
Throughout, we work with the triadic cubes defined for every $n \in (0,\infty)$ by
\begin{equation*}  
I_n := \Ll( -\frac {3^{2n}}{2}, \frac{3^{2n}}{2} \Rr), \qquad \cu_n := \Ll( - \frac{3^n}{2}, \frac{3^n}{2} \Rr)^d , \qquad \cud_n := I_n \times \cu_n.
\end{equation*}
Note that the parabolic cylinder $\cud_n$ is evidently not a cube per se since its sides have a scaling which match the parabolic scaling. However, we note that for each $m,n\in\N$ with $m<n$, we can write $\cud_n$ as the disjoint union (up to a set of Lebesgue measure zero) of exactly $3^{(2+d)(m-n)}$ cubes of the form $z+ \cud_m$ with $z\in 3^{2m} \Z \times 3^m\Z$. 

\smallskip

We also use the following notation for parabolic cylinders: for each $r\in (0,\infty]$ and $(t,x)\in \R\times\Rd$, we define
\begin{equation} 
\label{e.alternativQr}
\tilde I_r:= (-r^2,0], \ \quad \ 
Q_r(t,x):= (t,x) + \tilde I_r \times B_r, \quad \mbox{and} \quad Q_r := Q_r(0,0).
\end{equation}

\subsubsection*{Function spaces}
For every bounded Lipschitz domain $U \subseteq \Rd$ with $|U| < \infty$ and $p\in [1,\infty)$, we denote the normalized $L^p(U)$ norm of a function $f\in L^p(U)$ by 
\begin{equation} 
\label{e.def.nL}
\| f \|_{\underline{L}^p(U)} := \left( \fint_U \left| f\right|^p\right)^{\frac1p} = \left| U \right|^{-\frac1p}\| f \|_{{L}^p(U)}.
\end{equation}
For $p=\infty$, we denote $\| f \|_{\underline{L}^\infty(U)} := \| f \|_{L^\infty(U)}$. 
We use similar notation to denote normalized (scale-invariant) Sobolev norms: for every $p\in [1,\infty)$ and $f\in W^{1,p}(U)$, 
\begin{equation*} \label{}
\left\| f \right\|_{\underline{W}^{1,p}(U)} 
:= 
|U|^{-\frac1d} \left\| f \right\|_{\underline{L}^{p}(U)} 
+ \left\| \nabla f \right\|_{\underline{L}^{p}(U)} 
\end{equation*}
In the case~$p=2$ we use the notation~$\left\| f \right\|_{\underline{H}^1(U)}:= \left\| f \right\|_{\underline{W}^{1,2}(U)}$. As usual, $H^1_0(U)$ and $W^{1,p}_0(U)$ respectively denote the closure in $H^1(U)$ and $W^{1,p}(U)$, respectively, of the compactly supported smooth functions in~$U$. 
The dual spaces to $W^{1,p}(U)$ and $W^{1,p}_0(U)$ are denoted by $\hat W^{-1,p'}(U)$ and $W^{-1,p'}(U)$, respectively, where $p':=\frac{p}{p-1}$ is the H\"older conjugate exponent of~$p$. The normalized, scale-invariant dual norms are respectively defined by
\begin{equation*} \label{}
\left\| v \right\|_{\underline{\hat{W}}^{-1,p'}(U)}
:=
\sup\left\{
\fint_U uv 
\,:\,
u \in W^{1,p}(U), \ \left\| u \right\|_{\underline{W}^{1,p}(U)} \leq 1
\right\}
\end{equation*}
and
\begin{equation*} \label{}
\left\| v \right\|_{\underline{{W}}^{-1,p'}(U)}
:=
\sup\left\{
\fint_U uv 
\,:\,
u \in W^{1,p}_0(U), \ \left\| u \right\|_{\underline{W}^{1,p}(U)} \leq 1
\right\}.
\end{equation*}
Here we are abusing notation by denoting the natural pairing $\left\langle u,w\right\rangle$ between the two dual spaces (up to a constant) by the normalized integral. This is done to emphasize the normalization that we wish to enforce, which extends the action of an element $w\in C^\infty_c(U)$ on $W^{1,p}(U)$ by $u \mapsto \fint_{U} uw$. For $p=2$, we also write $\|\cdot\|_{\un {\hat H}^{-1}(U)}:= \left\| v \right\|_{\underline{\hat{W}}^{-1,2}(U)}$ and $\|\cdot\|_{\un H^{-1}(U)}:=\left\| v \right\|_{\underline{{W}}^{-1,2}(U)}$.

\smallskip

We next introduce function spaces designed for parabolic equations. For each~$n\in\N$, bounded Lipschitz domain~$U\subseteq\R^n$, Banach space~$X$ and~$p\in[1,\infty)$, we denote by~$L^p(U;X)$ the space of Lebesgue-measurable mappings $u:U \to X$ such that 
\begin{equation*} \label{}
\left\| u \right\|_{\underline{L}^p(U;X)} 
:=
\left( \fint_{U} \left\| u(x) \right\|_{X}^p \,dx
\right)^{\frac1p} <\infty. 
\end{equation*}
For every interval $I = (I_-,I_+) \subset \R$ and bounded Lipschitz domain $U \subset \Rd$, we define the function space
\begin{equation}  
\label{e.def.X}
H^1_\pa(I\times U) := \Ll\{ u \in L^2(I;H^1(U))\,:\,\partial_t u \in L^2(I;H^{-1}(U)) \Rr\} ,
\end{equation}
which is the closure of bounded smooth functions on $I\times U$ with respect to the norm
\begin{equation}
\label{e.def.Xnorm}
\|u\|_{\un {H}^1_\pa(I \times U)} := \|u\|_{\un L^2(I;\un H^1(U))} + \|\partial_t u\|_{\un L^2(I;\un H^{-1}(U))}.
\end{equation}
We denote by $H^1_{\pa,\sqcup}(I\times U)$ the closure in $H^1_\pa(I\times U)$ of the set of smooth functions with compact support in $(I_-,I_+] \times U$. In other words, a function in $H^1_{\pa,\sqcup}(I\times U)$ has zero trace on the lateral boundary $I \times \partial U$ and the initial time $\{ I_-\}\times U$ but does not necessarily vanish at the final time. 

\smallskip

We let $H^1_{\pa,0}(I\times U)$ denote the completion of the set of smooth functions with compact support in $I \times U$ with respect to the norm
\begin{equation}  
\label{e.def.X0}
\|v\|_{\un H^1_{\pa,0}(I\times U)} := \|v\|_{\un L^2(I;\un H^1(U))} + \|\partial_t v\|_{\un L^2(I;\hat {\un H}^{-1}(U))}.
\end{equation}
Note that compared with \eqref{e.def.Xnorm}, here we require the time derivative $\partial_t v(t,\cdot)$ to be an element of $\hat H^{-1}(U)$ instead of $H^{-1}(U)$. In particular, for $v \in H^1_{\pa,0}(I\times U)$, the spatial average of $\partial_t v$ over $I\times U$ is well-defined, since constant functions belong to $L^2(I;H^1(U))$ (while they do not belong to $L^2(I;H^1_0(U))$). Moreover, the boundary condition imposes that for every $v \in H^1_{\pa,0}(I\times U)$, 
\begin{equation}
\label{e.dtv.mean.zero}
\int_{I\times U} \partial_t v = 0.
\end{equation}
This identity is indeed clear if $v$ is smooth and compactly supported in $I\times U$, and we can then obtain the general case by density. 

\smallskip

In certain situations, it is useful to work with variations of $H^1_\pa(I\times U)$ in which the exponent of integrability is $p\in (1,\infty)$ rather than~$2$. So we also define the function spaces
\begin{equation}  
\label{e.def.W1p.par}
W^{1,p}_\pa(I\times U) := 
\Ll\{ u \in L^p\left(I;W^{1,p}(U)\right)\,:\,\partial_t u \in L^p\left(I;W^{-1,p}(U)\right) \Rr\} ,
\end{equation}
which is the closure of bounded smooth functions on $I\times U$ with respect to the norm
\begin{equation}
\label{e.def.W1p.par.norm}
\left\|u\right\|_{\underline{W}^{1,p}_\pa\left(I \times U\right)} 
:= \left\|u\right\|_{\underline{L}^p(I;\underline{W}^{1,p}(U))} + \|\partial_t u\|_{\un L^p\left(I;\underline{W}^{-1,p}(U)\right)}.
\end{equation}
Similarly to $H^1_{\pa,\sqcup}(I\times U)$, we denote by $W^{1,p}_{\pa,\sqcup}(I\times U)$ the closure in $W^{1,p}_\pa(I\times U)$ of the set of smooth functions with compact support in $(I_-,I_+] \times U$. Finally, for every parabolic cylinder $V$, we denote by $W^{1,p}_{\pa,\,\mathrm{loc}}(V)$, $H^1_{\pa,\,\mathrm{loc}}(V)$, and so forth, the functions on~$V$ which are, respectively, elements of $W^{1,p}_\pa(W)$ and $H^1_{\pa}(W)$, etc, for every  subcylinder $W\subseteq V$ with $\overline{W} \subseteq V$. 

\smallskip

We next turn to the definitions of the negative parabolic Sobolev spaces. 
We denote by $\hat{H}^{-1}_{\pa}(V)$ and $H^{-1}_{\pa}(V)$ the dual spaces to $H^1_{\pa}(V)$ and $H^1_{\pa,\sqcup}(V)$, respectively, with (normalized, scale-invariant) dual norms given by
\begin{equation}
\label{e.def.H-1par}
\left\{ 
\begin{aligned} 
&
\| f \|_{\underline{\hat{H}}^{-1}_{\pa}(V)}
:= 
\sup \left\{ \fint_{V} f w  \,:\, w\in H^1_{\pa}(V), 
\ \left\| w \right\|_{\underline{H}^{1}_\pa(V)} \leq 1
\right\}
\\ &
\| f \|_{\underline{{H}}^{-1}_{\pa}(V)}
:= 
\sup \left\{ \fint_{V} f w  \,:\, w\in H^1_{\pa,\sqcup}(V), 
\ \left\| w \right\|_{\underline{H}^{1}_\pa(V)} \leq 1
\right\}.
\end{aligned}
\right.
\end{equation}
As explained above, the notation $\fint_V fw$ should be interpreted as the canonical pairing between $f\in \hat{H}^{-1}_{\pa}(V)$ or $f\in H^{-1}_{\pa}(V)$, respectively, and $w\in H^1_{\pa}(V)$ or~$w\in H^1_{\pa,\sqcup}(V)$, which extends the action of bounded smooth functions on $H^1_{\pa}(V)$ or ~$H^1_{\pa,\sqcup}(V)$. We similarly define the space~$W^{-1,p}_{\pa}(V)$ to be the dual space of the Banach space $W^{1,p'}_{\pa,\sqcup}(V)$, where $p':=\frac{p}{p-1}$, and endow it with the (normalized, scale-invariant) norm
\begin{equation} 
\label{e.negativeW1pa}
\left\| f \right\|_{\underline{W}^{-1,p}_{\pa}(V)} 
:= 
\sup \left\{ \fint_{V} f w  \,:\, w\in W^{1,p'}_{\pa,\sqcup}(V), 
\ \left\| w \right\|_{\underline{W}^{1,p'}_\pa(V)} \leq 1
\right\}.
\end{equation}
Recall that negative Sobolev norms arise naturally when one wishes to quantify \emph{weak} convergence in $L^p$ or positive Sobolev spaces (see~\cite[Section 1.4]{AKMBook}). This is indeed their purpose in this paper.

\subsubsection*{The $\O_s$ notation}
Since the random variables we encounter in this paper are very often the sum of a deterministic quantity and a ``small'' random part, it is useful to work with the notation introduced in~\cite{AKM2} for expressing the sizes of random variables (essentially, an alternative notation for certain Orlicz norms). It is intended to remind us of ``big-$O$'' notation and is convenient because it compresses some of our computations and makes our inequalities easier to understand at a glance. 

\smallskip

If $X$ is a random variable and $s,k\in (0,\infty)$, then we write 
\begin{equation*} \label{}
X \leq \O_s(k)
\end{equation*}
as a shorthand for the statement that 
\begin{equation}
\label{e.chebyforward}
\E \left[ \exp \left( \left( \frac{X_+}{k} \right)^s \right) \right] \leq 2. 
\end{equation}
Roughly, this means that ``$X$ is of order $k$ with stretched exponential tails with exponent $s$.'' More precisely, we can use Chebyshev's inequality to see that
\begin{equation} 
\label{e.Cheby}
X \leq \O_s(k) \implies \forall \lambda>0, \ \P \left[ X > \lambda k \right] \leq 2\exp\left( -\lambda^s \right). 
\end{equation}
The converse of this statement is almost true: for every $k\geq 0$, 
\begin{equation}
\label{e.chebyconverse}
\forall \lambda \ge 0, \quad \P\Ll[ X \ge \lambda k \Rr] \le \exp \Ll( - \lambda^s \Rr)  \implies  X \le \O_s \Ll(2^\frac 1 s \, \theta\Rr).
\end{equation}
This can be obtained by integration. We also use the notation
\begin{equation*} \label{}
X = \O_s(k) 
\iff 
X \leq \O_s(k) \ \mbox{and} \ -X \leq \O_s(k). 
\end{equation*}
Similarly, we write $X\leq Y + \O_s(k)$ to mean that $X-Y \leq \O_s(k)$ and $X = Y+\O_s(k)$ to mean that $X-Y = \O_s(k)$. If $s\in [1,\infty)$, then Jensen's inequality gives us a triangle inequality for $\O_s(\cdot)$ in the following sense: for any measure space $(E,\mathcal{S},\mu)$, measurable function $K : E \to (0,\infty)$ and jointly measurable family $\left\{ X(z) \right\}_{z \in E}$ of nonnegative random variables, we have 
\begin{equation} 
\label{e.Osums}
\forall z\in E, \ X(z) \le \O_s(K(z))
\implies
\int_{E} X\,d\mu \leq \O_s\left( \int_E K \,d\mu \right). 
\end{equation}
If $s\in (0,1]$, then the statement is true after adding a prefactor constant $C_s>1$ to the right side. 
For a proof of \eqref{e.Cheby}-\eqref{e.Osums}, see~\cite[Appendix A]{AKMBook}.

\subsection{Statement of the main results}

We present two main results. The first provides an algebraic convergence rate for the homogenization limit of the Cauchy-Dirichlet initial-value problem in a parabolic cylinder~$I\times U$, where~$U\subseteq\Rd$ is a bounded Lipschitz domain. This is a parabolic counterpart of a theorem proved in the elliptic setting in~\cite{AS} (see also~\cite[Theorem 2.16]{AKMBook}). 

\begin{theorem}
\label{t.CDP}
Fix $s\in (0,2+d)$, a bounded Lipschitz domain $U \subseteq B_1$, an interval $I:= (I_-,0) \subseteq \left(-\tfrac14,0\right)$ and an exponent $\delta > 0$. Put $V:= I\times U$. There exist an exponent $\beta(\delta, V,d, \Lambda)>0$, a constant $C(s,V,\delta,d,\Lambda)<\infty$ and a random variable $\X$ satisfying
\begin{equation*} \label{}
\X = \O_1(C)
\end{equation*} 
such that the following convergence result holds: for each $\ep \in (0,\frac12]$ and initial-boundary condition~$f\in W^{1,2+\delta}_{\pa} \left(V\right)$, denoting
\begin{equation*} \label{}
\a^\ep(t,x):= \a\left(\frac t{\ep^2}, \frac x\ep  \right)
\end{equation*}
and taking~$u^\ep, u \in f + H^1_{\pa,\sqcup}(V)$ to be the solutions of the Cauchy-Dirichlet problems
\begin{equation} 
\label{e.CDPs}
\left\{
\begin{aligned}
& \partial_t u^\ep - \nabla \cdot \left( \a^\ep \nabla u^\ep  \right) = 0 & \mbox{in} & \ V, 
\\ 
& u^\ep = f & \mbox{on} & \ \partial_\sqcup V,
\end{aligned}
\right.
\ \ \mbox{and} \ \ 
\left\{
\begin{aligned}
& \partial_t u - \nabla \cdot \left( \ahom \nabla u  \right) = 0 & \mbox{in} & \ V, 
\\ 
& u = f & \mbox{on} & \ \partial_\sqcup V,
\end{aligned}\right.
\end{equation}
we have the estimate
\begin{multline}
\label{e.CPDerrorestimate}
\left\|  \nabla u^\ep - \nabla u \right\|_{\hat{H}^{-1}_\pa(V)}
+ \left\| \a^\ep \nabla u^\ep - \ahom \nabla u \right\|_{\hat{H}^{-1}_\pa(V)}
+ \left\| u^\ep - u \right\|_{{L}^2(V)}
\\
\leq C  \left\| \nabla f \right\|_{W^{1,2+\delta}_\pa(V)} \left( \ep^{\beta(2+d-s)} + \X \ep^{s} \right). 
\end{multline}
\end{theorem}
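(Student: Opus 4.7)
The strategy is a quantitative two-scale expansion, using the first-order correctors and flux potentials produced by the renormalization scheme of Section~\ref{s.iteration}. The first step is to extract from the algebraic convergence rate of the subadditive quantities, for each $e \in \R^d$, approximate correctors $\phi_e$ and a skew-symmetric flux corrector $\sigma_e$ defined on a parabolic cylinder of scale $\ep^{-1}$ and satisfying
\begin{equation*}
\partial_t \phi_e = \nabla \cdot \bigl( \a (e + \nabla \phi_e) \bigr), \qquad \a(e + \nabla \phi_e) - \ahom\, e = \nabla \cdot \sigma_e,
\end{equation*}
together with averaged $\O_1(\ep^s)$ bounds on $\phi_e$, $\sigma_e$, $\ep\nabla\phi_e$ and $\ep\nabla\sigma_e$. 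With a smooth parabolic cutoff $\eta^\ep$ equal to one outside an $\ep^\theta$-neighborhood of $\partial_\sqcup V$ (where $\theta \in (0,1)$ is to be optimized at the end), one then forms the standard two-scale expansion
\begin{equation*}
w^\ep(t,x) := u(t,x) + \ep\, \eta^\ep(t,x) \sum_{k=1}^d \phi_{e_k}\Ll(\tfrac{t}{\ep^2},\tfrac{x}{\ep}\Rr) \partial_{x_k} u(t,x).
\end{equation*}

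Writing $r^\ep := u^\ep - w^\ep \in H^1_{\pa,\sqcup}(V)$, a direct computation using the two corrector identities (and in particular the skew-symmetry of $\sigma_{e_k}$ to cancel the leading-order remainder flux) shows that $r^\ep$ solves a Cauchy-Dirichlet problem of the form
\begin{equation*}
\partial_t r^\ep - \nabla \cdot (\a^\ep \nabla r^\ep) = \nabla \cdot F^\ep + G^\ep \ \text{ in } \ V, \qquad r^\ep = 0 \ \text{ on } \ \partial_\sqcup V,
\end{equation*}
where $F^\ep$ and $G^\ep$ are explicit linear combinations of $\phi_{e_k}^\ep, \sigma_{e_k}^\ep, \ep\nabla\phi_{e_k}^\ep, \ep\nabla\sigma_{e_k}^\ep$ paired against first and second derivatives of $u$ and of the cutoff $\eta^\ep$. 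The standard energy estimate for parabolic equations in $H^1_{\pa,\sqcup}(V)$ then gives
\begin{equation*}
\|r^\ep\|_{L^2(V)} + \|\nabla r^\ep\|_{L^2(V)} \lesssim \|F^\ep\|_{L^2(V)} + \|G^\ep\|_{\un{\hat H}^{-1}_\pa(V)}.
\end{equation*}
The bulk contribution is controlled by H\"older's inequality against the corrector bounds, producing an $\O_1(\ep^s)\|\nabla f\|_{W^{1,2+\delta}_\pa(V)}$ term. The boundary-layer piece, supported in a set of volume $O(\ep^\theta)$, is controlled using the Meyers-type higher integrability $\|\nabla u\|_{L^{2+\delta}(V)} \lesssim \|\nabla f\|_{W^{1,2+\delta}_\pa(V)}$ for the homogenized constant-coefficient equation, and optimizing $\theta$ against the interior rate yields the deterministic factor $\ep^{\beta(2+d-s)}$. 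Passage from the $L^2$-bound on $r^\ep$ to the three stated $\un{\hat H}^{-1}_\pa(V)$-type estimates is then automatic: the additional corrector pieces $\ep\eta^\ep\phi^\ep_{e_k}\partial_{x_k} u$ and $\nabla\cdot(\eta^\ep\sigma^\ep_{e_k}\partial_{x_k}u)$ appearing in the expansion are both small in $\un{\hat H}^{-1}_\pa$ by their $L^2$-size and by duality.

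The principal technical obstacle is the time derivative in $\partial_t w^\ep$: when $\partial_t$ lands on $\phi_{e_k}(t/\ep^2,x/\ep)$ it produces a factor of $\ep^{-2}$, so the corresponding contribution to $G^\ep$ is formally of order $\ep^{-1}$ and cannot be absorbed at the $L^2$ level. One must instead interpret this piece as an element of $\un{\hat H}^{-1}_\pa(V)$, by using the corrector equation $\partial_t\phi_{e_k} = \nabla\cdot(\a(e_k+\nabla\phi_{e_k}))$ to trade the time derivative for a spatial divergence and then transferring the divergence onto the test function; this is precisely the step where the functional inequalities of Section~\ref{s.funineqs} enter, and it explains why the theorem is formulated with the ``hat''-type space $\un{\hat H}^{-1}_\pa$ rather than $\un H^{-1}_\pa$. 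A final layer of care is needed to preserve the optimal $\O_1$ stochastic integrability of $\X$: products of random factors in $F^\ep$ and $G^\ep$ must be combined through H\"older's inequality for the $\O_s$ classes together with~\eqref{e.Osums}, with the exponents arranged so that no multiplication degrades the final integrability below $\O_1$.
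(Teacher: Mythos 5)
There is a genuine gap at the very first step: your two corrector identities are mutually inconsistent in the parabolic setting. If $\sigma_e$ is a skew-symmetric matrix field with $\a(e+\nabla\phi_e)-\ahom e=\nabla\cdot\sigma_e$, then $\nabla\cdot\bigl(\a(e+\nabla\phi_e)\bigr)=\partial_i\partial_j(\sigma_e)_{ij}=0$, and the corrector equation $\partial_t\phi_e=\nabla\cdot\bigl(\a(e+\nabla\phi_e)\bigr)$ forces $\partial_t\phi_e\equiv 0$, which is false for time-dependent coefficients. A parabolic flux corrector must treat $\bigl(-\phi_e,\ \a(e+\nabla\phi_e)-\ahom e\bigr)$ as a space-time divergence-free field and carry a time component; no such object is constructed in this paper, and in any case the bounds you invoke for it do not follow from Section~\ref{s.iteration}: Theorem~\ref{t.subadd} yields estimates of the form $C\ep^{\beta(2+d-s)}+\O_1(\ep^s)$, not pure $\O_1(\ep^s)$ bounds, and only for \emph{weak} norms of $\nabla\phi_{e,n}$ and of the flux error (see \eqref{e.weakphicontrol}--\eqref{e.L2phicontrol}), so the deterministic factor in \eqref{e.CPDerrorestimate} originates in the corrector estimates themselves and not only in your boundary-layer optimization. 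The paper's proof avoids $\sigma_e$ entirely: since $x\mapsto e\cdot x+\phi_{e,n}$ is an exact solution on $\cud_{n+1}$, the only residual flux term is $\sum_k\nabla(\zeta_r\partial_{x_k}u)\cdot\bigl(\a^\ep(e_k+\nabla\phi^\ep_{e_k})-\ahom e_k\bigr)$, which is paired against a smooth deterministic factor and estimated directly in $\hat H^{-1}_\pa$ via Proposition~\ref{p.controlofS} and the multiscale Poincar\'e inequality.

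A second unjustified step is the energy estimate $\|r^\ep\|_{L^2}+\|\nabla r^\ep\|_{L^2}\lesssim\|F^\ep\|_{L^2}+\|G^\ep\|_{\un{\hat H}^{-1}_\pa(V)}$. A right-hand side controlled only in $\hat H^{-1}_\pa$ is not admissible for the Cauchy-Dirichlet problem: testing with $r^\ep$ requires controlling $\|\partial_t r^\ep\|_{L^2(I;H^{-1}(U))}$, which reintroduces the strong norm of the source, and Remarks~\ref{r.Dirichlet} and~\ref{r.no.reflexive} explain that $\hat H^{-1}_\pa(V)$ is strictly larger than the class $\partial_t f+v$ for which well-posedness and the estimate \eqref{e.generalsolutionestimate} hold. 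The paper closes this by observing that the residual is an $L^2$ function and invoking Lemma~\ref{l.identif.H-1par} to rewrite it as $\partial_t v+v^*$ with $\|v\|_{L^2(I;H^1)}+\|v^*\|_{L^2(I;H^{-1})}\le C\|f^*\|_{\hat H^{-1}_\pa}$, after which \eqref{e.generalsolutionestimate} applies; your remark about ``trading the time derivative for a spatial divergence'' gestures at the right difficulty but does not supply this decomposition, and without it (or the corrected space-time flux potential) the proposed proof does not close.
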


As well as estimating the homogenization error, notice that the estimate~\eqref{e.CPDerrorestimate} quantifies the weak convergence in~$L^2(V)$ of the gradients and fluxes of~$u^\ep$ to those of~$u$.
The random part of the error, namely~$\X\ep^s$ for an~$s$ arbitrarily close to~$2+d$, is very small compared to the deterministic part, $\ep^{\beta(2+d-s)}$. It is also important for applications to observe that~$\X$ is independent of the initial-boundary condition~$f$. 

\smallskip

On the right side of \eqref{e.CPDerrorestimate}, we have split the error into a possibly rather large deterministic part (large, since we do not control the smallness of $\beta > 0$) plus a random error. While the typical size of the error is estimated suboptimally, since $\beta>0$ is small, the \emph{tail behavior} of this random part is sharply estimated. In particular, we see that the probability for the term $(\eps^{\beta(2+d-s)} + \X \eps^s)$ to be $O(1)$ is smaller than $\exp(-\eps^{-s}/C)$, for arbitary $s < 2+d$. This estimate is sharp, in the sense that it would be false for any $s > 2+d$. We refer to \cite[Remark~2.5 and Section~3.5]{AKMBook} for similar considerations in the elliptic setting.

\smallskip

The second theorem we present here is a large-scale regularity result, a parabolic counterpart to~\cite[Theorem 3.6]{AKMBook}. In particular, we seek to classify all ancient solutions of the parabolic equation which exhibit at most polynomial growth at infinity and backwards in time. This requires us to introduce some additional notation. 

\smallskip

We denote polynomials in the variables~$t,x_1,\ldots,x_d$ by $\mathcal{P}(\R\times \Rd)$. The \emph{parabolic degree}~$\deg_p(w)$ of an element~$w \in \mathcal{P}(\R\times \Rd)$ is the degree of the polynomial~$(t,x) \mapsto w(t^2,x)$. For each $k\in\N$ we let $\mathcal{P}_k(\R \times\Rd)$ be the subset of~$\mathcal{P}(\R\times\Rd)$ of polynomials with parabolic degree at most~$k$. For $\alpha>0$, we say that a function $\phi:\R\times \Rd\to \R$ or $\phi: (-\infty,0]\times \Rd\to \R$ is \emph{parabolically $\alpha$-homogeneous} if
\index{parabolically homogeneous}
\begin{equation*} \label{}
\forall \lambda \in\R, \quad \phi(\lambda^2 t,\lambda x) = \lambda^\alpha \phi(t,x).
\end{equation*}
Any element of $\mathcal{P}_k(\R \times\Rd)$ can be written as a sum of at most $C(d,k)<\infty$ many parabolically homogeneous polynomials. 

\smallskip

We denote by $\Ahom_k(Q_\infty)$ the set of~$\ahom$-caloric functions on~$Q_\infty$ with growth which is strictly less than a polynomial of parabolic degree $k+1$:
\begin{multline*} \label{}
\Ahom_k (Q_\infty)
:=
\\
\left\{ 
w\in H^1_\pa(Q_\infty) \,:\,
\limsup_{r \to \infty} \, r^{-(k+1)}  \left\| w \right\|_{\underline{L}^2(Q_r) }  = 0, \ \partial_t w - \nabla \cdot \left( \ahom \nabla w \right) = 0 \ \mbox{in} \ Q_\infty
\right\}.
\end{multline*}
It turns out that $\Ahom_k(Q_\infty)$ coincides with the set of \emph{$\ahom$-caloric polynomials}\footnote{$\ahom$-caloric polynomials are often called \emph{heat polynomials} in the literature, in the case $\ahom=I_d$} of parabolic degree at most $k$. That is, 
\begin{equation} 
\label{e.identifyAhomkQinfty}
\Ahom_k(Q_\infty)
=
\left\{ w\vert_{Q_\infty}  \,:\, w \in \mathcal{P}_k(\R \times \Rd), \ \partial_t w- \nabla \cdot \left( \ahom \nabla w\right) = 0 \ \mbox{in} \ \R\times \Rd \right\}. 
\end{equation}
The vector space of~$n$-homogeneous $\ahom$-caloric polynomials is isomorphic to that of $n$-homogeneous polynomials of $\R^d$. 
This can be shown by backwards uniqueness and the fact that this vector space is spanned by products of homogeneous $\ahom$-caloric polynomials depending only on $t$ and one of the space variables (see for instance~\cite{widder} or \cite[Proposition~1.1.1]{nualart}). In any case, we have that~$\dim(\Ahom_k(Q_\infty))= \binom{d+k}{d}<\infty$. 

\smallskip

In the next result, we generalize the parabolic Liouville theorem implicit in~\eqref{e.identifyAhomkQinfty} to~$\a(x)$-caloric functions. At the same time we provide a \emph{quantitative} version of this Liouville principle, in other words, a~$C^{k,1}$-type regularity estimate. Denote, for every parabolic cylinder $I\times U\subseteq \R\times \Rd$,
\begin{equation*} \label{}
\A(I\times U):= 
\left\{
w\in H^1_{\pa,\,\mathrm{loc}} (I\times U)
\,:\,
 \partial_t w - \nabla \cdot \left( \a \nabla w \right) = 0 \ \mbox{in} \ I\times U
\right\}
\end{equation*}
and, for every $k\in\N$,
\begin{equation*} \label{}
\A_k(Q_\infty):= 
\left\{ 
w\in \A(Q_\infty)  \,:\,
\limsup_{r \to \infty} r^{-(k+1)}  \left\| w \right\|_{\underline{L}^2(Q_r) }  = 0
\right\}.
\end{equation*}
Note that these vector spaces are \emph{random} since they depend on~$\a\in\Omega$. 
The following theorem is a parabolic analogue of \cite[Theorem 3.6]{AKMBook}.

\begin{theorem}
[Parabolic higher regularity theory]
\label{t.regularity}
Fix $s \in (0,2+d)$. There exist an exponent $\delta(s,d,\Lambda)\in \left( 0, \frac12 \right]$ and a random variable $\X_s$ satisfying the estimate
\begin{equation}
\label{e.X.parab}
\X_s \leq \O_s\left(C(s,d,\Lambda)\right)
\end{equation}
such that the following statements hold, for every $k\in\N$:
\begin{enumerate}
\item[{$\mathrm{(i)}_k$}] There exists $C(k,d,\Lambda)<\infty$ such that, for every $u \in \A_k(Q_\infty)$, there exists $p\in \Ahom_k(Q_\infty)$ such that, for every $R\geq \X_s$,
\begin{equation} \label{e.liouvillec.parab}
\left\| u - p \right\|_{\underline{L}^2(Q_R)} \leq C R^{-\delta} \left\| p \right\|_{\underline{L}^2(Q_R)}.
\end{equation}

\item[{$\mathrm{(ii)}_k$}]For every $p\in \Ahom_k(Q_\infty)$, there exists $u\in \A_k(Q_\infty)$ satisfying~\eqref{e.liouvillec.parab} for every $R\geq \X_s$. 

\item[{$\mathrm{(iii)}_k$}]
There exists $C(k,d,\Lambda)<\infty$ such that, for every $R\geq \X_s$ and $u\in \A(Q_R)$, there exists $\phi \in \A_k(Q_\infty)$ such that, for every $r \in \left[ \X_s,  R \right]$, we have the estimate
\begin{equation}
\label{e.intrinsicreg.parab}
\left\| u - \phi \right\|_{\underline{L}^2(Q_r)} \leq C \left( \frac r R \right)^{k+1} 
\left\| u \right\|_{\underline{L}^2(Q_R)}.
\end{equation}
\end{enumerate}
In particular, we have, $\P$-almost surely, for every $k\in\N$,
\begin{equation} 
\label{e.dimensionofAk.parab}
\dim(\A_k(Q_\infty)) = \dim(\Ahom_k(Q_\infty)) =   \binom{d+k}{d}.
\end{equation}
\end{theorem}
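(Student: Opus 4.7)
The plan is to follow the strategy of the elliptic argument in \cite{AKMBook}, using as the single quantitative input the algebraic convergence rate for the subadditive quantities proved in \sref{iteration}. That rate translates, via the Cauchy-Dirichlet estimate of \tref{CDP}, into the following excess-decay statement: for any $\a$-caloric $u$ on $Q_r$ and the $\ahom$-caloric function $\bar u$ with the same parabolic boundary data, one has $\|u-\bar u\|_{\underline L^2(Q_r)} \le \X_s r^{-\alpha}\|u\|_{\underline L^2(Q_r)}$ for some $\alpha(d,\Lambda)>0$ and all $r\ge \X_s$, with $\X_s\le \O_s(C)$. The first step is the base case $(\mathrm{iii})_0$, the large-scale $C^{0,1}$ estimate. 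Given $u\in\A(Q_R)$ and $r\le R$, interior regularity for the constant-coefficient equation supplies, for every $\theta\in(0,\tfrac{1}{2}]$, an affine $\ell$ with $\|\bar u-\ell\|_{\underline L^2(Q_{\theta r})} \le C\theta^2\|u\|_{\underline L^2(Q_r)}$. Combined with the homogenization error, this yields a one-step excess decay $\inf_\ell \|u-\ell\|_{\underline L^2(Q_{\theta r})}\le (C\theta^2+\X_s(\theta r)^{-\alpha})\|u\|_{\underline L^2(Q_r)}$. Fixing $\theta$ small in terms of $(d,\Lambda)$ and iterating from scale $R$ down to $\X_s$ yields $(\mathrm{iii})_0$ by a geometric sum.

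For the higher-order statements I proceed by simultaneous induction on $k$ for $(\mathrm{i})_k$ and $(\mathrm{ii})_k$. Assuming both hold at level $k-1$, I prove $(\mathrm{ii})_k$ by a Cauchy construction: given $p\in\Ahom_k(Q_\infty)$, solve on each $Q_{R_n}$ with $R_n\to\infty$ the Cauchy-Dirichlet problem for the $\a$-equation with boundary data equal to $p$ plus approximate lower-order correctors supplied by $(\mathrm{ii})_{k-1}$; \tref{CDP} applied at each scale shows these solutions form a Cauchy sequence in $L^2_{\mathrm{loc}}(Q_\infty)$ with growth of parabolic order $k$, and its limit $u\in\A_k(Q_\infty)$ satisfies \eqref{e.liouvillec.parab}. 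For $(\mathrm{i})_k$, given $u\in\A_k(Q_\infty)$, I construct the corresponding $p\in\Ahom_k(Q_\infty)$ by running the Campanato-type excess iteration \emph{outward}: on each $Q_R$ with $R\ge\X_s$, project onto the $\ahom$-caloric polynomials of parabolic degree at most $k$ identified in \eqref{e.identifyAhomkQinfty}, and use the comparison with $\bar u$ together with the growth restriction in the definition of $\A_k(Q_\infty)$ to show that the projections stabilize to a single $p$ with the quantitative bound \eqref{e.liouvillec.parab}.

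Granted $(\mathrm{i})_k$ and $(\mathrm{ii})_k$, the estimate $(\mathrm{iii})_k$ follows by a standard inward Campanato iteration: at each triadic scale, replace $u$ by its best approximation in $\A_k(Q_\infty)$ supplied by $(\mathrm{i})_k$ and $(\mathrm{ii})_k$, apply the one-step improvement, and sum the resulting geometric series, whose decay exponent can be arranged to exceed $k+1$. The isomorphism between $\A_k(Q_\infty)$ and $\Ahom_k(Q_\infty)$ produced by $(\mathrm{i})_k$ and $(\mathrm{ii})_k$ then gives the dimension identity \eqref{e.dimensionofAk.parab}. The main obstacle is quantitative bookkeeping: a single random scale $\X_s$ with the prescribed stochastic integrability exponent $s<2+d$ must uniformly control all the above estimates over every admissible $r\ge\X_s$ and every $k$. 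This is handled by applying the subadditive convergence rate from \sref{iteration} simultaneously on a triadic family of scales and summing the resulting $\O_s$-tails via \eqref{e.Osums}. A subtlety specific to the parabolic setting is that the iteration operates on parabolic cylinders and is driven by parabolic, not spatial, degree; this forces one to carefully match the parabolic Sobolev spaces $H^1_\pa$ and $\hat{H}^{-1}_\pa$ appearing in \tref{CDP} with the $\underline L^2(Q_r)$ norms used in \eqref{e.liouvillec.parab} and \eqref{e.intrinsicreg.parab}.
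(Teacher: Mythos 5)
Your proposal follows essentially the same route as the paper's: \tref{CDP}, upgraded by the interior parabolic Meyers estimate (\pref{interiormeyers}) into the caloric approximation statement of \pref{caloricapproximation}, drives a Campanato-type excess-decay iteration over parabolic cylinders (\lref{u:decayestimate.parab}), and the statements $(\mathrm{i})_k$, $(\mathrm{ii})_k$, $(\mathrm{iii})_k$ are then obtained by induction on $k$ exactly as in the elliptic argument of~\cite[Theorem 3.6]{AKMBook}, with the minimal random scale $\X_s$ handled by the $\O_s$ calculus. The only deviation is organizational (the order in which the three statements are established within the induction step), which does not change the substance of the argument the paper sketches in \sref{regularity}.
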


Observe that, as in the elliptic case, even for $k=0$ the third statement of Theorem~\ref{t.regularity} gives us an important gradient estimate on solutions. Indeed, the combination of statement {$\mathrm{(iii)}_0$} and the Caccioppoli inequality yields that, for every $R\geq \X_s$, $u\in \A(Q_R)$ and $r \in [\X_s,R]$, we have 
\begin{equation*} \label{}
\left\| \nabla u \right\|_{\underline{L}^2(Q_r)} 
\leq 
C\left\| \nabla u \right\|_{\underline{L}^2(Q_R)}. 
\end{equation*}
This should be seen as a $C^{0,1}$-type estimate and compared to pointwise gradient bounds for the solutions of the heat equation. 

\smallskip

The proof of Theorem~\ref{t.regularity} is obtained as a consequence of Theorem~\ref{t.CDP} and a routine adaptation of the proof of~\cite[Theorem 3.6]{AKMBook}, which is the statement of the analogous result in the elliptic case. In Section~\ref{s.regularity}, we explain the modifications required in the parabolic setting. 

\smallskip

Soon after the first version of this paper was submitted and posted to the arxiv, a new preprint of Bella,~Chiarini and~Fehrman~\cite{BCF} appeared which contains a large-scale regularity result which has some overlap with Theorem~\ref{t.regularity}. In particular, under qualitative assumptions, they obtain the statement of Theorem~\ref{t.regularity} in the case $k=1$ with the estimate~\eqref{e.X.parab} on~$\X_s$ replaced by the qualitative bound $\P\left[ \X_s <\infty \right]=1$.

\subsection{Outline of the paper}
In the next section, we introduce the subadditive quantities inherited from the variational structure of the equation and record some of their basic properties. For convenience, the variational formulation of uniformly parabolic equations is recalled in a self-contained presentation in Appendix~\ref{s.app}. In Section~\ref{s.funineqs}, we present several functional inequalities which are needed later in the paper. Of particular interest are inequalities giving us control of certain weak norms of functions in terms of the spatial averages of the functions in cubes as well as Caccioppoli-type inequalities giving us control of strong norms of solutions in terms of weak norms. Section~\ref{s.iteration} is the heart of the paper, where we prove the convergence of the subadditive quantities by an iteration over the length scales. In Section~\ref{s.CDP}, we demonstrate how to pass from control of the convergence of the subadditive quantities to general homogenization results. Finally, in Section~\ref{s.regularity} we summarize the passage from the quantitative homogenization results to the higher regularity theory (which is entirely analogous to the elliptic setting). In Appendix~\ref{s.meyers}, we give local and global versions of the Meyers higher integrability estimate for gradients of solutions. We remark that the statement of the global Meyers estimate we prove appears to be new and somewhat sharper compared to what has previously appeared in the literature.

\section{Variational structure and subadditive quantities}
\label{s.varstructure}

\subsection{Variational formulation of parabolic equations}
As we now explain, the solution of a parabolic equation can be obtained as the minimizer of a uniformly convex functional. This is an entirely deterministic statement, valid for an arbitrary fixed coefficient field $\a \in \Omega$. 

\smallskip

The following proposition states the solvability of parabolic equations. It relies on convex analysis and calculus of variations, and is close to the main result of~\cite{ghoussoub-tzou} (see also the monograph \cite{ghoussoub-book}). We provide a self-contained proof in the appendix in the more general setting of maximal monotone operators, and for a larger set of pairs $(w,w^*)$; see Proposition~\ref{p.parabolic.min.app}.
\begin{proposition}[Parabolic variational principle]
\label{p.parabolic.min}
Let $\mcl J$ be defined below in \eqref{e.def.mclJ}. For each $w \in H^1_\pa(I\times U)$ and $w^* \in L^2(I;H^{-1}(U))$, the mapping
\begin{equation*}  
\Ll\{
\begin{array}{ccc}
w + H^1_{\pa,\sqcup}(I\times U)  & \to & \R \\
u & \mapsto & \mcl J[u,w^*]
\end{array}
\Rr.
\end{equation*}
is uniformly convex. Moreover, its minimum is zero, and the associated minimizer is the unique $u \in w + H^1_{\pa,\sqcup}(I\times U)$ solution of
\begin{equation}
\label{e.parab.rhs}
(\partial_t - \nabla \cdot \a \nabla)u = w^*  \quad \text{in } I \times U.
\end{equation}
\end{proposition}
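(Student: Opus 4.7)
The plan is to combine a Fenchel--Young inequality (the convex-analytic engine behind the Brezis--Ekeland principle) with the direct method of the calculus of variations on the affine Hilbert space $w+H^1_{\pa,\sqcup}(I\times U)$. Although $\mathcal J$ has not yet been displayed in the excerpt, the standard Brezis--Ekeland form couples the primal gradient $\nabla u$ with a flux variable $\g$ via a Legendre pairing governed by $\a$; the quadratic integrand will include terms like $\frac12 \nabla u \cdot \a \nabla u + \frac12 \g \cdot \a^{-1}\g - \g\cdot \nabla u$ together with a linear part encoding $\langle \partial_t u - w^*, u\rangle$ plus an appropriate boundary term. I would proceed in four steps.

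First, I would verify uniform convexity of $u \mapsto \mathcal J[u,w^*]$. The ellipticity bounds \eqref{e.unifellip0} give a uniformly convex quadratic lower bound in $\nabla u$, and a Poincar\'e-type inequality on $H^1_{\pa,\sqcup}(I\times U)$ (which is available because elements vanish on the lateral boundary and at the initial time) upgrades this to uniform convexity on the affine subspace $w+H^1_{\pa,\sqcup}(I\times U)$. By standard arguments, a uniformly convex, coercive, weakly lower semicontinuous functional on a reflexive Hilbert space admits a unique minimizer on any closed affine subspace, so existence and uniqueness of a minimizer $u_*$ follow immediately.

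Second, I would show that $\mathcal J\ge 0$ on the admissible set. The key ingredient is the pointwise Fenchel--Young inequality $\tfrac12 \xi\cdot \a\xi + \tfrac12 \eta\cdot \a^{-1}\eta \ge \xi\cdot\eta$, which is saturated precisely when $\eta = \a\xi$. Integrating this together with an integration by parts in time---here the boundary condition built into $H^1_{\pa,\sqcup}$ ensures the initial-time contribution vanishes, and the terminal-time contribution produces a non-negative quadratic $\tfrac12 \|u(I_+,\cdot) - w(I_+,\cdot)\|_{L^2(U)}^2$-type term that must appear in the definition of $\mathcal J$ for the identity to close---yields $\mathcal J \ge 0$ for all admissible $u$.

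Third, I would check that equality $\mathcal J[u_*,w^*] = 0$ is equivalent to $u_*$ solving \eqref{e.parab.rhs}. If $u_*$ solves \eqref{e.parab.rhs}, one substitutes the flux $\g = \a\nabla u_*$ and observes that the pointwise Fenchel--Young inequality becomes an equality while the time integration by parts exactly reconstructs $\int \langle \partial_t u_* - w^*, u_*\rangle$, so $\mathcal J[u_*,w^*] = 0$. Conversely, vanishing of $\mathcal J$ forces pointwise equality in Fenchel--Young, which pins down $\g = \a\nabla u_*$, and the Euler--Lagrange equation at the minimizer then reads as \eqref{e.parab.rhs} in the distributional sense.

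The main obstacle is the fourth step: showing that the infimum of $\mathcal J$ is actually zero, equivalently, that a solution to the Cauchy--Dirichlet problem \eqref{e.parab.rhs} exists in the class $w+H^1_{\pa,\sqcup}(I\times U)$. The uniform convexity argument only gives that \emph{some} minimizer exists; its vanishing is not automatic. The cleanest route, which is what I would follow (and which matches the spirit of \cite{ghoussoub-tzou}), is to write the Euler--Lagrange equation for the minimizer $u_*$ as a monotone identity, then use the convex duality between the primal and flux variables to deduce that the minimum value equals the duality gap, which is zero by construction of $\mathcal J$ as a Fitzpatrick-type representation of the monotone operator $\partial_t - \nabla\cdot\a\nabla$. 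This is where the maximal monotonicity framework advertised in the appendix does the real work; the remaining analytic subtlety, treating the time derivative $\partial_t u_*\in L^2(I;H^{-1}(U))$ and giving rigorous meaning to the terminal trace needed for the integration by parts, is handled by the standard density and duality arguments for the space $H^1_\pa(I\times U)$.
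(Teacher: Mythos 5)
Your overall architecture (Fitzpatrick/Fenchel--Young representation, nonnegativity of $\mcl J$, equality iff $\g=\a\nabla u$, and the recognition that the real issue is showing the infimum is \emph{zero}) matches the paper's appendix proof of Proposition~\ref{p.parabolic.min.app}. But your fourth step contains a genuine gap: you assert that ``the minimum value equals the duality gap, which is zero by construction of $\mcl J$ as a Fitzpatrick-type representation.'' This is circular. The representation only gives $A(p,q,\cdot)\ge p\cdot q$, hence $\mcl J\ge 0$ and the characterization of equality; it does \emph{not} by itself show that the minimum over $w+H^1_{\pa,\sqcup}(I\times U)$ vanishes, which is equivalent to solvability of the Cauchy--Dirichlet problem and is precisely the content to be proved. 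The direct method gives you a minimizer, but its Euler--Lagrange equation does not immediately read as \eqref{e.parab.rhs}; the whole point of the Brezis--Ekeland approach is that ``min $=0$'' is the nontrivial statement. The paper closes this by a concrete Fenchel--Rockafellar argument: it introduces the perturbed value function $G(u^*)$, shows $G$ is convex and locally bounded above (hence lower semicontinuous, so $G(0)=G^{**}(0)=\sup_v(-G^*(v))$), and then proves $G^*(v)\ge 0$ for all $v\in L^2(I;H^1_0(U))$. That last step requires two arguments you do not supply: (i) finiteness of $G^*(v)$ forces $\partial_t v\in L^2(I;H^{-1}(U))$; and (ii) a relaxation of the initial-time constraint --- the natural test choice $u=v$ does not lie in $H^1_{\pa,\sqcup}$, and one must approximate by functions with prescribed terminal trace, exploit continuity of the terminal-time functional in $L^2(U)$, and then use the sign identity $\tfrac12\|v(T,\cdot)\|_{L^2(U)}^2-\int_{I\times U} v\,\partial_t v=\tfrac12\|v(0,\cdot)\|_{L^2(U)}^2\ge 0$ before choosing $\g=\a(\nabla(w+v),\cdot)$. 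Deferring all of this to ``the maximal monotonicity framework does the real work'' leaves the theorem unproved.

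A smaller inaccuracy: in your first step, uniform convexity of $u\mapsto\mcl J[u,w^*]$ does not follow from ellipticity plus a Poincar\'e inequality alone. The integrand $A(\nabla u,\g,\cdot)-\nabla u\cdot\g$ is degenerate along $\g=\a\nabla u$, and $\mcl J$ involves an infimum over $\g$ constrained by $-\nabla\cdot\g=w^*-\partial_t u$. The paper's mechanism is different: it proves uniform convexity of the \emph{joint} functional on the constraint set, rewriting $-\int\nabla u\cdot\g$ via the constraint and an integration by parts in time (producing the terminal-time quadratic you anticipated), and obtaining coercivity in the $\partial_t$-direction from the bound $\|\partial_t v\|_{L^2(I;H^{-1}(U))}\le\|\h\|_{L^2(I\times U)}$ valid for admissible perturbations $(v,\h)$. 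You should make this the actual convexity argument rather than invoking Poincar\'e.
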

Equation \eqref{e.parab.rhs} is interpreted as
\begin{equation}
\label{e.parab.rhs.weak}
\forall \phi \in L^2(I;H^1_0(U)), \quad \int_{I \times U} \nabla \phi \cdot \a \nabla u = \int_{I \times U} \phi \Ll( w^* - \partial_t u \Rr).
\end{equation}
The left side of \eqref{e.parab.rhs.weak} can be more explicitly written as
\begin{equation*}  
\int_{I \times U} \nabla \phi(t,x) \cdot \a(t,x) \nabla u(t,x) \, dt \, dx,
\end{equation*}
while the right side of \eqref{e.parab.rhs.weak} could be more properly written as 
\begin{equation*}  
\int_I \langle \phi(t,\cdot), (w^* - \partial_t u)(t,\cdot) \rangle \, dt,
\end{equation*}
with $\langle \cdot, \cdot \rangle$ the duality pairing between $H^1_0(U)$ and $H^{-1}(U)$. 

\smallskip

We proceed to define the functional $\J$ appearing in Proposition~\ref{p.parabolic.min}. To start with, we decompose the matrix $\a$ into its symmetric and skew-symmetric parts:
\begin{equation*}  
\sym(t,x) := \frac {\a(t,x) + \a^\T(t,x)} 2, \qquad \skew(x) := \frac {\a(t,x) - \a^\T(t,x)} 2,
\end{equation*}
and set
\begin{equation}  
\label{e.def.F}
A(p,q,t,x) := \frac 1 2 p \cdot \sym(t,x) p + \frac 1 2 (q - \skew(t,x) p) \cdot \sym^{-1}(t,x) (q - \skew(t,x) p),
\end{equation}
so that the following lemma holds.
\begin{lemma}
\label{l.lin.Fitz}
 There exists a constant $C(\Lambda) < \infty$ such that, for every $(t,x) \in \R \times \R^d$, 
\begin{equation*}  
(p,q) \mapsto A(p,q,t,x) - C^{-1} (|p|^2 + |q|^2) \quad \text{is convex}, 
\end{equation*}
and
\begin{equation*}  
(p,q) \mapsto A(p,q,t,x) - C (|p|^2 + |q|^2) \quad \text{is concave}.
\end{equation*}
Moreover, for every $(t,x) \in \R \times \Rd$ and $p,q \in \Rd$,
\begin{equation*}  
A(p,q,t,x) \ge p\cdot q,
\end{equation*}
with equality if and only if $q = \a(t,x) p$.
\end{lemma}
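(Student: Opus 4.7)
The plan is to treat the two assertions separately: the convexity/concavity sandwich follows from direct quadratic form estimates on the Hessian of $A$, while the inequality $A(p,q,t,x) \ge p \cdot q$ is a completion of squares after a change of variable that exploits the skew-symmetry.

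First I would record the consequences of the ellipticity bounds \eqref{e.unifellip0} for the symmetric and skew parts of $\a$: namely, $\Lambda^{-1} I \le \sym(t,x) \le \Lambda I$ (so $\Lambda^{-1} I \le \sym^{-1}(t,x) \le \Lambda I$ as well) and $|\skew(t,x)| \le \Lambda$. Since $(t,x)$ plays no role in what follows, I drop it from the notation. The upper bound is the easy half: expanding
\[
2A(p,q) = p \cdot \sym p + (q-\skew p)\cdot \sym^{-1}(q-\skew p),
\]
using $p\cdot \sym p \le \Lambda |p|^2$ and $(q-\skew p)\cdot \sym^{-1}(q-\skew p) \le \Lambda|q-\skew p|^2 \le 2\Lambda(|q|^2 + \Lambda^2|p|^2)$ immediately yields $A(p,q) \le C(\Lambda)(|p|^2+|q|^2)$, which gives the concavity of $(p,q)\mapsto A - C(|p|^2+|q|^2)$.

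For the matching lower bound, the cross term $-q\cdot \sym^{-1}\skew p$ in the expansion prevents a purely term-by-term estimate. I would instead argue by cases on the ratio $|q|/|p|$. When $|q| \le 2\Lambda|p|$, the bound $2A \ge \Lambda^{-1}|p|^2$ alone already dominates a multiple of $|p|^2+|q|^2 \le (1+4\Lambda^2)|p|^2$. When $|q| > 2\Lambda |p|$, the triangle inequality gives $|q-\skew p| \ge |q| - \Lambda|p| \ge \tfrac12|q|$, so the second term $\Lambda^{-1}|q-\skew p|^2 \ge \tfrac14\Lambda^{-1}|q|^2$ dominates a multiple of $|p|^2+|q|^2 \le 2|q|^2$. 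Combining the two cases yields $A(p,q) \ge C^{-1}(\Lambda)(|p|^2+|q|^2)$, proving the claimed convexity of $(p,q)\mapsto A - C^{-1}(|p|^2+|q|^2)$. This case-based argument is the only real obstacle; it arises because $\skew$ couples $p$ and $q$ nontrivially, and a purely linear use of Young's inequality loses a factor one cannot absorb.

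Finally, for the pointwise inequality $A(p,q) \ge p\cdot q$, I would change variable to $r := q - \skew p$ and observe that since $\skew$ is skew-symmetric, $p \cdot \skew p = 0$, so
\[
p \cdot q \;=\; p\cdot r + p\cdot \skew p \;=\; p\cdot r.
\]
The claim $2A(p,q) \ge 2p\cdot q$ therefore reduces to
\[
p\cdot \sym p + r\cdot \sym^{-1} r \;\ge\; 2\,p\cdot r,
\]
which is nothing but
\[
\bigl(\sym^{1/2} p - \sym^{-1/2} r\bigr) \cdot \bigl(\sym^{1/2} p - \sym^{-1/2} r\bigr) \;\ge\; 0,
\]
using that $\sym^{1/2}$ is symmetric. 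Equality holds exactly when $\sym^{1/2} p = \sym^{-1/2} r$, i.e. $r = \sym p$, which unravels to $q = (\sym + \skew) p = \a p$. This completes the proof.
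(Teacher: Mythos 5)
Your proposal is correct and follows essentially the same route as the paper: the ellipticity bounds on $\sym$, $\sym^{-1}$ and $\skew$ give the two-sided quadratic bounds, which (since $A(\cdot,\cdot,t,x)$ is a homogeneous quadratic form in $(p,q)$) are equivalent to the convexity/concavity sandwich. Your completion of squares is exactly the paper's identity $A(p,q,t,x) - p\cdot q = \tfrac12\,(\a(t,x)p - q)\cdot \sym^{-1}(t,x)(\a(t,x)p - q)$, written with $\sym^{\pm 1/2}$, and it yields the equality case $q=\a(t,x)p$ in the same way.
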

\begin{proof}
We briefly recall the proof---see also \cite[(2.6)]{AM}. The fact that $(p,q) \mapsto A(p,q,t,x)$ is uniformly convex and $C^{1,1}$ follows from the definition of $\Omega$ in \eqref{e.def.Omega}. The second part of the lemma is a consequence of the identity
\begin{equation*}  
A(p,q,t,x) - p\cdot q = \frac 1 2 (\a(t,x) p - q) \sym^{-1}(t,x)  (\a(t,x) p - q). \qedhere
\end{equation*}
\end{proof}
The functional $\J$ appearing in Proposition~\ref{p.parabolic.min} is defined, for every $u \in H^1_\pa(I\times U)$ and $u^* \in L^2(I;H^{-1}(U))$, by
\begin{equation}
\label{e.def.mclJ}
\mcl J[u,u^*] := \inf \Ll\{ \int_{I \times U} \Ll(A(\nabla u,\g,\cdot) - \nabla u \cdot \g\Rr) \ : \ -\nabla \cdot \g = u^* - \partial_t u \Rr\}.
\end{equation}
In the infimum above, we understand that $\g \in L^2(I\times U;\Rd)$, and the last condition is interpreted as
\begin{equation*}  
\forall \phi \in L^2(I;H^1_0(U)), \quad \int_{I\times U} \nabla \phi \cdot \g = \int_{I \times U} \phi \Ll( u^* - \partial_t u \Rr) .
\end{equation*}
In the integral on the right side of \eqref{e.def.mclJ}, the dot ``\,$\cdot$\,'' in the expression $A(\nabla u, \g,\cdot)$ stands for the time-space variable, that is,
\begin{multline*}  
\int_{I \times U} \Ll(A(\nabla u,\g,\cdot) - \nabla u \cdot \g\Rr) \\
= \int_{I} \int_U \Ll(A(\nabla u(t,x),\g(t,x),t,x) - \nabla u(t,x) \cdot \g(t,x) \Rr) \, d x \, dt.
\end{multline*}

\subsection{Subadditive quantities and basic properties}
In this subsection, we define the subadditive quantities and collect their basic properties. Although their definitions are actually very natural and intuitive, many readers will not find them to be on first reading. In order to understand the motivation for studying them, it is best to first have some familiarity with the elliptic case with symmetric coefficients, which is described in~\cite{AKMBook}. Indeed, much of what appears below can be compared to Chapter~2 of~\cite{AKMBook}, and in fact this paper can be seen as a generalization of~\cite[Chapters 1-3]{AKMBook} to the parabolic setting. Now, since the subadditive quantities are endowed from the variational structure of the equation, it is natural that the parabolic versions should be somewhat more complicated than the elliptic ones. A similar issue was encountered in~\cite{AM}, where subadditive quantities were defined and analyzed for ``non-variational'' elliptic equations.

\smallskip

In any case, the most convincing demonstration that these are the ``right'' quantities will have to wait until Section~\ref{s.CDP}, where we prove that quantitative information about the convergence of the subadditive quantities can be translated directly into control of the first-order correctors and therefore into estimates on the rate of homogenization. 

\smallskip

Without further ado, we give the definitions of the subadditive quantities. For every Lipschitz domain $U\subseteq \Rd$, bounded interval $I\subseteq \R$ and $p,q \in \Rd$, we define
\begin{equation}
\label{e.def.mu}
\mu(I\times U, p,q) := \inf_{(\nabla v,\mathbf{h})\in \mcl{C}_0(I\times U)} \fint_{I\times U} A(p+\nabla v, q + \h, \cdot),
\end{equation}
where the infimum is taken over $(\nabla v,\h)$ ranging in the space
\begin{multline}
\label{e.def.candidate}
\mcl{C}_0(I\times U) := \bigg\{ (\nabla v , \h) \in L^2(I \times U, \Rd)^2 \ : \ v \in H^1_{\pa,0}(I \times U) \ \ \text{and} \\
\forall \phi \in L^2(I;H^1(U)), \quad \int_{I \times U} \nabla \phi \cdot \h = - \int_{I \times U}  \phi \, \partial_t v
\bigg\}.
\end{multline}
Since $\phi \in L^2(I;H^1(U))$ and $\partial_t v \in L^2(I;\hat H^{-1}(U))$, the last integral is well-defined, in the usual interpretation as 
$$
\int_I \langle \phi(t,\cdot),\partial_t v(t,\cdot) \rangle \, dt,
$$
where here $\langle \cdot, \cdot \rangle$ denotes the duality pairing between $H^1(U)$ and $\hat H^{-1}(U)$. Testing the condition in \eqref{e.def.candidate} against the function $\phi(t,x) := p\cdot x$ and integrating by parts in time, we see that any candidate $\h$ must satisfy
\begin{equation}
\label{e.h.mean.zero}
\int_{I \times U} \h = 0.
\end{equation}
The dual subadditive quantity~$\mu^*$ is defined, for every $p^*, q^* \in \Rd$, by
\begin{equation}  
\label{e.def.mu*}
\mu^*(I\times U, q^*,p^*) := \sup_{(\nabla u,\g) \in \C(I\times U)} \fint_{I \times U} \Ll( -A(\nabla u, \g, \cdot) + q^* \cdot \nabla u + p^* \cdot \g \Rr) ,
\end{equation}
where the supremum is taken over $(\nabla u,\g)$ ranging  in the space
\begin{multline}
\label{e.def.candidate*}
\C(I\times U) := \bigg\{ (\nabla u , \g) \in L^2(I \times U ;  \Rd)^2 \,:\, u\in H^1_\pa(I \times U), \\
\forall \phi \in L^2(I;H^1_0(U)), \quad \int_{I \times U} \nabla \phi \cdot \g = - \int_{I \times U}  \phi \, \partial_t u
\bigg\}.
\end{multline}
Note that for each $p \in \Rd$ and $(\nabla v,\h) \in \C_0(I\times U)$, we have $(p + \nabla v, q + \h) \in \C(I\times U)$. Using also \eqref{e.dtv.mean.zero} and \eqref{e.h.mean.zero}, we thus deduce that for every $p,q,p^*,q^* \in \Rd$,
\begin{equation}  
\label{e.almost.dual}
\mu^*(I\times U,q^*,p^*) \ge q^* \cdot p + p^* \cdot q - \mu(I\times U,p,q).
\end{equation}
That is, the function $(q^*,p^*) \mapsto \mu^*(I\times U,q^*,p^*)$ is bounded below by the convex dual of the function $(p,q) \mapsto \mu(I\times U,p,q)$. 
As in the elliptic case (see \cite[Lemma~3.1]{AKM1} and \cite{AKM2}), we will combine $\mu$ and~$\mu^*$ into a master quantity denoted by~$J$ which monitors the defect in this convex duality pairing. 
For concision, we set
\begin{equation}
\label{e.def.V}
V := I \times U,
\end{equation}
and define a $2d$-by-$2d$ matrix field $\Abf:\R\times \Rd \to \R^{2d\times 2d}$ by 
\begin{equation*} \label{}
\Abf(t,x)
:= 
 \begin{pmatrix}  \s(t,x) - \m(t,x) \s^{-1}(t,x) \m(t,x) & \m(t,x) \s^{-1}(t,x) \\ -\s^{-1} (t,x) \m(t,x) & \s^{-1}(t,x) \end{pmatrix},
\end{equation*}
so that 
\begin{equation*}  
A(p,q,t,x) = \frac 1 2 \begin{pmatrix} p \\ q \end{pmatrix}\cdot \Abf \begin{pmatrix} p \\ q \end{pmatrix}.
\end{equation*}
This notation allows to rewrite the definitions of $\mu$ and $\mu^*$ in \eqref{e.def.mu} and \eqref{e.def.mu*} in more compact notation: for every $X, X^* \in \R^{2d}$, we have
\begin{equation}
\label{e.def.mu.compact}
\mu(V,X) = \inf_{S \in X + \C_0(V)} \fint_V \frac 1 2 S \cdot \Abf S,
\end{equation}
\begin{equation}
\label{e.def.mu*.compact}
\mu^*(V,X^*) = \sup_{S \in \C(V)} \fint_V \Ll( -\frac 1 2 S \cdot \Abf S + X^* \cdot S \Rr) ,
\end{equation}
and the inequality \eqref{e.almost.dual} can be rewritten as
\begin{equation}
\label{e.ordering}
\mu^*(V,X^*) \ge  X\cdot X^* - \mu(V,X).
\end{equation}
We now set
\begin{equation}  
\label{e.def.S}
\S(V) := \bigg\{(\nabla v, \h) \in \C(V) \ : \ 
\forall (\nabla \phi, \f) \in \C_0(V), \quad \int_{V} 
\begin{pmatrix}
\nabla \phi \\
 \f 
 \end{pmatrix}
 \cdot \Abf 
 \begin{pmatrix}
 \nabla v\\
  \h
  \end{pmatrix} 
  = 0\bigg\},
\end{equation}
and for every $X, X^* \in \R^{2d}$,
\begin{equation} 
\label{e.compact.J}
J( V, X,X^* ) 
:=
\sup_{S \in \S(V)} \fint_{V} \left( -\frac 1 2 S \cdot \Abf S - X\cdot \Abf S + X^* \cdot S \right).
\end{equation}
The master quantity $J$ can be rewritten in the following more explicit notation:
\begin{multline}
\label{e.def.J}
J\left( V,  \begin{pmatrix}  p \\ q \end{pmatrix} ,  \begin{pmatrix}  q^* \\ p^* \end{pmatrix}  \right) := 
\\
\sup_{(\nabla v, \g) \in \S(V)} 
\fint_{V}\left( 
-\frac12  \begin{pmatrix}  \nabla v \\ \g \end{pmatrix}  \cdot \mathbf{A}  \begin{pmatrix} \nabla v \\ \g \end{pmatrix} 
-  \begin{pmatrix}  p \\ q \end{pmatrix} \cdot \mathbf{A}  \begin{pmatrix}  \nabla  v \\ \g \end{pmatrix}  
+  \begin{pmatrix}  q^* \\ p^* \end{pmatrix} \cdot  \begin{pmatrix}  \nabla v \\ \g \end{pmatrix}
\right).
\end{multline}
The next lemma shows that $J$ indeed monitors the defect in convex duality between $\mu$ and $\mu^*$.
\begin{lemma}  
\label{l.Jsplitting}
For every $X,X^*\in\R^{2d}$,
\begin{equation} 
\label{e.Jsplitting}
J(V,X,X^*)= \mu(V,X)+\mu^*(V,X^*) - X\cdot X^*.  
\end{equation}
Moreover, the maximizer $S(\cdot,V,X,X^*)$ in \eqref{e.compact.J} is the difference between the maximizer of~$\mu^*(V,X^*)$ in~\eqref{e.def.mu*.compact} and the minimizer of $\mu(V,X)$ in~\eqref{e.def.mu.compact}.
\end{lemma}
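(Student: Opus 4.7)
The plan is to identify the supremum in \eqref{e.compact.J} with $\mu(V,X) + \mu^*(V,X^*) - X\cdot X^*$ by exhibiting its maximizer explicitly as the difference $S_J := S_{\mu^*} - S_\mu$ of the (unique) maximizer of $\mu^*(V,X^*)$ in \eqref{e.def.mu*.compact} and the (unique) minimizer of $\mu(V,X)$ in \eqref{e.def.mu.compact}. Existence and uniqueness of these two stationary points follow from uniform convexity of $p \mapsto \frac12 p \cdot \Abf p$ guaranteed by Lemma~\ref{l.lin.Fitz}, and their Euler--Lagrange equations read
\begin{equation*}
\fint_V T \cdot \Abf S_\mu = 0 \ \ \forall T \in \C_0(V), \qquad \fint_V T \cdot (\Abf S_{\mu^*} - X^*) = 0 \ \ \forall T \in \C(V).
\end{equation*}

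The key observation is that both $S_\mu$ and $S_{\mu^*}$ already belong to $\S(V)$. For $S_\mu$, constant fields $X \in \R^{2d}$ lie in $\C(V)$ (taking $u(x) = p \cdot x$ and verifying the divergence condition via integration by parts against $\phi \in H^1_0(U)$), so $S_\mu = X + T_\mu \in \C(V)$, and its EL equation is precisely the orthogonality condition defining $\S(V)$. For $S_{\mu^*}$, since $\C_0(V) \subseteq \C(V)$, one may test the second EL equation against any $T \in \C_0(V)$; because $\int_V \nabla v = 0$ for $v \in H^1_{\pa,0}(V)$ and $\int_V \h = 0$ by \eqref{e.h.mean.zero}, one has $\fint_V T = 0$, so the $X^*$-contribution drops out and the remaining identity $\fint_V T \cdot \Abf S_{\mu^*} = 0$ is again the $\S(V)$-condition. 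Hence $S_J \in \S(V)$ by linearity.

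Now for a generic $S \in \S(V)$, I would rewrite the $J$-integrand using these two observations. The $\S$-orthogonality applied to $S_\mu - X \in \C_0(V)$ gives $\fint_V X \cdot \Abf S = \fint_V S_\mu \cdot \Abf S$, while the EL for $S_{\mu^*}$ applied to $T = S \in \C(V)$ gives $X^* \cdot \fint_V S = \fint_V S \cdot \Abf S_{\mu^*}$. Substituting and completing the square produces
\begin{equation*}
\fint_V \Ll(-\tfrac12 S \cdot \Abf S - X\cdot \Abf S + X^*\cdot S\Rr) = -\tfrac12 \fint_V (S-S_J)\cdot \Abf(S-S_J) + \tfrac12 \fint_V S_J \cdot \Abf S_J,
\end{equation*}
so uniform positivity of $\Abf$ (Lemma~\ref{l.lin.Fitz}) yields that the supremum in \eqref{e.compact.J} is attained precisely at $S = S_J$, with value $\frac12 \fint_V S_J \cdot \Abf S_J$. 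This already proves the ``moreover'' statement.

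It remains to verify that $\frac12 \fint_V S_J \cdot \Abf S_J = \mu(V,X) + \mu^*(V,X^*) - X\cdot X^*$. Expanding the quadratic form produces three terms: $\frac12 \fint_V S_{\mu^*}\cdot \Abf S_{\mu^*} = \mu^*(V,X^*)$ (from the EL for $S_{\mu^*}$ tested against itself, giving $\fint S_{\mu^*} \Abf S_{\mu^*} = X^*\cdot \fint S_{\mu^*}$, then halving); $\frac12 \fint_V S_\mu \cdot \Abf S_\mu = \mu(V,X)$ by definition; and $\fint_V S_\mu \cdot \Abf S_{\mu^*} = X\cdot X^*$, since the EL for $S_{\mu^*}$ tested against $T = S_\mu \in \C(V)$ gives $X^* \cdot \fint_V S_\mu$, and $\fint_V S_\mu = X$ follows from $S_\mu - X \in \C_0(V)$ together with \eqref{e.dtv.mean.zero}--\eqref{e.h.mean.zero}. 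Summing these three yields \eqref{e.Jsplitting}. The principal conceptual point is recognizing that both stationary points automatically lie in $\S(V)$; after this, the main hurdle is merely keeping track of which test-function class ($\C_0(V)$ or $\C(V)$) each orthogonality relation lives in, and the rest is a completion-of-squares computation.
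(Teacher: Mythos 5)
Your proof is correct and follows essentially the same route as the paper's: both arguments rest on observing that the maximizer of $\mu^*(V,X^*)$ and the minimizer of $\mu(V,X)$ satisfy first-variation identities placing them (after subtracting $X$ in the latter case) in the orthogonality class defining $\S(V)$, and then using these identities together with the zero-mean property of $\C_0(V)$ to rewrite the objective in \eqref{e.compact.J} as a shifted/completed square whose optimum is $\mu(V,X)+\mu^*(V,X^*)-X\cdot X^*$ attained at the difference of the two extremizers. The only cosmetic difference is that you complete the square around the explicit candidate $S_{\mu^*}-S_\mu$ and evaluate its value directly, while the paper changes variables $S\mapsto S-S_0$ and compares suprema; the content is identical.
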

\begin{proof}
We first argue that, for every $X^*\in \R^{2d}$, 
\begin{equation}
\label{e.mu*.S}
\mu^*(V,X^*) = \sup_{S \in \S(V)} \fint_V \Ll( - \frac 1 2 S \cdot \Abf S + X^* \cdot S \Rr) .
\end{equation}
Let $S^*  \in \mcl{C}(V)$ denote the maximizer in the definition of $\mu^*(V,X^*)$.
Note that, for every $X^* \in \R^{2d}$ and $S \in \mcl{C}_0(V)$,
\begin{equation}  
\fint_U X^* \cdot S = 0.
\end{equation}
By the first variation for~$\mu^*$, we deduce that 
\begin{equation*}  
\forall\, S' \in  \mcl{C}_0(V), \quad \int_V S' \cdot \Abf S^* = 0.
\end{equation*}
That is, $S^*\in \S(U)$, and thus~\eqref{e.mu*.S} holds.

\smallskip

Let $X = (p,q) \in \R^{2d}$ and 
\begin{equation}
\label{e.def.S0}
S_0 = (\nabla v, \h) \in X + \mcl{C}_0(V)
\end{equation}
denote the minimizer in the definition of $\mu(V,p,q) = \mu(V,X)$. For every $S \in \S(V)$, 
\begin{multline}
\label{e.mu.nu.bigJ}
\mu(V,X) + \fint_V  \Ll( X^* \cdot S - \frac 1 2 S \cdot \Abf S \Rr)  - X \cdot X^* \\
 = \fint_V \Ll( \frac 1 2 S_0 \cdot \Abf S_0 - \frac 1 2 S \cdot \Abf S + X^* \cdot S \right) - X \cdot X^*.
\end{multline}
By~\eqref{e.def.S0}, we have $X  = \fint_V S_0$. For each $S \in \S(V)$, 
\begin{equation*}  
\fint_V S \cdot \Abf S_0 = \fint_V S \cdot \Abf X,
\end{equation*}
and this last identity holds true in particular for $S = S_0$. 
We obtain that the left side of~\eqref{e.mu.nu.bigJ} is equal to 
\begin{equation*}  
\fint_V  \Ll( -\frac 1 2 \Ll( S - S_0 \Rr) \cdot \Abf \Ll( S - S_0 \Rr) 
- X \cdot \Abf\Ll( S - S_0 \Rr) + X^* \cdot (S - S_0)\Rr).
\end{equation*}
We compare this result to the identites~\eqref{e.mu*.S} and \eqref{e.compact.J} to obtain the lemma.
\end{proof}

The next lemma collects elementary properties of $J$ and its minimizer. It can be compared with~\cite[Lemma 2.2]{AKMBook}.

\begin{lemma}[Basic properties of $J$]
\label{l.basicJ}
The quantity $J(V,X,X^*)$ and its maximizer $S(\cdot,V,X,X^*)$ satisfy the following properties:

\begin{itemize}

\item The mapping $(X,X^*) \mapsto J(V,X,X^*)$ is quadratic. 

\item \emph{Uniformly convex and $C^{1,1}$ in $X$ and $X^*$ separately.} There exists a constant $C(d,\Lambda)<\infty$ such that, for every  $X_1,X_2,X^*\in\R^{2d}$, 
\begin{multline} 
\label{e.JunifconvC11p}
\frac1C \left| X_1-X_2\right|^2
\leq
\frac12 J\left( V,X_1,X^* \right) + \frac12 J\left(V,X_2, X^* \right)  - J\left(V,\frac12X_1+\frac12 X_2 , X^*\right)
\\
\leq
C \left| X_1-X_2\right|^2
\end{multline}
and, for every $X_1^*,X_2^*,X\in\R^{2d}$, 
\begin{multline} 
\label{e.JunifconvC11q}
\frac1C \left| X_1^*-X_2^*\right|^2
\leq
\frac12 J\left(V,X,X_1^* \right) + \frac12 J\left( V,X,X_2^* \right)  - J\left( V,X,\frac12X_1^*+\frac12 X_2^* \right)
\\
\leq
C \left| X_1^* - X_2^*\right|^2.
\end{multline}

\item \emph{Subadditivity.} Let $V_1, \ldots, V_N \subset V$ be parabolic cylinders that partition $V$, in the sense that $V_i \cap V_j = \emptyset$ if $i\neq j$ and 
\begin{equation*} \label{}
\left| V \setminus \bigcup_{i=1}^N V_i \right| = 0. 
\end{equation*}
For every $X,X^*\in\R^{2d}$, we have
\begin{equation}
\label{e.Jsubadd}
J(V,X,X^*) \leq \sum_{i=1}^N \frac{\left|V_i\right|}{|V|} J(V_i,X,X^*). 
\end{equation}

\item \emph{First variation for $J$.} For $X,X^*\in\R^{2d}$, the function $S(\cdot,V,X,X^*)$ is the unique element of $\S(V)$ such that
\begin{equation}
\label{e.Jfirstvar} 
\forall\, T \in \S(V), \qquad \fint_V  T \cdot \Abf S(\cdot,V,X,X^*) = \fint_V \left( - X\cdot \Abf T + X^*\cdot T \right). 
\end{equation}

\item \emph{Quadratic response.} For every $X,X^*\in\R^{2d}$ and $T\in \S(V)$, 
\begin{multline} 
\label{e.Jquadresponse}
\frac 1C \fint_V \left|   T -   S(\cdot,V,X,X^*) \right|^2
\\
  \leq
J(V,X,X^*) - \fint_V \left( -\frac12  T \cdot \Abf  T  -X\cdot\Abf   T + X^*\cdot   T  \right)
\\
\leq
C \fint_V \left|   T - S(\cdot,V,X,X^*) \right|^2.
\end{multline}

\item \emph{Formulas for derivatives of $J$.} For every $X,X^*\in\R^{2d}$,
\begin{equation} 
\label{e.JderX}
\nabla_X J(V,X,X^*)  = -\fint_V \Abf S(\cdot,V,X,X^*)
\end{equation}
and
\begin{equation} 
\label{e.JderX*}
\nabla_{X^*} J(V,X,X^*)  =  \fint_V  S(\cdot,V,X,X^*).
\end{equation}
\end{itemize}
\end{lemma}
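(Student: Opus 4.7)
I would first establish the first variation \eqref{e.Jfirstvar} and quadratic response \eqref{e.Jquadresponse}, from which everything else cascades. Because $\Abf$ is uniformly positive-definite by Lemma~\ref{l.lin.Fitz} and $\S(V)$ is a closed affine-linear subspace of $L^2(V;\R^{2d})$ (in fact a linear subspace, since $0\in\S(V)$), the strictly concave quadratic functional in \eqref{e.compact.J} admits a unique maximizer, which I denote $S = S(\cdot,V,X,X^*)$. Vanishing of the Gateaux derivative at $S$ in the directions $T\in\S(V)$ yields precisely \eqref{e.Jfirstvar}. Writing the integrand at an arbitrary $T\in\S(V)$ as
$$
-\tfrac 12 T\cdot\Abf T - X\cdot\Abf T + X^*\cdot T = -\tfrac 12 S\cdot\Abf S - X\cdot\Abf S + X^*\cdot S - \tfrac 12 (T-S)\cdot\Abf(T-S) + L(T-S),
$$
where $L$ is the linear functional $R\mapsto \fint_V(-S\cdot\Abf R - X\cdot\Abf R + X^*\cdot R)$ which vanishes on $\S(V)$ by \eqref{e.Jfirstvar}, and then using the two-sided bounds $C^{-1}|R|^2 \le R\cdot\Abf R\le C|R|^2$ from Lemma~\ref{l.lin.Fitz}, gives \eqref{e.Jquadresponse}.

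The formulas \eqref{e.JderX}--\eqref{e.JderX*} follow by the envelope theorem: since the first-variation equation \eqref{e.Jfirstvar} depends linearly on $(X,X^*)$, the maximizer $S$ depends linearly and hence smoothly on $(X,X^*)$, so differentiating $J$ reduces to differentiating the explicit integrand at fixed $S$. Linearity of $S$ in $(X,X^*)$ substituted back into \eqref{e.compact.J} also shows $(X,X^*)\mapsto J(V,X,X^*)$ is a quadratic polynomial.

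For the separate convexity estimates \eqref{e.JunifconvC11p} and \eqref{e.JunifconvC11q}, I would invoke the splitting $J(V,X,X^*) = \mu(V,X) + \mu^*(V,X^*) - X\cdot X^*$ from Lemma~\ref{l.Jsplitting}. The bilinear term $-X\cdot X^*$ satisfies the parallelogram identity exactly and hence cancels in the midpoint defect on both sides, reducing matters to separate estimates on $\mu(V,\cdot)$ and $\mu^*(V,\cdot)$. For $\mu$, letting $S_0^{(1)}, S_0^{(2)}$ be the minimizers at $X_1,X_2$, the midpoint $\tfrac 12(S_0^{(1)} + S_0^{(2)})$ is admissible at $\tfrac 12(X_1+X_2)$, and Lemma~\ref{l.lin.Fitz} (together with the parallelogram identity in the $\Abf$-inner product) converts the two-sided bounds on $A$ into \eqref{e.JunifconvC11p}. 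The estimate \eqref{e.JunifconvC11q} for $\mu^*$ proceeds symmetrically (with the midpoint of the $\mu^*$-maximizers as a test function in one direction and quadratic response in the other).

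Finally, for subadditivity \eqref{e.Jsubadd}, the essential point is that restriction preserves membership in $\S$: if $S = (\nabla u,\g)\in \S(V)$, then its restriction to $V_i$ lies in $\S(V_i)$. Indeed, both the constraint in $\C(V_i)$ and the orthogonality against $\C_0(V_i)$ can be checked by extending admissible test functions on $V_i$ by zero to $V$—which lands back inside $\C(V)$ and $\C_0(V)$ respectively, thanks to the vanishing boundary conditions defining those spaces. Granted this, using the fact that the $V_i$ partition $V$,
$$
\fint_V\left(-\tfrac 12 S\cdot\Abf S - X\cdot\Abf S + X^*\cdot S\right) = \sum_{i=1}^N \frac{|V_i|}{|V|}\fint_{V_i}\left(-\tfrac 12 S\cdot\Abf S - X\cdot\Abf S + X^*\cdot S\right) \le \sum_{i=1}^N \frac{|V_i|}{|V|} J(V_i,X,X^*),
$$
and taking the supremum over $S\in\S(V)$ gives \eqref{e.Jsubadd}. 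The main obstacle I anticipate is rigorously checking the restriction property for $\S$—particularly the orthogonality condition, since candidates in $\C_0(V_i)$ extended by zero must truly land in $\C_0(V)$, which requires care with the parabolic boundary (the zero extension preserves the defining weak identities because test functions in $\C_0$ have no lateral vanishing and the internal parabolic boundaries absorb the difficulty through the $H^1_{\pa,0}$ endpoint conditions).
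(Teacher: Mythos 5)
Your proposal is correct and coincides with the argument the paper intends: the paper omits the proof of Lemma~\ref{l.basicJ}, referring to the elliptic analogue in \cite[Lemma 2.2]{AKMBook}, and your steps (first variation and exact quadratic expansion over the closed linear space $\S(V)$ using the two-sided bounds on the symmetric matrix $\Abf$, quadraticity and the derivative formulas from linearity of the maximizer in $(X,X^*)$, separate uniform convexity and $C^{1,1}$ via the splitting of Lemma~\ref{l.Jsplitting}, and subadditivity via restriction of the maximizer to each $V_i$) are exactly that argument transposed to the parabolic setting. In particular, you correctly identify and resolve the only genuinely parabolic point, namely that zero-extension maps $\C_0(V_i)$ into $\C_0(V)$: this works because the divergence-type constraint in $\C_0$ is tested against all of $L^2(I;H^1(U))$ (so test functions restrict to $V_i$), while the potential component lies in $H^1_{\pa,0}(V_i)$ and hence extends by zero across the parabolic boundary of $V_i$.
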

\begin{proof}
Since these properties are easy to check and their proofs are almost the same of those of~\cite[Lemma 2.2]{AKMBook}, we omit the details. 
\end{proof}

\begin{remark}
\label{r.spatav.determined}
Since $X^* \mapsto \mu^*(V,X^*)$ is a quadratic form, we obtain from \eqref{e.JderX} and Lemma~\ref{l.Jsplitting} that
\begin{equation}  
\label{e.spatav.X0}
\fint_{V} S(\cdot,V,X,0) = \nabla_{X^*} J(V,X,0) = -X,
\end{equation}
a property which also follows directly from the definition of $\mu$ in \eqref{e.def.mu.compact} and the identification of $- S(\cdot,V,X,0)$ as the minimizer in this definition. From \eqref{e.JderX*} and Lemma~\ref{l.Jsplitting}, we also obtain the dual identity
\begin{equation*}  
\fint_V \Abf S(\cdot,V,0,X^*) = X^*.
\end{equation*}
\end{remark}

In the next lemma, we relate the space $\S(I\times U)$ with the space of solutions of the parabolic equation and of its dual. Define the vector space $\A(I\times U)$ to be the set of weak solutions $u\in H^1_\pa(I \times U)$ of the equation
\begin{equation*} 
\partial_t u - \nabla \cdot \left( \a\nabla u \right) = 0 \quad \mbox{in} \ I \times U, 
\end{equation*}
and the vector space $\A^*(I\times U)$ to be the set of weak solutions $u^*\in H^1_\pa(I\times U)$ of the dual equation
\begin{equation*} 
\partial_t u^* + \nabla \cdot \left( \a^\T\nabla u^* \right) = 0 \quad \mbox{in} \ I \times U.
\end{equation*}
Note that the direction of time is reversed in the dual equation. Precisely,
\begin{multline*} 
\A(I\times U) := \\
\left\{ u\in H^1_\pa(I\times U) \,:\, \forall w\in L^2(I;H^1_0(U)), \ \ \int_{I\times U} w \, \partial_t u =- \int_{I\times U} \nabla w \cdot \a \nabla u  \right\},
\end{multline*}
\begin{multline*} 
\A^*(I\times U) := \\
\left\{ u^* \in H^1_\pa(I\times U) \,:\, \forall w\in L^2(I;H^1_0(U)), \ \  \int_{I\times U} w \, \partial_t u^* = \int_{I\times U} \nabla w \cdot \a^\T \nabla u^*  \right\}.
\end{multline*}

\begin{lemma} We have
\label{l.identif.S}
\begin{equation} 
\label{e.identif.S}
\S(V) := \left\{ \left( \nabla u + \nabla u^*, \a \nabla u- \a^\T\nabla u^* \right) \,:\, u\in \A(V), \ u^* \in \A^*(V) \right\}.
\end{equation}
\end{lemma}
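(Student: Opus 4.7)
The plan is to prove the two inclusions separately. For $\supseteq$, given $u\in\A(V)$ and $u^*\in\A^*(V)$, I set $v:=u+u^*$ and $\h:=\a\nabla u-\a^\T\nabla u^*$. Adding the weak forms of the equation for $u$ and the dual equation for $u^*$ (the sign reversal in the dual equation is essential) shows $(\nabla v,\h)\in\C(V)$. A direct computation from the explicit form of $\Abf$ then yields
\[
\Abf\begin{pmatrix}\nabla v\\\h\end{pmatrix}=\begin{pmatrix}\a\nabla u+\a^\T\nabla u^*\\ \nabla u-\nabla u^*\end{pmatrix}.
\]
For any $(\nabla\phi,\f)\in\C_0(V)$, the portion of the orthogonality paired with $\nabla\phi$, after applying the weak forms of the two equations with spatial test $\phi\in L^2(I;H^1_0(U))$, rewrites as $-\int_V\phi\,\partial_t(u-u^*)$; the portion paired with $\f$, after applying the compatibility relation defining $\C_0(V)$ with spatial test $\psi:=u-u^*\in L^2(I;H^1(U))$, rewrites as $-\int_V(u-u^*)\,\partial_t\phi$. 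The sum is $-\int_V\partial_t[\phi(u-u^*)]$, which vanishes by integration by parts in time since $\phi\in H^1_{\pa,0}(V)$ is the limit of smooth functions compactly supported in $I\times U$.

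For $\subseteq$, given $(\nabla v,\h)\in\S(V)$, I introduce the candidate spatial gradients
\[
P:=\tfrac12\bigl(\nabla v+\s^{-1}(\h-\m\nabla v)\bigr),\qquad P^*:=\tfrac12\bigl(\nabla v-\s^{-1}(\h-\m\nabla v)\bigr),
\]
which satisfy $P+P^*=\nabla v$ and $\a P-\a^\T P^*=\h$ by direct algebra, and $\Abf(\nabla v,\h)^\T=(\a P+\a^\T P^*,\,P-P^*)^\T$ by the same computation as above. The first step is to show that $P$ and $P^*$ really are spatial gradients. I test the orthogonality against pairs of the form $(0,\f)$: since $\nabla\phi=0$ combined with $\phi\in H^1_{\pa,0}(V)$ forces $\phi\equiv0$, the defining constraint of $\C_0(V)$ reduces to $\int_V\nabla\psi\cdot\f=0$ for every $\psi\in L^2(I;H^1(U))$, i.e.\ $\f(t,\cdot)$ is divergence-free with vanishing normal trace on $\partial U$ for a.e.\ $t$. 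The resulting condition $\int_V\f\cdot(P-P^*)=0$ against all such $\f$, combined with the Helmholtz decomposition of $L^2(U;\Rd)$, yields $P-P^*=\nabla\alpha$ for some $\alpha\in L^2(I;H^1(U))$. Setting $u:=\tfrac12(v+\alpha)$ and $u^*:=\tfrac12(v-\alpha)$ gives $\nabla u=P$, $\nabla u^*=P^*$, and $u+u^*=v$.

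The PDEs are recovered next. From $(\nabla v,\h)\in\C(V)$ one already has $\partial_tv=\nabla\cdot\h=\nabla\cdot(\a\nabla u-\a^\T\nabla u^*)$ as distributions on $I\times U$. A companion relation for $\alpha=u-u^*$ comes from testing the orthogonality against $(\nabla\phi,\f)\in\C_0(V)$ with $\phi\in C^\infty_c(I\times U)$ satisfying $\int_U\phi(t,\cdot)\,dx=0$ for all $t$; this mean-zero restriction is exactly the compatibility condition for the existence of an admissible $\f$ with $-\nabla\cdot\f(t,\cdot)=\partial_t\phi(t,\cdot)$ and zero normal trace. After integration by parts in time this produces $\partial_t\alpha=\nabla\cdot(\a\nabla u+\a^\T\nabla u^*)$, valid modulo a distribution depending only on $t$. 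Taking the half-sum and half-difference with the equation for $v$ gives $\partial_tu-\nabla\cdot(\a\nabla u)=c(t)$ and $\partial_tu^*+\nabla\cdot(\a^\T\nabla u^*)=-c(t)$ for some $c$ depending only on $t$; replacing $u$ by $u-F$ and $u^*$ by $u^*+F$ with $F(t):=\int_{I_-}^tc(s)\,ds$ eliminates this $c(t)$ while preserving both $u+u^*=v$ and the two spatial gradients, and $u\in H^1_\pa(V)$ is then automatic from $\partial_tu=\nabla\cdot(\a\nabla u)\in L^2(I;H^{-1}(U))$.

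The main obstacle I anticipate is this mean-zero restriction on admissible test functions: the class of $\phi$ for which a companion $\f$ exists is not large enough to determine $\partial_t\alpha$ pointwise on $I\times U$, so one recovers the equation for $\alpha$ only modulo a purely time-dependent additive indeterminacy, and one must check that this indeterminacy can be absorbed consistently into $u$ and $u^*$ without disturbing the gradient identifications established in the Hodge step.
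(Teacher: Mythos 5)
Your proof is correct, and both directions follow essentially the same route as the paper's: for $\supseteq$ you make the same substitution $v=u+u^*$, $\h=\a\nabla u-\a^\T\nabla u^*$ and conclude by the same integration by parts in time; for $\subseteq$ you use the same two ingredients, namely testing the orthogonality against $\{0\}\times L^2(I;\Lso(U))$ together with the Helmholtz decomposition to write $\s^{-1}(\h-\m\nabla v)$ as a spatial gradient, and then companion fluxes obtained from a Neumann problem to recover the second equation. Where you genuinely deviate is in the treatment of the compatibility condition for those companion fluxes: the paper asserts that $(\nabla\phi,\nabla\Delta_N^{-1}\partial_t\phi)\in\C_0(V)$ for \emph{every} $\phi\in H^1_{\pa,0}(V)$, whereas (testing the constraint in the definition of $\C_0$ against spatially constant $\psi$) admissibility of any companion flux forces $\partial_t\phi(t,\cdot)$ to have zero spatial mean; you restrict to such $\phi$ explicitly, recover the equation for $\alpha$ only modulo a spatially constant term $c$, and then gauge it away by $u\mapsto u-F$, $u^*\mapsto u^*+F$ with $F'=c$, which preserves $u+u^*=v$ and both gradients and hence the pair $(\nabla v,\h)$. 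This is a legitimate, in fact more careful, repair of a step the paper passes over quickly. The one point to tighten is that $c$ is a priori only a distribution in $t$: testing with $\phi=g(t)\chi(x)$ shows it has the form $\partial_t h+k$ with $h,k\in L^2(I)$, so $F$ should be taken as a distributional antiderivative rather than the literal integral $\int_{I_-}^t c(s)\,ds$; such an $F$ belongs to $L^2(I)$, which is all your argument needs (it keeps $u-F\in L^2(I;H^1(U))$, and then $\partial_t(u-F)=\nabla\cdot(\a\nabla(u-F))\in L^2(I;H^{-1}(U))$ gives membership in $H^1_\pa(V)$ as you say), so nothing else in the proof changes.
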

\begin{proof}
Recall that $V = I \times U$, and denote by $\mcl S'(I\times U)$ the set on the right side of~\eqref{e.identif.S}. The condition
\begin{equation*}  
\int_{I\times U} (\nabla \phi, \f) \cdot \Abf (\nabla v, \h) = 0
\end{equation*}
appearing in \eqref{e.def.S}
can be rewritten more explicitly as
\begin{equation}  
\label{e.explicit.orthog}
\int_{I \times U} \Ll( \nabla \phi \cdot \s \nabla v  + \Ll( \f - \m \nabla \phi  \Rr) \cdot \s^{-1} \Ll( \h - \m \nabla v \Rr) \Rr) = 0.
\end{equation}
We first verify that $\S(I \times U) \subset \S'(I\times U)$. The space $\{0\} \times L^2(I,\Lso(U))$ is a subspace of $\C_0(I\times U)$. Hence, if \eqref{e.explicit.orthog} holds for every $(\nabla \phi, \h) \in \C_0(I \times U)$, then in particular
\begin{equation*}  
\forall \f \in L^2(I; \Lso(U)), \qquad \int_{I \times U} \f \cdot \s^{-1} (\h - \m \nabla v) = 0.
\end{equation*}
In other words, $\s^{-1} (\h - \m \nabla v)$ belongs to the space orthogonal to $L^2(I; \Lso(U))$ in $L^2(I\times U)$. That is, there exists $w \in L^2(I; H^1(U))$ such that
\begin{equation*}  
\s^{-1} (\h - \m \nabla v) = \nabla w,
\end{equation*}
and we deduce that for every $(\nabla \phi, \f) \in \C_0(I\times U)$,
\begin{equation*}  
\int_{I \times U} \Ll(\nabla \phi \cdot ( \s \nabla v + \m \nabla w) - w \, \partial_t \phi \Rr) = 0.
\end{equation*}
Denoting by $\Delta_N^{-1}$ the solution operator for the Laplace equation on $U$ with null Neumann boundary condition, we observe that for each $\phi \in H^1_{\pa,0}(I\times U)$, the pair $(\nabla \phi,\nabla \Delta_N^{-1} (\partial_t \phi))$ belongs to $\C_0(I\times U)$. The identity above therefore holds for arbitrary $\phi \in H^1_{\pa,0}(I\times U)$, and we thus deduce that $\partial_t w \in L^2(I,H^{-1}(U))$. We can then integrate by parts in time and obtain that
\begin{equation}  
\label{e.twist1}
\forall \phi \in H^1_{\pa,0}(I\times U), \quad \int_{I \times U} \Ll(\nabla \phi \cdot ( \s \nabla v + \m \nabla w) + \phi \, \partial_t w \Rr) = 0.
\end{equation}
This property can be extended to arbitrary $\phi \in L^2(I;H^1_0(U))$ by density. 
The additional requirement that $(\nabla v, \h) \in \C(I\times U)$ brings 
\begin{equation}  
\label{e.twist2}
\forall \psi \in L^2(I;H^1_0(U)), \quad \int_{I \times U} \Ll(\nabla \psi \cdot (\s \nabla w + \m \nabla v) +  \psi \, \partial_t v \Rr) = 0.
\end{equation}
Setting 
\begin{equation}
\label{e.here.are.u.u*}
u := \frac 1 2 (v + w), \qquad u^* := \frac 1 2 (v - w),
\end{equation}
we deduce that $u \in \A(I \times U), u^* \in \A^*(I \times U)$, with
\begin{equation*}  
v = \frac 1 2 (u  + u^*), \qquad \h = \a \nabla u - \a^\T \nabla u^*,
\end{equation*}
and this completes the proof that $\S(I \times U) \subset \S'(I\times U)$. 

\smallskip

Conversely, given $u \in \A(I \times U)$ and $u^* \in \A^*(I \times U)$, we set
\begin{equation*}  
v = u+u^*, \qquad w := u - u^*, \qquad \h := \a \nabla u - \a^\T \nabla u^* = \s  \nabla w + \m \nabla v,
\end{equation*}
and observe that
\begin{equation*}  
\a \nabla u + \a^\T \nabla u^* = \s \nabla v + \m \nabla w.
\end{equation*}
The identities \eqref{e.twist1} and \eqref{e.twist2} follow. This implies that the condition \eqref{e.explicit.orthog} is satisfied for every $(\nabla \phi,\f) \in \C_0(I\times U)$, and hence that $(\nabla v, \h) \in \S(I\times U)$. We have thus shown that $\S'(I\times U) \subset \S(I\times U)$, which completes the proof.
\end{proof}

\begin{remark}
\label{r.deconstruction}
Note that for $S = \left( \nabla u + \nabla u^*, \a \nabla u - \a^\T\nabla u^* \right)\in \S(V)$, we have
\begin{equation}  \label{e.diff.recovery}
\Abf S = (\a \nabla u + \a^\T \nabla u^*, \nabla u - \nabla u^*).
\end{equation}
Indeed,~\eqref{e.diff.recovery} is implicit in the proof of Lemma~\ref{l.identif.S} above and can also be checked by a direct computation. 
In particular,~$\nabla u$ can be written as one half the sum of the first component of~$S$ and second component of~$\Abf S$, and $\a\nabla u$ can be recovered similarly. This observation is needed in Section~\ref{s.CDP} in the construction of (approximate) correctors.  
\end{remark}

\section{Functional inequalities}
\label{s.funineqs}

We collect here some functional inequalities which will be useful in the rest of the paper. The two main results are a ``multiscale'' version of the Poincar\'e inequality, and a Caccioppoli-type inequality for elements of $\S(\cud_n)$. The proof of the latter is based on a parabolic version of the Helmhotz-Hodge decomposition of vector fields, which is of independent interest. 

\smallskip

We first recall a useful version of the Poincar\'e inequality, for functions of the space variable only.
\begin{lemma}
\label{l.Poincare.psi}
Let $\psi \in L^2(\cu_n)$ satisfy 
\begin{equation*} \label{}
\fint_{\cu_n} \psi = 1. 
\end{equation*}
There exists $C(d)<\infty$ such that, for every $u\in H^1(\cu_n)$,  
\begin{equation} 
\label{e.whateverPoincare}
\left\| u - \fint_{\cu_n} u\, \psi \right\|_{\underline{L}^2(\cu_n)} 
\leq C \left\| \psi \right\|_{\underline{L}^2(\cu_n)} \left\| \nabla u \right\|_{\underline{\hat H}^{-1}(\cu_n)}. 
\end{equation}
\end{lemma}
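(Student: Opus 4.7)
My plan is to reduce \eqref{e.whateverPoincare} to the $\underline{\hat H}^{-1}$-version of Poincar\'e's inequality on $\cu_n$ and then prove the latter by duality against the Neumann Laplacian. The reduction is purely algebraic: using $\fint_{\cu_n}\psi = 1$, one has $\fint_{\cu_n} u\psi - (u)_{\cu_n} = \fint_{\cu_n} \left(u - (u)_{\cu_n}\right)\psi$, which by Cauchy-Schwarz is bounded in absolute value by $\|u - (u)_{\cu_n}\|_{\underline L^2(\cu_n)}\|\psi\|_{\underline L^2(\cu_n)}$. Together with the triangle inequality and Jensen's bound $\|\psi\|_{\underline L^2(\cu_n)} \ge \fint_{\cu_n} \psi = 1$, this yields
\begin{equation*}
\Ll\| u - \fint_{\cu_n} u\psi \Rr\|_{\underline L^2(\cu_n)} \le 2 \|\psi\|_{\underline L^2(\cu_n)} \Ll\|u - (u)_{\cu_n}\Rr\|_{\underline L^2(\cu_n)}.
\end{equation*}
So it suffices to establish the Poincar\'e-type inequality $\|u - (u)_{\cu_n}\|_{\underline L^2(\cu_n)} \le C \|\nabla u\|_{\underline{\hat H}^{-1}(\cu_n)}$.

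To prove this reduced bound, I would normalize so that $(u)_{\cu_n} = 0$ and introduce the dual potential $w \in H^1(\cu_n)$ solving the Neumann problem
\begin{equation*}
-\Delta w = u \ \text{in } \cu_n, \qquad \partial_\nu w = 0 \ \text{on } \partial \cu_n, \qquad (w)_{\cu_n} = 0,
\end{equation*}
which is well-posed thanks to the compatibility condition $\fint_{\cu_n} u = 0$. Integration by parts (the boundary term vanishes by the Neumann condition) gives the identity $\|u\|_{\underline L^2(\cu_n)}^2 = \fint_{\cu_n} \nabla u \cdot \nabla w$, and testing $\nabla u$ against $\nabla w$ in the vector-valued extension of the definition of $\underline{\hat H}^{-1}$ bounds this by $\|\nabla u\|_{\underline{\hat H}^{-1}(\cu_n)} \|\nabla w\|_{\underline H^1(\cu_n)}$. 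To close the loop, the combination of Poincar\'e and the energy identity $\int|\nabla w|^2 = \int uw$ gives $\|\nabla w\|_{\underline L^2(\cu_n)} \le C \cdot 3^n \|u\|_{\underline L^2(\cu_n)}$, while $H^2$-regularity for the Neumann Laplacian on the cube yields $\|D^2 w\|_{\underline L^2(\cu_n)} \le C \|u\|_{\underline L^2(\cu_n)}$; in view of the normalized definition $\|\nabla w\|_{\underline H^1(\cu_n)} = 3^{-n}\|\nabla w\|_{\underline L^2(\cu_n)} + \|D^2 w\|_{\underline L^2(\cu_n)}$, these collectively bound $\|\nabla w\|_{\underline H^1(\cu_n)} \le C\|u\|_{\underline L^2(\cu_n)}$. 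Dividing through by $\|u\|_{\underline L^2(\cu_n)}$ in the duality identity then delivers the Poincar\'e-type estimate.

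The only delicate point is the $H^2$-regularity of the Neumann Laplacian on $\cu_n$, since $\cu_n$ has corners and is not smooth. This can be handled entirely by even reflection across each face of $\cu_n$: the extended $u$ is periodic on a torus of doubled side length, where $w$ can be constructed via the periodic Poisson equation, and the desired $H^2$ bound is immediate from the Fourier expansion. Everything else in the argument amounts to routine duality and elementary triangle/Jensen manipulations.
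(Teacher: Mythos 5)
Your proof is correct. It is built on the same two pillars as the paper's argument — duality against a Neumann problem on the cube and the $H^2$ estimate for the Neumann Laplacian — but it arranges them differently. The paper first invokes the known Poincar\'e-type inequality $\left\| u-(u)_{\cu_n}\right\|_{\underline L^2(\cu_n)}\le C\left\|\nabla u\right\|_{\underline{\hat H}^{-1}(\cu_n)}$ (the ``usual Poincar\'e inequality'' in the sense of \cite[Chapter 1]{AKMBook}), and then spends its Neumann-problem duality on the weighted-mean term: it solves $-\Delta w=1-\psi$ with zero Neumann data, so that the $H^2$ bound $\|\nabla w\|_{\underline H^1}\le C\|1-\psi\|_{\underline L^2}\le C\|\psi\|_{\underline L^2}$ produces the factor $\|\psi\|_{\underline L^2}$, and testing with $u$ gives $\bigl|\fint_{\cu_n}u(1-\psi)\bigr|\le C\|\psi\|_{\underline L^2}\|\nabla u\|_{\underline{\hat H}^{-1}}$. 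You instead dispose of the weighted mean by the elementary Cauchy--Schwarz/Jensen step (picking up the factor $\|\psi\|_{\underline L^2}\ge 1$ for free), and then use the Neumann problem, with data $u$ itself and the reflection argument for $H^2$ regularity, to prove the $\hat H^{-1}$-Poincar\'e inequality from scratch. The trade-off: your version is fully self-contained (it does not rely on the cited Poincar\'e-type inequality, which the plain $L^2$ Poincar\'e inequality would not replace), while the paper's is shorter given that reference; your handling of the $\psi$-dependence is arguably cleaner, since no PDE estimate involving $\psi$ is needed. The only points to state carefully are the componentwise (or vector-valued) interpretation of the pairing $\fint\nabla u\cdot\nabla w\le C\|\nabla u\|_{\underline{\hat H}^{-1}}\|\nabla w\|_{\underline H^1}$, which costs at most a dimensional constant, and the identification of the reflected periodic solution with the weak Neumann solution, which you correctly indicate; neither is a gap.
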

\begin{proof}
By the usual Poincar\'e inequality, all we need to show is that 
\begin{equation} 
\label{e.adjustmean}
\left|  \fint_{\cu_n}u\left( 1 - \psi \right) \right| \leq C\left\| \psi \right\|_{\underline{L}^2(\cu_n)} \left\| \nabla u \right\|_{\underline{H}^{-1}(\cu_n)}.
\end{equation}
Let $w$ be the solution of the Neumann problem
\begin{equation*} \label{}
\left\{
\begin{aligned}
& - \Delta w = 1-\psi & \mbox{in} & \ \cu_n, \\
& \mathbf{n} \cdot \nabla w = 0 & \mbox{on} & \ \partial \cu_n. 
\end{aligned}
\right.
\end{equation*}
Notice that this has a solution because $\int_{\cu_n} (1-\psi) = 0$, and we have the $H^2$ estimate (see for instance \cite[Lemma~B.18]{AKMBook})
\begin{equation*} \label{}
\left\| \nabla w \right\|_{\underline{H}^1(\cu_n)} 
\leq C \left\| 1 - \psi \right\|_{\un L^2(\cu_n)}
\leq C \left( 1 + \left\| \psi \right\|_{\un L^2(\cu_n)}\right) 
\leq C\left\| \psi \right\|_{\un L^2(\cu_n)}.
\end{equation*}
Testing the equation for $w$ by $u$ thus yields
\begin{align*} \label{}
\left|  \fint_{\cu_n}u\left( 1 - \psi \right)  \right|
& 
= \left|  \fint_{\cu_n} \nabla u \cdot \nabla w \right|
\\ &
\leq C \left\| \nabla u \right\|_{\underline{\hat H}^{-1}(\cu_n)} \left\| \nabla w \right\|_{\underline{H}^{1}(\cu_n)}
\\ &
\leq C\left\| \psi \right\|_{\un L^2(\cu_n)}\left\| \nabla u \right\|_{\underline{\hat H}^{-1}(\cu_n)}. \qedhere
\end{align*}
\end{proof}

For every parabolic cylinder $V$ and $f \in L^1(V)$, we recall that we use the following shorthand notation for the spatial average of $f$ over $V$:
\begin{equation}
\label{e.def.spat.av}
(f)_{V} := \fint_V f.
\end{equation}
By the standard Poincar\'e inequality in $1+d$ coordinates, we have
\begin{equation}  
\label{e.naive.Poincare}
\|u - (u)_{\cud_n}\|_{\un L^2(\cud_n)} \le C 3^n \|\nabla u\|_{\un L^2(\cud_n)} + C 3^{2n} \|\partial_t u\|_{\un L^2(\cud_n)}.
\end{equation}
In the context of parabolic equations, it is natural to try to preserve a matching between the number of times a function is differentiated in space and \emph{half} the number of times it is differentiated in time. The estimate \eqref{e.naive.Poincare} is not consistent with this scaling. The purpose of the next proposition is to obtain such a bound---see also Corollary~\ref{c.parab.poincare} below.
\begin{proposition}
\label{p.L2.weak}
There exists $C(d) <\infty$ such that, for every $u\in H^1_\pa(\cud_n)$ and $\g \in L^2(\cud_n;\Rd)$ satisfying $\partial_t u = \nabla \cdot \g$, we have
\begin{equation*}
\left\| u - \left( u \right)_{\cud_n} \right\|_{\un L^2(\cud_n)}
\leq C \left(
 \left\| \nabla u \right\|_{\underline{L}^2\left(I_n;\underline{\hat H}^{-1}(\cu_n)\right)} 
+ \left\| \g \right\|_{\underline{L}^1\left(I_n;\underline{H}^{-1}(\cu_n)\right)}
\right).
\end{equation*}
\end{proposition}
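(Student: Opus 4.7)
The plan is to split the deviation $u - (u)_{\cud_n}$ into a ``horizontal'' (spatial) fluctuation plus a ``vertical'' (temporal) fluctuation, and handle each with one of the two controls given in the hypothesis. Fix a smooth cutoff $\psi \in C_c^\infty(\cu_n)$ with $\fint_{\cu_n} \psi = 1$, chosen as $\psi(x) = \td\psi(x/3^n)$ for a universal $\td\psi \in C_c^\infty([-\tfrac12,\tfrac12]^d)$, so that $\|\psi\|_{\un L^2(\cu_n)} \le C$ and, crucially, $\nabla\psi \in H^1_0(\cu_n;\R^d)$ with
\begin{equation*}
\|\nabla\psi\|_{\un H^1(\cu_n)} \le C\, 3^{-2n}.
\end{equation*}
Set $g(t) := \fint_{\cu_n} u(t,\cdot)\,\psi$ and $\bar g := \fint_{I_n} g(t)\,dt$. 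Since the constant minimizing the $\un L^2(\cud_n)$-distance to $u$ is $(u)_{\cud_n}$, it is enough to bound $\|u - \bar g\|_{\un L^2(\cud_n)}$, and I will insert $g(t)$ as an intermediate: $u - \bar g = (u - g(t)) + (g(t) - \bar g)$.

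For the spatial term, I apply Lemma~\ref{l.Poincare.psi} slice-by-slice in $t$, which gives
\begin{equation*}
\|u(t,\cdot) - g(t)\|_{\un L^2(\cu_n)} \le C\|\nabla u(t,\cdot)\|_{\un{\hat H}^{-1}(\cu_n)};
\end{equation*}
squaring and averaging over $t \in I_n$ yields the first term of the right-hand side. For the temporal term, I test the distributional identity $\partial_t u = \nabla\cdot\g$ against the tensor product $\phi(t)\psi(x)$ with $\phi \in C_c^\infty(I_n)$. Since $\psi$ is compactly supported in $\cu_n$, integration by parts in $x$ is legitimate and yields
\begin{equation*}
g'(t) = -\fint_{\cu_n} \g(t,\cdot)\cdot\nabla\psi \quad \text{in the sense of distributions on } I_n.
\end{equation*}
Interpreting $\fint \g \cdot \nabla\psi$ via the $H^{-1}$--$H^1_0$ duality on $\cu_n$ and using the bound on $\|\nabla\psi\|_{\un H^1}$ above,
\begin{equation*}
|g'(t)| \le C\, 3^{-2n}\, \|\g(t,\cdot)\|_{\un H^{-1}(\cu_n)}.
\end{equation*}

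Integrating, for every $t \in I_n$,
\begin{equation*}
|g(t) - \bar g| \le \int_{I_n}|g'(r)|\,dr \le C\,3^{-2n}\,|I_n|\fint_{I_n}\|\g(r,\cdot)\|_{\un H^{-1}(\cu_n)}\,dr = C\,\|\g\|_{\un L^1(I_n;\un H^{-1}(\cu_n))},
\end{equation*}
where the cancellation $3^{-2n}\cdot|I_n| = 1$ (parabolic scaling) produces a scale-invariant bound. Combining this uniform bound with the spatial estimate and applying the triangle inequality in $\un L^2(\cud_n)$ gives the proposition.

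The only real obstacle is the rigorous justification that $g \in H^1(I_n)$ and that the formula for $g'(t)$ above holds with the right-hand side an $L^2(I_n)$ function; this follows by testing the weak formulation of $\partial_t u = \nabla\cdot\g$ against $\phi(t)\psi(x)$ (allowable since $\psi \in H^1_0(\cu_n)$ and $\partial_t u \in L^2(I_n;H^{-1}(\cu_n))$, while $\g \in L^2(\cud_n;\R^d)$), and then recognizing the resulting identity as the weak derivative of $g$. Once this is in place, the rest is a clean three-line combination.
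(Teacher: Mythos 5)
Your argument is correct and is essentially the paper's own proof: the same rescaled cutoff $\psi$ with $\|\nabla\psi\|_{\un H^1(\cu_n)}\le C3^{-2n}$, the same slicewise application of Lemma~\ref{l.Poincare.psi} for the spatial fluctuation, the same computation of $\partial_t\fint_{\cu_n}u(t,\cdot)\psi$ via the identity $\partial_t u=\nabla\cdot\g$ paired against $\nabla\psi$ in $H^{-1}$--$H^1_0$ duality, the same integration in time using the parabolic cancellation $3^{-2n}|I_n|=1$, and the same conclusion via minimality of the mean. The only cosmetic difference is that you justify the weak derivative of the slice average by testing with tensor products $\phi(t)\psi(x)$, which the paper performs implicitly.
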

\begin{remark}
\label{r.explicit.g}
In the statement of Proposition~\ref{p.L2.weak} (and similarly for Corollary~\ref{c.parab.poincare} and Proposition~\ref{p.integrate} below), the condition 
$\partial_t u = \nabla \cdot \g$
is interpreted as 
\begin{equation*}  
\forall \phi \in L^2(I_n;H^1_0(\cu_n)), \quad \int_{\cud_n} \nabla \phi \cdot \g = - \int_{\cud_n}  \phi \, \partial_t u.
\end{equation*}
Equivalently, this amounts to saying that $(\nabla u,\g) \in \mcl C(\cud_n)$. As an example, we can always take $\g = \nabla \Delta_{\cu_n}^{-1} \partial_t u$, where $\Delta_{\cu_n}^{-1}$ is the solution operator for the Laplacian in $\cu_n$ with null Dirichlet boundary condition.
\end{remark}
\begin{proof}[Proof of Proposition~\ref{p.L2.weak}]
Let $\psi \in C^\infty_c(\cu_n)$ be a smooth function of compact support in $\cu_n$ such that $\fint_{\cu_n} \psi(x)\,dx = 1$, $0 \leq \psi \leq 2$, and
\begin{equation} 
\label{e.movemeanaround}
3^{-n} \left\| \nabla \psi \right\|_{L^\infty(\cu_n)} + \left\| \nabla^2 \psi \right\|_{L^\infty(\cu_n)} \leq C3^{-2n}.
\end{equation}
We write $\left( u \right)_{\cu_n,\psi}:= \fint_{\cu_n} u(x)\psi(x) \, dx$ and $\left( u \right)_{\cu_n} : = \fint_{\cu_n} u(x) \, dx$. 
Using the Poincar\'e inequality (in the form given by Lemma~\ref{l.Poincare.psi}) in time slices gives, for every $t\in I_n$, 
\begin{equation*} \label{}
\left\| u(t,\cdot) - \left( u(t,\cdot) \right)_{\cu_n,\psi} \right\|_{\underline{L}^2(\cu_n)} 
\leq 
C \left\| \nabla u(t,\cdot) \right\|_{\underline{\hat H}^{-1}(\cu_n)}.
\end{equation*}
Thus
\begin{equation} 
\label{e.pp0}
\fint_{I_n} \fint_{\cu_n} \left| u(t,x) - \left( u(t,\cdot) \right)_{\cu_n,\psi} \right|^2 \,dx\,dt \leq C \fint_{I_n} \left\| \nabla u(t,\cdot) \right\|_{\underline{\hat H}^{-1}(\cu_n)}^2\,dt. 
\end{equation}
Since $\psi\in H^1_0(U)$ and $\nabla \psi \in H^1_0(U;\Rd)$, we have, for every $t\in I_n$,
\begin{align*}
\left| \partial_t \left( u(t,\cdot) \right)_{\cu_n,\psi} \right| 
& 
= \left|  \fint_{\cu_n} \psi(x) \partial_t u(t,x)\,dx \right|
\\ & 
= \left| \fint_{\cu_n} \nabla \psi(x) \cdot \g(t,\cdot) (x)\,dx \right|
\\ & 
\leq C \left\| \nabla \psi \right\|_{\underline{H}^1(\cu_n)} \left\| \g(t,\cdot) \right\|_{\underline{H}^{-1}(\cu_n)} 
\\ & 
\leq C3^{-2n} \left\|\g(t,\cdot)\right\|_{\underline{H}^{-1}(\cu_n)}. 
\end{align*}
Thus 
\begin{align*} \label{}
\lefteqn{
\sup_{t\in I_n} \left| \left( u(t,\cdot) \right)_{\cu_n,\psi} - \fint_{I_n\times\cu_n} \psi(x)u(t,x)\,dx\,dt \right|
} \qquad & 
\\ &
\leq \int_{I_n} \left| \partial_t  \left( u(t,\cdot) \right)_{\cu_n,\psi} \right| \,dt
\\ & 
\leq C3^{-2n} \int_{I_n} \left\| \g(t,\cdot) \right\|_{\underline{H}^{-1}(\cu_n)} \,dt 
\\ & 
= C \fint_{I_n} \left\| \g(t,\cdot) \right\|_{\underline{H}^{-1}(\cu_n)} \,dt .
\end{align*}
Combining this with~\eqref{e.pp0}, we obtain
\begin{align*} \label{}
\lefteqn{
\fint_{I_n} \fint_{\cu_n} \left| u(t,x) - \fint_{I_n\times \cu_n} \psi(y) u(s,y)\,ds\,dy \right|^2 \,dx\,dt 
} \quad &
\\ &
\leq C \fint_{I_n} \left\| \nabla u(t,\cdot) \right\|_{\underline{\hat H}^{-1}(\cu_n)}^2\,dt + \left(  \fint_{I_n} \left\| \g(t,\cdot) \right\|_{\underline{H}^{-1}(\cu_n)} \,dt \right)^2.
\end{align*}
Since 
\begin{equation*}  
\|u - (u)_{\cud_n}\|_{\un L^2(\cud_n)} = \inf_{c \in \R} \|u - c\|_{\un L^2(\cud_n)} ,
\end{equation*}
this yields the announced result.
\end{proof}
\begin{corollary}
\label{c.parab.poincare}
There exists $C(d) <\infty$ such that, for every $u\in H^1_\pa(\cud_n)$ and $\g \in L^2(\cud_n,\Rd)$ satisfying $\partial_t u = \nabla \cdot \g$,
\begin{equation}
\label{e.parab.poincare}
\left\| u - \left( u \right)_{\cud_n} \right\|_{\un L^2(\cud_n)}
\leq C3^n \left(
 \left\| \nabla u \right\|_{\underline{L}^2\left(\cud_n\right)} 
+ \left\| \g \right\|_{\underline{L}^2\left(\cud_n\right)}
\right).
\end{equation}
\end{corollary}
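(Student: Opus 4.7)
The plan is to deduce the corollary directly from Proposition~\ref{p.L2.weak} by estimating each of the two terms on its right-hand side using the trivial embedding of $L^2$ into the negative Sobolev spaces, with an explicit tracking of the scale factor $3^n$ coming from the normalized norm conventions.

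First, I would verify the scale-invariant embedding $\|f\|_{\underline{\hat{H}}^{-1}(\cu_n)} \le 3^n \|f\|_{\underline{L}^2(\cu_n)}$. This is the key mechanical step: for any test function $\phi \in H^1(\cu_n)$ with $\|\phi\|_{\underline{H}^1(\cu_n)} \le 1$, the definition of the normalized $\underline{H}^1$ norm implies $3^{-n} \|\phi\|_{\underline{L}^2(\cu_n)} \le 1$, hence $\|\phi\|_{\underline{L}^2(\cu_n)} \le 3^n$, and Cauchy--Schwarz yields
\begin{equation*}
\left| \fint_{\cu_n} \phi \, f \right| \le \|\phi\|_{\underline{L}^2(\cu_n)} \|f\|_{\underline{L}^2(\cu_n)} \le 3^n \|f\|_{\underline{L}^2(\cu_n)}.
\end{equation*}
Taking the supremum over such $\phi$ gives the claim, and exactly the same argument (with $H^1_0$ in place of $H^1$) gives $\|f\|_{\underline{H}^{-1}(\cu_n)} \le 3^n \|f\|_{\underline{L}^2(\cu_n)}$. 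Applying the first of these embeddings slice-by-slice to each component of $\nabla u(t,\cdot)$ and integrating yields
\begin{equation*}
\left\| \nabla u \right\|_{\underline{L}^2(I_n;\underline{\hat H}^{-1}(\cu_n))} \le 3^n \left\| \nabla u \right\|_{\underline{L}^2(\cud_n)}.
\end{equation*}

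Next I would handle the $\g$ term. Applying the embedding to $\g(t,\cdot)$ in each time slice and then using Cauchy--Schwarz in $t\in I_n$ (i.e., the embedding $\underline{L}^2(I_n) \hookrightarrow \underline{L}^1(I_n)$), I obtain
\begin{equation*}
\left\| \g \right\|_{\underline{L}^1(I_n;\underline{H}^{-1}(\cu_n))}
\le \left\| \g \right\|_{\underline{L}^2(I_n;\underline{H}^{-1}(\cu_n))}
\le 3^n \left\| \g \right\|_{\underline{L}^2(\cud_n)}.
\end{equation*}
Substituting both bounds into the estimate of Proposition~\ref{p.L2.weak} gives~\eqref{e.parab.poincare}.

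There is no real obstacle; the only thing to be careful about is that the factor $3^n$ matches the normalization in the definition of $\underline{H}^1(\cu_n)$, where the prefactor $|\cu_n|^{-1/d} = 3^{-n}$ on the $\underline{L}^2$ term is precisely what forces the $3^n$ on the dual side. The two-step passage $\underline{L}^1(I_n) \hookrightarrow \underline{L}^2(I_n)$ is free because these are normalized norms on a bounded interval.
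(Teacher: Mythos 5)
Your proof is correct and follows essentially the same route as the paper: both deduce the corollary from Proposition~\ref{p.L2.weak} together with the elementary scale-invariant embeddings $\|f\|_{\underline{\hat H}^{-1}(\cu_n)} \le C3^n \|f\|_{\underline{L}^2(\cu_n)}$, $\|f\|_{\underline{H}^{-1}(\cu_n)} \le C3^n \|f\|_{\underline{L}^2(\cu_n)}$, and $\underline{L}^1(I_n) \hookrightarrow \underline{L}^2(I_n)$. Your explicit verification of the factor $3^n$ from the normalization $|\cu_n|^{-1/d} = 3^{-n}$ is exactly the content the paper leaves implicit.
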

\begin{proof}
This follows from Proposition~\ref{p.L2.weak} and the inequalities
\begin{equation*} \label{}
\left\| v \right\|_{\underline{L}^2\left(I_n;\underline{H}^{-1}(\cu_n)\right)} 
\leq \left\| v \right\|_{\underline{L}^2\Ll(I_n;\underline{\hat H}^{-1}(\cu_n)\Rr)} \le
C3^n \left\| v \right\|_{\underline{L}^2(\cud_n)}. \qedhere
\end{equation*}
\end{proof}
\begin{remark}  
\label{r.lapl.lapl}
Recall from Remark~\ref{r.explicit.g} that in the statement of Corollary~\ref{c.parab.poincare}, we can take $\g = \nabla \Delta^{-1}_{\cu_n} \partial_t u$. Moreover, there exists $C(d) < \infty$ such that, for every $f \in L^2(I_n;H^{-1}(\cu_n))$,
\begin{equation*}  
C^{-1} \, \|f\|_{\un L^2(I_n;\un H^{-1}(\cu_n))} \le \|\nabla \Delta_{\cu_n}^{-1} f\|_{\un L^2(\cud_n)} \le C \|f\|_{\un L^2(I_n;\un H^{-1}(\cu_n))}.
\end{equation*}
Indeed, by the standard Poincar\'e inequality, the norm~$\left\| \cdot \right\|_{\underline{H}^{1}(\cu_n)}$ is equivalent to the norm $u \mapsto \|\nabla u\|_{\un L^2(\cu_n)}$ on $H^1_0(\cu_n)$, and moreover,
\begin{align*}  
& \sup \Ll\{ \fint_{\cud_n} f \phi \ : \ \phi \in L^2(I_n; H^1_0(\cu_n)), \ \|\nabla \phi\|_{L^2(\cud_n)} \le 1 \Rr\} \\
& \qquad = \sup \Ll\{ \fint_{\cud_n} \nabla \phi \cdot \nabla \Delta_{\cu_n}^{-1} f  \ : \ \phi \in L^2(I_n; H^1_0(\cu_n)), \ \|\nabla \phi\|_{L^2(\cud_n)} \le 1 \Rr\}  \\
& \qquad = \|\nabla \Delta_{\cu_n}^{-1} f\|_{\un L^2(\cud_n)}.
\end{align*}
In particular, on the right side of \eqref{e.parab.poincare}, we can replace the term
$
\left\| \g \right\|_{\underline{L}^2\left(\cud_n\right)}
$
with the quantity 
$
\|\partial_t u\|_{\un L^2(I_n;\un H^{-1}(\cu_n))}
$.
\end{remark}

The next proposition allows to obtain a control of the $H^{-1}_{\pa}(V)$ norm of a function from a knowledge of its spatial averages over large scales. 
For each $m,n \in \N$, $m \le n$, we set
\begin{equation}
\label{e.def.mclZ}
\mcl Z_m := \Ll[\Ll(3^{2m} \Z\Rr) \times \Ll(3^m \Z^d\Rr) \Rr]\cap \cud_n.
\end{equation}
Although $\mcl Z_m$ depends on $n$, we keep this dependence implicit in the notation, since its identity will be clear from the context. This is a parabolic version of the inequality which first appeared in~\cite[Proposition 6.1]{AKM1}.

\begin{proposition}[Multiscale Poincar\'e inequality]
\label{p.msp}
There exists $C(d,\Lambda) < \infty$ such that, for every $n \ge 1$ and $f \in L^2(\cud_n)$,
\begin{equation*}  
\|f\|_{\un {\hat H}^{-1}_{\pa}(\cud_n)} 
\leq
C \|f\|_{\un L^2(\cud_n)}  + C \sum_{m = 0}^{n-1} 3^{m} \Ll( |\mcl Z_m|^{-1} \sum_{z \in \mcl Z_m} |(f)_{z + \cud_m}|^2 \Rr)^\frac 1 2. 
\end{equation*}
\end{proposition}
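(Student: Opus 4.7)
By duality, the plan is to fix a test function $\phi \in H^1_\pa(\cud_n)$ with $\|\phi\|_{\un H^1_\pa(\cud_n)} \le 1$ and bound $\fint_{\cud_n} f\phi$ by the right-hand side of the claimed inequality. Adapting to the parabolic setting the strategy behind~\cite[Proposition~6.1]{AKM1}, the main device is a telescoping along triadic scales: for each $m \in \{0,\ldots,n\}$, the lattice $\mcl Z_m$ indexes a (essentially disjoint) partition of $\cud_n$ into translates of $\cud_m$, and each element at scale $m+1$ decomposes into exactly $3^{d+2}$ elements at scale $m$. Writing $[\phi]_m$ for the function on $\cud_n$ that takes the constant value $(\phi)_{z+\cud_m}$ on each $z+\cud_m$ with $z \in \mcl Z_m$, the starting point is the decomposition
\begin{equation*}
\phi = [\phi]_n + \sum_{m=0}^{n-1}\bigl([\phi]_m - [\phi]_{m+1}\bigr) + \bigl(\phi - [\phi]_0\bigr).
\end{equation*}

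The two extreme pieces are routine. The constant $[\phi]_n = (\phi)_{\cud_n}$ integrated against $f$ contributes at most $|(\phi)_{\cud_n}|\,|(f)_{\cud_n}| \le \|f\|_{\un L^2(\cud_n)}$ by Cauchy--Schwarz. To handle $\phi - [\phi]_0$, I would fix once and for all the vector field $\g := \nabla \Delta_{\cu_n}^{-1} \partial_t \phi$, whose $\un L^2(\cud_n)$-norm is comparable to $\|\partial_t\phi\|_{\un L^2(I_n;\un H^{-1}(\cu_n))}$ by~\rref{lapl.lapl}; its restriction to any subcube $z + \cud_0$ still satisfies $\partial_t\phi = \nabla\cdot\g$ there by duality with test functions extended by zero, so \cref{parab.poincare} applies on each of the $|\mcl Z_0|$ unit parabolic cubes. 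Summing the squared bounds and using the additivity of normalized $L^2$ energies over the partition yields $\|\phi - [\phi]_0\|_{\un L^2(\cud_n)} \le C\|\phi\|_{\un H^1_\pa(\cud_n)} \le C$, so Cauchy--Schwarz dominates this piece by $C\|f\|_{\un L^2(\cud_n)}$.

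The heart of the argument is the middle telescoping sum. On each scale-$m$ cube $z + \cud_m$, the difference $[\phi]_m - [\phi]_{m+1}$ equals the constant $(\phi)_{z+\cud_m} - (\phi)_{z'+\cud_{m+1}}$, where $z' \in \mcl Z_{m+1}$ is the unique parent of $z$; Cauchy--Schwarz in $z \in \mcl Z_m$ therefore factors the integral as $\bigl(|\mcl Z_m|^{-1}\sum_z|(f)_{z+\cud_m}|^2\bigr)^{1/2}$ times an oscillation term $\bigl(|\mcl Z_m|^{-1}\sum_z|(\phi)_{z+\cud_m}-(\phi)_{z'+\cud_{m+1}}|^2\bigr)^{1/2}$. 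Using Jensen's inequality on each parent cube and the identity $|\mcl Z_m| = 3^{d+2}|\mcl Z_{m+1}|$, the oscillation term is bounded by $\bigl(|\mcl Z_{m+1}|^{-1}\sum_{z'}\fint_{z'+\cud_{m+1}}(\phi-(\phi)_{z'+\cud_{m+1}})^2\bigr)^{1/2}$. Applying \cref{parab.poincare} on each $z'+\cud_{m+1}$ with the restriction of the same global $\g$ gives, pointwise in $z'$, the bound $C 3^{2(m+1)}\bigl(\|\nabla\phi\|^2_{\un L^2(z'+\cud_{m+1})}+\|\g\|^2_{\un L^2(z'+\cud_{m+1})}\bigr)$, and summing in $z'$ and dividing by $|\mcl Z_{m+1}|$ yields $C 3^{2m}\|\phi\|^2_{\un H^1_\pa(\cud_n)} \le C 3^{2m}$. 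Taking square roots produces precisely the factor $C 3^m$ appearing in the statement, and summing over $m$ finishes the proof.

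The one subtlety worth flagging is the replacement of the abstract dual norm $\|\partial_t\phi\|_{\un L^2(I_n;\un H^{-1}(\cu_n))}$ by an explicit $L^2$ vector field $\g$ with $\nabla\cdot\g = \partial_t\phi$: this is what makes the per-scale applications of \cref{parab.poincare} compatible with additive summation over the triadic partition, with no loss in constants, and it is the only place where \rref{lapl.lapl} is genuinely needed.
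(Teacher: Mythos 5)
Your overall strategy is the same as the paper's: test against $\phi$ with $\|\phi\|_{\un H^1_\pa(\cud_n)}\le 1$, telescope over the triadic partition indexed by $\mcl Z_m$, and control the oscillation of $\phi$ between successive scales by Jensen plus the per-cube parabolic Poincar\'e inequality (Corollary~\ref{c.parab.poincare}). Your martingale-difference decomposition $[\phi]_m-[\phi]_{m+1}$ is just a repackaging of the paper's recursive identity \eqref{e.iter.msp}. One genuine (minor) difference is your handling of the time-derivative term: by fixing a single global flux $\g=\nabla\Delta_{\cu_n}^{-1}\partial_t\phi$ and restricting it to every subcylinder (legitimate, since spatially compactly supported test functions on a subcylinder extend by zero), the local energies $\|\g\|^2_{L^2(z'+\cud_{m+1})}$ sum additively to $\|\g\|^2_{L^2(\cud_n)}\le C\|\partial_t\phi\|^2_{L^2(I_n;H^{-1}(\cu_n))}$ via Remark~\ref{r.lapl.lapl}. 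This neatly sidesteps the duality argument the paper uses in its Step 1 to show that the \emph{local} $\un H^{-1}$ norms of $\partial_t\phi$ over the partition are summable; it is a small but real streamlining.

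The one step that fails as written is your treatment of the coarsest piece $[\phi]_n=(\phi)_{\cud_n}$. The norm $\|\cdot\|_{\un H^1_\pa(\cud_n)}$ is scale-invariant: it controls only $3^{-n}\|\phi\|_{\un L^2(\cud_n)}$, so from $\|\phi\|_{\un H^1_\pa(\cud_n)}\le 1$ you only get $|(\phi)_{\cud_n}|\le 3^n$, not $|(\phi)_{\cud_n}|\le 1$; hence the claimed bound
\begin{equation*}
\Ll|(\phi)_{\cud_n}\Rr|\,\Ll|(f)_{\cud_n}\Rr| \le \|f\|_{\un L^2(\cud_n)}
\end{equation*}
does not follow by Cauchy--Schwarz. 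The fix is exactly the paper's \eqref{e.msp.firstbit}: apply Jensen's inequality to write
\begin{equation*}
\Ll|(f)_{\cud_n}\Rr| \le \Ll(|\mcl Z_{n-1}|^{-1}\sum_{z\in\mcl Z_{n-1}}\Ll|(f)_{z+\cud_{n-1}}\Rr|^2\Rr)^{\frac12},
\end{equation*}
so that the factor $3^n=3\cdot 3^{n-1}$ is absorbed into the $m=n-1$ term of the sum on the right side of the statement. With this correction, your argument goes through and yields the proposition.
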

\begin{proof}[Proof of Proposition~\ref{p.msp}]
Recalling \eqref{e.def.H-1par}, we fix $g \in H^{1}_{\pa}(\cud_n)$ such that 
\begin{equation}  
\label{e.normalize.g}
\|g\|_{\un H^{1}_{\pa}(\cud_n)} \le 1,
\end{equation}
and decompose the proof into two steps.

\smallskip

\emph{Step 1.} In this step, we show that there exists a constant $C(d) < \infty$ such that, for every $m \in \{0,\ldots,n\}$,
\begin{equation}
\label{e.local.Poincare}
\sum_{z \in \mcl Z_{m}} \int_{z + \cud_m} |g - (g)_{z + \cud_m}|^2 \le C |\cud_n| \, 3^{2m}.
\end{equation}
By Corollary~\ref{c.parab.poincare} and Remark~\ref{r.lapl.lapl}, the left side above is bounded by
\begin{equation*}  
C 3^{2m} \, |\cud_m| \sum_{z \in \mcl Z_m} \Ll( \|\nabla g\|_{\un L^2(z + \cud_m)}^2 + \|\partial_t g\|_{\un L^2(z_0+I_m,\un H^{-1}(z' + \cu_m))}^2 \Rr) ,
\end{equation*}
where we write $z = (z_0,z') \in \Z\times \Zd$. The contribution of the first term is easily estimated, since by \eqref{e.normalize.g},
\begin{equation*}  
|\cud_m| \sum_{z \in \mcl Z_m}  \|\nabla g\|_{\un L^2(z + \cud_m)}^2 = \int_{\cud_n} |\nabla g|^2 \le |\cud_n|.
\end{equation*}
For the second term, we write
\begin{align*}  
& \sum_{z \in \mcl Z_m} \|\partial_t g\|_{\un L^2(z_0+I_m,\un H^{-1}(z' + \cu_m))}^2
 \\
&  = \sup \Ll\{ \sum_{z \in \mcl Z_m} \Ll(\fint_{z + \cud_m} \phi_z \, \partial_t g\Rr)^2 \ : \ \phi_z \in L^2(z_0 + I_m; H^1_0(z'+\cu_m)), \ \|\nabla \phi_z\|_{\un L^2(z + \cud_m)} \le 1 \Rr\} .
\end{align*}
For $\phi_z$ satisfying the conditions in the supremum above, we have
\begin{multline*}  
|\cud_n|^{-1} \sum_{z \in \mcl Z_m} \Ll(\int_{z + \cud_m} \phi_z \, \partial_t g\Rr)^2 = \fint_{\cud_n} \partial_t g \Ll( \sum_{z \in \mcl Z_m} \phi_z \int_{z + \cud_n} \phi_z \partial_t g \Rr) 
\\
\le \|\partial_t g\|_{\un L^2(I_n; \un H^{-1}(\cu_n))} \Ll\| \sum_{z \in \mcl Z_m}  \phi_z \int_{z + \cud_n} \phi_z \partial_t g \Rr\|_{\un L^2(I_n;\un H^1(\cu_n))}.
\end{multline*}
Notice that, by \eqref{e.normalize.g}, the first term on the right side of the previous inequality is bounded by~$1$. 
Moreover, by the normalization of the functions $\phi_z$,
\begin{align*}  
\Ll\| \sum_{z \in \mcl Z_m}  \phi_z \int_{z + \cud_n} \phi_z \partial_t g \Rr\|_{\un L^2(I_n;\un H^1(\cu_n))}^2 
&\le C \Ll\| \sum_{z \in \mcl Z_m}  \nabla \phi_z  \int_{z + \cud_n} \phi_z \partial_t g\Rr\|_{\un L^2(\cud_n)}^2 \\
& \le C \frac{|\cud_m|}{|\cud_n|} \sum_{z \in \mcl Z_m} \Ll(\int_{z + \cud_m} \phi_z \, \partial_t g\Rr)^2.
\end{align*}
Combining the last three displays, we arrive at 
\begin{equation*}  
\sum_{z \in \mcl Z_m} \|\partial_t g\|_{\un L^2(z_0+I_m,\un H^{-1}(z' + \cu_m))}^2 \le C \frac {|\cud_n|}{|\cud_m|},
\end{equation*}
and this completes the proof of \eqref{e.local.Poincare}.

\smallskip

\emph{Step 2.}
We aim to control $\fint_{\cud_n} fg$, which we decompose into
\begin{equation}  
\label{e.decomp.intfg}
\fint_{\cud_n} fg = \fint_{\cud_n} f (g - (g)_{\cud_n}) + (f)_{\cud_n} \, (g)_{\cud_n}.
\end{equation}
By the definition of the $H^{1}_{\pa}$ norm in \eqref{e.def.Xnorm}, we have $(g)_{\cud_n} \le 3^n$, and therefore, by Jensen's inequality,
\begin{equation}  
\label{e.msp.firstbit}
(f)_{\cud_n} \, (g)_{\cud_n} \le 3^{n} \Ll|(f)_{\cud_n}\Rr| \le C 3^{n-1} \Ll( |\mcl Z_{n-1}|^{-1} \sum_{z \in \mcl Z_{n-1}} |(f)_{z + \cud_{n-1}}|^2 \Rr)^\frac 1 2.
\end{equation}
We then proceed to decompose the first integral on the right side of \eqref{e.decomp.intfg} recursively. For every $m \in \{0,\ldots, n-1\}$ and $z\in \mcl Z_{m+1}$, we have
\begin{multline}
\label{e.iter.msp}
\int_{z+\cud_{m+1}} f \cdot \left( g - \left( g\right)_{z+\cud_{m+1}} \right) 
= \sum_{y\in \mcl Z_{m} \cap (z+\cud_{m+1})} \int_{y+\cud_{m}} f \cdot \left( g - \left( g\right)_{y+\cud_{m}} \right) \\
+ |\cud_{m}|\sum_{y\in \mcl Z_{m} \cap (z+\cud_{m+1})} \left(  \left( g\right)_{y+\cud_{m} }  -  \left( g\right)_{z+\cud_{m+1}} \right) \cdot \left( f \right)_{y+\cud_{m}}.
\end{multline}
Summing over $z \in \mcl Z_{m+1}$ and using H\"older's inequality, we get
\begin{multline*}  
\sum_{z\in \mcl Z_{m+1}} \int_{z+\cud_{m+1}} f \cdot \left( g - \left( g\right)_{z+\cud_{m+1}} \right)  \leq \sum_{y\in \mcl Z_{m}} \int_{y+\cud_{m}} f \cdot \left( g - \left( g\right)_{y+\cud_{m}} \right) \\
+|\cud_{m}| \Bigg(\sum_{\substack{z \in \mcl Z_{m+1} \\ y \in \mcl Z_{m} \cap (z+\cud_{m+1})}} \Ll|   \left( g\right)_{y+\cud_{m} } -  \left( g\right)_{z+\cud_{m+1}}\Rr|^2\Bigg)^{\frac 1 2} \Ll( \sum_{y \in \mcl Z_{m}}\left| \left( f\right)_{y+\cud_{m} } \right|^2  \Rr)^\frac 1 2.
\end{multline*}
By Jensen's inequality, we have, for each $z \in \mcl Z_{m+1}$,
\begin{equation*}  
\sum_{y \in \mcl Z_{m} \cap (z+\cud_{m+1})} \Ll|   \left( g\right)_{y+\cud_{m} } -  \left( g\right)_{z+\cud_{m+1}}\Rr|^2 \le \sum_{y \in \mcl Z_{m} \cap (z+\cud_{m+1})} \fint_{y + \cud_m} \Ll| g -  \left( g\right)_{z+\cud_{m+1}}\Rr|^2,
\end{equation*}
and thus, by \eqref{e.local.Poincare}, 
\begin{equation*}  
\sum_{\substack{z \in \mcl Z_{m+1} \\ y \in \mcl Z_m \cap (z + \cud_{m+1})}} \Ll| (g)_{y + \cud_m} - (g)_{z + \cud_{m+1}} \Rr|^2 \le C \, \frac{|\cud_n|}{|\cud_{m}|} \, 3^{2m}.
\end{equation*}
Using also that $|\mcl Z_m| = |\cud_n|/|\cud_m|$ and combining with \eqref{e.iter.msp}, we obtain
\begin{multline*}
\sum_{z\in \mcl Z_{m+1}} \int_{z+\cud_{m+1}} f \cdot \left( g - \left( g\right)_{z+\cud_{m+1}} \right) \\ \leq \sum_{y\in \mcl Z_{m}} \int_{y+\cud_{m}} f \cdot \left( g - \left( g\right)_{y+\cud_{m}} \right)
+C\, |\cud_n| \, 3^{m} \left( |\mcl Z_{m}|^{-1}\sum_{y\in \mcl Z_{m}}\left| \left( f\right)_{y+\cud_{m} } \right|^2 \right)^{\frac12}.
\end{multline*}
Summing over $m \in \{0,\ldots,n-1\}$ yields
\begin{multline*}
\int_{\cud_n} f \Ll(g - (g)_{\cud_n}\Rr)\\
 \leq \sum_{z\in \mcl Z_0} \int_{z+\cud_0} f \cdot \left( g - \left( g\right)_{z+\cud_{0}} \right) + C \, \left| \cud_m\right| \, \sum_{m=0}^{n-1} 3^{ m}    \left( \left| \mcl Z_m \right|^{-1} \sum_{y\in \mcl Z_m}\left| \left( f \right)_{y+\cud_{m} } \right|^2 \right)^{\frac12}.
\end{multline*}
Hence, by H\"older's inequality and \eqref{e.local.Poincare},
\begin{multline*}
\int_{\cud_n} f \Ll(g - (g)_{\cud_n}\Rr)  \\
\le C|\cud_n|^{\frac1{2}} \left(  \int_{\cud_n} \left| f\right|^2 \right)^{\frac12}  
+ C \, \left| \cud_n\right| \, \sum_{m=0}^{n-1} 3^{m}    \left( \left| \mcl Z_m \right|^{-1} \sum_{y\in \mcl Z_m}\left| \left( f \right)_{y+\cud_{m} } \right|^2 \right)^{\frac12}.
\end{multline*}
Dividing by $|\cud_n|$ and combining with \eqref{e.decomp.intfg}-\eqref{e.msp.firstbit}, we obtain
\begin{equation*}  
\fint_{\cud_n} fg \le 
C  \|f\|_{\un L^2(\cud_n)} 
+ C\sum_{m=0}^{n} 3^{m}    \left( \left| \mcl Z_m \right|^{-1} \sum_{y\in \mcl Z_m}\left| \left( f \right)_{y+\cud_{m} } \right|^2 \right)^{\frac12}.
\end{equation*}
Taking the supremum over all $g$ satisfying \eqref{e.normalize.g} yields the result.
\end{proof}

The name ``multiscale Poincar\'e inequality'' for Proposition~\ref{p.msp} is best understood in conjunction with the following statement.
\begin{proposition}
\label{p.integrate}
There exists a constant $C(d) < \infty$ such that, for every $n \in \N$, $u \in H^1_{\pa}(\cud_{n+1})$ and $\g \in L^2(\cud_{n+1};\Rd)$ satisfying $\partial_t u = \nabla \cdot \g$, we have
\begin{equation*}  
\|u - (u)_{\cud_n}\|_{\un L^2(\cud_n)} \le C \|\nabla u\|_{\un {\hat H}^{-1}_{\pa}(\cud_{n+1})} + C \|\g\|_{\un {\hat H}^{-1}_{\pa}(\cud_{n+1})}.
\end{equation*}
\end{proposition}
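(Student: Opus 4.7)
The plan is to prove the estimate by a duality argument. Writing
\begin{equation*}
\|u - (u)_{\cud_n}\|_{\un L^2(\cud_n)} = \sup \Ll\{ \fint_{\cud_n} u\phi \,:\, \phi \in L^2(\cud_n),\ \|\phi\|_{\un L^2(\cud_n)} \le 1,\ (\phi)_{\cud_n} = 0 \Rr\},
\end{equation*}
I would fix such a $\phi$ and extend it by zero to $\tilde \phi$ on $\cud_{n+1}$; then $(\tilde \phi)_{\cud_{n+1}} = 0$. The goal is to realize $\tilde \phi$ as a parabolic divergence on $\cud_{n+1}$ and exploit the relation $\partial_t u = \nabla \cdot \g$ through integration by parts.

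More precisely, the key step is to construct auxiliary fields $\Psi \in H^1_\pa(\cud_{n+1})$ and $\H \in L^2(\cud_{n+1};\Rd)$ satisfying
\begin{equation*}
\tilde \phi = \partial_t \Psi + \nabla \cdot \H \quad \text{on } \cud_{n+1},
\end{equation*}
with $\Psi$ vanishing at $t = T_{\pm}^{n+1}$ and on the lateral boundary $\partial \cu_{n+1}$, $\H \cdot \mathbf{n} = 0$ on the lateral boundary, and the normalized bound $\|\nabla \Psi\|_{\un H^1_\pa(\cud_{n+1})} + \|\H\|_{\un H^1_\pa(\cud_{n+1})} \le C$. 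Granted this, integration by parts in $t$ (using that $\Psi$ vanishes at the endpoints of $I_{n+1}$) and in $x$ (using $\H \cdot \mathbf{n} = 0$ laterally), combined with $\partial_t u = \nabla \cdot \g$, yields
\begin{equation*}
\fint_{\cud_{n+1}} u \tilde \phi = \fint_{\cud_{n+1}} \g \cdot \nabla \Psi - \fint_{\cud_{n+1}} \nabla u \cdot \H,
\end{equation*}
which is bounded by $C\bigl(\|\g\|_{\un \hat H^{-1}_\pa(\cud_{n+1})} + \|\nabla u\|_{\un \hat H^{-1}_\pa(\cud_{n+1})}\bigr)$ directly by the definition of the $\hat H^{-1}_\pa$ dual norm. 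Rescaling by the dimensional factor $|\cud_{n+1}|/|\cud_n| = 3^{2+d}$ and taking the supremum over $\phi$ then completes the argument.

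The main obstacle is the construction of such a pair $(\Psi, \H)$ with all four boundary/regularity requirements simultaneously. A natural first attempt is to solve the parabolic problem $\partial_t \Psi - \Delta \Psi = \tilde \phi$ on $\cud_{n+1}$ with zero initial condition and zero Dirichlet lateral condition, and to set $\H := -\nabla \Psi$; parabolic regularity then delivers $\nabla \Psi \in H^1_\pa(\cud_{n+1})$ with the required normalized bound via $\|\tilde \phi\|_{\un L^2(\cud_{n+1})} \le C$. However, two compatibility defects remain: $\Psi$ does not vanish at $t = T_+^{n+1}$, and $\H \cdot \mathbf{n}$ need not vanish laterally. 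These are repaired by subtracting divergence-free correction fields obtained from stationary Bogovskii- or Stokes-type problems, whose solvability relies on the mean-zero condition $\int_{\cud_{n+1}} \tilde \phi = 0$. An alternative route is to appeal to Proposition~\ref{p.L2.weak} on $\cud_n$ and then to control the mixed-norm quantities $\|\nabla u\|_{\un L^2(I_n; \un \hat H^{-1}(\cu_n))}$ and $\|\g\|_{\un L^1(I_n; \un H^{-1}(\cu_n))}$ by $\|\nabla u\|_{\un \hat H^{-1}_\pa(\cud_{n+1})}$ and $\|\g\|_{\un \hat H^{-1}_\pa(\cud_{n+1})}$ respectively, via a time-mollification argument on the dual test functions that exploits the extra room in $I_{n+1} \setminus I_n$ to make the mollified functions valid $H^1_\pa(\cud_{n+1})$-test functions.
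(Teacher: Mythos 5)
Your duality reduction and your list of requirements on the pair $(\Psi,\mathbf{H})$ correctly identify what must be proved, but the construction of that pair is the entire content of the proposition, and your sketch of it does not go through. After solving $\partial_t\Psi-\Delta\Psi=\tilde\phi$ with zero initial and lateral Dirichlet data and setting $\mathbf{H}:=-\nabla\Psi$, the two defects you name cannot be removed by stationary Bogovskii- or Stokes-type solves. First, the slice-wise compatibility conditions fail: neither the final-time defect $b=\Psi(T_+,\cdot)$ nor the lateral flux $-\mathbf{n}\cdot\nabla\Psi$ has vanishing mean (respectively total flux) at each time, since only the full space-time mean of $\tilde\phi$ is zero, so the divergence problems you would need to solve slice by slice are not solvable. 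Second, and more seriously, any correction must itself lie in $H^1_\pa(\cud_{n+1})$, i.e.\ have time derivative in $L^2(I_{n+1};H^{-1}(\cu_{n+1}))$ with a bound by $\|\phi\|_{L^2}$; but the natural data for the lateral correction is the Neumann trace of $\nabla\Psi$, whose time derivative is only controlled as $\partial_t\nabla\Psi\in L^2(I;H^{-1})$, a space with no trace theory, so a time-slice application of a stationary extension operator gives no control of $\partial_t\mathbf{H}$. This is precisely the difficulty the paper's proof is organized to avoid: it never imposes boundary conditions on the auxiliary objects, but instead (i) shows by a telescoping-mollifier argument, writing $\psi_{m-1}-\psi_m=3^{-2m}\partial_t\h^\circ_m+3^{-m}\nabla\cdot\h'_m$ for explicit compactly supported kernels, that a cut-off version of $u$ minus a constant is small in $\hat H^{-1}_\pa$, and then (ii) runs your duality step with a \emph{space-periodic} heat equation (no lateral boundary) whose right-hand side is $\chi\tilde u$ with a space-time cutoff $\chi$ (no endpoint terms), using the $H^2$ estimate of Lemma~\ref{l.H2.heat} in place of your normalized bound and step (i) to absorb the cutoff errors $\tilde u\,\nabla\chi$. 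Since the statement is only an interior estimate, giving up exact boundary conditions in favor of periodicity plus cutoffs is both permissible and necessary.

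The alternative route you mention at the end is also not viable. The norms $\|\nabla u\|_{\un L^2(I_n;\un{\hat H}^{-1}(\cu_n))}$ and $\|\g\|_{\un L^1(I_n;\un H^{-1}(\cu_n))}$ appearing in Proposition~\ref{p.L2.weak} are genuinely \emph{stronger} than the parabolic norms $\|\cdot\|_{\un{\hat H}^{-1}_\pa(\cud_{n+1})}$, because the test class for the latter carries the extra constraint $\partial_t w\in L^2(I;H^{-1})$; time-mollification of the dual test functions does not repair this, since the error pairs $\nabla u$ (or $\g$) against $\Phi-\Phi_\ep$, which is not small in $L^2(I;H^1)$ when $\Phi$ has no time regularity. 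Concretely, take $u\equiv 0$ and $\g(t,x)=\cos(Nt)\,G(x)$ with $G$ smooth, compactly supported and divergence-free: the constraint $\partial_t u=\nabla\cdot\g$ holds, $\|\g\|_{\un{\hat H}^{-1}_\pa(\cud_{n+1})}=O(N^{-1})$ after an integration by parts in time, while $\|\g\|_{\un L^1(I_n;\un H^{-1}(\cu_n))}$ remains of order one, so the comparison your alternative requires is false.
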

\begin{remark}
\label{r.change.interior}
It is clear that the proof of Proposition~\ref{p.integrate} can be adapted to show that for every $r > 0$, there exists a constant $C(r,d) < \infty$ such that, for every $n \in \N$, $u \in H^1_{\pa}(\cud_{n})$ and $\g \in L^2(\cud_{n};\Rd)$ satisfying $\partial_t u = \nabla \cdot \g$, we have
\begin{equation*}  
\|u - (u)_{\cud_{n-r}}\|_{\un L^2(\cud_{n-r})} \le C \|\nabla u\|_{\un {\hat H}^{-1}_{\pa}(\cud_{n})} + C \|\g\|_{\un {\hat H}^{-1}_{\pa}(\cud_{n})}.
\end{equation*}
Although one can expect that the estimate above still holds for $r = 0$, we leave it as an open question here, and content ourselves with an interior estimate.
\end{remark}
Combining Propositions~\ref{p.msp} and \ref{p.integrate} allows to estimate the (interior) $L^2$ oscillation of $u$ in terms of spatial averages of $\nabla u$ and $\g$ (see also \cite[Proposition~6.1]{AKM1}). The estimate yields better interior information than the ``single-scale'' Poincar\'e inequality provided by Proposition~\ref{p.L2.weak} as soon as the spatial averages of $\nabla u$ and $\g$ display non-trivial cancellations over large scales. This feature will be crucial to our subsequent arguments. 

\smallskip

Before turning to the proof of Proposition~\ref{p.integrate}, we recall the classical $H^2$ estimate for solutions of the heat equation. For simplicity, we state it using periodic boundary conditions in the space variable. We denote the corresponding function spaces by $H^1_\per(U)$, $H^1_{\pa,\per}(I\times U)$, etc. 

\begin{lemma}[$H^2$ estimate for the Cauchy problem]
\label{l.H2.heat}
There exists $C(d) < \infty$ such that, for every $n \in \N$, if $f \in L^2(\cud_n)$ and $u \in H^{1}_{\pa,\per}(\cud_n)$ solves the Cauchy problem
\begin{equation}
\label{e.H2.heat}
\Ll\{
\begin{aligned}
& (\partial_t-\Delta) u = f  \quad & \mbox{in}  \ & \cud_n, \\
& u = 0 \quad & \mbox{on} \ & \Ll\{-\frac{3^{2n}}{2}\Rr\}\times \cu_n,
\end{aligned}
\Rr.
\end{equation}
then $\nabla u \in H^1_{\pa,\per}(\cud_n)$ and 
\begin{equation}  \label{e.bndH2}
3^{-n} \|u\|_{\un H^1_{\pa,\per}(\cud_n)} + \|\nabla u\|_{\un {H}^1_{\pa,\per}(\cud_n)} \le C \|f\|_{\un L^2(\cud_n)}.
\end{equation}
\end{lemma}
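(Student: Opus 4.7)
\medskip

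\noindent \textbf{Proof plan for Lemma~\ref{l.H2.heat}.} Since the coefficient is $-\Delta$ with periodic boundary conditions in space and zero initial data, the natural strategy is to run two energy estimates and then feed the result into the equation. Throughout, I will use that periodicity allows integration by parts in $x$ with no boundary contributions, and that the zero initial condition at $t = -3^{2n}/2$ allows integration by parts in $t$ with no contribution from the left endpoint.

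\emph{Step 1: basic energy estimate.} The spatial average $\bar u(t) := \fint_{\cu_n} u(t,\cdot)$ satisfies the ODE $\bar u'(t) = \bar f(t)$ with $\bar u(-3^{2n}/2) = 0$, from which one reads off $\|\bar u\|_{L^\infty(I_n)} \le C 3^n \|f\|_{\un L^2(\cud_n)}$. Writing $u_0 := u - \bar u$, the function $u_0$ has zero spatial mean on each time slice, so the periodic Poincar\'e inequality gives $\|u_0(t,\cdot)\|_{L^2(\cu_n)} \le C 3^n \|\nabla u(t,\cdot)\|_{L^2(\cu_n)}$. Testing the equation for $u_0$ against $u_0$, integrating over $t$ and using the zero initial condition yields
\begin{equation*}
\|\nabla u\|_{\un L^2(\cud_n)} \le C 3^n \|f\|_{\un L^2(\cud_n)}, \qquad \|u\|_{\un L^2(\cud_n)} \le C 3^{2n}\|f\|_{\un L^2(\cud_n)}.
\end{equation*}

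\emph{Step 2: $H^2$ energy estimate.} Now I test the equation against $-\Delta u$. The parabolic term becomes $-\fint_{\cud_n}(\partial_t u) \Delta u = \fint_{\cud_n} \partial_t \nabla u \cdot \nabla u = \tfrac{1}{2}\fint_{\cu_n} |\nabla u(3^{2n}/2,\cdot)|^2 \ge 0$ thanks to spatial periodicity and zero initial data. Absorbing the right side $-\fint_{\cud_n} f \Delta u$ by Cauchy--Schwarz, I obtain $\|\Delta u\|_{\un L^2(\cud_n)} \le \|f\|_{\un L^2(\cud_n)}$. On the periodic torus, elliptic regularity (directly verified by Fourier series, i.e.\ $\sum_k |k|^4 |\hat u_k(t)|^2 = \sum_k |\,|k|^2 \hat u_k(t)|^2$ for $k\ne 0$, plus a trivial argument for the $k=0$ mode which satisfies Step~1) promotes this to a full $H^2$ bound $\|\nabla^2 u\|_{\un L^2(\cud_n)} \le C \|f\|_{\un L^2(\cud_n)}$. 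Using the equation pointwise in $L^2$ then gives $\|\partial_t u\|_{\un L^2(\cud_n)} \le C \|f\|_{\un L^2(\cud_n)}$.

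\emph{Step 3: assembling the parabolic norm.} It remains to convert the bounds of Steps~1--2 into the norm on the left-hand side of~\eqref{e.bndH2}. Unpacking the definitions~\eqref{e.def.nL}--\eqref{e.def.Xnorm} and using $|\cu_n|^{1/d} = 3^n$, the spatial part $\|\nabla u\|_{\un L^2(I_n;\un H^1(\cu_n))} = 3^{-n}\|\nabla u\|_{\un L^2(\cud_n)} + \|\nabla^2 u\|_{\un L^2(\cud_n)}$ is controlled by Steps~1--2. For the time part, writing $\nabla \partial_t u$ in $\un H^{-1}(\cu_n)$ as a duality against $H^1_0(\cu_n)$ test functions $\phi$ with $\|\nabla \phi\|_{\un L^2(\cu_n)}\le 1$ and integrating by parts in $x$ gives
\begin{equation*}
\|\partial_t \nabla u(t,\cdot)\|_{\un H^{-1}(\cu_n)} \le C \|\partial_t u(t,\cdot)\|_{\un L^2(\cu_n)},
\end{equation*}
so Step~2 yields $\|\partial_t \nabla u\|_{\un L^2(I_n;\un H^{-1}(\cu_n))} \le C \|f\|_{\un L^2(\cud_n)}$. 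An analogous computation (now losing a factor $3^n$ because test functions in $\un H^1$ are only bounded in $\un L^2$ by $3^n$) handles the $3^{-n} \|u\|_{\un H^1_{\pa,\per}(\cud_n)}$ piece.

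The only real subtlety is Step~1, where one cannot naively use Poincar\'e for $u$ itself (it is merely periodic, not mean-zero), so the spatial-mean/fluctuation split is essential; once that is in hand, the remaining steps are standard Hilbert-space computations on the flat torus and the bookkeeping of the $3^n$ factors through the normalized norm~\eqref{e.def.Xnorm} is the main thing to carry out carefully.
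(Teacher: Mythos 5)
Your proof is correct and follows essentially the same route as the paper's: a basic energy estimate, testing against $-\Delta u$ together with the identity $\|\nabla^2 u\|_{L^2} = \|\Delta u\|_{L^2}$ in the periodic setting, recovering $\partial_t u$ from the equation, and a duality bound for $\partial_t \nabla u$; the paper merely rescales to $n=0$ first and absorbs via $\sup_s \|u(s,\cdot)\|_{L^2}$ rather than your mean/fluctuation split with the periodic Poincar\'e inequality. One small slip: the intermediate bound should read $\|\bar u\|_{L^\infty(I_n)} \le C 3^{2n} \|f\|_{\un L^2(\cud_n)}$ rather than $C3^n\|f\|_{\un L^2(\cud_n)}$ (take $f$ constant to see this), but this does not affect your stated Step 1 conclusions or the remainder of the argument.
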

\begin{proof}
By scaling, it suffices to prove the result for $n = 0$. For each $s \in I_0$, we test equation~\eqref{e.H2.heat} against the function $(t,x) \mapsto \1_{t < s} u(t,x)$ and get
\begin{equation*}  
\int_{\Ll( -\frac 1 2, s \Rr) \times \cu_0} \Ll( u \partial_t u + |\nabla u|^2 \Rr) = \int_{\Ll( -\frac 1 2, s \Rr) \times \cu_0} u f,
\end{equation*}
which implies
\begin{equation*}  
\frac 1 2 \|u(s,\cdot)\|_{L^2(\cu_0)}^2 + \int_{\Ll( -\frac 1 2, s \Rr) \times \cu_0} |\nabla u|^2 \le \|u\|_{L^2(\cud_0)} \, \|f\|_{L^2(\cud_0)}. 
\end{equation*}
Taking the supremum over $s \in I_0$, observing that 
\begin{equation*}  
\sup_{s \in I_0} \|u(s,\cdot)\|_{L^2(\cu_0)} \le \|u\|_{L^2(\cud_0)}
\end{equation*}
and using Young's inequality, we obtain
\begin{equation*}  
\|u\|_{L^2(\cud_0)} + \|\nabla u\|_{L^2(\cud_0)} \le C \|f\|_{L^2(\cud_0)}.
\end{equation*}
We now turn to the estimation of $\|\nabla^2 u\|_{L^2(\cud_0)}$. We first observe that by integration by parts, we have $\|\nabla^2 u\|_{L^2(\cud_0)} = \|\Delta u\|_{L^2(\cud_0)}$. Moreover, using \eqref{e.H2.heat}, we get
\begin{equation*}  
\|\Delta u\|_{L^2(\cud_0)}^2 =  \int_{\cud_0}\Delta u (f+\partial_t u) =\int_{\cud_0}f\Delta u  - \int_{\cud_0} \nabla u \cdot\partial_t\nabla u,
\end{equation*}
with 
\begin{equation*}  
 \int_{\cud_0} \nabla u \cdot\partial_t\nabla u = \frac 1 2 \Ll\|\nabla u \Ll( \tfrac 1 2, \cdot \Rr) \Rr\|_{L^2(\cu_0)}^2 \ge 0,
\end{equation*}
and therefore
\begin{equation}  
\label{e.bound.Deltau}
\|\Delta u\|_{L^2(\cu_0)} \le \|f\|_{L^2(\cu_0)}. 
\end{equation}
We also need bounds on the time derivatives of $u$ and $\nabla u$. Note that
\begin{align*}  
\|\partial_t\nabla u\|_{L^2(I_0;H^{-1}_\per(\cu_0))} 
& = \sup \Ll\{ \int_{\cud_0} \partial_t \nabla u \cdot  F \ : \ \|F\|_{L^2(I_0;H^1_\per(\cu_0))} \le 1 \Rr\} 
\\ 
& = \sup \Ll\{ \int_{\cud_0} \partial_t  u  \, \nabla \cdot F \ : \ \|F\|_{L^2(I_0;H^1_\per(\cu_0))} \le 1 \Rr\} \\
& \le \|\partial_t u\|_{L^2(\cud_0)},
\end{align*}
and we can estimate the $L^2$-norm of $\partial_t u$ using \eqref{e.H2.heat} and \eqref{e.bound.Deltau}:
\begin{equation*}
\|\partial_t u\|_{L^2(\cud_0)}=	\|f+\Delta u\|_{L^2(\cud_0)}\le 2 \| f \|_{L^2(\cud_0)}.
\end{equation*}
The obvious bound
\begin{equation*}  
\|\partial_t u\|_{L^2(I_0;H^{-1}_\per(\cu_0))} \le C \|\partial_t u\|_{L^2(\cud_0)}
\end{equation*}
completes the proof of \eqref{e.bndH2}.
\end{proof}

\begin{proof}[Proof of Proposition~\ref{p.integrate}]
By homogeneity, it suffices to show the result for $n = 0$. Let $\psi \in C^\infty_c(\R^{1+d})$ be a smooth function with compact support in $\cud_{-2}$ and such that $\int_{\R^{1+d}} \psi = 1$. We decompose the proof into three steps. 

\smallskip 

\emph{Step 1.} Let $\eta \in C^\infty_c(\cud_1)$ be a smooth function with compact support in $\cud_{3/4}$ and such that $\eta \equiv 1$ on $\cud_{1/2}$. In this step, we show that there exists a constant $C(d,\psi,\eta) < \infty$ such that, for every $u \in L^2(\cud_1)$,
\begin{equation}  
\label{e.stupid.estimate}
\|\eta\Ll(u - u \star \psi\Rr)\|_{\hat H^{-1}_\pa(\cud_{1})} \le C \Ll(\|\nabla u\|_{\hat H^{-1}_\pa(\cud_1)} + \|\g\|_{\hat H^{-1}_\pa(\cud_1)}\Rr),
\end{equation}
For each $n \in \N$, we set
\begin{equation*}  
\psi_n :=  3^{n(2+d)} \psi(3^{2n} t, 3^{n} x).
\end{equation*}
Since the function $\psi_{-1} - \psi_0$ is compactly supported and of mean zero, we can use e.g.\ \cite[Lemma~5.7]{AKMBook} (in $1+d$ dimensions) to rewrite it in the form
\begin{equation*}  
\psi_{-1} - \psi_0 = \partial_t \h^\circ + \nabla \cdot \h',
\end{equation*}
where $\h = (\h^\circ, \h') \in C^\infty_c(\R^{1+d} ; \R \times \Rd)$ is supported in $\cud_{-2}$ (with $\h^\circ$ taking values in $\R$ and $\h'$ in $\R^d$). 
For each $n \in \N$, we denote
\begin{equation*}  
\h_n(t,x) = (\h^\circ_n, \h'_n)(t,x) :=  3^{n(2+d)} \h(3^{2n} t, 3^{n} x),
\end{equation*}
so that
\begin{equation*}  
\psi_{n-1} - \psi_n = 3^{-2n}  \partial_t \h^\circ_n + 3^{-n} \, \nabla \cdot \h'_n.
\end{equation*}
Since $u \star \psi_n \to u$ in $L^2(\cud_{3/4})$, we can use the triangle inequality to bound
\begin{equation}  
\label{e.triangle.H-1}
\|\eta \Ll(u - u \star \psi\Rr)\|_{\hat H^{-1}_\pa(\cud_{1})} \le \sum_{n = 0}^{+\infty} \|\eta \Ll(u \star \psi_{n-1} - u \star \psi_n\Rr)\|_{\hat H^{-1}_\pa(\cud_{1})}.
\end{equation}
We next observe that, for every $z \in \cud_{3/4}$, 
\begin{align*}  
(u \star \psi_{n-1} - u \star \psi_n)(z) & = \int_{\cud_{1}} u(y) \Ll(3^{-2n}  \partial_t \h^\circ_n + 3^{-n} \, \nabla \cdot \h'_n\Rr)(z-y) \, dy \\
& = \int_{\cud_{1}} \Ll(3^{-2n} \partial_t u(y)  \h^\circ_n(z-y) + 3^{-n} \, \nabla u(y) \cdot \h'_n(z-y)\Rr) \, dy \\
& =  3^{-n} \int_{\cud_1}  \nabla u(y) \cdot \h'_n(z-y) \, dy  \\
& \qquad \qquad  \qquad- 3^{-2n} \int_{\cud_{1}}  \g(y)  \cdot \nabla \h^\circ_n(z-y) \, dy ,
\end{align*}
We fix $f \in H^1_{\pa}(\cud_{1})$, set $\td f := \eta f$, and compute
\begin{equation*}  
\int_{\cud_{1}} \eta (u \star \psi_{n-1} - u \star \psi_n) f =3^{-n} \int_{\cud_1} \nabla u \cdot (\td f \ast \h_n')\\
-  3^{-2n} \int_{\cud_1} \g \cdot (\td f \ast \nabla \h_n^\circ) .
\end{equation*}
One can check that there exists a constant $C(d, \psi,\h) < \infty$ such that
\begin{equation*}  
\|\td f\ast \h_n'\|_{H^1_\pa(\cud_1)} + 3^{-n} \|\td f\ast \nabla \h_n^\circ\|_{H^1_\pa(\cud_1)}  \le C  \|f\|_{H^1_\pa(\cud_1)}.
\end{equation*}
Summarizing, we have thus shown that
\begin{equation*}  
\|\eta \Ll(u \star \psi_{n-1} - u \star \psi_n\Rr)\|_{\hat H^{-1}_\pa(\cud_{1})} \le C 3^{-n} \Ll( \|\nabla u\|_{\hat H^{-1}_\pa(\cud_1)} + \|\g\|_{\hat H^{-1}_\pa(\cud_1)} \Rr) .
\end{equation*}
Summing over $n \in \N$ in \eqref{e.triangle.H-1}, we obtain \eqref{e.stupid.estimate}.

\smallskip

\emph{Step 2.} Define
\begin{equation*}  
c(u) := u \star \psi(0).
\end{equation*}
In this step, we show that there exists a constant $C(d,\psi,\eta) < \infty$ such that 
\begin{equation}
\label{e.cleanup.estimate}
\|\eta \Ll(u - c(u)\Rr)\|_{\hat H^{-1}_\pa(\cud_{1})} \le C \Ll(\|\nabla u\|_{\hat H^{-1}_\pa(\cud_1)} + \|\g\|_{\hat H^{-1}_\pa(\cud_1)}\Rr).
\end{equation}
This is an immediate consequence of the fact that there exists a constant $C(d,\psi) < \infty$ such that, for every $y,z \in \cud_{3/4}$, 
\begin{equation}  
\label{e.pointwise.est}
\Ll|u \star \psi(z) - u \star \psi(y)\Rr| \le C \Ll(\|\nabla u\|_{\hat H^{-1}_\pa(\cud_1)} + \|\g\|_{\hat H^{-1}_\pa(\cud_1)}\Rr).
\end{equation}
The proof of \eqref{e.pointwise.est} is very similar to the previous step, only simpler: we represent the function $\psi(\cdot - z) - \psi(\cdot - y)$ in the form
\begin{equation*}  
\partial_t \td \h^\circ + \nabla \cdot \td \h',
\end{equation*}
with $(\td \h^\circ, \td \h') \in C^\infty_c(\cud_1;\R\times \Rd)$, and then obtain \eqref{e.pointwise.est} thanks to an integration by parts.

\smallskip

\emph{Step 3.} 
For concision, we write 
\begin{equation*}  
\td u := u - c(u). 
\end{equation*}
Let $\chi \in C^\infty_c(\cud_1)$ be a smooth function with compact support in $\cud_{1/2}$ and such that $\chi \equiv 1$ on $\cud_0$. 
In this step, we show that there exists a constant $C(d, \psi,\eta,\chi) < \infty$ such that
\begin{equation} 
\label{e.thats.your.L2.bound}
\|\chi \td u \|_{L^2(\cud_1)} \le C  \Ll( \|\nabla u\|_{\hat H^{-1}_\pa(\cud_1)} + \|\g\|_{\hat H^{-1}_\pa(\cud_1)} \Rr) .
\end{equation}
Let $w \in H^1_{\pa,\per}(\cud_1)$ solve the Cauchy problem
\begin{equation}  
\label{e.L2.est.eq.v}
\Ll\{
\begin{aligned}
& (\partial_t - \Delta) w = \chi \td u  \quad & \mbox{in}  \ & \cud_1, \\
& w = 0 \quad & \mbox{on} \ & \Ll\{-\tfrac{9}{2}\Rr\}\times \cu_1.
\end{aligned}
\Rr.
\end{equation}
By Lemma~\ref{l.H2.heat}, there exists a constant $C(d) < \infty$ such that 
\begin{equation}  
\label{e.from.lemma.H2heat}
  \| w\|_{H^1_{\pa,\per}(\cud_1)} + \|\nabla w\|_{H^1_{\pa,\per}(\cud_1)} \le C \|\chi \td u\|_{L^2(\cud_1)}.
\end{equation}
Testing the equation \eqref{e.L2.est.eq.v} against $\chi \td u$ and integrating by parts gives
\begin{align*}  
\|\chi \td u\|_{L^2(\cud_1)}^2 & = \int_{\cud_1} \Ll( \nabla w \cdot \nabla \Ll( \chi \td u \Rr) - w \,  \partial_t \Ll( \chi \td u \Rr)  \Rr) \\
& = \int_{\cud_1} \Ll(\nabla w \cdot \nabla \Ll( \chi \td u \Rr) + \nabla \Ll( \chi w \Rr) \cdot \g - \partial_t \chi \, w \td u  \Rr) \\
& \le \|\nabla w\|_{H^1_{\per,\pa(\cud_1)}} \Ll( \|\nabla u\|_{\hat H^{-1}_\pa(\cud_1)} + \|\td u \nabla \chi\|_{\hat H^{-1}_\pa(\cud_1)} + \|\g\|_{\hat H^{-1}_\pa(\cud_1)} \Rr)  \\
& \qquad + \|w\|_{H^1_{\per,\pa(\cud_1)}} \Ll( \|\g\|_{\hat H^{-1}_\pa(\cud_1)} + \|\td u \nabla \chi\|_{\hat H^{-1}_\pa(\cud_1)} \Rr) .
\end{align*}
Using the result of the previous step and \eqref{e.from.lemma.H2heat}, we obtain \eqref{e.thats.your.L2.bound}. This completes the proof of Proposition~\ref{p.integrate}, since
\begin{equation*}  
\|u - (u)_{\cud_0}\|_{L^2(\cud_0)}  \le \|u - c(u)\|_{L^2(\cud_0)} \le \|\chi \Ll( u - c(u) \Rr) \|_{L^2(\cud_0)}. \qedhere
\end{equation*}
\end{proof}

Finally, we prove a Caccioppoli-type inequality for elements of $\S(\cud_n)$. 

\begin{proposition}
\label{p.caccio} 
There exists a constant $C(d,\Lambda) < \infty$ such that, for every $n \in \N$ and $S \in \S(\cud_{n+1})$, 
\begin{equation*}  
\|S\|_{\un L^2(\cud_n)} \le C 3^{-n} \|S\|_{\un {\hat H}^{-1}_\pa(\cud_{n+1})}.
\end{equation*}
\end{proposition}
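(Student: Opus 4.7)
The plan is to use \lref{identif.S} to write $S$ as coming from solutions of the parabolic equation and its dual, then combine a standard parabolic Caccioppoli estimate with the negative-norm Poincar\'e-type inequality from \pref{integrate}. Specifically, by \lref{identif.S} we write $S = (\nabla u + \nabla u^*,\, \a\nabla u - \a^\T \nabla u^*)$ with $u \in \A(\cud_{n+1})$ and $u^* \in \A^*(\cud_{n+1})$. Because $|\Abf|$ and $|\Abf^{-1}|$ are both bounded by $C(\Lambda)$, $|S|$ is pointwise comparable to $|\nabla u| + |\nabla u^*|$, so it suffices to prove
\begin{equation*}
\|\nabla u\|_{\un L^2(\cud_n)} + \|\nabla u^*\|_{\un L^2(\cud_n)} \le C\,3^{-n}\|S\|_{\un{\hat H}^{-1}_\pa(\cud_{n+1})}.
\end{equation*}

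Next, I would fix an intermediate parabolic cylinder $V_*$ with $\cud_n \subset V_* \subset \cud_{n+1}$ and $\dist(\cud_n,\partial V_*) \wedge \dist(V_*,\partial\cud_{n+1}) \sim 3^n$, and pick a smooth cutoff $\eta$ supported in $V_*$, equal to $1$ on $\cud_n$, with $|\nabla \eta| \le C\,3^{-n}$ and $|\partial_t \eta| \le C\,3^{-2n}$, arranged to vanish on the lateral and initial boundaries of $V_*$ but left free at its final time. Testing $\partial_t u - \nabla\cdot(\a\nabla u)=0$ against $\eta^2(u - c)$ and using the positivity of the final-time boundary contribution, uniform ellipticity, and Young's inequality, yields the standard parabolic Caccioppoli bound
\begin{equation*}
\|\nabla u\|_{\un L^2(\cud_n)} \le \frac{C}{3^n}\,\|u - c\|_{\un L^2(V_*)}, \qquad c \in \R.
\end{equation*}
Choosing $c := (u)_{V_*}$ and noting that $(\nabla u, \a\nabla u) \in \C(\cud_{n+1})$, \pref{integrate} (applied via \rref{change.interior}) gives
\begin{equation*}
\|u - (u)_{V_*}\|_{\un L^2(V_*)} \le C\bigl(\|\nabla u\|_{\un{\hat H}^{-1}_\pa(\cud_{n+1})} + \|\a\nabla u\|_{\un{\hat H}^{-1}_\pa(\cud_{n+1})}\bigr).
\end{equation*}
The symmetric bound for $u^*$ follows by the identical argument, using a cutoff vanishing at the final time of $V_*$ and the $\C(\cud_{n+1})$-pair $(\nabla u^*, -\a^\T \nabla u^*)$.

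The last step is to convert the four $\un{\hat H}^{-1}_\pa$ norms arising above into a multiple of $\|S\|_{\un{\hat H}^{-1}_\pa(\cud_{n+1})}$. By \rref{deconstruction}, $\nabla u = \tfrac12(S_1 + (\Abf S)_2)$ and $\a\nabla u = \tfrac12(S_2 + (\Abf S)_1)$, with analogous identities for $\nabla u^*$ and $\a^\T\nabla u^*$ differing only by a sign in the $\Abf S$-components, whence
\begin{equation*}
\|\nabla u\|_{\un{\hat H}^{-1}_\pa} + \|\a\nabla u\|_{\un{\hat H}^{-1}_\pa} + \|\nabla u^*\|_{\un{\hat H}^{-1}_\pa} + \|\a^\T\nabla u^*\|_{\un{\hat H}^{-1}_\pa} \le C\bigl(\|S\|_{\un{\hat H}^{-1}_\pa} + \|\Abf S\|_{\un{\hat H}^{-1}_\pa}\bigr),
\end{equation*}
all norms being on $\cud_{n+1}$. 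The hard part will be showing $\|\Abf S\|_{\un{\hat H}^{-1}_\pa(\cud_{n+1})} \le C\,\|S\|_{\un{\hat H}^{-1}_\pa(\cud_{n+1})}$: since $\Abf$ depends only measurably on $(t,x)$, it does not map $H^1_\pa$ into itself, and pure duality fails. However, the defining orthogonality \eqref{e.def.S} of $\S(\cud_{n+1})$ forces $\Abf S$ to annihilate every element of $\C_0(\cud_{n+1})$ in the $L^2$-pairing, so when computing $\fint\Abf S\cdot W$ for an arbitrary $W \in H^1_\pa(\cud_{n+1})$ one is free to subtract from $W$ its $\C_0$-component. Producing this projection---a parabolic Helmholtz-Hodge type decomposition of $H^1_\pa$ test objects relative to the pair $(\C(\cud_{n+1}),\C_0(\cud_{n+1}))$, which I would implement via Neumann-type elliptic solvers applied slice-by-slice in time with careful handling of the initial and terminal boundary conditions---is the main technical ingredient; once in hand, it closes the chain of estimates.
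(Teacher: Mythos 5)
Your outline is fine up to, and including, the Caccioppoli step for $u$ and $u^*$ separately and the appeal to Proposition~\ref{p.integrate}; but the argument then reduces the whole proposition to the inequality $\|\Abf S\|_{\un{\hat H}^{-1}_\pa(\cud_{n+1})} \le C\,\|S\|_{\un{\hat H}^{-1}_\pa(\cud_{n+1})}$, and this is not a technical afterthought: it is the entire difficulty, and your sketch does not close it. The hypothesis only controls $S=(\nabla u+\nabla u^*,\,\a\nabla u-\a^\T\nabla u^*)$ in a negative norm; to run your scheme you need $\nabla u$ and $\a\nabla u$ (hence $\nabla u-\nabla u^*$ and $\a\nabla u+\a^\T\nabla u^*$, i.e.\ the components of $\Abf S$) in the same negative norm, which amounts to inverting the merely measurable matrix $\Abf$ in a space of negative regularity. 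For general vector fields this is false, and for $S\in\S(\cud_{n+1})$ the natural way to obtain it is through the very $L^2$ bound you are trying to prove ($\|\Abf S\|_{\un{\hat H}^{-1}_\pa(\cud_n)}\le C3^{n}\|S\|_{\un L^2(\cud_n)}$), so the reduction is circular. The orthogonality \eqref{e.def.S} cannot rescue this in the form you suggest, because the test objects in the definition of $\|\Abf S\|_{\un{\hat H}^{-1}_\pa}$ are of the form $e\,w$ with $w\in H^1_\pa$ scalar, and their ``$\C_0$-component'' would have to satisfy the constraint in \eqref{e.def.candidate}, which couples the flux entry to $\partial_t$ of the potential entry across time slices; a Neumann solver applied slice-by-slice in time produces no such compatibility, so the proposed projection does not land in $\C_0(\cud_{n+1})$.

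The paper's proof applies the localization in the opposite direction, and this is the idea your proposal is missing: rather than projecting arbitrary test functions, one performs a genuinely parabolic Helmholtz--Hodge decomposition of the cut-off pair $(\eta u,\eta\g)$ itself, writing it (up to remainders $R_0,R_i$ controlled in $L^2$ by $\|\nabla u\|_{\hat H^{-1}_\pa}+\|\g\|_{\hat H^{-1}_\pa}$) in terms of potentials $T_{0j},T_{ij}$ obtained by solving heat equations with space-periodic boundary data; the $L^2$ bounds on $T_{0j},\nabla T_{0j},T_{ij}$ require the parabolic $H^2$ estimate (Lemma~\ref{l.H2.heat}), the identification of $\hat H^{-1}_\pa$ (Lemma~\ref{l.identif.H-1par}) and Proposition~\ref{p.integrate}. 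The point is that the resulting field built from $\chi^4 T_{0j},\chi^4 T_{ij}$ \emph{does} belong to $\C_0(\cud_{1})$, so it may legitimately be paired against $\Abf S$ via \eqref{e.def.S}; expanding that identity yields directly the localized energy $\int\chi^4 S\cdot\Abf S$ plus error terms bounded by the negative norms, and ellipticity of $\Abf$ then gives $\|S\|_{\un L^2(\cud_0)}\le C(\|\nabla u\|_{\un{\hat H}^{-1}_\pa(\cud_1)}+\|\g\|_{\un{\hat H}^{-1}_\pa(\cud_1)})$ without ever needing a negative-norm bound on $\Abf S$ or separate weak-norm control of $\nabla u$ and $\nabla u^*$. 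In short: keep the Caccioppoli philosophy, but apply it to the pair $S$ through the variational orthogonality, not to $u$ and $u^*$ individually.
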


In order to prove this result, we first describe more explicitly the structure of elements of $\hat H^{-1}_\pa(I\times U)$.

\begin{lemma}[Identification of $\hat H^{-1}_\pa$]
\label{l.identif.H-1par}
There exists a constant $C(I,U,d) < \infty$ and, for each $u^* \in L^2(I\times U)$, a pair $(w,w^*) \in L^2(I;H^1_0(U)) \times L^2(I;H^{-1}(U))$ such that
\begin{equation}  
\label{e.u*.decomp}
u^* = \partial_t w + w^*,
\end{equation}
with
\begin{equation}  
\label{e.estim.w.w*}
\|w\|_{L^2(I;H^1(U))} + \|w^*\|_{L^2(I;H^{-1}(U))} \le C \|u^*\|_{{\hat H}^{-1}_\pa(I\times U)}.
\end{equation}
Let $V \subset V' \subset  I\times U$ be subdomains of $I \times U$ such that $\bar V \subset V'$. If $u^*$ vanishes outside of $V$, then there exists a pair $(w,w^*) \in L^2(I;H^1_0(U)) \times L^2(I;H^{-1}(U))$ satisfying \eqref{e.u*.decomp}-\eqref{e.estim.w.w*} for a constant $C(V,V',I,U,d) < \infty$, and such that $w$ and $w^*$ vanish outside of $V'$.
\end{lemma}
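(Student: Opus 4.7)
The plan is to deduce the decomposition from the Hahn--Banach theorem. First, since $u^* \in L^2(I\times U)$, the map $\phi \mapsto \int_{I\times U} u^* \phi$ is a continuous linear functional on $H^1_\pa(I \times U)$ whose operator norm is exactly $\|u^*\|_{\hat H^{-1}_\pa(I\times U)}$. Consider the map
\begin{equation*}
J : H^1_\pa(I\times U) \to L^2(I;H^1(U)) \times L^2(I;H^{-1}(U)),\qquad J(\phi) := (\phi,\partial_t \phi),
\end{equation*}
which is an isometry onto a closed subspace by the very definition of the $H^1_\pa$ norm. The functional above transports to one on $J(H^1_\pa)$ of the same norm, and Hahn--Banach extends it to a functional on the entire product space without increasing its norm.

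The next step is to identify this extension via the dual isomorphism
\begin{equation*}
\bigl(L^2(I;H^1(U)) \times L^2(I;H^{-1}(U))\bigr)^\ast \cong L^2(I;\hat H^{-1}(U)) \times L^2(I;H^1_0(U)),
\end{equation*}
which yields a pair $(w^*,-w)$ with $w \in L^2(I;H^1_0(U))$ and $w^* \in L^2(I;\hat H^{-1}(U))$ such that
\begin{equation*}
\int_{I\times U} u^*\phi = \int_I \langle w^*(t),\phi(t)\rangle\, dt - \int_I \langle w(t),\partial_t \phi(t)\rangle\, dt \qquad \text{for all } \phi \in H^1_\pa(I\times U),
\end{equation*}
with the sum of norms appearing in \eqref{e.estim.w.w*} controlled by $C\|u^*\|_{\hat H^{-1}_\pa(I\times U)}$. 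Via the restriction embedding $L^2(I;\hat H^{-1}(U)) \hookrightarrow L^2(I;H^{-1}(U))$ (which does not increase the norm), and interpreting the second integral as the distributional action of $\partial_t w$ on $\phi$, this is precisely the identity $u^* = \partial_t w + w^*$.

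For the localized version, I would fix a smooth cutoff $\eta \in C^\infty_c(V')$ with $\eta \equiv 1$ on a neighborhood of $\overline V$, apply the first part to obtain $(\td w,\td w^*)$ representing $u^*$ on the whole of $I\times U$, and use that $u^* = \eta u^*$ together with the Leibniz rule in time to test any $\phi \in H^1_\pa(I\times U)$ against the identity
\begin{equation*}
u^* = \partial_t (\eta\, \td w) + \bigl(\eta\, \td w^* - (\partial_t \eta)\, \td w\bigr),
\end{equation*}
by applying the first part to the test function $\eta \phi \in H^1_\pa(I\times U)$. Setting $w := \eta\, \td w$ and $w^* := \eta\, \td w^* - (\partial_t \eta)\, \td w$ produces functions supported in $V'$. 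Multiplication by $\eta$ preserves $L^2(I;H^1_0(U))$ and $L^2(I;H^{-1}(U))$ with constant depending on $\|\eta\|_{C^1}$, and the extra term $(\partial_t \eta)\, \td w$ lies in $L^2(I;L^2(U)) \subset L^2(I;H^{-1}(U))$ and is controlled by $\|\td w\|_{L^2(I;H^1_0(U))}$. Thus \eqref{e.u*.decomp}--\eqref{e.estim.w.w*} hold with a constant $C(V,V',I,U,d)$.

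The main conceptual point is the correct identification of the dual of the product space and the verification that the pairing $\int_I \langle w(t),\partial_t\phi(t)\rangle\, dt$ is well-defined; this is legitimate precisely because $w(t) \in H^1_0(U)$ duality-pairs with $\partial_t\phi(t) \in H^{-1}(U)$, with the time-pointwise pairings lying in $L^1(I)$ by Cauchy--Schwarz. Beyond this, the argument is a routine application of Hahn--Banach plus a cutoff, and I do not anticipate a serious obstacle.
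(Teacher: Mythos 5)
Your proof is correct, but the first part runs along a different track from the paper's. The paper equips $H^1_\pa(I\times U)$ with the explicit scalar product $(u,v)\mapsto \fint_{I\times U}\left(|U|^{-2/d}uv+\nabla u\cdot\nabla v+\nabla\Delta^{-1}_U\partial_t u\cdot\nabla\Delta^{-1}_U\partial_t v\right)$ and applies the Riesz representation theorem, which yields the concrete representatives $w=\Delta^{-1}_U\partial_t u$ and $w^*=|U|^{-2/d}u-\Delta u$ together with \eqref{e.estim.w.w*}; you instead embed $H^1_\pa$ isometrically into $L^2(I;H^1(U))\times L^2(I;H^{-1}(U))$ via $\phi\mapsto(\phi,\partial_t\phi)$, extend the functional $\phi\mapsto\int u^*\phi$ by Hahn--Banach, and read off $(w^*,-w)$ from the duality $(L^2(I;X))^*\cong L^2(I;X^*)$ for the separable Hilbert spaces $X=H^1(U)$ and $X=H^{-1}(U)$, then restrict $w^*$ from $\hat H^{-1}(U)$ to $H^{-1}(U)$. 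Both arguments are sound; the paper's buys explicit formulas and stays entirely within elementary Hilbert-space theory, while yours is the classical ``dual of a Sobolev-type space'' argument, shorter to state and transferable verbatim to reflexive $L^p$ variants (and since the image subspace is closed in a Hilbert product, you could replace Hahn--Banach by extending by zero on the orthogonal complement, so no choice is really needed). Your localized statement is obtained exactly as in the paper: the identity $u^*=\eta u^*=\partial_t(\eta\,\td w)+\eta\,\td w^*-(\partial_t\eta)\,\td w$ with a cutoff $\eta$ equal to $1$ near $\bar V$ and supported in $V'$, with the lower-order term absorbed into $L^2(I;H^{-1}(U))$.
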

\begin{proof}
Denote by $\Delta_{\cu_n}^{-1}$ the solution operator for the Laplacian in ${\cu_n}$ with null Dirichlet boundary condition. We observe that
\begin{equation*}  
(u,v) \mapsto \fint_{I\times U} \Ll(|U|^{-\frac 2 d}uv + \nabla u \cdot \nabla v + \nabla \Delta^{-1}_U \partial_t u \cdot \nabla \Delta^{-1}_U \partial_t v\Rr)
\end{equation*}
is a scalar product for the Hilbert space $H^1_\pa(I\times U)$. By the Riesz representation theorem, there exists a unique $u \in H^1_\pa(I\times U)$ such that, for every $v \in H^1_\pa(I\times U)$,
\begin{equation*}  
\fint_{I\times U} u^*  v = \fint_{I\times U} \Ll(|U|^{-\frac 2 d}uv + \nabla u \cdot \nabla v + \nabla \Delta^{-1}_U \partial_t u \cdot \nabla \Delta^{-1}_U \partial_t v\Rr),
\end{equation*}
and moreover, by testing this identity with $v = u$, we obtain
\begin{equation}  
\label{e.compare.u.u^*.norms}
\|u\|_{H^1_\pa(I\times U)} \le C \|u^*\|_{{\hat H}^{-1}_\pa(I\times U)}.
\end{equation}
We set
\begin{equation*}  
w := \Delta_U^{-1} \partial_t u \quad \text{and} \quad w^* := |U|^{-\frac 2 d} u - \Delta u.
\end{equation*}
The estimate \eqref{e.estim.w.w*} follows from \eqref{e.compare.u.u^*.norms}. 
For $v \in H^1_\pa(I\times U)$ with compact support in $I\times U$, we have
\begin{equation*}  
\fint_{I\times U} u^*  v = \fint_{I\times U} \Ll(w^* v  + \partial_t w  \, v \Rr) .
\end{equation*}
Since $u^* \in L^2(I\times U)$, we can argue by density to infer that $w^* + \partial_t w \in L^2(I\times U)$. The identity above then implies \eqref{e.u*.decomp}.

\smallskip

If $u^*$ vanishes outside of $V \subset I\times U$, then we select a smooth cutoff function $\eta$ such that $\eta \equiv 1$ on $V$ and $\eta \equiv 0$ outside of $V'$, and we write
\begin{align*}  
u^* & = \eta u^* = \eta (\partial_t w + w^*) \\
& = \partial_t (\eta w) + \eta w^* - w \partial_t \eta.
\end{align*}
This decomposition yields the second part of the statement, by standard comparisons of norms.
\end{proof}

\begin{proof}[Proof of Proposition~\ref{p.caccio}]
By scaling, we may fix $n = 0$. In order to localize an element $S \in \S(\cud_1)$ into an element of $\C_0(\cud_{1})$ and thus be able to use the orthogonality property in the definition of the set $\S(\cud_{1})$, see \eqref{e.def.S}, we introduce a version of the Helmholtz-Hodge decomposition of $S$ which is adapted to the parabolic setting. In order to minimize difficulties due to boundary conditions, we work with functions which are periodic in the space variable. 
In the course of the proof, we will use the elementary variant of Proposition~\ref{p.parabolic.min} for the standard heat operator with space-periodic boundary condition. 

\smallskip

We decompose the proof into four steps. The first two steps are devoted to the construction of the Helmholtz-Hodge decomposition of $S$, and its estimation in relevant norms. The last step uses this representation to localize $S$ to an element of $\C_0(\cud_1)$ and concludes the proof.

\smallskip

\emph{Step 1.}
We write $S = (\nabla u,\g) \in \S(\cud_1)$. We recall that since $\S(\cud_1) \subset \C(\cud_1)$, we have $\partial_t u = \nabla \cdot \g$, see \eqref{e.def.candidate*}. The function $u$ is determined up to an additive constant, which we fix so that
\begin{equation*}  
\fint_{\cud_{3/4}} u = 0.
\end{equation*}
Let $\eta \in C_c^\infty(\cud_1)$ be a smooth function with compact support in $\cud_{3/4}$, such that $0 \le \eta \le 1$ and $\eta \equiv 1$ on $\cud_{1/2}$. We set
\begin{equation*}  
\td u := \eta u, \quad \text{and} \quad \forall j \in \{1,\ldots, d\}, \ \td \g_j := \eta \g_j.
\end{equation*}
For each $j \in \{1,\ldots,d\}$, let $T_{0j} \in H^{1}_{\pa,\per}(\cud_1)$ be the unique solution of
\begin{equation}  
\label{e.def.T0j}
\Ll\{
\begin{aligned}
& (\partial_t - \Delta) T_{0j}  = \td \g_j - \partial_{x_j} \td u  \quad & \mbox{in}  \ & \cud_1, \\
& T_{0j}  = 0 \quad & \mbox{on} \ & \Ll\{-\tfrac{9}{2}\Rr\}\times \cu_1.
\end{aligned}
\Rr.
\end{equation}
By Lemma~\ref{l.identif.H-1par} there exist $(w_j, w_j^*) \in L^2(I_1;H^1_0(\cu_1)) \times  L^2(I_1;H^{-1}(\cu_1))$ which vanish in a neighborhood of $\partial \cud_1$ and satisfy
\begin{equation*}  
\td \g_j - \partial_{x_j} \td u  = \partial_t w_j +  w^*_j,
\end{equation*}
with 
\begin{equation*}  
\|w_j\|_{L^2(I_1;H^1(\cu_1))} + \|w^*_j\|_{L^2(I_1;H^{-1}(\cu_1))} \le C \|\td \g_j - \partial_{x_j} \td u \|_{{\hat H}^{-1}_\pa(\cud_1)}.
\end{equation*}
Since $(w_j,w^*_j)$ vanish in a neighborhood of $\partial \cud_1$, we can interpret this pair as an element of
\begin{equation*}  
L^2(I_1;H^1_\per(\cu_1)) \times  L^2(I_1;H^{-1}_\per(\cu_1)),
\end{equation*}
with the estimate
\begin{equation*}  
\|w_j\|_{L^2(I_1;H_\per^1(\cu_1))} + \|w^*_j\|_{L^2(I_1;H_\per^{-1}(\cu_1))} \le C \|\td \g_j - \partial_{x_j} \td u \|_{{\hat H}^{-1}_\pa(\cud_1)}.
\end{equation*}
Since $\td \g_j - \partial_{x_j} \td u \in L^2(\cud_1)$, it is clear that $\partial_t w_j \in L^2(I_1; H^{-1}_\per(\cud_1))$, and we therefore deduce that $w_j \in H^1_{\per,\pa,\sqcup}(\cud_1)$. 
Moreover, by Proposition~\ref{p.parabolic.min} applied to the standard heat operator, there exist a constant $C(d) < \infty$ and $T'_{0j} \in H^1_{\per,\pa,\sqcup}(\cud_1)$ such that 
\begin{equation*}  
(\partial_t - \Delta) T'_{0j} = -\Delta w_j + w^*_j,
\end{equation*}
with
\begin{align*}  
\|T'_{0j}\|_{L^2(\cud_1)} + \|\nabla T'_{0j}\|_{L^2(\cud_1)} & \le C \Ll(\|w_j\|_{L^2(I_1;H^1(\cu_1))}  +  \|w^*_j\|_{L^2(I_1;H^{-1}(\cu_1))} \Rr) \\
& \le C \|\td \g_j - \partial_{x_j} \td u \|_{{\hat H}^{-1}_\pa(\cud_1)}.
\end{align*}
We thus have 
\begin{equation*}  
(\partial_t - \Delta) (w_j + T'_{0j} - T_{0j}) = 0,
\end{equation*}
with $w_j + T'_{0j} - T_{0j} \in H^1_{\per,\pa, \sqcup}(\cud_1)$. Therefore,
\begin{equation*}  
T_{0j} = w_j + T'_{0j},
\end{equation*}
and 
\begin{equation*}  
\|T_{0j}\|_{L^2(\cud_1)} + \|\nabla T_{0j}\|_{L^2(\cud_1)} \le C \|\td \g_j - \partial_{x_j} \td u\|_{{\hat H}^{-1}_\pa(\cud_1)}.
\end{equation*}
It is clear that $\|\td \g_j\|_{H^{-1}_\pa(\cud_1)} \le C \|\g_j\|_{H^{-1}_\pa(\cud_1)}$. 
By Proposition~\ref{p.integrate}, Remark~\ref{r.change.interior} and the comparison 
$$
\|\td u\|_{H^{-1}_\pa(\cud_1)} \le \|\td u\|_{L^2(\cud_1)} \le  \|u\|_{L^2(\cud_{3/4})} = \|u - (u)_{\cud_{3/4}}\|_{L^2(\cud_{3/4})} ,
$$
we obtain
\begin{equation}  
\label{e.estim.T0j}
\|T_{0j}\|_{L^2(\cud_1)} + \|\nabla T_{0j}\|_{L^2(\cud_1)} \le C \Ll(\|\nabla u\|_{{\hat H}^{-1}_\pa(\cud_1)} + \|\g\|_{{\hat H}^{-1}_\pa(\cud_1)} \Rr).
\end{equation}

\smallskip

\emph{Step 2.}
For each $i,j \in \{1,\ldots,d\}$, we define $T_{ij} \in H^{1}_{\pa,\per}(\cud_1)$ as the solution of
\begin{equation*}  
\Ll\{
\begin{aligned}
& (\partial_t - \Delta) T_{ij} = \partial_{x_i} \td \g_j - \partial_{x_j} \td \g_i \quad & \mbox{in}  \ & \cud_1, \\
& T_{ij} = 0 \quad & \mbox{on} \ & \Ll\{-\tfrac{9}{2}\Rr\}\times \cu_1.
\end{aligned}
\Rr.
\end{equation*}
The solution $T_{ij}$ is well-defined since the right-hand side belongs to $L^2(I_1;H^{-1}_\per(\cu_1))$. We now estimate the $L^2$ norm of $T_{ij}$ using Lemma~\ref{l.H2.heat} and duality. We define $\phi_{ij} \in H^1_{\pa,\per}(\cud_1)$ the solution of the backwards heat equation
\begin{equation}  
\label{e.def.phiij}
\Ll\{
\begin{aligned}
& -\Ll(\partial_t + \Delta\Rr) \phi_{ij} = T_{ij} \quad & \mbox{in}  \ & \cud_1, \\
& \phi_{ij} = 0 \quad & \mbox{on} \ & \Ll\{\tfrac{9}{2}\Rr\}\times \cu_1.
\end{aligned}
\Rr.
\end{equation}
By Lemma~\ref{l.H2.heat}, we have
\begin{equation*}  
\|\nabla \phi_{ij}\|_{H^1_{\pa,\per}(\cud_1)} \le C\|T_{ij}\|_{L^2(\cud_1)}.
\end{equation*}
Testing the equation \eqref{e.def.phiij} against $T_{ij}$, we get
\begin{equation*}  
\|T_{ij}\|_{L^2(\cud_1)}^2 = \int_{\cud_1} \Ll(\td \g_i \, \partial_{x_j} \phi_{ij} - \td \g_j \, \partial_{x_i} \phi_{ij} \Rr)\le \|\nabla \phi_{ij}\|_{H^1_{\pa,\per}(\cud_1)}  \, \|\td \g\|_{{\hat H}^{-1}_\pa(\cud_1)} .
\end{equation*}
Combining the two previous displays yields
\begin{equation}
\label{e.estim.Tij}
\|T_{ij}\|_{L^2(\cud_1)} \le  C \| \td \g\|_{{\hat H}^{-1}_\pa(\cud_1)} \le  \| \g\|_{{\hat H}^{-1}_\pa(\cud_1)}.
\end{equation}

\smallskip

\emph{Step 3.}
For notational convenience,  for each $j \in \{1,\ldots, d\}$, we denote $T_{j0} := - T_{0j}$, $\partial_0 := \partial_t$, $\partial_j := \partial_{x_j}$, 
\begin{equation*}  
R_0 := \td u - \sum_{j = 1}^d \partial_j T_{0j},
\end{equation*}
and, for each $i \in \{1,\ldots,d\}$,
\begin{equation*}  
R_i := \td \g_i  - \sum_{j = 0}^d \partial_j T_{ij} .
\end{equation*}
In this step, we show that
\begin{equation}  
\label{e.estim.R0}
\|R_0 \|_{L^2(\cud_1)} + \|\nabla R_0\|_{L^2(\cud_1)} \le C \|\nabla u\|_{{{\hat H}^{-1}_\pa(\cud_1)}} + C \|\g\|_{{\hat H}^{-1}_\pa(\cud_1)}.
\end{equation}
and that for every $i \in \{1,\ldots,d\}$, 
\begin{equation}  
\label{e.estim.R1}
\|R_i\|_{L^2(\cud_1)} \le C \|\nabla u\|_{{{\hat H}^{-1}_\pa(\cud_1)}} + C \|\g\|_{{\hat H}^{-1}_\pa(\cud_1)}.
\end{equation}
Recalling that $\partial_t u = \nabla \cdot \g$, we note that
\begin{align*}  
(\partial_t - \Delta) R_0
 = (\partial_t - \Delta) \td u - \sum_{j = 1}^d \partial_j \Ll(  \td \g_j - \partial_j \td u \Rr) = u \, \partial_t \eta + \g \cdot \nabla \eta ,
\end{align*}
and, for each $i \in \{1,\ldots,d\}$,
\begin{align*}  
 (\partial_t - \Delta) R_i 
&  = (\partial_t - \Delta) \td \g_i - \partial_t \Ll( \td  \g_i - \partial_i \td u \Rr) - \sum_{j = 1}^d \partial_j \Ll( \partial_i \td \g_j - \partial_j \td \g_i \Rr) \\
& = (\partial_i u) (\partial_t \eta) + (\partial_t u)(\partial_i \eta) + (\nabla \cdot \g) (\partial_i \eta)  + (\partial_i \g) \cdot \nabla \eta.
\end{align*}
Moreover, it is clear from their definitions that $T_{0j}$ and $T_{ij}$ vanish in a neighborhood of the initial time slice $\{-\frac 9 2\} \times \cud_1$. The estimates \eqref{e.estim.R0} and \eqref{e.estim.R1} are thus obtained by following the steps to the derivations of \eqref{e.estim.T0j} and \eqref{e.estim.Tij} respectively.

\smallskip

\emph{Step 4.}
We now select a cutoff function $\chi \in C^\infty_c(\cud_1)$ such that $\chi \equiv 1$ on $\cud_0$ and $\chi \equiv 0$ outside of $\cud_{1/2}$, and observe that
\begin{equation*}  
\begin{pmatrix}
\displaystyle{\sum_{j = 1}^d \nabla \partial_j(\chi^4 T_{0j})} \\
\displaystyle{\sum_{j = 0}^d \partial_j (\chi^4 T_{ij})} 
\end{pmatrix}
\in \C_0(\cud_{1}),
\end{equation*}
where we understand that the second component above denotes a $d$-dimensional vector field with components indexed by $i \in \{1,\ldots, d\}$. By the definition of $\S(\cud_1)$, we deduce that
\begin{equation}  
\label{e.orthogonality}
\int_{\cud_1} 
\begin{pmatrix}
\displaystyle{\sum_{j = 1}^d \nabla \partial_j(\chi^4 T_{0j})} \\
\displaystyle{\sum_{j = 0}^d \partial_j (\chi^4 T_{ij})} 
\end{pmatrix}
\cdot \Abf
\begin{pmatrix}
\nabla u
\\
\g
\end{pmatrix}
= 0.
\end{equation}
In the display above, the first vector is of dimension $2d$: the gradient appearing on the first raw carries the first $d$ components, while the other $d$ components are represented by the second raw and indexed by $i \in \{1,\ldots,d\}$.
Applying the chain rule in the identity \eqref{e.orthogonality} yields a number of terms, one of which is
\begin{align*}  
\int_{\cud_1} 
\chi^4
\begin{pmatrix}
\displaystyle{\sum_{j = 1}^d \nabla \partial_j T_{0j}} \\
\displaystyle{\sum_{j = 0}^d \partial_j  T_{ij}} 
\end{pmatrix}
\cdot \Abf
\begin{pmatrix}
\nabla u
\\
\g
\end{pmatrix} 
& = 
\int_{\cud_1} 
\chi^4
\begin{pmatrix}
\nabla \td u
\\
\td \g
\end{pmatrix}
\cdot \Abf
\begin{pmatrix}
\nabla u
\\
\g
\end{pmatrix}
+ 
\int_{\cud_1} 
\chi^4
\begin{pmatrix}
\nabla R_0
\\
R_i
\end{pmatrix}
\cdot \Abf
\begin{pmatrix}
\nabla u
\\
\g
\end{pmatrix}
\\
&
=
\int_{\cud_1} 
\chi^4
\begin{pmatrix}
\nabla u
\\
\g
\end{pmatrix}
\cdot \Abf
\begin{pmatrix}
\nabla u
\\
\g
\end{pmatrix}
+
\int_{\cud_1} 
\chi^4
\begin{pmatrix}
\nabla R_0
\\
R_i
\end{pmatrix}
\cdot \Abf
\begin{pmatrix}
\nabla u
\\
\g
\end{pmatrix}
.
\end{align*}
We are interested in estimating the first term in this sum. By the uniform boundedness of $\Abf$, the absolute value of the second term in this sum is bounded by a constant times
\begin{equation*}  
\Ll( \|\nabla R_0\|_{L^2(\cud_1)} + \sum_{i = 1}^d \|R_i\|_{L^2(\cud_1)} \Rr) 
\Ll( \int_{\cud_1} 
\chi^4
\begin{pmatrix}
\nabla u
\\
\g
\end{pmatrix}
\cdot \Abf
\begin{pmatrix}
\nabla u
\\
\g
\end{pmatrix} \Rr) ^{\frac 1 2}.
\end{equation*}
When applying the chain rule in the identity \eqref{e.orthogonality}, the leftover terms are 
\begin{equation*}  
4\int_{\cud_1} \chi^2
\begin{pmatrix}
\displaystyle{\sum_{j = 1}^d T_{0j}\Ll(3 \nabla \chi\partial_j \chi + \chi \nabla \partial_j \chi \Rr) + 2\nabla T_{0j} \chi \partial_j \chi} \\
\displaystyle{\sum_{j = 0}^d T_{ij}\chi \partial_j \chi  } 
\end{pmatrix}
\cdot \Abf
\begin{pmatrix}
\nabla u
\\
\g
\end{pmatrix} .
\end{equation*}
Using once more the uniform boundedness of $\Abf$, we obtain that the absolute value of the quantity above  is bounded by a constant times
\begin{equation*}  
\Ll( \sum_{j=1}^d \Ll(\|T_{0j}\|_{L^2(\cud_1)} + \| \nabla T_{0j}\|_{L^2(\cud_1)} \Rr) + \sum_{j = 0}^d \|T_{ij}\|_{L^2(\cud_1)} \Rr)\Ll( \int_{\cud_1} 
\chi^4
\begin{pmatrix}
\nabla u
\\
\g
\end{pmatrix}
\cdot \Abf
\begin{pmatrix}
\nabla u
\\
\g
\end{pmatrix} \Rr) ^{\frac 1 2}.
\end{equation*}
Combining the previous displays with the estimates \eqref{e.estim.T0j}, \eqref{e.estim.Tij}, \eqref{e.estim.R0} and \eqref{e.estim.R1}, we arrive at
\begin{equation*}  
\Ll( \int_{\cud_1} 
\chi^4
\begin{pmatrix}
\nabla u
\\
\g
\end{pmatrix}
\cdot \Abf
\begin{pmatrix}
\nabla u
\\
\g
\end{pmatrix} \Rr) ^{\frac 1 2}
\le 
C \Ll( \|\nabla u\|_{{\hat H}^{-1}_\pa(\cud_1)} + \|\g\|_{{\hat H}^{-1}_\pa(\cud_1)} \Rr) .
\end{equation*}
By the uniform ellipticity of $\Abf$, the left side is an upper bound for $\|S\|_{L^2(\cud_0)}$, up to a multiplicative constant, and therefore the proof is complete.
\end{proof}

\section{Convergence of subadditive quantities}
\label{s.iteration}

In this section, we obtain an algebraic rate of convergence for the limits of the subadditive quantities by adapting the approach of~\cite{AS,AM}, following the presentation of~\cite[Chapter 2]{AKMBook}.

\smallskip

We let $\Abfh$ be the $2d$-by-$2d$ matrix characterized by the limit
\begin{equation}  
\label{e.defAbfh}
\lim_{n \to \infty} \E \Ll[ \mu(\cud_n,X) \Rr] = \frac 1 2 X\cdot \Abfh X.
\end{equation}
Note that the existence of the limit on the left side follows from the subadditivity of $\mu(\cdot,X) = J(\cdot,X,0)$ and stationarity, which together ensure that $\E \Ll[ \mu(\cud_n,X) \Rr]$ is a nonincreasing sequence. The fact that $X\mapsto \mu(\cud_n,X)$ is quadratic ensures that the limit is also quadratic in $X$ and can therefore be represented by a matrix. Moreover, by Lemma~\ref{l.basicJ}, there exists $C(\Lambda) < \infty$ such that 
\begin{equation}
\label{e.abigbarbounds}
\frac 1 C \, I_{2d} \le \Abfh \le C \, I_{2d},
\end{equation}
where $I_{2d}$ is the $2d$-by-$2d$ identity matrix. 
It is convenient to define 
\begin{equation*} \label{}
\overline{J}(X,X^*):=  \frac12 X \cdot \Abfh  X +  \frac12 X^* \cdot \Abfh ^{-1} X^* - X \cdot X^*.
\end{equation*}
The goal of this section is to prove the following theorem.

\begin{theorem}[Convergence of $J$]
\label{t.subadd}
There exist an exponent $\be(d,\Lambda) > 0$ and, for each $s \in (0,2+d)$, a constant $C(s,d,\Lambda) < \infty$ such that, for every $X,X^*\in B_1$ and $n\in\N$,  we have
\begin{equation}
\label{e.subadderror}
 \left| 
 J(\cud_n,X,X^*)
 -
 \overline{J}(X,X^*)
  \right| 
\leq C 3^{-n \be(2+d-s)} + \O_1\left( C3^{-ns} \right). 
\end{equation}
\end{theorem}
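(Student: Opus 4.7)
The plan is to follow the renormalization strategy of \cite{AKMBook}, adapted to the parabolic setting, and to split the error into a deterministic bias plus a stochastic fluctuation. Define
$$
\tau_n := \sup_{X, X^* \in B_1} \bigl( \E[J(\cud_n, X, X^*)] - \bar J(X, X^*) \bigr).
$$
By the subadditivity property \eqref{e.Jsubadd} and stationarity, the sequence $n \mapsto \E[J(\cud_n,X,X^*)]$ is non-increasing, so $\tau_n$ is non-negative and non-increasing, and by construction of $\Abfh$ in \eqref{e.defAbfh} together with Lemma~\ref{l.Jsplitting}, $\tau_n \to 0$. The objective is to prove an algebraic rate $\tau_n \le C 3^{-n \alpha}$ for some $\alpha(d,\Lambda) > 0$, and a stretched-exponential concentration of $J(\cud_n, X, X^*)$ around $\E[J(\cud_n,X,X^*)]$ at rate $3^{-ns}$ for every $s<2+d$.

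For the deterministic bias, the key observation is that the \emph{subadditivity defect}
$$
D_n(X, X^*) := \E\bigl[3^{-(2+d)}\!\sum_{z} J(z + \cud_n, X, X^*) - J(\cud_{n+1}, X, X^*)\bigr],
$$
where $z$ ranges over the translates that partition $\cud_{n+1}$ into $3^{2+d}$ copies of $\cud_n$, equals exactly $\tau_n - \tau_{n+1}$ (in the supremum sense). Via the quadratic response estimate \eqref{e.Jquadresponse}, $D_n$ controls the $L^2(\cud_{n+1})$-distance between the global maximizer $S(\cdot, \cud_{n+1}, X, X^*)$ and the family of local maximizers $S(\cdot, z + \cud_n, X, X^*)$ glued together. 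Using Lemma~\ref{l.identif.S}, these maximizers decompose into gradients and fluxes of a primal solution $u \in \A$ and a dual solution $u^* \in \A^*$, so the bound transfers to a control on spatial averages of $\nabla u, \nabla u^*$ and the associated fluxes on subcubes. Plugging this into the multiscale parabolic Poincaré inequality (Proposition~\ref{p.msp}), controlling the resulting local $L^2$ norms via the Caccioppoli estimate (Proposition~\ref{p.caccio}), and combining with Proposition~\ref{p.integrate} and Remark~\ref{r.spatav.determined} (which fixes the spatial mean of the maximizer in terms of $X$), yields an excess-decay inequality essentially of the form
$$
\tau_n \le \theta \, \tau_{n - k_0} + \text{smaller terms}
$$
for some $\theta < 1$ and sufficiently large $k_0(d,\Lambda)$. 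Iterating this gives $\tau_n \le C 3^{-n \alpha}$.

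For the stochastic fluctuation, I would exploit the unit range of dependence (P2) through a martingale / Efron-Stein type argument. Enumerate the unit cubes intersecting $\cud_n$ and, letting $\F_k$ denote the $\sigma$-algebra generated by the coefficient field restricted to the first $k$ of them, the sequence $M_k := \E[J(\cud_n, X, X^*) \mid \F_k]$ is a martingale whose increments can be controlled by the $L^2$-mass on each unit cube of the maximizer $S(\cdot, \cud_n, X, X^*)$, again by the quadratic response estimate. The uniform $L^2$ boundedness of the maximizer (a consequence of \eqref{e.abigbarbounds}) and the finite range of dependence then produce a variance bound $\E[(J - \E J)^2] \le C 3^{-n(2+d)}$, i.e. an $\O_2$ bound at scale $3^{-n(2+d)/2}$. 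Interpolating this Gaussian bound against the a priori $L^\infty$ bound on $J$ (which is a bounded quadratic form of $(X,X^*)\in B_1$), one upgrades it to an $\O_1$ bound at rate $3^{-ns}$ for any $s < 2+d$, at the cost of absorbing a constant depending on $s$; see \cite[Appendix A]{AKMBook} for the relevant interpolation inequalities between $\O_s$ norms. Combining with the algebraic bound on $\tau_n$ yields \eqref{e.subadderror} with $\beta := \alpha/(2+d)$ (after reparametrization).

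The principal obstacle, and the main source of novelty compared to the elliptic case, is the need to control the time derivatives of solutions in the appropriate negative parabolic Sobolev norms at each stage of the iteration. This is precisely what Section~\ref{s.funineqs} is designed to handle: the multiscale Poincaré inequality (Proposition~\ref{p.msp}) quantifies how spatial averages of $\nabla u$ and of an auxiliary flux $\g$ satisfying $\partial_t u = \nabla \cdot \g$ control the $\hat H^{-1}_\pa$ norm, and the Caccioppoli estimate (Proposition~\ref{p.caccio}) — whose proof relies on a parabolic Helmholtz--Hodge decomposition — converts such weak norms back into strong $L^2$ bounds on the maximizer. Once these ingredients are in place, the rest of the renormalization argument proceeds in close analogy with the elliptic case.
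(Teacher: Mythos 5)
Your treatment of the deterministic bias is broadly in the spirit of the paper's argument (quadratic response, multiscale Poincar\'e, Caccioppoli, and an ODE-type iteration), although it glosses over two points that the paper cannot do without: the error must be measured at the special slope $X^*=\Abfh_n X$, where $\Abfh_n$ is a finite-volume approximation chosen so that $\E\big[\fint_{\cud_n} S(\cdot,\cud_n,X,\Abfh_n X)\big]=0$, and the smallness of the spatial averages of $S$ over mesoscopic subcubes (the input to Proposition~\ref{p.msp}) is a \emph{variance} estimate obtained from stationarity and the unit range of dependence via a checkerboard decomposition (Lemma~\ref{l.spatavg.S}); independence is therefore already indispensable in the expectation estimate, not only in the fluctuation bound. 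One also needs the reduction of the full supremum over $(X,X^*)$ to the diagonal $X^*=\Abfh X$ (Lemma~\ref{l.minimalset}); without it the quantity you iterate on does not control $|J-\overline J|$ for general $(X,X^*)$.

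The genuine gap is in the stochastic part. First, the goal as you state it---concentration of $J(\cud_n,X,X^*)$ \emph{around its mean} at scale $3^{-ns}$ with exponential tails for every $s<2+d$---is false for $s>\tfrac{2+d}{2}$, since the typical (CLT-scale) fluctuations of $J$ are of order $3^{-n(2+d)/2}$; the correct statement in \eqref{e.subadderror} pairs a comparatively large deterministic term $C3^{-n\be(2+d-s)}$ with an $\O_1(C3^{-ns})$ tail, i.e.\ it is a volume-order large-deviation estimate, not concentration at scale $3^{-ns}$. Second, the proposed derivation cannot produce even this weaker statement: an Efron--Stein/martingale bound gives $\var[J]\le C3^{-n(2+d)}$, i.e.\ Gaussian-type information at scale $3^{-n(2+d)/2}$, and no interpolation with an $L^\infty$ bound can convert this into $\O_1(C3^{-ns})$ for $s$ close to $2+d$. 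The $\O_s$ calculus of \cite[Appendix~A]{AKMBook} only allows trading the integrability exponent \emph{upward at the cost of worsening the scale} (e.g.\ $\O_1(\theta)$ plus boundedness gives $\O_{d+2}(\theta^{1/(d+2)})$, as used in Proposition~\ref{p.controlofEm}); it never sharpens the scale, and second-moment information plus boundedness yields at best polynomially small tail probabilities, versus the required $\exp(-c\,3^{ns})$. The mechanism that actually produces \eqref{e.subadderror} is subadditivity itself: $J(\cud_n,X,\Abfh X)$ is bounded by the average of roughly $3^{(2+d)(n-m)}$ bounded random variables $J(z+\cud_m,X,\Abfh X)$ which, after a checkerboard splitting, are independent and each have expectation $\le C3^{-\be m}$ by the first part of the proof; a Chernoff-type exponential-moment bound then gives
\begin{equation*}
\P\Ll[\,\sup_{X\in B_1} J(\cud_n,X,\Abfh X)\ge C3^{-\be m}+t\,\Rr]\le \exp\Ll(-C^{-1}3^{(2+d)(n-m)}t\Rr),
\end{equation*}
and choosing $m\approx \frac{2+d-s}{2+d}\,n$ yields the claimed $\O_1(C3^{-ns})$ term, after which Lemma~\ref{l.minimalset} extends the bound to all $(X,X^*)\in B_1$. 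You need to replace the variance-plus-interpolation step by an argument of this type; as written, that step fails.
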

The next lemma (which should be compared to~\cite[Lemma 2.7]{AKMBook}) allows us to reduce  Theorem~\ref{t.subadd} to an estimate on the quantity~$J(\cud_n, X, \Abfh X)$. Note that, in view of Lemma~\ref{l.Jsplitting}, a control on the size of~$\inf_{X^*} J(V,X,X^*)$ can be interpreted as information on the ``convex duality defect'' between the quantities $\mu$ and $\mu^*$, quantifying how close these functions are to a convex dual pair.

\begin{lemma}[reduction to minimal set]
\label{l.minimalset}
For each $\Gamma\geq 1$, there exists a constant $C(\Gamma,d,\Lambda)<\infty$ such that, for every $2d$-by-$2d$ symmetric matrix $\tilde{\Abf}$ satisfying
\begin{equation}
 \label{e.Abf.Gamma}
\Gamma^{-1} \, I_{2d} \leq \tilde{\Abf} \leq \Gamma\, I_{2d}
\end{equation}
and every parabolic cylinder $V\subseteq\R^{d+1}$, we have 
\begin{multline}
\label{e.minimalset}
\sup_{X,X^*\in B_1} 
\left|  
J(V,X,X^*) - 
\left( 
\frac12X
\cdot \tilde{\Abf}X
+ \frac12 X^*
\cdot \tilde{\Abf}^{-1} X^*
- X\cdot X^*
 \right) 
 \right|
\\
 \leq C \sup_{X  \in B_1} 
 \Ll(J \left(V,X, \tilde{\Abf} X \right) \Rr)^\frac 12 . 
\end{multline}
\end{lemma}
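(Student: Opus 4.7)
The proof is purely algebraic. By Lemma~\ref{l.Jsplitting}, we have
$$J(V,X,X^*) = \mu(V,X) + \mu^*(V,X^*) - X \cdot X^*,$$
in which $\mu(V, \cdot)$ and $\mu^*(V, \cdot)$ are nonnegative quadratic forms, uniformly convex and $C^{1,1}$ by Lemma~\ref{l.basicJ}. Writing them as $\mu(V,X) = \tfrac{1}{2} X \cdot \mathbf{M} X$ and $\mu^*(V, X^*) = \tfrac{1}{2} X^* \cdot \mathbf{N} X^*$ for $V$-dependent symmetric matrices satisfying $c(d,\Lambda) I \leq \mathbf{M}, \mathbf{N} \leq C(d,\Lambda) I$, the key observation is that the cross term $-X \cdot X^*$ appears identically in $J(V, \cdot, \cdot)$ and in the target quadratic, so the difference decouples into a pure $X$-part and a pure $X^*$-part:
\begin{multline*}
J(V,X,X^*) - \left(\tfrac12 X \cdot \tilde{\Abf} X + \tfrac12 X^* \cdot \tilde{\Abf}^{-1} X^* - X \cdot X^*\right) \\
= \tfrac12 X \cdot (\mathbf{M} - \tilde{\Abf}) X + \tfrac12 X^* \cdot (\mathbf{N} - \tilde{\Abf}^{-1}) X^*.
\end{multline*}
The plan thus reduces to bounding $\|\mathbf{M} - \tilde{\Abf}\|$ and $\|\mathbf{N} - \tilde{\Abf}^{-1}\|$ in terms of $\theta := \sup_{X \in B_1} \sqrt{J(V, X, \tilde{\Abf} X)}$.

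To bound $\|\mathbf{N} - \tilde{\Abf}^{-1}\|$, I complete the square in $X^*$:
$$J(V, X, X^*) = \tfrac{1}{2}(X^* - \mathbf{N}^{-1} X) \cdot \mathbf{N}(X^* - \mathbf{N}^{-1} X) + \left(\mu(V, X) - \tfrac{1}{2} X \cdot \mathbf{N}^{-1} X\right).$$
Nonnegativity $J \geq 0$ (take $S=0$ in \eqref{e.compact.J}) forces the second summand to be nonnegative, so in particular $\mathbf{M} \geq \mathbf{N}^{-1}$---a finite-volume analogue of convex duality. Specializing to $X^* = \tilde{\Abf} X$ with $|X| \le 1$ and using $\mathbf{N} \geq c(d,\Lambda) I$, the first summand yields
$$c \, |\tilde{\Abf} X - \mathbf{N}^{-1} X|^2 \leq 2 J(V, X, \tilde{\Abf} X) \leq 2\theta^2,$$
so $\|\tilde{\Abf} - \mathbf{N}^{-1}\| \leq C(d,\Lambda) \, \theta$. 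Because $\tilde{\Abf}$ and $\mathbf{N}$ are uniformly bounded above and below (by \eqref{e.Abf.Gamma} and Lemma~\ref{l.basicJ}), the identity $\mathbf{N} - \tilde{\Abf}^{-1} = \mathbf{N}(\tilde{\Abf} - \mathbf{N}^{-1})\tilde{\Abf}^{-1}$ upgrades this to $\|\mathbf{N} - \tilde{\Abf}^{-1}\| \leq C(\Gamma, d, \Lambda) \, \theta$. A fully symmetric argument, completing the square in $X$ in place of $X^*$ and specializing to $X = \tilde{\Abf}^{-1} X^*$ (equivalently reading $X^* = \tilde{\Abf} X$), yields $\|\mathbf{M} - \tilde{\Abf}\| \leq C(\Gamma, d, \Lambda) \, \theta$.

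Inserting these two bounds into the decoupled decomposition above and using $|X|, |X^*| \leq 1$ delivers \eqref{e.minimalset}. I anticipate no genuine obstacle: the special additive structure $J = \mu(X) + \mu^*(X^*) - X \cdot X^*$, together with the uniform convexity of Lemma~\ref{l.basicJ} and the positivity $J \geq 0$, is precisely what allows information on the $2d$-dimensional slice $\{X^* = \tilde{\Abf} X\}$ to propagate to all of $B_1 \times B_1$ by pure quadratic manipulation. The only mildly delicate point is the passage from $\|\tilde{\Abf} - \mathbf{N}^{-1}\| \leq C\theta$ to $\|\mathbf{N} - \tilde{\Abf}^{-1}\| \leq C\theta$, which is where the hypothesis \eqref{e.Abf.Gamma} on $\tilde{\Abf}$ enters and forces the constant to depend on $\Gamma$.
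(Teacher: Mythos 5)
Your proof is correct and follows essentially the same strategy as the paper's: both arguments quantify the convex duality defect between $\mu$ and $\mu^*$ using the splitting of Lemma~\ref{l.Jsplitting}, the nonnegativity of $J$ (equivalently the ordering \eqref{e.ordering}), and the uniform convexity bounds of Lemma~\ref{l.basicJ}. The only difference is bookkeeping: the paper routes the comparison through the Legendre--Fenchel transform $H$ of $\mu$ together with the identity $\mu(X)=\frac12 X\cdot\nabla\mu(X)$, whereas you represent $\mu,\mu^*$ by matrices $\mathbf{M},\mathbf{N}$ and complete squares, which amounts to the same computation.
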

\begin{proof}
Since the domain~$V$ plays no role in the argument, we drop the explicit dependence on~$V$. 
Denote 
\begin{equation*} \label{}
\delta^2 :=  \sup_{X  \in B_1} J \left(X, \tilde{\Abf} X \right).
\end{equation*}
To avoid a conflict in the notation, we denote the Legendre-Fenchel transform (convex dual function) of $\mu$ by
\begin{equation*} \label{}
H(X^*):= \sup_{X\in\R^{2d}} \left( X \cdot X^* - \mu(X) \right).  
\end{equation*}
It is clear from~\eqref{e.ordering} that 
\begin{equation*} \label{}
H(X^*) \leq \mu^*(X^*). 
\end{equation*}
Thus, by~\eqref{e.Jsplitting}, for every $X\in B_1$, 
\begin{multline} 
\label{e.zerodelta2}
0\leq \mu(X) + H(\tilde{\Abf} X) - X \cdot \tilde{\Abf} X 
\\
\leq 
\mu(X) + \mu^*(\tilde{\Abf} X) - X \cdot \tilde{\Abf} X 
= J(X,\tilde{\Abf} X) \leq \delta^2. 
\end{multline}
This implies that, for every $X \in B_1$, 
\begin{equation} 
\label{e.calledthatdual}
\left| \mu^*(\tilde{\Abf} X) -  H(\tilde{\Abf} X) \right| \leq \delta^2.
\end{equation}
For each $X\in \R^{2d}$, the minimum of the map $X^* \mapsto \mu(X) + H(X^*) - X\cdot X^*$ is zero and it is achieved at $X^*$ for which~$X^* = \nabla \mu(X)$. By uniform convexity (quadratic response) and \eqref{e.zerodelta2}, we deduce that, for every $X \in B_1$,
\begin{equation} \label{}
\left| \tilde{\Abf}X - \nabla \mu(X) \right|^2 \leq C\delta^2. 
\end{equation}
Using the expression
\begin{equation*} \label{}
\mu(X) =  \frac12 X \cdot \nabla \mu(X) 
\end{equation*}
we obtain, for every $X\in B_1$, 
\begin{equation*} \label{}
\left| \frac12 X \cdot \tilde{\Abf}X - \mu(X) \right| \leq C\delta. 
\end{equation*}
From this, uniform convexity and~\eqref{e.Abf.Gamma}, we obtain, for every $X^*\in B_1$,
\begin{equation*} \label{}
\left| \frac12 X^* \cdot \tilde{\Abf}^{-1} X^* - H(X^*) \right| \leq C\delta. 
\end{equation*}
Hence by~\eqref{e.calledthatdual},~\eqref{e.Abf.Gamma} again,
\begin{equation*} \label{}
\left| \frac12 X^* \cdot \tilde{\Abf}^{-1} X^* - \mu^*(X^*) \right| \leq C\delta. 
\end{equation*}
The formula~\eqref{e.Jsplitting} now yields the lemma. 
\end{proof}

We decompose the estimate for $J(\cud_n,X, \Abfh X)$ into three steps. In the first step, we identify a convenient finite-volume approximation of the homogenized matrix $\Abfh$. We next control the expectation of $J(\cud_n,X,\Abfh X)$ in Subsection~\ref{ss.expec}. We finally use the subadditivity of $J$ in Subsection~\ref{ss.fluc} to deduce a control of the fluctuations of $J(\cud_n,X, \Abfh X)$, and complete the proof of Theorem~\ref{t.subadd}.

\subsection{The coarsened mapping}

Recall that $S(\cdot, V, X, X^*)$ denotes the unique maximizer in the definition of $J(V, X, X^*)$, see \eqref{e.compact.J}.
We let $\Abfh_V \in \R^{2d \times 2d}$ be the symmetric matrix such that, for every $X^* \in \R^{2d}$,
\begin{equation*}  
\E[J(V,0,X^*)] = \frac 1 2 X^* \cdot \Abfh_V^{-1} X^*.
\end{equation*}
By \eqref{e.JunifconvC11q}, there exists $C(d,\Lambda) < \infty$ such that 
\begin{equation*}  
\frac 1 C \, I_{2d} \le \Abfh_{V} \le C \, I_{2d},
\end{equation*}
and by \eqref{e.JderX*}, 
\begin{equation*}  
\E \Ll[ \fint_V S(\cdot,V,0,X^*) \Rr] = \Abfh_V^{-1} X^*.
\end{equation*}
Recalling also \eqref{e.spatav.X0} and the linearity of the mapping $(X,X^*) \mapsto S(\cdot,V,X,X^*)$, we thus see that the matrix $\Abfh_V$ is such that, for every $X \in \R^{2d}$,
\begin{equation}
\label{e.spatav.Abfh}
\E \Ll[ \fint_V S(\cdot,V,X,\Abfh_V X) \Rr] = 0.
\end{equation}
We note that by Lemmas~\ref{l.Jsplitting} and \ref{l.basicJ}, for each $X \in \R^{2d}$, the mapping 
\begin{equation*}  
X^* \mapsto \E[J(V,X,X^*)] = \E[\mu(V,X)] + \E[\mu^*(V,X^*)] - X \cdot X^*
\end{equation*}
is uniformly convex, and achieves its unique minimum at $X^*$ satisfying
\begin{equation*}  
\E[\nabla_{X^*} \mu^*(V,X^*)] = X.
\end{equation*}
Moreover, the latter condition is equivalent to $X^* = \Abfh_V X$. We thus deduce that for every $X,X^* \in \R^{2d}$,
\begin{equation}
\label{e.quad.resp.Abfh}
\E[J(V,X,\Abfh_V X)] \le \E[J(V,X,X^*)] \le \E[J(V,X,\Abfh_V X)] + C |X^* - \Abfh_V X|^2.
\end{equation}
We use the shorthand notation 
\begin{equation}  
\label{e.def.abfh}
\Abfh_n := \Abfh_{\cud_n}.
\end{equation}

\subsection{Control of the expectation of \texorpdfstring{$J$}{J}}
\label{ss.expec}

The goal of this subsection is to prove the following proposition.

\begin{proposition}[{Decay of $\E[J]$}]
\label{p.control.expec}
There exist~$\be(d,\Lambda) > 0$ and~$C(d,\Lambda) < \infty$ such that, for every $n\in\N$ and $X\in B_1(\R^{2d})$,
\begin{equation}
\label{e.control.expec}
\E \left[ J(\cud_n,X,\Abfh  X)\right] \leq C3^{-\beta n}. 
\end{equation}
\end{proposition}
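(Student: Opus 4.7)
The strategy adapts the renormalization scheme of \cite[Chapter~2]{AKMBook} to the parabolic setting. The plan is to monitor the nonnegative quantity
\[
\tau_n := \sup_{X \in B_1(\R^{2d})} \E\Ll[ J(\cud_n, X, \Abfh_n X) \Rr],
\]
which by \eqref{e.quad.resp.Abfh} is the minimum over $X^*$ of $X^* \mapsto \E[J(\cud_n, X, X^*)]$ and thus captures the ``convex duality defect'' of $\E[\mu(\cud_n,\cdot)]$ versus $\E[\mu^*(\cud_n,\cdot)]$ at scale $n$. Subadditivity of $J$ (\lref{basicJ}), the definition \eqref{e.def.abfh} and the limit \eqref{e.defAbfh} together show that $\Abfh_n$ is monotone in $n$ and converges to $\Abfh$, and \eqref{e.quad.resp.Abfh} yields the two-sided comparison
\[
\E[J(\cud_n, X, \Abfh_n X)] \;\leq\; \E[J(\cud_n, X, \Abfh X)] \;\leq\; \E[J(\cud_n, X, \Abfh_n X)] + C\,|\Abfh_n - \Abfh|^2.
\]
It therefore suffices to show that both $\tau_n$ and $|\Abfh_n - \Abfh|^2$ decay algebraically.

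The first step is to extract a decrement inequality from subadditivity plus quadratic response. Partition $\cud_{n+1} = \bigsqcup_{i=1}^{N} V_i$ with $V_i = z_i + \cud_n$ and $N = 3^{2+d}$, and write $S^{(n+1)} := S(\cdot, \cud_{n+1}, X, X^*)$ and $S^{(i)} := S(\cdot, V_i, X, X^*)$. The identification in \lref{identif.S} guarantees that $S^{(n+1)}\vert_{V_i} \in \S(V_i)$, since restrictions of $\a$-caloric and $\a^\T$-adjoint caloric functions remain such on subdomains. Inserting this restriction as a competitor in the variational problem for $J(V_i,X,X^*)$, applying the quadratic response \eqref{e.Jquadresponse}, summing with weights $|V_i|/|\cud_{n+1}|$ and taking expectations yields, by stationarity and for any fixed $i$,
\begin{equation}
\label{e.plan.decrement}
\E[J(\cud_n, X, X^*)] - \E[J(\cud_{n+1}, X, X^*)] \;\geq\; c\,\E\Ll[ \fint_{V_i}\Ll| S^{(n+1)} - S^{(i)} \Rr|^2 \Rr].
\end{equation}

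The second step converts the $L^2$ decrement in \eqref{e.plan.decrement} into a quantitative improvement on the finite-volume matrix. By linearity of $S(\cdot,\cdot,X,X^*)$ in $(X,X^*)$ and \rref{spatav.determined}, one has $\E[\fint_{V_i} S^{(i)}] = -X + \Abfh_n^{-1} X^*$ and $\E[\fint_{\cud_{n+1}} S^{(n+1)}] = -X + \Abfh_{n+1}^{-1} X^*$. The choice $X^* = \Abfh_{n+1} X$ forces the global average to have zero mean, while each subcube average $\fint_{V_i} S^{(n+1)}$ has mean $(\Abfh_n^{-1}\Abfh_{n+1} - I)X$. Applying the finite-range hypothesis \textbf{(P2)} across buffered subcubes to control the variance of the mismatch, together with the multiscale Poincar\'e inequality \pref{msp} to aggregate spatial-average contributions over all intermediate scales and the Caccioppoli-type estimate \pref{caccio} to upgrade the resulting $\un{\hat H}^{-1}_\pa$ bound back to $\un L^2$, I would bound the variance of $\fint_{V_i} S^{(n+1)}$ from above by the right-hand side of \eqref{e.plan.decrement} and from below by $c|\Abfh_{n+1} - \Abfh_n|^2$. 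This produces the key estimate
\[
|\Abfh_{n+1} - \Abfh_n|^2 \;\leq\; C\,(\tau_n - \tau_{n+1}) + C\,3^{-n(2+d)},
\]
and, combined with the two-sided comparison above, a differential-type inequality
\[
\tau_{n+1} \;\leq\; (1-c)\,\tau_n + C\,3^{-n(2+d)}.
\]
Iterating, summing telescopically, and absorbing yields $\tau_n + |\Abfh_n - \Abfh|^2 \leq C\,3^{-\beta n}$ for some $\beta(d,\Lambda) > 0$, from which \eqref{e.control.expec} follows.

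The main obstacle is the second step: converting the $L^2$ decrement \eqref{e.plan.decrement} into a genuine improvement on $\Abfh_{n+1} - \Abfh_n$ requires the multiscale Poincar\'e inequality together with the parabolic Caccioppoli estimate, orchestrated on the anisotropic cylinders $\cud_n$ with the correct parabolic scaling $t \sim x^2$. The fact that $\partial_t$ only lives in $\hat H^{-1}$, the requirement that buffered subcubes of parabolic shape remain independent under \textbf{(P2)}, and the need to control the variance \emph{from below} by $|\Abfh_{n+1} - \Abfh_n|^2$ all make the book-keeping significantly more delicate than in the elliptic setting of~\cite{AKMBook}. In particular, the lower bound on the variance relies crucially on the independence of subcube contributions, so one must verify that the buffer width needed for Caccioppoli localization is compatible with the range of dependence at scale $3^n$.
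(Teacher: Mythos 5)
Your first step is sound: restricting the maximizer on $\cud_{n+1}$ to a subcylinder $z+\cud_n$ does give an element of $\S(z+\cud_n)$, and quadratic response then yields exactly the paper's Lemma~\ref{l.quadresponse}; likewise, comparing the expected spatial averages $\E[\fint_{V_i}S^{(i)}]=-X+\Abfh_n^{-1}X^*$ with $\E[\fint_{\cud_{n+1}}S^{(n+1)}]=-X+\Abfh_{n+1}^{-1}X^*$ (note: only the average over the subcylinders of $\fint_{V_i}S^{(n+1)}$ is pinned down, not each one individually, and the comparison goes through \emph{expectations}, not variances -- (P2) gives upper bounds on variances, never lower bounds) does give $|\Abfh_{n+1}-\Abfh_n|^2\le C\,\sup_{X,X^*\in B_1}\bigl(\E[J(\cud_n,X,X^*)]-\E[J(\cud_{n+1},X,X^*)]\bigr)$, which is the paper's estimate $|\Abfh_n-\Abfh_{n+1}|\le C\tau_n$ with $\tau_n$ the additivity defect.

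The genuine gap is the asserted one-step contraction $\tau_{n+1}\le(1-c)\tau_n+C3^{-n(2+d)}$ for your quantity $\tau_n=\sup_X\E[J(\cud_n,X,\Abfh_nX)]$. This does not follow from $|\Abfh_{n+1}-\Abfh_n|^2\le C(\tau_n-\tau_{n+1})+C3^{-n(2+d)}$ together with the two-sided comparison: that inequality is perfectly consistent with $\tau_n$ staying of order one while its decrements are tiny, and nothing in your scheme forces the duality gap to shrink by a definite factor between two consecutive scales. The reason is structural: by quadratic response (with $T=0$), $\E[J(\cud_n,X,\Abfh_nX)]$ is comparable to $\E\|S(\cdot,\cud_n,X,\Abfh_nX)\|_{\un L^2(\cud_n)}^2$, and smallness of this $L^2$ norm is obtained only by aggregating information over \emph{all} smaller scales: the variance of the spatial average of $S$ is reduced by a constant factor per scale using (P2) and the additivity defect at that scale (Lemma~\ref{l.spatavg.S}), the multiscale Poincar\'e inequality then converts spatial-average control at every scale $m\le n$ into an $\un{\hat H}^{-1}_\pa$ bound carrying a factor $3^{(2-\alpha)n}$ (Lemma~\ref{l.weak.S}), and Caccioppoli recovers the $3^{-2n}$, producing the bound $\E[J(\cud_n,X,\Abfh_nX)]\le C3^{-\alpha n}\bigl(1+\sum_{k<n}3^{\alpha k}\tau_k\bigr)$ of Proposition~\ref{p.Jsmall} -- a weighted-history estimate, not a two-scale contraction. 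The algebraic rate is then extracted by the ODE-type iteration on the weighted sums $\td D_n=3^{-\frac\alpha2 n}\sum_{k\le n}3^{\frac\alpha2 k}D_k$, closed by $\tau_n\le C(D_n-D_{n+1})$ (up to the basis reduction \eqref{e.sup.easy} and \eqref{e.quad.resp.Abfh}). Your proposal names all the right tools, but the step that actually produces decay -- replacing the unproved one-step contraction with the multi-scale bound plus the weighted iteration -- is missing, and without it the argument does not close.
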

The main step to prove this result is to control the size of $J$ near $(X,\Abfh X)$ in terms of the expected ``additivity defect'' of $J$ between successive triadic scales. We measure the latter using the quantity 
\begin{equation}
\label{e.def.taun}
\tau_n := \sup_{X, X^* \in B_1} \Ll( \E[J(\cud_{n},X, X^*)]  - \E[J(\cud_{n+1},X, X^*)] \Rr) .
\end{equation}

\begin{proposition}
\label{p.Jsmall}
There exist $\alpha(d) < \infty$ and $C(d,\Lambda) < \infty$ such that, for every $n \in \N$ and $X \in B_1(\R^{2d})$,
\begin{equation*}  
\E[J(\cud_{n},X, \Abfh_n X)] \le C \, 3^{-\alpha n} \Ll(1 + \sum_{k = 0}^{n-1} 3^{\alpha k } \tau_k \Rr) .
\end{equation*}
\end{proposition}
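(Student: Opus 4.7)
Let $S_n := S(\cdot,\cud_n,X,\Abfh_n X)$. Testing the first variation identity \eqref{e.Jfirstvar} with $T=S_n$ yields the compact representation
\[
J(\cud_n,X,\Abfh_n X)=\tfrac12\fint_{\cud_n}S_n\cdot\Abf S_n,
\]
which by uniform ellipticity of $\Abf$ is comparable to $\|S_n\|_{\un L^2(\cud_n)}^2$. Moreover, by the defining property \eqref{e.spatav.Abfh} of $\Abfh_n$, the choice $X^*=\Abfh_n X$ forces $\E[(S_n)_{\cud_n}]=0$. Writing $\fint |S_n|^2 = |(S_n)_{\cud_n}|^2 + \fint |S_n - (S_n)_{\cud_n}|^2$ by Pythagoras, the problem reduces to bounding the variance $\E[|(S_n)_{\cud_n}|^2]$ and the fluctuation $\E[\|S_n-(S_n)_{\cud_n}\|_{\un L^2(\cud_n)}^2]$ separately.

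\textbf{Variance via subcube localization and independence.} Fix an intermediate scale $m<n$ and partition $\cud_n$ into the cubes $\{z+\cud_m\}_{z\in\mcl Z_m}$. For each $z$, introduce $S_m^{(z)}:=S(\cdot,z+\cud_m,X,\Abfh_m X)$, so that by stationarity $\E[(S_m^{(z)})_{z+\cud_m}]=0$. I would compare $(S_n)_{z+\cud_m}$ to $(S_m^{(z)})_{z+\cud_m}$ by combining the subadditivity \eqref{e.Jsubadd} with the quadratic response \eqref{e.Jquadresponse}: the piecewise-defined map $T_m$ with $T_m|_{z+\cud_m}:=S_m^{(z)}$ satisfies a bound whose expectation telescopes to $\sum_{k=m}^{n-1}\tau_k$, and the discrepancy between $\Abfh_mX$ and $\Abfh_nX$ produces a further contribution of the same order via \eqref{e.quad.resp.Abfh}. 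Then the finite-range-of-dependence assumption (P2) makes the family $\{S_m^{(z)}\}_{z\in\mcl Z_m}$ essentially independent, so the variance of $|\mcl Z_m|^{-1}\sum_z(S_m^{(z)})_{z+\cud_m}$ is $O(|\mcl Z_m|^{-1})=O(3^{-(2+d)(n-m)})$.

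\textbf{Fluctuation via multiscale Poincar\'e and Caccioppoli.} For the fluctuation term, Proposition~\ref{p.msp} bounds the norm $\|S_n-(S_n)_{\cud_n}\|_{\un{\hat H}^{-1}_\pa(\cud_n)}$ by a weighted sum
\[
C\,\|S_n\|_{\un L^2(\cud_n)}+C\sum_{m=0}^{n-1}3^{m}\biggl(|\mcl Z_m|^{-1}\sum_{z\in\mcl Z_m}|(S_n)_{z+\cud_m}|^2\biggr)^{\!1/2},
\]
each of whose inner sums is controlled exactly as in the previous step. I then apply the Caccioppoli-type inequality (Proposition~\ref{p.caccio}) to $S_n\in\S(\cud_n)$, which transforms an $\un{\hat H}^{-1}_\pa(\cud_n)$-bound on a slightly larger cylinder into an $\un L^2(\cud_{n-1})$-bound with a gain of $3^{-n}$; this gain is what balances the $3^m$ growth in the multiscale Poincar\'e sum and produces an algebraic rate $3^{-\alpha n}$ when optimized over the intermediate scale $m$. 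The passage from $\|S_n\|_{\un L^2(\cud_{n-1})}$ to $\|S_n\|_{\un L^2(\cud_n)}$ can be handled via the Meyers higher integrability estimate from Appendix~\ref{s.meyers}, giving the final inequality $\E[J(\cud_n,X,\Abfh_n X)]\le C\,3^{-\alpha n}(1+\sum_{k=0}^{n-1}3^{\alpha k}\tau_k)$.

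\textbf{Main obstacle.} The technical heart of the argument is bookkeeping the cascade of comparison errors so that each of them ultimately contributes at most $C\,3^{-\alpha n}\sum 3^{\alpha k}\tau_k$ rather than blowing up the estimate. Three sources of error require delicate treatment: replacing $\Abfh_n X$ by $\Abfh_m X$ in $S_m^{(z)}$ (to obtain a mean-zero quantity in expectation), handling the small buffer zones in which the subcube maximizers are not strictly independent under (P2), and matching the natural scale of the Caccioppoli inequality (which forces a passage through $\cud_{n-1}$) with the scale $\cud_n$ appearing in the definition of $J$.
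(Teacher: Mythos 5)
Your overall architecture matches the paper's: the identity $J(\cud_n,X,\Abfh_nX)=\tfrac12\fint S_n\cdot\Abf S_n\sim\|S_n\|_{\un L^2}^2$, the vanishing of $\E\bigl[(S_n)_{\cud_n}\bigr]$ from \eqref{e.spatav.Abfh}, the checkerboard/independence bound for variances of spatial averages, quadratic response and subadditivity to pay in units of $\tau_k$, and the multiscale Poincar\'e plus Proposition~\ref{p.caccio} pipeline are exactly the ingredients used in Lemmas~\ref{l.spatavg.S}--\ref{l.weak.S} and the proof of Proposition~\ref{p.Jsmall}. However, there are two concrete gaps. First, your variance step stops at the single-scale comparison ``$\var$ at scale $n$ $\le C3^{-(2+d)(n-m)}\cdot\var$ at scale $m$ $+C\sum_{k=m}^{n-1}\tau_k$'' (this is the paper's \eqref{e.estim_varS}) and then proposes to optimize over one mesoscale $m$. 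This cannot yield the stated bound: if $n-m$ is bounded the first term does not decay in $n$, and if $n-m\to\infty$ the error $\sum_{k=m}^{n-1}\tau_k$ carries weight $1$ on the $\tau_k$ with $k$ close to $n$, whereas the proposition requires the geometrically decaying weights $3^{-\alpha(n-k)}$ for \emph{all} $k<n$ (and this matching of exponents is exactly what the subsequent ODE-type iteration in Proposition~\ref{p.control.expec} needs). The missing idea is to iterate \eqref{e.estim_varS} across scales as a contraction (choose $\ell$ with $C3^{-(2+d)\ell}\le\frac13$ and run the recursion $v_n\le\frac13 v_{n-1}+C\sum\tau_k$), which is precisely how the paper proves Lemma~\ref{l.spatavg.S}; the same issue propagates into your treatment of the inner sums in the multiscale Poincar\'e step.

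Second, the proposed use of the Meyers estimate to pass from $\|S_n\|_{\un L^2(\cud_{n-1})}$ to control at scale $\cud_n$ is not correct: Meyers-type (reverse H\"older) estimates control interior norms by norms on a larger cylinder, never the reverse, and no interior bound can constrain $S_n$ near $\partial_\sqcup\cud_n$. The paper closes this step differently: from the bound on $\E\|S(\cdot,\cud_n,X,\Abfh_nX)\|^2_{\un L^2(\cud_{n-1})}$, quadratic response (Lemma~\ref{l.quadresponse}) transfers it to $S(\cdot,\cud_{n-1},X,\Abfh_nX)$ at the absorbable cost of $C\tau_{n-1}$, stationarity extends it to every translate $z+\cud_{n-1}$, $z\in\mcl Z_{n-1}$, and a second application of quadratic response (or subadditivity) over this partition of $\cud_n$ recovers $\E[J(\cud_n,X,\Abfh_nX)]$. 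With these two repairs—an iteration over scales in the variance estimate and the quadratic-response/stationarity covering argument in place of Meyers—your outline becomes essentially the paper's proof.
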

As will be explained below, Proposition~\ref{p.control.expec} follows from Proposition~\ref{p.Jsmall} by iteration, in analogy with an ODE argument. We focus for now on the proof of Proposition~\ref{p.Jsmall}, and start by rewriting the quadratic response \eqref{e.Jquadresponse} in a more convenient form.
\begin{lemma}[Quadratic response]
\label{l.quadresponse}
There exists a constant $C(d,\Lambda) < \infty$ such that the following holds. Let $V$, $V_1,\ldots, V_k$ be parabolic cylinders such that $\{V_1,\ldots,V_k\}$ forms a partition of $V$, up to a set of null measure. For every $X,X^*\in \R^{2d}$, we have
\begin{multline*} \label{}
\sum_{j=1}^k\frac{|V_j|}{|V|} \left\| S (\cdot,V,X,X^*) - S(\cdot,V_j,X,X^*) \right\|^2_{\underline{L}^2(V_j)} \\
 \leq C \sum_{j=1}^k \frac{|V_j|}{|V|}\left( J(V_j,X,X^*) -  J(V,X,X^*) \right).
\end{multline*}
\end{lemma}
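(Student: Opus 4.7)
The plan is to use the global maximizer $S_V := S(\cdot, V, X, X^*)$, restricted to each subcylinder $V_j$, as a test field in the local quadratic response inequality \eqref{e.Jquadresponse} on $V_j$, and then to sum over $j$. The main technical point---and the only step requiring genuine verification---is that the restriction $S_V|_{V_j}$ is admissible, in the sense that it belongs to $\S(V_j)$ for each $j$.

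To check admissibility, write $S_V = (\nabla v, \h)$ with $v \in H^1_\pa(V)$. Membership in $\C(V_j)$ is immediate since the condition in \eqref{e.def.candidate*} is local. For the orthogonality condition defining $\S(V_j)$, I would take any $(\nabla \phi, \f) \in \C_0(V_j)$ and extend both $\phi$ and $\f$ by zero to $V$. Because elements of $H^1_{\pa, 0}(V_j)$ are limits of smooth functions with compact support strictly inside $V_j \subseteq V$, the zero extensions lie in $H^1_{\pa, 0}(V)$; a direct check confirms that the integration-by-parts compatibility between $\phi$ and $\f$ in the definition of $\C_0$ is preserved under zero extension. The extended pair therefore lies in $\C_0(V)$, so the orthogonality against $\Abf S_V$ on $V$ inherited from $S_V \in \S(V)$ restricts to the desired identity on $V_j$.

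With admissibility established, I would apply the lower bound of \eqref{e.Jquadresponse} on each $V_j$ with $T := S_V|_{V_j}$, obtaining
\begin{equation*}
\frac{1}{C} \fint_{V_j} \left| S_V - S(\cdot, V_j, X, X^*) \right|^2 \leq J(V_j, X, X^*) - \fint_{V_j}\left( -\tfrac12 S_V \cdot \Abf S_V - X \cdot \Abf S_V + X^* \cdot S_V \right).
\end{equation*}
Multiplying by $|V_j|/|V|$ and summing over $j$, the integrals on the right combine via the partition into $\fint_V$ of the same integrand, which by the definition of $S_V$ as the maximizer in \eqref{e.compact.J} equals $J(V, X, X^*)$. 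Using $\sum_j |V_j|/|V| = 1$ to rewrite $J(V, X, X^*)$ as a weighted sum over $j$, we recover exactly the claimed inequality. Apart from the admissibility check, the argument parallels the elliptic prototype \cite[Lemma 2.2]{AKMBook}.
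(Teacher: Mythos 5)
Your proof is correct and is essentially the paper's argument: apply the quadratic response \eqref{e.Jquadresponse} on each $V_j$ with the restricted global maximizer $S(\cdot,V,X,X^*)$ as test field, multiply by $|V_j|/|V|$, sum, and use \eqref{e.compact.J} to identify the combined integral with $J(V,X,X^*)$. The admissibility check that the restriction lies in $\S(V_j)$ (via zero extension of $\C_0(V_j)$ pairs) is left implicit in the paper, and your verification of it is sound.
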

\begin{proof}
Denote $T := S(\cdot,V,X,X^*)$. Applying \eqref{e.Jquadresponse} on the subdomain $V_j$ for each $j \in \{1,\ldots,k\}$, we get
\begin{multline*}  
\frac 1 C \int_{V_j} \Ll| T - S(\cdot,V_j,X,X^*) \Rr|^2 \\
\le 
|V_j| \, J(V_j,X,X^*) - \int_{V_j} \left( -\frac12  T \cdot \Abf  T  -X\cdot\Abf   T + X^*\cdot   T  \right).
\end{multline*}
Summing over $j$ and recalling \eqref{e.compact.J} yields the result.
\end{proof}

We next show that the spatial averages of $S$ can be controlled by an expression involving the additivity defects of $J$ on all smaller length scales. 
We denote
\begin{align*}
\ov S_n(X,X^*) & := \E \Ll[ \fint_{\cud_n} S(\cdot, \cud_n, X, X^*) \Rr] . 
\end{align*}
\begin{lemma}  
\label{l.spatavg.S}
There exist $\alpha(d) < \infty$ and $C(d,\Lambda) < \infty$  such that, for every $n \in \N$ and $X, X^* \in B_1(\R^{2d})$, we have
\begin{equation}  
\label{e.spatavS}
\E \Ll[ \Ll| \fint_{\cud_n}S(\cdot, \cud_n, X, X^*) - \ov S_n(X,X^*) \Rr|^2  \Rr] \\
\le C  3^{-\alpha n}\Ll( 1+ \sum_{k=0}^{n-1} 3^{\alpha k}\tau_k \Rr).
\end{equation}
\end{lemma}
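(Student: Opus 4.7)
The strategy is to decompose the spatial average telescopically across triadic scales, apply a variance estimate at each scale via a standard coloring argument combined with the unit range of dependence, and bound second moments via the quadratic response of Lemma~\ref{l.quadresponse}. For each $m \in \{0,\ldots,n\}$ set
\[
\bar S_m^{(n)} := \frac{1}{|\mcl Z_m|}\sum_{z \in \mcl Z_m}\fint_{z+\cud_m} S(\cdot, z+\cud_m, X, X^*),
\]
so that $\bar S_n^{(n)} = \fint_{\cud_n}S(\cdot,\cud_n,X,X^*) =: T_n$ and, by stationarity, $\E[\bar S_m^{(n)}] = \ov S_m(X,X^*)$. Telescoping and centering yield
\[
T_n - \ov S_n = \bigl(\bar S_0^{(n)} - \ov S_0\bigr) + \sum_{j=0}^{n-1} D_j^{(n)}, \qquad D_j^{(n)} := (\bar S_{j+1}^{(n)} - \bar S_j^{(n)}) - (\ov S_{j+1} - \ov S_j),
\]
and I would apply Minkowski's inequality in $L^2(\P;\R^{2d})$ to reduce matters to bounding each summand.

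For a fixed $j$, rewrite $D_j^{(n)} = |\mcl Z_{j+1}|^{-1}\sum_{z \in \mcl Z_{j+1}}(X_{z,j} - \E X_{z,j})$, where
\[
X_{z,j} := \fint_{z+\cud_{j+1}} S(\cdot, z+\cud_{j+1}) - 3^{-(2+d)}\sum_{z' \in \mcl Z_j \cap (z+\cud_{j+1})} \fint_{z'+\cud_j} S(\cdot, z'+\cud_j),
\]
so that $X_{z,j}$ is $\F_{z+\cud_{j+1}}$-measurable. Grouping elements of $\mcl Z_{j+1}$ by the residues modulo~$2$ of their coordinates in the underlying lattice $3^{2(j+1)}\Z \times (3^{j+1}\Z)^d$ partitions the index set into $2^{d+1}$ color classes whose distinct members differ by a translation in $2\cdot 3^{2(j+1)}\Z \times (2\cdot 3^{j+1}\Z)^d$, making any two of the corresponding parabolic cubes $z+\cud_{j+1}$ lie at Euclidean distance at least $3^{j+1} \ge 1$; hypothesis (P2), together with the passage from pairwise to mutual independence, delivers joint independence of the $X_{z,j}$ within each color class. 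Consequently
\[
\bigl\| D_j^{(n)}\bigr\|_{L^2(\P)}^2 \le \frac{C(d)}{|\mcl Z_{j+1}|}\sup_{z \in \mcl Z_{j+1}}\E\bigl[|X_{z,j}|^2\bigr].
\]
Expressing $X_{z,j}$ as the average over the subcubes $z'$ of $\fint_{z'+\cud_j}\bigl(S(\cdot,z+\cud_{j+1}) - S(\cdot, z'+\cud_j)\bigr)$, two applications of Jensen's inequality give
\[
|X_{z,j}|^2 \le 3^{-(2+d)}\sum_{z'}\|S(\cdot, z+\cud_{j+1}) - S(\cdot, z'+\cud_j)\|^2_{\un L^2(z'+\cud_j)},
\]
and Lemma~\ref{l.quadresponse} applied on $V = z+\cud_{j+1}$ with the subcubes $V_{z'} = z'+\cud_j$, combined with stationarity, yields $\E[|X_{z,j}|^2] \le C\tau_j$. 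Hence $\|D_j^{(n)}\|_{L^2(\P)} \le C\tau_j^{1/2}\, 3^{-(2+d)(n-j-1)/2}$. The initial error $\|\bar S_0^{(n)} - \ov S_0\|_{L^2(\P)} \le C\, 3^{-(2+d)n/2}$ is handled identically, using the uniform a-priori second-moment bound on $S(\cdot,\cud_0,X,X^*)$ for $X, X^* \in B_1$ that follows from the quadratic response in Lemma~\ref{l.basicJ} together with the uniform upper bound on $J(\cud_0,X,X^*)$.

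Setting $\alpha := (2+d)/2$, Minkowski's inequality applied to the telescoping identity followed by Cauchy--Schwarz against the convergent geometric weight $\sum_{j=0}^{n-1} 3^{-\alpha(n-j-1)}$ produces
\[
\E\bigl[|T_n - \ov S_n|^2\bigr] \le C\, 3^{-2\alpha n} + C\sum_{j=0}^{n-1}\tau_j\, 3^{-\alpha(n-j-1)} \le C\, 3^{-\alpha n}\Bigl(1 + \sum_{k=0}^{n-1} 3^{\alpha k}\tau_k\Bigr),
\]
which is the claimed inequality with this choice of $\alpha$. The only substantive point is the coloring argument: one must verify that cubes of the same color remain at Euclidean distance at least one under the parabolic (rather than isotropic) scaling, after which the remainder of the proof is a routine $L^2(\P)$ Minkowski computation.
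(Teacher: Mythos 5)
Your proof is correct, but it is organized differently from the paper's. The paper compares scale $n$ directly to a single smaller scale $m$: it uses the first variation (testing against $S(\cdot,z+\cud_m,0,X')$), Cauchy--Schwarz and the quadratic response of Lemma~\ref{l.quadresponse} to derive the recursion $\var\bigl[\fint_{\cud_n}S\bigr]\le C3^{-(2+d)(n-m)}\var\bigl[\fint_{\cud_m}S\bigr]+C\,\E\bigl[J(\cud_m)-J(\cud_n)\bigr]$, then fixes a step $\ell$ so that the prefactor is a contraction and iterates, which produces a small exponent $\alpha=1/\ell$ depending only on $d$. You instead telescope through \emph{all} consecutive triadic scales, write each increment as an average of centered, $\F_{z+\cud_{j+1}}$-measurable local variables, get variance additivity from the checkerboard coloring plus (P2), and bound each local second moment by $C\tau_j$ using only Jensen and Lemma~\ref{l.quadresponse} (no first variation, no ODE-type iteration). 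This buys an explicit exponent $\alpha=(2+d)/2$ (the CLT scaling) and a shorter argument; the paper's route yields a smaller, implicit $\alpha$, but since the lemma only asserts existence of some finite $\alpha(d)$ and the downstream arguments work for any positive $\alpha$, both versions serve equally well. One phrasing caveat: ``the passage from pairwise to mutual independence'' is not an abstract implication---pairwise independence does not imply mutual independence in general; the correct justification is to apply (P2) with one cube against the union of the remaining same-color cubes (which is still at distance at least $1$) and induct, exactly as the paper implicitly does. This is a standard point and does not affect the validity of your argument.
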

\begin{proof}
For any $X'\in B_1$, $m \le n$ and $z\in \mcl Z_m$, the first variation \eqref{e.Jfirstvar} gives
\begin{multline*}
X'\cdot \fint_{z+\cud_m} \left(S(\cdot,\cud_n,X,X^*)-S(\cdot,z+\cud_m,X,X^*)\right)\\=\fint_{z+\cud_m}S(\cdot,z+\cud_m,0,X')
 \cdot \Abf \left(S(\cdot,\cud_n,X,X^*)-S(\cdot,z+\cud_m,X,X^*)\right).
\end{multline*}
Averaging over $z\in \mcl Z_m$ and using the Cauchy-Schwarz inequality yields
\begin{multline}
|\mcl Z_m|^{-1}\cdot\left|X'\cdot \sum_{z\in \mcl Z_m} \fint_{z+\cud_m}\left( S(\cdot,\cud_n,X,X^*)-S(\cdot,z+\cud_m,X,X^*)\right)\right| \\
\le  \left( |\mcl Z_m|^{-1}\sum_{z\in \mcl Z_m} \fint_{z+\cud_m} \Ll| \Abf S(\cdot,z+\cud_m,0,X')\Rr|^2 \right)^\frac12\\
\cdot \left( |\mcl Z_m|^{-1 }\sum_{z\in \mcl Z_m} \fint_{z+\cud_m}| S(\cdot,\cud_n,X,X^*)-S(\cdot,z+\cud_m,X,X^*)|^2 \right)^\frac12.
\end{multline}
The first term on the right side is bounded by a constant $C(d,\Lambda) < \infty$. We use Lemma~\ref{l.quadresponse} to bound the second term and obtain
\begin{multline*}
\Ll||\mcl Z_m|^{-1} \sum_{z\in \mcl Z_m} \fint_{z+\cud_m} | S(\cdot,\cud_n,X,X^*)-S(\cdot,z+\cud_m,X,X^*)|^2\Rr|  \\
   \le  \frac{C}{|\mcl Z_m|}  \sum_{z\in \mcl Z_m} \Ll(J(z+\cud_m,X,X^*)-J(\cud_n,X,X^*)\Rr).
\end{multline*}
Now, we can estimate the variance of $\fint_{\cud_n}S(\cdot,\cud_n,X,X^*)$ using those at scale $m$ :
\begin{multline*}
\var\Ll[  \fint_{\cud_n} S(\cdot,\cud_n,X,X^*)     \Rr]  \le 2 \var\Ll[  3^{-(d+2)(n-m)} \sum_{z\in \mcl Z_m} \fint_{z+\cud_m}S(\cdot,z+\cud_m,X,X^*)  \Rr] \\
+ C \E\Ll[  J(\cud_m,X,X^*)-J(\cud_n,X,X^*)  \Rr].
\end{multline*}
For $m,n \in \N$, $m \le n$,  we can decompose $\mcl Z_m$ into a  union of $2^{d+1}$ ``checkerboard'' subsets $\mcl Z^{(1)}, \ldots, \mcl Z^{(2^{1+d})}$ to ensure that for each $i \in \{1,\ldots, 2^{1+d}\}$, 
$$
(z,z')\in \mcl Z^{(i)} \implies \dist(z+\cud_m,z'+\cud_m) \ge 1.
$$ 
For example, to any $i\in \{1,\ldots,2^{d+1}\}$ we can associate $(i_0,\ldots,i_d)\in\{0,1\}^{d+1}$, and then set
\begin{equation}\label{e.defZi}
Z^{(i)}:=\Ll((i_0 3^{2m},i_1 3^m,\ldots,i_d 3^m)+2\Ll(  \left(3^{2m}\Z\right)\times\left( 3^m\Z^d\right)  \Rr)\Rr)\cap \cud_n.
\end{equation}
Thus, we obtain the following bound
\begin{multline*}
\var\Ll[ \sum_{z\in \mcl Z_m} \fint_{z+\cud_m} S(\cdot,z+\cud_m,X,X^*) \Rr]
\\
 \le C(d)\sum_{i=1}^{2^{d+1}} \var\Ll[  \sum_{z\in \mcl Z^{(i)}}  \fint_{z+\cud_m} S(\cdot,z+\cud_m,X,X^*) \Rr],
\end{multline*}
and by independence at distance larger than one and stationarity :
\begin{equation*}
\var\Ll[ \sum_{z\in \mcl Z_m} \fint_{z+\cud_m} S(\cdot,z+\cud_m,X,X^*)\Rr]  \le C 3^{(d+2)(n-m)} 
\var\Ll[    \fint_{\cud_m} S(\cdot,\cud_m,X,X^*) \Rr].
\end{equation*}
We can now estimate the variance of the spatial average of $S$ at scale $n$ by the variance at smaller scales :
\begin{multline}\label{e.estim_varS}
\var\Ll[  \fint_{\cud_n} S(\cdot,\cud_n,X,X^*)     \Rr]  \le  C 3^{-(d+2)(n-m)}  \var\Ll[  \fint_{\cud_m} S(\cdot,\cud_m,X,X^*)\Rr] \\
+ C \E\Ll[  J(\cud_m,X,X^*)-J(\cud_n,X,X^*)  \Rr].
\end{multline}
Selecting $\ell$ to be the smallest integer such that $C3^{-(d+2)\ell}\le \frac13$, we get
\begin{multline*}
	\var\Ll[  \fint_{\cud_{m+\ell}} S(\cdot,\cud_{m+\ell},X,X^*)     \Rr] \le  \frac13  \var\Ll[  \fint_{\cud_m}S(\cdot,\cud_m,X,X^*) \Rr]\\
	 + C \E\Ll[  J(\cud_{m},X,X^*)-J(\cud_{m+\ell},X,X^*)  \Rr].
\end{multline*}	
We introduce 
\[
u_n := \var\Ll[  \fint_{\cud_{n}} S(\cdot,\cud_{n},X,X^*) \Rr],
\]
and $v_n:=u_{n\ell}$.
We have
\begin{equation*}
v_n  \le  \frac13 v_{n-1} + C \sum_{k=(n-1)\ell}^{n\ell-1} \tau_k,
\end{equation*}
and, by induction,
\begin{equation*}
v_n \le 3^{-n}u_0 + C \sum_{i=1}^{n} 3^{-i} \sum_{k=(n-i)\ell}^{(n-i+1)\ell-1}\tau_k.
\end{equation*}
Defining $\alpha:=\frac{1}{\ell}$ (recall that $\ell$ only depends on $d$), we get
\begin{equation*}
v_n \le C\left( 3^{-n}   +  \sum_{k=0}^{n\ell-1} 3^{-\alpha (n\ell-k)}\tau_k  \right),
\end{equation*}
Thus, if $n$ is a multiple of $\ell$, we have 
\begin{equation*}
u_n \le C\left( 3^{-\alpha n}   +  \sum_{k=0}^{n-1} 3^{-\alpha (n-k)}\tau_k  \right),
\end{equation*}
and for $n=\ell n'+m$, with $0\le m < \ell$, another application of \eqref{e.estim_varS} gives the same estimate, so finally
\begin{equation*}
\var\Ll[   \fint_{\cud_n}S(\cdot,\cud_n,X,X^*)     \Rr] \le C\left( 3^{-\alpha n} +\sum_{k=0}^{n-1} 3^{-\alpha (n-k)} \tau_k \right),
\end{equation*}
which is \eqref{e.spatavS}.
\end{proof}

We can now sum the scales and deduce that $S(\cdot,\cud_n,X,X^*)$ is close to a constant in a weak sense, provided that a weighted norm of $(\tau_k)_{k < n}$ is small.
\begin{lemma}[Weak control of $ S $]
	\label{l.weak.S}
There exist $\alpha(d) < \infty$ and $C(d,\Lambda) < \infty$ such that, for every $n \in \N$ and $X, X^* \in B_1(\R^{2d})$,
	\begin{equation*}  
	\E\Ll[\|S(\cdot,\cud_n,X, X^*) - \ov S_n(X,X^*)\|_{\un {\hat H}^{-1}_\pa(\cud_n)}^2\Rr] \le C 3^{(2-\alpha) n} \left( 1+  \sum_{k=0}^{n-1} 3^{\alpha k}\tau_k \right).
	\end{equation*}
\end{lemma}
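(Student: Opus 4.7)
The plan is to apply the multiscale Poincar\'e inequality of Proposition~\ref{p.msp} to the centered vector field $f := S(\cdot,\cud_n,X,X^*) - \ov S_n(X,X^*)$, component-by-component, which yields
\[
\|f\|_{\un{\hat H}^{-1}_\pa(\cud_n)} \leq C\|f\|_{\un L^2(\cud_n)} + C\sum_{m=0}^{n-1} 3^m A_m, \qquad A_m^2 := \frac{1}{|\mcl Z_m|}\sum_{z\in\mcl Z_m} |(f)_{z+\cud_m}|^2.
\]
The $L^2$ contribution is harmless: testing the first variation~\eqref{e.Jfirstvar} with $T = S$ and using the uniform ellipticity of $\Abf$ gives the standard energy estimate $\fint_{\cud_n} |S|^2 \leq C(|X|^2 + |X^*|^2)$, so that $\|f\|_{\un L^2(\cud_n)}^2 \leq C$, which is dominated by the right-hand side of the target inequality for $|X|,|X^*|\le 1$.

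The core of the argument is the estimate of $\E[A_m^2]$. By stationarity, $\E[\fint_{z+\cud_m} S(\cdot,\cud_n,X,X^*)] = \ov S_n(X,X^*)$ for every $z\in\mcl Z_m$, so
\[
\E[A_m^2] = \frac{1}{|\mcl Z_m|}\sum_z \var\!\Ll[\fint_{z+\cud_m} S(\cdot,\cud_n,X,X^*)\Rr].
\]
I then decompose
\[\fint_{z+\cud_m} S(\cdot,\cud_n) = \fint_{z+\cud_m} S(\cdot,z+\cud_m) + \fint_{z+\cud_m}\bigl[S(\cdot,\cud_n) - S(\cdot,z+\cud_m)\bigr]\]
and apply $\var[A+B] \leq 2\var[A]+2\E[B^2]$. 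The first piece, by stationarity, equals $2\var[\fint_{\cud_m} S(\cdot,\cud_m,X,X^*)]$ and is bounded by $C\,3^{-\alpha m}(1 + \sum_{k=0}^{m-1} 3^{\alpha k}\tau_k)$ directly from Lemma~\ref{l.spatavg.S}. For the second piece, the quadratic response Lemma~\ref{l.quadresponse} applied to the partition of $\cud_n$ by the $\{z+\cud_m\}_{z\in\mcl Z_m}$ yields
\[
\frac{1}{|\mcl Z_m|}\sum_z \fint_{z+\cud_m}\bigl|S(\cdot,\cud_n) - S(\cdot,z+\cud_m)\bigr|^2 \leq \frac{C}{|\mcl Z_m|}\sum_z\bigl[J(z+\cud_m,X,X^*) - J(\cud_n,X,X^*)\bigr],
\]
whose expectation is $\leq C(\E[J(\cud_m)]-\E[J(\cud_n)]) \le C\sum_{k=m}^{n-1}\tau_k \leq C\,3^{-\alpha m}\sum_{k=0}^{n-1} 3^{\alpha k}\tau_k$ by stationarity and the definition~\eqref{e.def.taun}. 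Combining the two pieces gives
\[
\E[A_m^2] \leq C\,3^{-\alpha m}\Ll(1 + \sum_{k=0}^{n-1} 3^{\alpha k}\tau_k\Rr).
\]

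The last step combines the scales via a weighted Cauchy-Schwarz. For any fixed $\alpha' \in (0,\alpha)$,
\[
\Ll(\sum_{m=0}^{n-1} 3^m A_m\Rr)^{2} \leq \Ll(\sum_{m=0}^{n-1} 3^{m(2-\alpha')}\Rr)\Ll(\sum_{m=0}^{n-1} 3^{m\alpha'} A_m^2\Rr) \leq C\,3^{n(2-\alpha')}\sum_{m=0}^{n-1} 3^{m\alpha'} A_m^2,
\]
and taking expectation with $\sum_m 3^{m(\alpha'-\alpha)} \leq C$ gives a bound of the form $C\,3^{(2-\alpha')n}(1+\sum_k 3^{\alpha k}\tau_k)$. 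Relabeling $\alpha'$ as $\alpha$ (at the cost of shrinking the $d$-dependent constant $\alpha$, which was only constrained by Lemma~\ref{l.spatavg.S} to depend on $d$) concludes the proof. The only delicate point is this final summation: it is precisely to avoid a stray $\log n$ factor from the borderline case $\alpha'=\alpha$ that one introduces the small loss $\alpha'<\alpha$ in the Cauchy-Schwarz step.
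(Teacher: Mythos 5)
Your overall route is the same as the paper's: apply the multiscale Poincar\'e inequality (Proposition~\ref{p.msp}) to the centered field, control the subcube averages via Lemma~\ref{l.spatavg.S} and quadratic response (Lemma~\ref{l.quadresponse}), and then sum the scales by a weighted Cauchy--Schwarz. However, there is a genuine gap at the key step where you assert
\begin{equation*}
\E\Ll[A_m^2\Rr] \;=\; \frac{1}{|\mcl Z_m|}\sum_{z\in\mcl Z_m}\var\Ll[\fint_{z+\cud_m}S(\cdot,\cud_n,X,X^*)\Rr],
\end{equation*}
justified ``by stationarity'' through the claim that $\E\bigl[\fint_{z+\cud_m}S(\cdot,\cud_n,X,X^*)\bigr]=\ov S_n(X,X^*)$ for every $z$. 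Stationarity relates $S(\cdot,z+\cud_n,X,X^*)$, after translating the coefficient field, to $S(\cdot,\cud_n,X,X^*)$; it says nothing about the restriction of the maximizer on the \emph{fixed} cylinder $\cud_n$ to subcylinders sitting at different relative positions inside $\cud_n$. Because of boundary-layer effects, $\E\bigl[(S(\cdot,\cud_n))_{z+\cud_m}\bigr]$ genuinely depends on $z$ (only its average over $z\in\mcl Z_m$ equals $\ov S_n$). Hence $\E[A_m^2]$ is the average over $z$ of $\var\bigl[(S(\cdot,\cud_n))_{z+\cud_m}\bigr]$ \emph{plus} the squared biases $\bigl|\E[(S(\cdot,\cud_n))_{z+\cud_m}]-\ov S_n\bigr|^2$, and your argument silently discards the bias terms, which are a priori of the same order as everything you keep.

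The repair is close to ingredients you already use, and it is exactly what the paper does: estimate $\bigl|(S(\cdot,\cud_n))_{z+\cud_m}-\ov S_n\bigr|^2$ by the triangle inequality through $(S(\cdot,z+\cud_m))_{z+\cud_m}$ and $\ov S_m$. The first difference is controlled by quadratic response (your ``second piece''), the second by stationarity and Lemma~\ref{l.spatavg.S} (your ``first piece''), but the third requires the additional estimate $\bigl|\ov S_n-\ov S_m\bigr|^2\le C\sum_{k=m}^{n-1}\tau_k$, which follows from Jensen's inequality, quadratic response and stationarity (this is Step~1 of the paper's proof) and which your write-up never establishes. Once this is inserted, your bound on $\E[A_m^2]$ and the concluding weighted Cauchy--Schwarz go through; the final shrinking of $\alpha$ is legitimate (since the estimates of Lemma~\ref{l.spatavg.S} only improve when $\alpha$ is decreased, after rewriting the weights as $3^{-\alpha(m-k)}$), although the paper avoids it by applying H\"older with weights $3^m$ and resumming directly. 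The $L^2$ term is handled correctly.
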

\begin{proof}
	We decompose the proof into three steps.
	
	\smallskip
	
	\emph{Step 1.} To begin with, we show that there exists a constant $C(d,\Lambda) < \infty$ such that, for every $m, n \in \N$, $m \le n$ and $X, X^* \in B^1(\R^{2d})$,
	\begin{equation}  
	\label{e.Sm.Sn}
	\Ll|\ov S_n(X,X^*) - \ov S_m(X,X^*) \Rr|^2 \le C \sum_{k = m}^{n-1} \tau_k.
	\end{equation}
	Indeed, recalling the definition of $\mcl Z_m$ in \eqref{e.def.mclZ} (which depends implicitly on $n$), we use Jensen's inequality, Lemma~\ref{l.quadresponse} and stationarity to get
	\begin{align*}  
	\lefteqn{
		\Ll| \E\Ll[\fint_{\cud_n} S(\cdot, \cud_n,X,X^*) - |\mcl Z_m|^{-1} \sum_{z \in \mcl Z_m} \fint_{z + \cud_m} S(\cdot, z + \cud_m,X,X^*) \Rr] \Rr|^2
	} \qquad & \\
	& \le \E \Ll[ |\mcl Z_m|^{-1} \sum_{z \in \mcl Z_m} \fint_{z + \cud_m} \Ll|S(\cdot, \cud_n, X, X^*) - S(\cdot, z + \cud_m,X,X^*) \Rr|^2 \Rr]  \\
	& \le C \Ll(J(\cud_n, X, X^*) - J(\cud_m,X, X^*)\Rr) \\
	& \le C \sum_{k = m}^{n-1} \tau_k,
	\end{align*}
	and this implies \eqref{e.Sm.Sn}.

	\smallskip
	
	\emph{Step 2.}
	In this step, we show that there exists a constant $C(d,\Lambda) < \infty$ such that, for every $m \in \N$, $m \le n$ ,
	\begin{multline}  
	\label{e.weakS.tobound}
	\E\Ll[|\mcl Z_m|^{-1}\sum_{z \in \mcl Z_m} \Ll| \Ll( S\Ll(\cdot,\cud_n, X, X^*\Rr) \Rr)_{z + \cud_m} - \ov S_n(X,X^*)  \Rr|^2\Rr] \\
	\le C \Ll( 3^{-\alpha m} + \sum_{k=0}^m 3^{\alpha(k-m)}\tau_k + \sum_{k=m}^{n-1} \tau_k \Rr) .
	\end{multline}
	By Lemma~\ref{l.quadresponse}, we have
	\begin{multline}  
	\label{e.quad.resp.S}
	\sum_{z \in \mcl Z_m} \|S\Ll(\cdot,\cud_n, X, X^* \Rr) - S\Ll(\cdot,z + \cud_m, X, X^*\Rr) \|_{\un L^2(z + \cud_m)}^2
	\\
	\le
	C \sum_{z \in \mcl Z_m} \Ll( J \Ll( \cud_n, X, X^*\Rr) - J \Ll( z + \cud_m, X, X^*\Rr)  \Rr) .
	\end{multline}
	Taking expectations, using stationarity and Jensen's inequality, we deduce that
	\begin{multline}  
	\label{e.one.triangle}
	\E \Ll[\sum_{z \in \mcl Z_m} \Ll| \Ll( S\Ll(\cdot,\cud_n, X, X^*\Rr) \Rr)_{z + \cud_m} - \Ll( S\Ll(\cdot,z + \cud_m, X, X^*\Rr) \Rr)_{z + \cud_m}  \Rr|^2 \Rr] 
	\\
	\le C  {|\mcl Z_m|} \sum_{k = m}^{n-1} \tau_k.
	\end{multline}
	Moreover, by stationarity and Lemma~\ref{l.spatavg.S}, we have
	\begin{multline}  
	\label{e.two.triangle}
	\E \Ll[|\mcl Z_m|^{-1} \sum_{z \in \mcl Z_m} \Ll| \Ll( S\Ll(\cdot,z + \cud_m, X, X^*\Rr) \Rr)_{z + \cud_m}  - \ov S_m(X, X^*)\Rr|^2 \Rr]  
	\\
	\le C \Ll( 3^{-\alpha m} + \sum_{k=0}^m 3^{\alpha(k-m)}\tau_k \Rr).
	\end{multline}
	Since
	\begin{align*}  
	\lefteqn{\Ll| \Ll( S\Ll(\cdot,\cud_n, X, X^*\Rr) \Rr)_{z + \cud_m} - \ov S_n(X,X^*)  \Rr|^2} \qquad &  \\
	& \le 
	3 \Ll| \Ll( S\Ll(\cdot,\cud_n, X, X^*\Rr) \Rr)_{z + \cud_m} -\Ll( S\Ll(\cdot,z+\cud_m, X, X^*\Rr) \Rr)_{z + \cud_m}   \Rr|^2 
	\\
	& \quad + 3  \Ll| \Ll( S\Ll(\cdot,z + \cud_m, X, X^*\Rr) \Rr)_{z + \cud_m}  - \ov S_m(X, X^*)\Rr|^2 
	\\
	& \quad + 3 \Ll| \ov S_n(X,X^*) - \ov S_m(X,X^*) \Rr|^2,
	\end{align*}
	we obtain \eqref{e.weakS.tobound} by combining \eqref{e.one.triangle}, \eqref{e.two.triangle} and \eqref{e.Sm.Sn}.
	
	\smallskip
	
	\emph{Step 3.} 
	We now combine Proposition~\ref{p.msp} with the result of the previous step to obtain that
	\begin{equation}  
	\label{e.weakS.almost1}
	\|S(\cdot,\cud_n,X, X^*) - \ov S_n(X,X^*)\|_{\un {H}^{-1}_\pa(\cud_n)}^2  \le C \Ll( 1 + \Ll( \sum_{m=0}^{n-1} 3^{m} Z_m^\frac 1 2 \Rr)^2  \Rr) ,
	\end{equation}
	where $Z_m$ is a random variable satisfying
	\begin{equation}  
	\label{e.weakS.almost2}
	\E[Z_m] \le C \Ll(   3^{-\alpha m} + \sum_{k=0}^m 3^{\alpha(k-m)}\tau_k + \sum_{k=m}^{n-1} \tau_k \Rr) .
	\end{equation}
	By H\"older's inequality, we have
	\begin{equation}
	\label{e.discrete.holder}
	\left( \sum_{m=0}^{n-1} 3^{m} Z_m^{\frac12} \right)^2 
	\leq \left( \sum_{m=0}^{n-1} 3^{m} \right) \left( \sum_{m=0}^{n-1} 3^{m}  Z_m\right) 
	\leq C 3^{n} \sum_{m=0}^{n-1} 3^{m} Z_m.
	\end{equation}
	Taking expectations and using \eqref{e.weakS.almost2}, we get
	\begin{align*}
	\E\left[ \left( \sum_{m=0}^{n-1} 3^{m} Z_m^{\frac12} \right)^2 \right]
	& \leq C  3^{n} \sum_{m=0}^{n-1} 3^{m}\left( 3^{-\alpha m} + \sum_{k=0}^m 3^{\alpha (k-m)}\tau_k + \sum_{k=m}^{n-1} \tau_k\right).
	\end{align*}
	For the last two terms, we reverse the order of the sums to find
	\begin{align*}  
	\sum_{m = 0}^{n-1} 3^{m} \sum_{k = 0}^m 3^{\alpha(k-m)} \tau_k = \sum_{k = 0}^{n-1} 3^{\alpha k}\tau_k \sum_{m = k}^{n-1} 3^{(1-\alpha m)} \leq C 3^{(1-\alpha)n} \sum_{k = 0}^{n-1}  3^{\alpha k} \tau_k,
	\end{align*}
	and
	\begin{equation}
	\label{e.resum2}
	\sum_{m=0}^{n-1} 3^{m} \sum_{k=m}^{n-1} \tau_k = \sum_{k=0}^{n-1} \sum_{m=0}^k 3^{m} \tau_k \leq C \sum_{k=0}^{n-1} 3^{k}\tau_k. 
	\end{equation}
	The second sum is bounded by the first, thus combining the above displays yields
	\begin{equation*}
	\E\left[ \left( \sum_{m=0}^{n-1} 3^m Z_m^{\frac12} \right)^2 \right] 
	\leq C 3^{(2-\alpha) n} \left( 1+  \sum_{k=0}^{n-1} 3^{\alpha k}\tau_k \right),
	\end{equation*}
	and this completes the proof.
\end{proof}

We next complete the proof of Proposition~\ref{p.Jsmall} and then of Proposition~\ref{p.control.expec}.
\begin{proof}[Proof of Proposition~\ref{p.Jsmall}]
According to Lemma~\ref{l.weak.S}, Proposition~\ref{p.caccio} and \eqref{e.spatav.Abfh}, we have
\begin{equation*}  
\E \Ll[\|S(\cdot,\cud_n,X,\Abfh_n X)\|_{\un L^2(\cud_{n-1})}^2\Rr] \le C 3^{-\alpha n } \Ll(1 + \sum_{k = 0}^{n-1} 3^{\alpha k} \tau_k \Rr) .
\end{equation*}
By Lemma~\ref{l.quadresponse}, we deduce
\begin{equation*}  
\E \Ll[\|S(\cdot,\cud_{n-1},X,\Abfh_n X)\|_{\un L^2(\cud_{n-1})}^2\Rr] \le C 3^{-\alpha n } \Ll(1 + \sum_{k = 0}^{n-1} 3^{\alpha k } \tau_k \Rr) .
\end{equation*}
Recall that $(z + \cud_{n-1})_{z \in \mcl Z_{n-1}}$ is a partition of $\cud_n$, up to a set of null measure. Moreover, by stationarity, the previous display implies that for every $z \in \mcl Z_{n-1}$,
\begin{equation*}  
\E \Ll[\|S(\cdot,z + \cud_{n-1},X,\Abfh_n X)\|_{\un L^2(z + \cud_{n-1})}^2\Rr] \le C 3^{-\alpha n } \Ll(1 + \sum_{k = 0}^{n-1} 3^{\alpha k } \tau_k \Rr) .
\end{equation*}
Applying Lemma~\ref{l.quadresponse} once more and summing over $z \in \mcl Z_{n-1}$, we obtain the result.
\end{proof}

\begin{proof}[Proof of Proposition~\ref{p.control.expec}]

We denote by $\mcl B$ the set of canonical basis elements of $\R^{2d}$, and observe that there exists a constant $C(d) < \infty$ such that if $X \mapsto B(X)$ is a nonnegative quadratic form over $\R^{2d}$, then 
\begin{equation}  
\label{e.sup.easy}
\sup_{X \in B_1} B(X) \le C \sum_{X \in \mcl B} B(X). 
\end{equation}
Indeed, a quadratic form is associated to a nonnegative symmetric matrix with largest eigenvalue bounded by its trace; this trace is equal to the right side above.

\smallskip

By the definition of $\tau_n$, see \eqref{e.def.taun}, and Lemma~\ref{l.Jsplitting}, we have
\begin{multline*}  
\tau_n \le \sup_{X \in 	B_1} \Ll( \E\Ll[\mu(\cud_n,X)\Rr] - \E\Ll[\mu(\cud_{n+1}, X) \Rr]\Rr) \\+  \sup_{X^* \in B_1} \Ll( \E\Ll[\mu^*(\cud_n,X^*)\Rr] - \E\Ll[\mu^*(\cud_{n+1}, X^*)\Rr] \Rr).
\end{multline*}
Since $X \mapsto \E\Ll[\mu(V,X)\Rr]$ and $X^* \mapsto \E\Ll[\mu(V, X^*)\Rr]$ are nonnegative quadratic forms, and since this property is stable under linear changes of coordinates, it follows from \eqref{e.sup.easy} that
\begin{multline*}  
\tau_n \le C \sum_{X \in \mcl B} \Ll( \E[\mu(\cud_n,X)] - \E[\mu(\cud_{n+1},X)] \Rr) \\ + C \sum_{X \in \mcl B} \Ll( \E[\mu^*(\cud_n,\Abfh_n X)] - \E[\mu^*(\cud_{n+1},\Abfh_n X)] \Rr)  ,
\end{multline*}
and thus by Lemma~\ref{l.Jsplitting},
\begin{equation*}  
\tau_n \le C \sum_{X \in \mcl B} \Ll( \E[J(\cud_n,X,\Abfh_n X)] - \E[J(\cud_{n+1},X,\Abfh_{n} X)]  \Rr) .
\end{equation*}
By \eqref{e.quad.resp.Abfh}, we have
\begin{equation*}  
\E \left[J(\cud_{n+1}, X, \Abfh_{n+1} X) \right] \le \E\left[J(\cud_{n+1}, X,\Abfh_n X)\right],
\end{equation*}
and therefore
\begin{equation}  
\label{e.taun.Dn}
\tau_n \le C \sum_{X \in \mcl B} \Ll( \E[J(\cud_n,X,\Abfh_n X)] - \E[J(\cud_{n+1},X,\Abfh_{n+1} X)]  \Rr) .
\end{equation}
This motivates the definition of 
\begin{equation*}  
D_n := \sum_{X \in \mcl B} \E\Ll[ J(\cud_n, X, \Abfh_n X) \Rr].
\end{equation*}
Proposition~\ref{p.Jsmall} asserts that
\begin{equation*}  
D_n \le C 3^{-\alpha n } \Ll(1 + \sum_{k = 0}^{n-1} 3^{\alpha k } \tau_k \Rr) .
\end{equation*}
Setting 
\begin{equation*}  
\td D_n := 3^{-\frac \alpha 2 n} \sum_{k = 0}^n 3^{\frac {\alpha } 2 k } D_k,
\end{equation*}
we deduce that
\begin{align}  
\notag
\td D_n & \le C 3^{-\frac \alpha 2 n} \sum_{m = 0}^n 3^{-\frac \alpha 2 m} \Ll( 1 + \sum_{k = 0}^m 3^{\alpha k} \tau_k \Rr)  \\
\notag
& \le C 3^{-\frac \alpha 2 n} + C 3^{-\frac \alpha 2 n} \sum_{k = 0}^n \sum_{m = k}^n 3^{-\frac \alpha 2 m} \, 3^{\alpha k} \tau_k \\
\label{e.dn.bound}
& \le C 3^{-\frac \alpha 2 n} \Ll( 1 + \sum_{k = 0}^n 3^{\frac \alpha 2 k } \tau_k \Rr) .
\end{align}
Since $D_0 \le C$, we also have
\begin{equation*}  
\td D_n - \td D_{n+1} \ge 3^{-\frac \alpha 2 n} \sum_{k = 0}^n 3^{\frac \alpha 2 k} \Ll(D_k - D_{k+1}\Rr) - C 3^{-\frac \alpha 2 n}.
\end{equation*}
Combining this with \eqref{e.taun.Dn} yields
\begin{equation*}  
\td D_n - \td D_{n+1} \ge C^{-1}  \, 3^{-\frac \alpha 2 n} \sum_{k = 0}^n 3^{\frac \alpha 2 } \tau_k - C 3^{-\frac \alpha 2 n}.
\end{equation*}
From this and \eqref{e.dn.bound}, we obtain that there exists an exponent $\be(d,\Lambda) \in (0,\frac \alpha 2)$ such that 
\begin{equation*}  
\td D_{n+1}  \le 3^{-\be} \td D_n + C 3^{-\frac \alpha 2 n},
\end{equation*}
introducing $v_n:=3^{\be n}\td D_n$ and multiplying the previous identity by $3^{(\be+1) n}$ gives
\begin{equation*}
	v_{n+1} \le v_n + C 3^{(\be-\frac{\alpha}{2})n}.
\end{equation*}
Summing this inequality over $n$ yields $v_n \le v_0 + \frac{C}{1-\left( 3^{ \beta-\frac \alpha 2}  \right)}$, i.e.\ $v$ is bounded, that is,
	\begin{equation*}
		\td D_n \le C 3^{-\be n}.
	\end{equation*}
By \eqref{e.taun.Dn}, we also obtain
\begin{equation*}
\tau_n \le C 3^{-\be n}.
\end{equation*}
By the definition of $\Abfh_n$, we have
\begin{equation*}  
\Ll| \Abfh_n - \Abfh_{n+1} \Rr|  \le C \tau_n,
\end{equation*}
so that, setting
\begin{equation}
\label{e.def.Abfh.bis}
\Abfh := \lim_{n \to \infty} \Abfh_n,
\end{equation}
we get
\begin{equation*}  
\Ll| \Abfh_n - \Abfh \Rr|
\le 
\sum_{m =n }^\infty \Ll| \Abfh_m - \Abfh_{m+1} \Rr| 
\leq 
C \sum_{m =n }^\infty \tau_m
\leq
C 3^{-\be n}.
\end{equation*}
Combining the last displays with \eqref{e.quad.resp.Abfh} yields
\begin{equation*}  
\sup_{X \in B_1} \E \Ll[ J\Ll(\cud_n,X,\Abfh X\Rr) \Rr] \le C 3^{-\be n}.
\end{equation*}
By an application of Lemma~\ref{l.minimalset}, we can verify that the matrix $\Abfh$ defined in \eqref{e.def.Abfh.bis} coincides with that defined in \eqref{e.defAbfh}. The proof is therefore complete.
\end{proof}

\subsection{Control of the fluctuations of \texorpdfstring{$J$}{J}}
\label{ss.fluc}
In this subsection, we prove Theorem~\ref{t.subadd}. In view of Lemma~\ref{l.minimalset}, the main point is to obtain a control on the fluctuations of $J(\cud_n, X, \Abfh X)$, which we obtain using subadditivity.
\begin{proof}[Proof of Theorem~\ref{t.subadd}]
\emph{Step 1.} In this first step, we show that there exists an exponent $\be(d,\Lambda) > 0$ and a constant $C(d,\Lambda) > 0$ such that, for every $X \in B_1(\R^{2d})$ and $m,n \in \N$, $m \le n$, we have
\begin{equation}
\label{e.logL.bound}
3^{-(2+d)(n-m)} \log \E \Ll[ \exp \Ll( C^{-1}  3^{(2+d)(n-m)} J(\cud_n, X, \Abfh X) \Rr)  \Rr] \le C 3^{-\be m}.
\end{equation}
For $m,n \in \N$, $m \le n$, recall that the cube $\cud_n$ is partitioned into  into a union of $2^{d+1}$ ``checkerboard'' subsets, see \eqref{e.defZi}, to ensure that for each $i \in \{1,\ldots, 2^{1+d}\}$,
\begin{equation*}  
z, z' \in \mcl Z^{(i)} \quad \implies \quad \dist \Ll(z+\cud_n, z' + \cud_n\Rr) \ge 1.
\end{equation*} 
In particular, for each fixed $i \in \{1,\ldots, 2^{1+d}\}$, the random variables $(z+\cud_n)_{z \in \mcl Z^{(i)}_m}$ are independent. By subadditivity, for each $X \in B_1(\R^{2d})$ and $t > 0$, we have
\begin{align*}  
\log \E \Ll[ \exp \Ll( t 3^{(2+d)(n-m)} J(\cud_n, X, \Abfh X) \Rr)  \Rr] \le \log \E \Ll[ \exp \Ll( t \sum_{z \in \mcl Z_m} J(z+\cud_m, X, \Abfh X) \Rr)  \Rr],
\end{align*}
and by H\"older's inequality and independence, the latter is bounded by 
\begin{multline*}
\le 2^{-(1+d)} \sum_{i = 1}^{2^{1+d}} \log \E \Ll[ \exp \Ll( t 2^{1+d}\sum_{z \in \mcl Z_m^{(i)}} J(z+\cud_m, X, \Abfh X) \Rr)  \Rr] \\
\le 2^{-(1+d)} \sum_{z \in \mcl Z_m} \log \E \Ll[ \exp \Ll( t 2^{1+d} J(z+\cud_m, X, \Abfh X) \Rr)  \Rr] .
\end{multline*}
By stationarity, the summands above do not depend on $z \in \mcl Z_m$. Since $$J(\cud_m, X, \Abfh X) \le C(d,\Lambda),$$ we can choose $t(d,\Lambda) > 0$ sufficiently small and use the elementary inequalities
\begin{equation*} 
\left\{ 
\begin{aligned}
& \exp(s) \leq 1+2s && \mbox{for all} \ 0\leq s \leq 1, \\
& \log(1+s) \leq s && \mbox{for all} \ s\geq 0
\end{aligned} 
\right.
\end{equation*}
to obtain that
\begin{equation*}  
\log \E \Ll[ \exp \Ll( C^{-1} 3^{(2+d)(n-m)} J(\cud_n, X, \Abfh X) \Rr)  \Rr] \le C 3^{(2+d)(n-m)} \E \Ll[ J(\cud_m,X,\Abfh X) \Rr] .
\end{equation*}
Inequality \eqref{e.logL.bound} then follows by an application of Proposition~\ref{p.control.expec}.

\smallskip 

\emph{Step 2.} Set
\begin{equation*}  
\rho_n := \sup_{X \in B_1} J(\cud_n,X, \Abfh X).
\end{equation*}
In this step, we show that there exists an exponent $\be(d,\Lambda) > 0$ and, for every $s \in (0,2+d)$, a constant $C(s,d,\Lambda) < \infty$ such that, for every $n \in \N$,
\begin{equation}  
\label{e.rhon.O}
\rho_n \le C 3^{-\be(2+d-s)n} + \O_1 \Ll(C  3^{-sn} \Rr) .
\end{equation}
By \eqref{e.sup.easy} and H\"older's inequality, the relation \eqref{e.logL.bound} can be improved to
\begin{equation}
\label{e.logL.sup}
3^{-(2+d)(n-m)} \log \E \Ll[ \exp \Ll( C^{-1}  3^{(2+d)(n-m)} \rho_n \Rr) \Rr] \le C 3^{-\be m}.
\end{equation}
By Chebyshev's inequality, for every $t \ge 0$, 
\begin{align*}  
\P \Ll[ \rho_n \ge t \Rr] &  \le \exp \Ll( - C^{-1}  3^{(2+d)(n-m)} t \Rr) \E \Ll[ \exp \Ll( C^{-1}  3^{(2+d)(n-m)} \rho_n \Rr) \Rr] \\
& \le \exp \Ll(- C^{-1}  3^{(2+d)(n-m)} t + C 3^{ (2+d)(n-m)-\be m} \Rr) .
\end{align*}
Replacing $t$ by $C 3^{-\be m} + t$ gives
\begin{equation*}  
\P \Ll[ \rho_n \ge C 3^{-\be m} + t \Rr]  \le \exp \Ll(- C^{-1}  3^{(2+d)(n-m)} t\Rr) .
\end{equation*}
Choosing 
\begin{equation*}
m := \Ll\lfloor \frac{2+d-s}{2+d} n \Rr\rfloor
\end{equation*}
yields
\begin{equation*}  
\P \Ll[ \rho_n \ge C 3^{-\be\frac{2+d-s}{2+d}n} + t \Rr]  \le \exp \Ll(- C^{-1}  3^{s n} t\Rr) .
\end{equation*}
By~\eqref{e.chebyconverse}, this is \eqref{e.rhon.O}, up to a redefinition of $\be(d,\Lambda) > 0$.

\smallskip

\emph{Step 3.}
We now combine Lemma~\ref{l.minimalset},~\eqref{e.logL.sup} and the elementary inequality
\begin{equation} 
\label{e.nicelementary}
\forall a,b > 0, \qquad (a+b)^{\frac12} \leq a^{\frac12} + \frac12 a^{-\frac12} b 
\end{equation}
to get
\begin{multline}  
\label{e.whats.your.end.game}
\sup_{X,X^*\in B_1} 
\left|  
J(\cud_n,X,X^*) - 
\left( 
\frac12X
\cdot {\Abfh}X
+ \frac12 X^*
\cdot {\Abfh}^{-1} X^*
- X\cdot X^*
 \right) 
 \right|
\\
\le 3^{-\frac{\be}2 (2+d-s)n} + \O_1 \Ll(C  3^{-\Ll(s - \frac{\be}2 (2+d-s)\Rr)n} \Rr) .
\end{multline}
For every $s' \in (0,2+d)$, if we set
\begin{equation*}  
s := \frac{2s' + \be(2+d)}{2+\be} \in (0,2+d),
\end{equation*}
then the right side of \eqref{e.whats.your.end.game} can be rewritten as
\begin{equation*}  
3^{-\frac{\be}{2+\be} (2+d-s')n} + \O_1 \Ll(C  3^{-s'n} \Rr).
\end{equation*}
We thus obtained \eqref{e.subadderror}, up to a redefinition of $\be(d,\Lambda) > 0$.
\end{proof}

\begin{proposition}
\label{p.ahomdef}
There exist $C(d,\Lambda)<\infty$ and a matrix $\ahom \in \R^{d\times d}$ satisfying
\begin{equation} 
\label{e.ahombounds.bigJ}
\forall \xi\in\Rd, \quad 
\xi\cdot \ahom \xi \geq \frac1C\left| \xi \right|^2
\quad \mbox{and} \quad 
\left| \ahom \xi \right| \leq C\left| \xi \right|,
\end{equation}
such that, for every $p,q \in \Rd$, we have the equivalence
\begin{equation}
\label{e.mappingAbar}
\frac 12 \begin{pmatrix} p \\ q  \end{pmatrix} \cdot \Abfh \begin{pmatrix} p \\ q  \end{pmatrix} 
- p\cdot q = 0 
\iff
q = \ahom p
.
\end{equation}
\end{proposition}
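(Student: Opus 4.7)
The plan is to identify $Q(X) := \tfrac{1}{2} X \cdot \Abfh X - p\cdot q$ as a nonnegative quadratic form on $\R^{2d}$ whose zero set is exactly the graph of the sought matrix $\ahom$, and to read $\ahom$ off directly from the block structure of $\Abfh$.

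First I will establish $Q \geq 0$. By Lemma~\ref{l.lin.Fitz}, $A(p,q,t,x) \geq p \cdot q$ pointwise. For any candidate $(\nabla v, \h) \in \C_0(V)$, the cross-term identity
\begin{equation*}
\fint_V (p + \nabla v) \cdot (q + \h) = p \cdot q
\end{equation*}
holds: the spatial means $\fint \nabla v$ and $\fint \h$ both vanish (the former because $v \in H^1_{\pa,0}$, the latter by \eqref{e.h.mean.zero}), and the identity $\fint \nabla v \cdot \h = -\fint v \, \partial_t v = 0$ follows from the constraint $\partial_t v = \nabla \cdot \h$ combined with the vanishing of $v$ at both time endpoints. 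Hence $\mu(V,p,q) \geq p \cdot q$ for every $V$, and passing to the limit $V = \cud_n \to \infty$ via \eqref{e.defAbfh} gives $Q \geq 0$.

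Writing $\Abfh = \begin{pmatrix} A_{11} & A_{12} \\ A_{12}^T & A_{22} \end{pmatrix}$ in $d \times d$-blocks, the ellipticity \eqref{e.abigbarbounds} yields $A_{22} \geq C^{-1} I_d$, so
\begin{equation*}
\ahom := A_{22}^{-1}(I_d - A_{12}^T)
\end{equation*}
is well-defined with $|\ahom| \leq C$. Direct differentiation shows that $q = \ahom p$ is the unique minimizer in $q$ of the quadratic function $Q(p, \cdot)$. Together with $Q \geq 0$, the equivalence in \eqref{e.mappingAbar} reduces to verifying that this minimum value $Q(p, \ahom p)$ vanishes for every $p \in \Rd$.

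The hard part is this last identity, which is algebraically equivalent to the structural relation $A_{11} = (I - A_{12}) A_{22}^{-1} (I - A_{12}^T)$ and expresses that $\Abfh - \begin{pmatrix} 0 & I \\ I & 0 \end{pmatrix}$ has rank exactly $d$. The strategy is to couple $Q \geq 0$ with a dual lower bound $\tfrac{1}{2} X^* \cdot \Abfh^{-1} X^* \geq p^* \cdot q^*$ for every $X^* = (q^*,p^*) \in \R^{2d}$, together with the Young-type identity $\overline{J}(X, \Abfh X) = 0$, which forces both bounds to be tight at $(X, X^*) = ((p, \ahom p), (\ahom p, p))$. To obtain the dual bound, I test $\mu^*(V, X^*)$ against the ``primal-only'' elements $S = (\nabla u, \a \nabla u) \in \S(V)$ supplied by Lemma~\ref{l.identif.S}; on these, $A(S) = \nabla u \cdot \a \nabla u$ by the equality case of Lemma~\ref{l.lin.Fitz}, and $\Abf S = (\a \nabla u, \nabla u)$ by Remark~\ref{r.deconstruction}, so the value of $\mu^*(V, X^*)$ at such $S$ rearranges into a form involving only the primal averages $\fint \nabla u$ and $\fint \a \nabla u$. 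Using the first-order identity \eqref{e.Jfirstvar} to select the test function, the Caccioppoli estimate of Proposition~\ref{p.caccio} to control its interior regularity, and the convergence rate of Theorem~\ref{t.subadd} to close the limit, one arrives at $\mu^*_\infty(X^*) \geq p^* \cdot q^*$, and hence at $Q(p, \ahom p) = 0$. The ellipticity lower bound $p \cdot \ahom p \geq C^{-1} |p|^2$ then follows from the identity $\tfrac{1}{2}(p, \ahom p) \cdot \Abfh (p, \ahom p) = p \cdot \ahom p$ combined with $\Abfh \geq C^{-1} I_{2d}$.
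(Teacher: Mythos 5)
Your primal bound $Q\geq 0$ is correct and coincides with the paper's first step, and your reduction of the proposition to the single identity $Q(p,\ahom p)=0$, with $\ahom=A_{22}^{-1}(I_d-A_{12}^T)$ the minimizer of $Q(p,\cdot)$, matches the paper's Steps 3--4. The genuine gap is the dual bound $\tfrac12 X^*\cdot\Abfh^{-1}X^*\geq q^*\cdot p^*$, which you rightly call the hard part but do not prove. Your sketch rests on the claim that, for $S=(\nabla u,\a\nabla u)$ with $u\in\A(V)$, the value $\fint_V(-\nabla u\cdot\a\nabla u+q^*\cdot\nabla u+p^*\cdot\a\nabla u)$ ``involves only the primal averages $\fint_V\nabla u$ and $\fint_V\a\nabla u$''. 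That is false in general: $\fint_V\nabla u\cdot\a\nabla u$ is not a function of those averages, and replacing the average of the product by the product of averages is precisely a div--curl/homogenization statement. The tools you then invoke (the first variation \eqref{e.Jfirstvar}, Proposition~\ref{p.caccio}, Theorem~\ref{t.subadd}) would in effect require re-deriving the weak convergence results of Section~\ref{s.CDP}, which in the paper come after, and partly by means of, the present proposition (the correctors there are built from $X_e=-(e,\ahom e)$); so this route needs a genuine argument plus a check against circularity, and no chain of inequalities is supplied.

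What makes the dual bound easy is not an averaging identity but a boundary condition. Take $u\in\ell_{p^*}+H^1_{\pa,\sqcup}(V)$ and any $\g$ with $-\nabla\cdot\g=-\partial_t u$; then $(\nabla u,\g)\in\C(V)$, $\fint_V\nabla u=p^*$, and testing the divergence constraint with $u-\ell_{p^*}$ gives $\fint_V(p^*-\nabla u)\cdot\g\geq 0$ (it equals a positive constant times the squared $L^2(U)$ norm of the final-time trace of $u-\ell_{p^*}$). Hence $\mu^*(V,q^*,p^*)\geq q^*\cdot p^*-\fint_V\bigl(A(\nabla u,\g,\cdot)-\nabla u\cdot\g\bigr)$, and the solvability of the Cauchy--Dirichlet problem with datum $\ell_{p^*}$ (Proposition~\ref{p.parabolic.min.app}) makes the last integral vanish for a suitable choice of $(u,\g)$; Theorem~\ref{t.subadd} then passes to the limit. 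This is in fact your ``primal-only'' test field, but it is the affine boundary datum, not any identity between averages, that controls the cross term. Separately, your tightness step applies the dual bound at $X^*=(\ahom p,p)$, which presumes $\Abfh(p,\ahom p)=(\ahom p,p)$; the first-order condition only pins down the second block. The repair, as in the paper's Step 3, is to apply the dual bound at $X^*:=\Abfh(p,\ahom p)=(q^*,p)$ with $q^*$ unknown, use the Legendre-duality identity $\tfrac12X^*\cdot\Abfh^{-1}X^*=X^*\cdot X-\tfrac12X\cdot\Abfh X$ at the touching point, and cancel $p\cdot q^*$; as written, your argument assumes part of what is to be shown.
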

\begin{proof}

\emph{Step 1.} We show that, for every $p,q\in\Rd$, 
\begin{equation} 
\label{e.Awantsrep}
\overline A(p,q) := \frac12 \begin{pmatrix}  p \\ q \end{pmatrix} \cdot \Abfh \begin{pmatrix}  p \\ q \end{pmatrix} 
\geq p \cdot q. 
\end{equation}
By Lemma~\ref{l.lin.Fitz}, we have for every $S=(\nabla u,\g)\in \C_0(I\times U)$ and $p,q \in \Rd$ that
\begin{equation*}
\fint A(p+\nabla u, q+\g,\cdot)   \ge    \fint (p+\nabla u)\cdot (q + \g) = p\cdot q.
\end{equation*}
By the definition of $\mu$ in \eqref{e.def.mu}, we deduce that
\begin{equation*}
	\mu(I\times U,p,q)\ge p\cdot q,
\end{equation*}
and thus~\eqref{e.Awantsrep} follows from~\eqref{e.defAbfh}.

\smallskip

\emph{Step 2.} We show that, for every $q^*,p^*\in\Rd$, 
\begin{equation} 
\label{e.Ainvwantsrep}
\frac12 \begin{pmatrix}  q^* \\ p^* \end{pmatrix} \cdot \Abfh^{-1} \begin{pmatrix}  q^* \\ p^* \end{pmatrix} 
\geq q^* \cdot p^*. 
\end{equation}
Fix $p^*\in\Rd$. 
For every $u\in \ell_{p^*} + H^1_{\pa,\sqcup}(I\times U)$ and $\g \in L^2(I\times U;\Rd)$ satisfying $-\nabla \cdot \g = -\partial_t u$, we have $(\nabla u,\g) \in \mcl{C}(I\times U)$ as well as 
\begin{equation*} \label{}
\fint_{I\times U}  \nabla u =  p^*
\end{equation*}
and
\begin{equation*} \label{}
\fint_{I\times U}( p^*-\nabla u ) \cdot \g = \frac{1}{|I|} \fint_{U} (u - \ell_p)^2 \geq 0. 
\end{equation*}
Therefore, for every $q^*\in\Rd$, 
\begin{align*} \label{}
\mu^*(I\times U,q^*,p^*) 
& 
\geq 
\fint_{I\times U} 
\left( -A(\nabla u, \g,\cdot) + q^*\cdot\nabla u + p^* \cdot \g \right)
\\ & 
\geq \fint_{I\times U} 
\left( -A(\nabla u, \g,\cdot) + q^*\cdot p^* + \nabla u \cdot \g \right)
\\ & 
= q^*\cdot p^* - \fint_{I\times U} 
\left( A(\nabla u, \g,\cdot) - \nabla u \cdot \g \right).
\end{align*}
By the solvability of the Cauchy-Dirichlet problem (Proposition~\ref{p.parabolic.min.app}), for every $p^*\in\Rd$,
\begin{multline*} \label{}
0 = \inf
\bigg\{ \int_{I\times U} 
\left( A(\nabla u, \g,\cdot) - \nabla u\cdot \g \right) 
\\
\,:\, 
u\in \ell_{p^*} + H^1_{\pa,\sqcup}(I\times U), \ \g \in L^2(I\times U;\Rd),\ 
-\nabla \cdot \g = -\partial_t u
\bigg\}.
\end{multline*}
Combining the above yields 
\begin{equation*} \label{}
\mu^*(I\times U,q^*,p^*) 
\geq q^*\cdot p^*. 
\end{equation*}
According to Theorem~\ref{t.subadd}, we have the $\P$-a.s.~limit
\begin{equation*} \label{}
\lim_{n\to \infty} \mu^*(\cud_n,X^*) = \frac12 X^* \cdot  \Abfh^{-1} X^*. 
\end{equation*}
We therefore obtain~\eqref{e.Ainvwantsrep}.

\smallskip

\emph{Step 3.} We argue that, for every $p\in\Rd$, 
\begin{equation} 
\label{e.touchbottom}
\inf_{q\in\Rd} \left( \overline A(p,q) - p\cdot q \right) = 0.
\end{equation}
We have already shown in~\eqref{e.Awantsrep} that the infimum on the left is nonnegative. The infimum is attained, by the quadratic growth of $q\mapsto \overline{A}(p,q)$. To see that it is equal to zero, we fix $p \in \Rd$ and select $q \in \Rd$ achieving the infimum. Then 
\begin{equation*}  
 \Abfh \begin{pmatrix} p \\ q  \end{pmatrix} = \begin{pmatrix} \ast \\ p  \end{pmatrix}. 
\end{equation*}
Let  $q^* \in \Rd$ denote the ``$\ast$'' in the previous line, so that 
\begin{equation}  
\label{e.define.q*}
\Abfh \begin{pmatrix} p \\ q  \end{pmatrix} = \begin{pmatrix} q^* \\ p  \end{pmatrix}.
\end{equation}
Then using~\eqref{e.define.q*}, we find that 
\begin{align*}  
\frac 1 2  \begin{pmatrix} q^* \\ p  \end{pmatrix} \Abfh^{-1}  \begin{pmatrix} q^* \\ p  \end{pmatrix} & = \sup_{p',q' \in \Rd} \Ll(  \begin{pmatrix} q^* \\ p  \end{pmatrix} \cdot  \begin{pmatrix} p' \\ q'  \end{pmatrix} - \frac 12 \begin{pmatrix} p' \\ q'  \end{pmatrix} \cdot \Abfh \begin{pmatrix} p' \\ q'  \end{pmatrix}  \Rr)  \\
& = \begin{pmatrix} q^* \\ p  \end{pmatrix} \cdot  \begin{pmatrix} p \\ q  \end{pmatrix} 
- \frac 12 \begin{pmatrix} p \\ q  \end{pmatrix} \cdot \Abfh \begin{pmatrix} p \\ q  \end{pmatrix} 
\end{align*}
By the previous inequality and~\eqref{e.Ainvwantsrep}, we discover that 
\begin{equation*} \label{}
p\cdot q^* \leq \begin{pmatrix} q^* \\ p  \end{pmatrix} \cdot  \begin{pmatrix} p \\ q  \end{pmatrix} 
- \frac 12 \begin{pmatrix} p \\ q  \end{pmatrix} \cdot \Abfh \begin{pmatrix} p \\ q  \end{pmatrix} 
= p\cdot q^* + p\cdot q - \frac 12 \begin{pmatrix} p \\ q  \end{pmatrix} \cdot \Abfh \begin{pmatrix} p \\ q  \end{pmatrix}.
\end{equation*}
Rearranging, this yields~$\bar A (p,q) \leq p\cdot q$, which in view of~\eqref{e.Awantsrep} allows us to deduce that $\bar A (p,q) = p\cdot q$ and completes the proof of~\eqref{e.touchbottom}. 

\smallskip

\emph{Step 4.} We define~$\ahom$ to be the matrix associated to the linear mapping taking~$p$ to the~$q$ achieving the infimum in~\eqref{e.touchbottom}. That the infimum is achieved at a unique minimum point is a consequence of the uniform convexity of~$q\mapsto \overline{A}(p,q)$. That this mapping is linear is due to the fact that~$q\mapsto \overline{A}(p,q)$ is quadratic. The bounds~\eqref{e.ahombounds.bigJ} are a consequence of~\eqref{e.abigbarbounds}. This completes the proof of the proposition. 
\end{proof}

\section{Quantitative homogenization of the Cauchy-Dirichlet problem}
\label{s.CDP}

In this section, we demonstrate the passage from the convergence of~$J$ to the homogenization of the parabolic operator. In particular, we complete the proof of Theorem~\ref{t.CDP} on the quantitative homogenization of the Cauchy-Dirichlet problem. The argument is completely deterministic in the sense that the only probabilistic ingredient is the appeal to Theorem~\ref{t.subadd}. The argument proceeds in four steps: (i) we show that convergence of~$J$ implies convergence of~$S$ and~$\Abf S$ in~$H^{-1}$; (ii) we use Remark~\ref{r.deconstruction} to show that there are ``finite-volume correctors'' which can be found hiding in~$S$ and~$\Abf S$ and we obtain estimates on them; (iii) we use the finite-volume correctors and a quantitative version of the standard two-scale expansion argument to pass from estimates on the correctors to estimates on the homogenization error for a general Cauchy-Dirichlet problem.

\subsection{Convergence of $J$ maximizers}
In this subsection, we use the multiscale Poincar\'e inequality (Proposition~\ref{p.msp}) to obtain information about the weak convergence of~$S(\cud_n,X,X^*)$ as~$n\to \infty$. It is useful to define the quantity 
\begin{equation*} \label{}
\mathcal{E}(V) := \sup_{X,X^*\in B_1} \left| J(V,X,X^*) - \overline{J}(X,X^*) \right|,
\end{equation*}
which keeps track of the convergence of~$J$. We also denote, given $X,X^* \in \R^{2d}$, 
\begin{equation*} \label{}
\overline S(X,X^*) := \Abfh^{-1} X^* - X. 
\end{equation*}
Note that $\overline S = \nabla_{X^*} \overline{J}$ and therefore, by~\eqref{e.JderX*} and the fact that $J$ and $\overline{J}$ are quadratic, we have, for every $X,X^*\in \R^{2d}$, 
\begin{align} 
\label{e.spatavgS}
\left| \overline S(X,X^*) - \fint_{V} S(\cdot,V,X,X^*)  \right| 
& 
= \left|\nabla_{X^*}  \overline J(X,X^*) - \nabla_{X^*}  J(V,X,X^*)  \right| 
\\ & \notag
\leq  C \left( |X|+|X^*| \right) \mathcal{E}(V) .
\end{align}
Similarly, 
\begin{align} 
\label{e.spatavgAS}
\left| \Abfh \overline S(X,X^*) - \fint_{V} \Abf S(\cdot,V,X,X^*)  \right| 
& 
= \left| \nabla_{X}  \overline J(X,X^*) - \nabla_{X}  J(V,X,X^*)  \right| 
\\ & \notag
\leq  C \left( |X|+|X^*| \right) \mathcal{E}(V).
\end{align}
That is, we can control the spatial averages of $S(\cdot,V,X,X^*)$ and $\Abf S(\cdot,V,X,X^*)$ in terms of the random variable~$\mathcal{E}(V)$. The combination of this observation and Proposition~\ref{p.msp} yields the following result. 

\begin{proposition}[{Weak convergence of $(S,\Abf S)$}]
\label{p.controlofS}
There exists $C(d,\Lambda)<\infty$ such that, for every $X,X^*\in B_1$ and $n\in\N$,
\begin{multline} 
\label{e.HminusoneS}
 3^{-n} \left\| S(\cdot,\cud_n,X,X^*) - \overline{S}(X,X^*) \right\|_{\un {\hat H}^{-1}_\pa(\cud_n)}
\\
\leq
C3^{-n} + C \sum_{m = 0}^{n-1} 3^{m-n} \Ll(  \left|\mcl Z_m\right|^{-1} \sum_{z\in \mcl Z_m} \left( \mathcal{E}\left( z+\cud_m \right) \right) \Rr)^{\frac 12}
\end{multline}
and
\begin{multline} 
\label{e.HminusoneAS}
3^{-n} \left\| \Abf S(\cdot,\cud_n,X,X^*) - \Abfh \overline{S}(X,X^*) \right\|_{\un {\hat H}^{-1}_\pa(\cud_n)}
\\
\leq
C3^{-n} + C \sum_{m = 0}^{n-1} 3^{m-n} \Ll(  \left|\mcl Z_m\right|^{-1} \sum_{z\in \mcl Z_m} \left( \mathcal{E}\left( z+\cud_m \right) \right) \Rr)^{\frac 12}.
\end{multline}
\end{proposition}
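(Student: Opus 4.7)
The plan is to apply the multiscale Poincaré inequality (Proposition~\ref{p.msp}) componentwise to the vector-valued field
\[
f_n := S(\cdot, \cud_n, X, X^*) - \overline{S}(X,X^*),
\]
and to its $\Abf$-transform $\Abf S(\cdot, \cud_n, X, X^*) - \Abfh \overline{S}(X,X^*)$ for the second assertion. Once multiplied by $3^{-n}$, Proposition~\ref{p.msp} produces exactly the structure ``$C 3^{-n}\|\cdot\|_{\un L^2(\cud_n)} + C \sum_{m=0}^{n-1} 3^{m-n} (\cdots)^{1/2}$'' that appears on the right of \eqref{e.HminusoneS} and \eqref{e.HminusoneAS}. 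So I need to produce two inputs: a uniform $L^2$ bound on $f_n$, and a bound on the squared spatial averages $|(f_n)_{z+\cud_m}|^2$ in terms of $\mathcal{E}(z+\cud_m)$.

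For the $L^2$ bound, I would test the quadratic response \eqref{e.Jquadresponse} with $T \equiv 0$; together with the uniform boundedness of $\Abf$ (Lemma~\ref{l.lin.Fitz}) this yields $\|S(\cdot,\cud_n,X,X^*)\|_{\un L^2(\cud_n)} \le C(|X|+|X^*|)$. Combined with $|\overline{S}(X,X^*)| \le C(|X|+|X^*|)$ (from \eqref{e.abigbarbounds}), this gives $\|f_n\|_{\un L^2(\cud_n)} \le C$, which after multiplication by $3^{-n}$ produces the first summand $C3^{-n}$ in the statement. The analogue for $\Abf S$ follows from the same computation together with the uniform boundedness of $\Abf$.

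For the spatial averages, I would split
\[
(f_n)_{z+\cud_m} = \Big[\fint_{z+\cud_m} S(\cdot,\cud_n,X,X^*) - \fint_{z+\cud_m} S(\cdot,z+\cud_m,X,X^*)\Big] + \Big[\fint_{z+\cud_m} S(\cdot,z+\cud_m,X,X^*) - \overline{S}(X,X^*)\Big].
\]
The second bracket is bounded by $C\,\mathcal{E}(z+\cud_m)$ by direct application of \eqref{e.spatavgS}. For the first bracket, Jensen's inequality bounds its square by $\|S(\cdot,\cud_n,X,X^*) - S(\cdot,z+\cud_m,X,X^*)\|_{\un L^2(z+\cud_m)}^2$, and averaging over $z\in\mcl Z_m$ the quadratic response (Lemma~\ref{l.quadresponse}, applied to the partition $\{z+\cud_m\}_{z\in \mcl Z_m}$ of $\cud_n$) controls the average by $\frac{C}{|\mcl Z_m|}\sum_z(J(z+\cud_m,X,X^*) - J(\cud_n,X,X^*))$. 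Since subadditivity of $J$ ensures this signed average is non-negative and bounded above by $\frac{1}{|\mcl Z_m|}\sum_z \mathcal{E}(z+\cud_m)$ (plus a lower-order contribution from $|\overline J - J(\cud_n)|$ that is reabsorbed via the one-sided subadditivity $J(\cud_n) - \overline J \le \frac{1}{|\mcl Z_m|}\sum(J(z+\cud_m) - \overline J)$ together with $J \ge 0$), the desired estimate follows by substituting back into the multiscale Poincaré inequality. The bound \eqref{e.HminusoneAS} follows verbatim after replacing \eqref{e.spatavgS} with \eqref{e.spatavgAS} and noting $\|\Abf S(\cud_n) - \Abf S(z+\cud_m)\|_{\un L^2} \le C\|S(\cud_n) - S(z+\cud_m)\|_{\un L^2}$.

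The main obstacle is the control of the signed ``additivity defect'' $\frac{1}{|\mcl Z_m|}\sum_z(J(z+\cud_m) - J(\cud_n))$ by the positive quantities $\mathcal E(z+\cud_m)$ only: subadditivity of $J$ gives the required sign but only produces the one-sided bound $J(\cud_n) - \overline J \le \frac{1}{|\mcl Z_m|}\sum \mathcal E(z+\cud_m)$, whereas the complementary inequality $\overline J \le J(\cud_n)$ does not hold pointwise. The fix is to observe that $\mathcal{E}(\cud_n)$ itself satisfies the one-sided subadditivity and so can be absorbed into the outermost sum at the cost of a harmless constant prefactor; the remaining deterministic budget is carried by the $C3^{-n}$ term.
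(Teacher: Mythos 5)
Your route is the paper's route: apply Proposition~\ref{p.msp} to $S(\cdot,\cud_n,X,X^*)-\overline S(X,X^*)$ (and to $\Abf S-\Abfh\,\overline S$), bound the $L^2$ term by a constant via quadratic response with $T=0$, and control the spatial averages on subcubes by the triangle inequality, \eqref{e.spatavgS} (resp.\ \eqref{e.spatavgAS}) on $z+\cud_m$, Jensen, and Lemma~\ref{l.quadresponse} applied to the partition $\{z+\cud_m\}_{z\in\mcl Z_m}$. All of this matches the paper's proof and is correct.

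The gap is in your final paragraph, i.e.\ exactly at the obstacle you identify. After Lemma~\ref{l.quadresponse} you must control $|\mcl Z_m|^{-1}\sum_z\bigl(J(z+\cud_m)-J(\cud_n)\bigr)$, and writing $J(z+\cud_m)-J(\cud_n)=\bigl(J(z+\cud_m)-\overline J\bigr)+\bigl(\overline J-J(\cud_n)\bigr)$ leaves the term $\overline J-J(\cud_n)\le \mathcal E(\cud_n)$. Your proposed disposal of it fails on both counts: the claim that ``$\mathcal E(\cud_n)$ itself satisfies the one-sided subadditivity'' is precisely the nontrivial statement needed and does \emph{not} follow from subadditivity of $J$ (that only controls $J(\cud_n)-\overline J$, as you yourself note), and the claim that the remainder is ``carried by the $C3^{-n}$ term'' is false, since the leftover contribution is $\sum_{m=0}^{n-1}3^{m-n}\mathcal E(\cud_n)^{1/2}\simeq\mathcal E(\cud_n)^{1/2}$, which is of order one, not $3^{-n}$, and is not dominated by the right side of \eqref{e.HminusoneS} unless $\mathcal E(\cud_n)$ is itself controlled by the subcube averages. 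The needed inequality $\mathcal E(\cud_n)\le C\,|\mcl Z_m|^{-1}\sum_{z\in\mcl Z_m}\mathcal E(z+\cud_m)$ is true, but its proof uses the convex-duality structure, not subadditivity alone: for the problematic direction, apply \eqref{e.ordering} with $X^*=\Abfh X$ to get
\begin{equation*}
\tfrac12 X\cdot\Abfh X-\mu(\cud_n,X)\;\le\;\mu^*(\cud_n,\Abfh X)-\tfrac12 X\cdot\Abfh X\;=\;J(\cud_n,0,\Abfh X)-\overline J(0,\Abfh X),
\end{equation*}
and the right side is bounded by $C|\mcl Z_m|^{-1}\sum_z\mathcal E(z+\cud_m)$ using the subadditivity of $J(\cdot,0,\Abfh X)=\mu^*(\cdot,\Abfh X)$ together with the fact that $(X,X^*)\mapsto J-\overline J$ is quadratic (so its supremum over $B_C$ is at most $C^2$ times that over $B_1$); the symmetric argument with $X=\Abfh^{-1}X^*$ handles the $\mu^*$ part, and the direction $J(\cud_n)-\overline J$ is plain subadditivity. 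With this inequality the term $\mathcal E(\cud_n)$ is absorbed into $|\mcl Z_m|^{-1}\sum_z\mathcal E(z+\cud_m)$ for every $m$ — this is the absorption performed (without comment) in the last display of the paper's proof — and your argument then closes as written.
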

\begin{proof}
We fix $X,X^*\in B_1$ and, since it plays no role in the argument, we drop explicit display of the dependence on $(X,X^*)$. According to Proposition~\ref{p.msp}, 
\begin{multline*}
\left\| S(\cdot,\cud_n) - \overline{S} \right\|_{\un {\hat H}^{-1}_\pa(\cud_n)} 
\leq
C \left\| S(\cdot,\cud_n) - \overline{S} \right \|_{\un L^2(\cud_n)} 
\\
+ C \sum_{m = 0}^{n-1} 3^{m} \Ll( \left|\mcl Z_m\right|^{-1} \sum_{z \in \mcl Z_m} \left| \left (S(\cdot,\cud_n) \right)_{z + \cud_m} - \overline{S} \right|^2 \Rr)^{\frac 12}. 
\end{multline*}
To estimate the first term on the right side, we just observe that 
\begin{equation*} \label{}
\left\| S(\cdot,\cud_n) - \overline{S} \right \|_{\un L^2(\cud_n)} 
\leq \left\| S(\cdot,\cud_n) \right \|_{\un L^2(\cud_n)} + \left|  \overline{S}  \right|
\leq C. 
\end{equation*}
We next estimate the second term. By the triangle inequality,~\eqref{e.spatavgS} and Lemma~\ref{l.quadresponse},
\begin{align*} \label{}
\lefteqn{
\sum_{z\in \mcl Z_m} \left| \left (S(\cdot,\cud_n) \right)_{z + \cud_m} - \overline{S} \right|^2
} \qquad &
\\ & 
\leq 
2\sum_{z\in \mcl Z_m}
\left( \left| \left(S\left(\cdot,{z+\cud_m} \right) \right)_{z + \cud_m} - \overline{S} \right|^2 
+
\left\| S\left(\cdot,\cud_n\right) - S\left(\cdot,z+\cud_m \right)\right\|^2_{\underline{L}^2(z+\cud_m)}
\right)
\\ & 
\leq C \sum_{z\in \mcl Z_m} \mathcal{E}\left( z+\cud_m \right) + C \sum_{z\in \mcl Z_m} \left( J(z+\cud_m) - J(\cud_n) \right) 
\\ & 
\leq C \sum_{z\in \mcl Z_m} \left( \mathcal{E}\left( z+\cud_m \right) + \mathcal{E}\left(\cud_n\right) \right). 
\end{align*}
Thus
\begin{align*}
\lefteqn{
\sum_{m = 0}^{n-1} 3^{m} \Ll( \left|\mcl Z_m\right|^{-1} \sum_{z \in \mcl Z_m} \left| \left (S(\cdot,\cud_n) \right)_{z + \cud_m} - \overline{S} \right|^2 \Rr)^{\frac 12}
} \qquad & 
\\ &
\leq
C \sum_{m = 0}^{n-1} 3^{m} \Ll(  C\mathcal{E}\left(\cud_n\right)  + \left|\mcl Z_m\right|^{-1} \sum_{z\in \mcl Z_m} \left( \mathcal{E}\left( z+\cud_m \right) \right) \Rr)^{\frac 12}
\\ & 
\leq C \sum_{m = 0}^{n-1} 3^{m} \Ll(  \left|\mcl Z_m\right|^{-1} \sum_{z\in \mcl Z_m} \left( \mathcal{E}\left( z+\cud_m \right) \right) \Rr)^{\frac 12}.
\end{align*}
Combining the above yields~\eqref{e.HminusoneS}. The estimate~\eqref{e.HminusoneAS} is obtained similarly, we just need to use~\eqref{e.spatavgAS} instead of~\eqref{e.spatavgS}. 
\end{proof}

We next give an estimate of the random variable appearing on the right side of~\eqref{e.HminusoneS} and~\eqref{e.HminusoneAS}, which is a straightforward consequence of Theorem~\ref{t.subadd}. This is the only place in this section where Theorem~\ref{t.subadd} or any other stochastic ingredient is used. 

\begin{proposition}
\label{p.controlofEm}
There exists $\beta(d,\Lambda)$ and, for every $s\in (0,2+d)$, a constant $C(s,d,\Lambda)<\infty$ such that, for every $n\in\N$, 
\begin{equation}
\label{e.estimatemonster}
\sum_{m = 0}^{n-1} 3^{m-n} \Ll(  \left|\mcl Z_m\right|^{-1} \sum_{z\in \mcl Z_m} \left( \mathcal{E}\left( z+\cud_m \right) \right) \Rr)^{\frac 12}
\leq C3^{-n\beta(2+d-s)} + \O_1\left(C3^{-ns} \right).
\end{equation}
\end{proposition}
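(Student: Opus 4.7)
The plan is to deduce the estimate directly from Theorem~\ref{t.subadd} in three short steps. The first step is to upgrade the pointwise-in-$(X,X^*)$ bound of Theorem~\ref{t.subadd} into a uniform bound on $\mathcal{E}(V)$. Because $(X,X^*) \mapsto J(V,X,X^*) - \overline{J}(X,X^*)$ is a quadratic form on $\R^{4d}$, its supremum over the unit ball is controlled by a constant times the sum of its values at finitely many fixed test vectors, by the same elementary observation that gives~\eqref{e.sup.easy}. Applying Theorem~\ref{t.subadd} at each of those test pairs and combining via the triangle inequality~\eqref{e.Osums} for $\O_1$ yields, for every $m \in \N$,
\begin{equation*}
\mathcal{E}(\cud_m) \leq C 3^{-m \beta (2+d-s)} + \O_1\bigl(C 3^{-ms}\bigr),
\end{equation*}
and, by the stationarity of $\P$, the same bound holds for $\mathcal{E}(z+\cud_m)$ for each $z \in \mcl Z_m$.

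The second step is to average this estimate over $z \in \mcl Z_m$. Using~\eqref{e.Osums} once more, the random variable $Y_m := |\mcl Z_m|^{-1} \sum_{z \in \mcl Z_m} \mathcal{E}(z+\cud_m)$ inherits the same bound $Y_m \leq C 3^{-m\beta(2+d-s)} + \O_1(C 3^{-ms})$. The third step is to take square roots and sum. The elementary inequality $\sqrt{a+b} \leq \sqrt a + \sqrt b$ for $a,b \geq 0$ splits $Y_m^{1/2}$ into a deterministic piece of size $C 3^{-m\beta(2+d-s)/2}$ and a random piece whose tails are Gaussian at scale $C 3^{-ms/2}$ and are hence bounded by $\O_1(C 3^{-ms/2})$ after adjusting the multiplicative constant. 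Multiplying by $3^{m-n}$ and summing, the deterministic part is a geometric series dominated by its last term $m = n-1$, producing $C 3^{-n\beta(2+d-s)/2}$, and a final application of~\eqref{e.Osums} handles the random part analogously. Applying Theorem~\ref{t.subadd} with a parameter suitably larger than the target $s$ and then redefining $\beta$ absorbs the factors of two lost in the square-root step and yields~\eqref{e.estimatemonster} as stated.

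The main technical subtlety is the passage through the square root: the $\O_1$-bound on $Y_m$ naturally produces an $\O_2$-bound on $Y_m^{1/2}$, and its conversion back to $\O_1$ together with the geometric summation governs the exponent that finally appears in the random part. I therefore expect the main bookkeeping difficulty to lie in calibrating the parameter $s$ used when invoking Theorem~\ref{t.subadd} against the target exponent in~\eqref{e.estimatemonster}; once this calibration is carried out, the remaining arguments are routine manipulations of~\eqref{e.Osums} and of the elementary geometric sum bounds already used several times in Section~\ref{s.iteration}.
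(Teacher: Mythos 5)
There is a genuine gap in the summation step, and it is exactly the point where the paper has to work hardest. After you take square roots and sum, your random part at scale $m$ is $\O_1\bigl(C3^{-ms/2}\bigr)$ (or $\O_1(C3^{-mt})$ for whatever auxiliary parameter $t<2+d$ you feed into Theorem~\ref{t.subadd}), weighted by $3^{m-n}$. Applying \eqref{e.Osums} to this weighted sum gives $\O_1\bigl(C3^{-n}\sum_{m=0}^{n-1}3^{m(1-t)}\bigr)$, and since $t>1$ for any useful choice, the geometric sum is dominated by its \emph{smallest} scale $m=0$: the best you can conclude is $\O_1(C3^{-n})$. This is not absorbable into the deterministic term and is far weaker than the claimed $\O_1(C3^{-ns})$ once $s>1$ (the target allows $s$ arbitrarily close to $2+d$); a random variable with only exponential tails at scale $3^{-n}$ simply does not satisfy \eqref{e.estimatemonster}. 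No calibration of the auxiliary parameter fixes this, because the obstruction is the $m=0$ term $3^{-n}\,\O_1(C)$, not the size of the exponents at large $m$. A second, related loss is your use of $\sqrt{a+b}\le\sqrt a+\sqrt b$: it halves the stochastic exponent, capping the per-scale rate at $t/2<(2+d)/2$, so even with a perfect summation you could not reach $s$ close to $2+d$. The paper instead applies \eqref{e.nicelementary} with the \emph{deterministic} term playing the role of $a$, which preserves the random exponent up to a harmless $3^{m\beta(2+d-s'')/2}$ factor.

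The missing idea is an upgrade of stochastic integrability \emph{before} summing. Since each averaged quantity is deterministically bounded by $C$, one can interpolate between this bound and the $\O_1(C3^{-ms'})$ estimate (the paper invokes \cite[Lemma A.3]{AKMBook}) to obtain $\O_{d+2}\bigl(C3^{-ms'/(d+2)}\bigr)$. Now the per-scale decay rate $s'/(d+2)<1$ is slower than the growth of the weights $3^{m-n}$, so the weighted sum in \eqref{e.Osums} (valid for $\O_{d+2}$ since $d+2\ge1$) is dominated by the \emph{largest} scale $m=n-1$ and yields $\O_{d+2}\bigl(C3^{-ns'/(d+2)}\bigr)$ with no loss of the exponent $n s'$. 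A final interpolation in the opposite direction, the computation establishing \eqref{e.Ximpy} (again exploiting boundedness, by raising to the power $2+d$ and paying a small deterministic price), converts this back into the stated form $C3^{-n\beta(2+d-s)}+\O_1(C3^{-ns})$ after shrinking $\beta$. Your Steps 1 and 2 (uniformizing over $X,X^*\in B_1$ by quadraticity as in \eqref{e.sup.easy}, then averaging over $z\in\mcl Z_m$ via \eqref{e.Osums}) are fine and match the paper, but without the integrability-boosting detour through $\O_{d+2}$ the conclusion as stated cannot be reached.
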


\begin{proof}
Fix $s':= \frac13(2s+2+d)$ and $s'':= \frac13(s+2(2+d))$ so that $s<s'<s''< 2+d$ with equally sized gaps between these numbers. By Theorem~\ref{t.subadd} and~\eqref{e.Osums}, we have  
\begin{equation*} \label{}
 \left|\mcl Z_m\right|^{-1} \sum_{z\in \mcl Z_m} \left( \mathcal{E}\left( z+\cud_m \right) \right)
\leq
C3^{-m\beta(2+d-s'')} + \O_1\left(C3^{-ms''} \right).
\end{equation*}
Using the elementary inequality~\eqref{e.nicelementary}, we deduce that 
\begin{equation*} \label{}
\left( 
\left|\mcl Z_m\right|^{-1} \sum_{z\in \mcl Z_m} \left( \mathcal{E}\left( z+\cud_m \right) \right)
\right)^{\frac12}
\leq 
C3^{-m\beta(2+d-s'')/2} + \O_1\left(C3^{m\beta(2+d-s'')/2-ms''} \right).
\end{equation*}
Redefining $\beta$ to be smaller if necessary, we get
\begin{equation*} \label{}
\left( 
\left|\mcl Z_m\right|^{-1} \sum_{z\in \mcl Z_m} \left( \mathcal{E}\left( z+\cud_m \right) \right)
\right)^{\frac12}
\leq 
C3^{-m\beta(2+d-s')} + \O_1\left(C3^{-ms'} \right).
\end{equation*}
As the left side of the previous line is bounded by~$C$, we can apply~\cite[Lemma A.3]{AKMBook} to obtain
\begin{equation*} \label{}
\left( 
\left|\mcl Z_m\right|^{-1} \sum_{z\in \mcl Z_m} \left( \mathcal{E}\left( z+\cud_m \right) \right)
\right)^{\frac12}
\leq 
C3^{-m\beta(2+d-s')} + \O_{d+2}\left(C3^{-ms'/(d+2)} \right).
\end{equation*}
Since $s'/(d+2)\leq1-c$, we may apply~\eqref{e.Osums} again to obtain 
\begin{equation*} \label{}
\sum_{m = 0}^{n-1} 3^{m-n} \Ll(  \left|\mcl Z_m\right|^{-1} \sum_{z\in \mcl Z_m} \left( \mathcal{E}\left( z+\cud_m \right) \right) \Rr)^{\frac 12}
\leq C3^{-n\beta(2+d-s')} + \O_{d+2}\left(C3^{-ns'/(d+2)} \right).
\end{equation*}
We conclude by observing that, for any nonnegative random variable $X$, 
\begin{equation} 
\label{e.Ximpy}
X \leq  C3^{-n\beta(2+d-s')} + \O_{d+2}\left(C3^{-ns'/(d+2)}  \right) 
\implies
X \leq C3^{-n\beta(2+d-s)} + \O_1\left(C3^{-ns} \right),
\end{equation}
where $\beta(d,\Lambda)$ in the second statement may be smaller than in the first. To see this, we compute
\begin{align*} \label{}
X 
\leq X+3^{-n\beta(2+d-s)(1+d)}
&
\leq 3^{n\beta(2+d-s)(1+d)} \left(X+3^{-n\beta(2+d-s)} \right)^{2+d}
\\ & 
\leq C 3^{n\beta(2+d-s)(1+d)} \left(X^{2+d} +3^{-n\beta(2+d-s')(2+d)} \right)
\\ & 
\leq C 3^{n\beta(2+d-s)(1+d)} \left(C3^{-n\beta(2+d-s)(2+d)} + \O_{1}\left(C3^{-ns'} \right) \right)
\\ & 
\leq C3^{-n\beta(2+d-s)} + \O_1\left(C3^{-ns} \right),
\end{align*}
provided that $\beta$ is small enough that $\beta(2+d-s)(2+d) \leq s'-s$. It suffices to require $\beta \leq 1/3(2+d)$. This completes the proof of~\eqref{e.Ximpy} and of the proposition.
\end{proof}

\subsection{Construction of finite-volume correctors}
We next give the construction of the (finite-volume) correctors. The usage of the term ``corrector'' in stochastic homogenization is typically reserved for a function with stationary, mean-zero gradient which is the difference of a solution of the equation in the full space and an affine function. For our purposes, it is more convenient to work with a finite-volume approximation of the corrector which will be defined on a large cylinder~$\cud_n$, because this is what comes most easily and naturally out of the estimates we have already proved above. These correctors will be obtained in a simple way from~$S(\cdot,X,X^*)$ and~$\Abf S(\cdot,X,X^*)$ and Remark~\ref{r.deconstruction}; the estimates we need for them will be easy consequences of~\eqref{e.HminusoneS} and~\eqref{e.HminusoneAS}. The fact that these correctors are not stationary functions defined in the whole space does not create any complication in the proof of Theorem~\ref{t.CDP}. 

\smallskip

The corrector with slope~$e \in \Rd$ on the cylinder~$\cud_n$ with~$n\in\N$ will be denoted by~$\phi_{e,n}$. 
We define it from the maximizers of $J(\cud_m,X,0)$, studied in the previous section. We first must make an appropriate choice of $X$, depending on $e$. This is a linear algebra exercise using Proposition~\ref{p.ahomdef}. We set
\begin{equation*} \label{}
X_e:= 
- \begin{pmatrix} e \\ \ahom e \end{pmatrix}
\end{equation*}
and observe from~\eqref{e.mappingAbar} that we have 
\begin{equation} 
\label{e.freedom}
\Abfh X_e = 
- \begin{pmatrix} \ahom e \\ e \end{pmatrix}.
\end{equation}
To check the previous line, we note (see Proposition~\ref{p.ahomdef}) that the map
\begin{equation*} \label{}
q \mapsto \frac12 \begin{pmatrix} e \\ q \end{pmatrix} \cdot \Abfh \begin{pmatrix} e \\ q \end{pmatrix} - e\cdot q
\quad \mbox{attains its minimum at} \ q=\ahom e
\end{equation*}
and the map
\begin{equation*} \label{}
p \mapsto \frac12 \begin{pmatrix} p \\ \ahom e \end{pmatrix} \cdot \Abfh \begin{pmatrix} p \\ \ahom e \end{pmatrix} - p\cdot \ahom e
\quad \mbox{attains its minimum at} \ p= e.
\end{equation*}
Differentiating in $p$ and $q$, respectively, gives~\eqref{e.freedom}.

\smallskip

We next take~$u_{e,n}\in H^1_\pa(\cud_{n+1})$ to be the element $u\in \mathcal{A}(\cud_{n+1})$ in the representation of~$S(\cdot,\cud_{n+1},X_e,0)$ given in Lemma~\ref{l.identif.S}, with additive constant chosen so that~$\left(u_{e,n}\right)_{\cud_n} = 0$. Equivalently, in view of Remark~\ref{r.deconstruction}, we can define~$u_{e,n}$ to be the function on $\cud_{n+1}$ with mean zero on~$\cud_n$ with gradient given by
\begin{equation} 
\label{e.deconformugrad}
\nabla u_{e,n} = \frac12 \left( \pi_1 S\left( \cdot,\cud_{n+1},X_e,0\right) + \pi_2 \Abf S\left( \cdot,\cud_{n+1},X_e,0 \right) \right),
\end{equation}
where $\pi_1$ and $\pi_2$ denote the projections $\R^{2d}\to \Rd$ onto the first and second~$d$ variables, respectively (that is, $\pi_1(x,y) = x$ and $\pi_2(x,y)=y$ for $x,y\in\Rd$). Note that, by Remark~\ref{r.deconstruction}, we also have the formula
\begin{equation}
\label{e.deconformuflux}
\a \nabla u_{e,n} =  \frac12 \left( \pi_2 S\left( \cdot,\cud_{n+1},X_e,0\right) + \pi_1 \Abf S\left( \cdot,\cud_{n+1},X_e,0 \right) \right).
\end{equation}
By Proposition~\ref{p.controlofS},~\eqref{e.freedom},~\eqref{e.deconformugrad} and~\eqref{e.deconformuflux}, we have that 
\begin{multline} 
\label{e.weakucontrol}
3^{-n} \left\| \nabla u_{e,n} - e \right\|_{\un {\hat H}^{-1}_\pa(\cud_{n+1})} 
+ 
3^{-n} \left\| \a \nabla u_{e,n} - \ahom e \right\|_{\un {\hat H}^{-1}_\pa(\cud_{n+1})} 
\\
\leq
C3^{-n} + C \sum_{m = 0}^{n} 3^{m-n} \Ll(  \left|\mcl Z_m\right|^{-1} \sum_{z\in \mcl Z_m} \left( \mathcal{E}\left( z+\cud_m \right) \right) \Rr)^{\frac 12}.
\end{multline}
Since~$u_{e,n}\in \mathcal{A}(\cud_{n+1})$, we have that~$u_{e,n}$ is a solution of 
\begin{equation} 
\label{e.eqnforuen}
\partial_t u_{e,n} - \nabla \cdot \left( \a \nabla u_{e,n}\right) = 0 \quad \mbox{in} \ \cud_{n+1}.
\end{equation}
The approximate first-order corrector $\phi_{e,n}$ is defined by subtracting the affine function $x\mapsto e\cdot x$ from $u_{e,n}$:
\begin{equation*} \label{}
\phi_{e,n}(x) := u_{e,n}(x) - e\cdot x. 
\end{equation*}
Summarizing, we therefore have that $\phi_{e,n}$ is a solution of 
\begin{equation} 
\label{e.eq.phien}
\partial_t \phi_{e,n} - \nabla \cdot \left( \a \left(e+\nabla \phi_{e,n}\right) \right) = 0 \quad \mbox{in} \ \cud_{n+1},
\end{equation}
and satisfies the estimates
\begin{multline} 
\label{e.weakphicontrol}
3^{-n} \left( \left\| \nabla \phi_{e,n} \right\|_{\un {\hat H}^{-1}_\pa(\cud_{n+1})} 
+ 
 \left\| \a \left(e+ \nabla \phi_{e,n} \right)- \ahom e \right\|_{\un {\hat H}^{-1}_\pa(\cud_{n+1})} 
 \right)
\\
\leq
C3^{-n} + C \sum_{m = 0}^{n} 3^{m-n} \Ll(  \left|\mcl Z_m\right|^{-1} \sum_{z\in \mcl Z_m} \left( \mathcal{E}\left( z+\cud_m \right) \right) \Rr)^{\frac 12}.
\end{multline}
By the previous two displays, Proposition~\ref{p.integrate} and $\left( \phi_{e,n} \right)_{\cud_n} = 0$, we also have
\begin{equation} 
\label{e.L2phicontrol}
3^{-n} \left\|  \phi_{e,n} \right\|_{\un L^2(\cud_n)} 
\leq
C3^{-n} + C \sum_{m = 0}^{n} 3^{m-n} \Ll(  \left|\mcl Z_m\right|^{-1} \sum_{z\in \mcl Z_m} \left( \mathcal{E}\left( z+\cud_m \right) \right) \Rr)^{\frac 12}.
\end{equation}

\subsection{{The proof of Theorem~\ref{t.CDP}}}

The main step in the proof of Theorem~\ref{t.CDP} is the following proposition. It is a deterministic estimate of the homogenization error in terms of the error in the convergence of the correctors defined in the previous subsection. Since we have already estimated the latter in~\eqref{e.estimatemonster},~\eqref{e.weakphicontrol} and~\eqref{e.L2phicontrol}, this is sufficient to imply the theorem. It is convenient to denote, for every $m\in\N$,
\begin{multline*} \label{}
\mathcal{E}'(m):= 3^{-m} \sum_{k=1}^d \bigg( \left\| \phi_{e_k,m}  \right\|_{\un L^2(\cud_m)} 
+  \left\| \nabla \phi_{e_k,m}  \right\|_{\un {\hat H}^{-1}_\pa(\cud_m)} 
\\
+  \left\| \a \left(e_k+ \nabla \phi_{e_k,m} \right)- \ahom e_k \right\|_{\un {\hat H}^{-1}_\pa(\cud_m)} 
\bigg).
\end{multline*}
We also set, for each $\ep>0$, 
\begin{equation} 
\label{e.rescaleeps}
\a^\ep(t,x):= \a\left(\frac t{\ep^2}, \frac x\ep \right)
\quad \mbox{and} \quad
\phi^\ep_{e,n}(t,x):= \ep \phi_{e,n} \left( \frac{t}{\ep^2}, \frac x\ep \right).
\end{equation}

\begin{proposition}
\label{p.blackbox}
Fix a bounded interval $I:=(I_-,0)\subseteq \left( -\frac14,0 \right)$, a bounded Lipschitz domain $U \subseteq \cu_0$, a small parameter~$\ep \in \left(0,\tfrac12\right]$, an exponent~$\delta>0$ and a initial-boundary condition~$f\in W^{1,2+\delta}_{\pa} \left(I \times U\right)$. Let 
$$u^\ep,u \in f + H^1_{\pa, \sqcup}\left( I \times U\right)$$
respectively denote the solutions of
\begin{equation} 
\label{e.CDP}
\left\{
\begin{aligned}
& \partial_t u^\ep - \nabla \cdot \left( \a^\ep \nabla u^\ep  \right) = 0 & \mbox{in} & \ I \times U,
\\ 
& u^\ep = f & \mbox{on} & \ \partial_\sqcup \left( I \times U \right),
\end{aligned}
\right.
\end{equation}
and
\begin{equation}
\label{e.CDPhom}
\left\{
\begin{aligned}
& \partial_t u - \nabla \cdot \left( \ahom \nabla u  \right) = 0 & \mbox{in} & \ I \times U,
\\ 
& u = f & \mbox{on}  & \ \partial_\sqcup \left( I \times U \right).
\end{aligned}\right.
\end{equation}
Let $n\in\N$ be such that $3^{-n} \leq \ep < 3^{-(n+1)}$. Then there exist $\beta(\delta,d,\Lambda)>0$ and $C(I,U,\delta,d,\Lambda)<\infty$ such that, for every~$r\in (0,1)$, we have the estimate
\begin{multline} 
\label{e.blackbox}
\left\|  \nabla u^\ep - \nabla u \right\|_{{\hat H}^{-1}_\pa(I\times U)}
+ \left\| \a^\ep \nabla u^\ep - \ahom \nabla u \right\|_{{\hat H}^{-1}_\pa(I\times U)}
+ \left\| u^\ep - u \right\|_{{L}^2(I\times U)}
\\
\leq
C \left\| f \right\|_{W^{1,2+\delta}_\pa(I\times U)}  
\left(
r^\beta + \frac1{r^{3+(2+d)/2}} \mathcal{E}'(n) 
\right).
\end{multline}
\end{proposition}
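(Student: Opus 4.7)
\textbf{Proof proposal for Proposition~\ref{p.blackbox}.}
The plan is to implement a quantitative parabolic two-scale expansion, using the finite-volume correctors $\phi_{e_k,n}$ constructed above in place of stationary ones, and closing the estimate by a parabolic energy identity. First, I would regularize $u$: let $\tilde u$ be a parabolic mollification of $u$ at scale $r$ (that is, convolve with a smooth bump supported in $(-r^2,r^2)\times B_r$) and let $\eta_r$ be a smooth cutoff that equals~$1$ at parabolic distance at least $2r$ from $\partial_\sqcup V$ and vanishes at distance~$r$. Using the global Meyers estimate from Appendix~\ref{s.meyers} together with $f\in W^{1,2+\delta}_\pa(V)$, one controls both the $L^{2+\delta'}$-norm of $\nabla u$ (for some $\delta'(\delta,d,\Lambda)>0$) and the boundary-layer contribution $\|(1-\eta_r) \nabla u\|_{L^2(V)} \leq C r^\beta \|f\|_{W^{1,2+\delta}_\pa(V)}$ for some $\beta(\delta,d,\Lambda)>0$. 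The parabolic mollification gives the standard bounds $\|\partial_t^a \nabla^b \tilde u\|_{L^\infty(V)} \leq C r^{-(2a+b-1)-(2+d)/(2+\delta)} \|f\|_{W^{1,2+\delta}_\pa(V)}$ for small orders, losing the factor $r^{-(2+d)/(2+\delta)} \leq r^{-(2+d)/2}$ relative to $\|\nabla u\|_{L^{2+\delta}}$.

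Next, define the two-scale expansion
\begin{equation*}
w^\ep := u + \ep \eta_r \sum_{k=1}^d \phi^\ep_{e_k,n} \, \partial_{x_k} \tilde u,
\end{equation*}
where $\phi^\ep_{e_k,n}$ is the rescaled corrector from~\eqref{e.rescaleeps}. Because $\eta_r$ vanishes in a neighborhood of $\partial_\sqcup V$, we have $w^\ep - u^\ep \in H^1_{\pa,\sqcup}(V)$, which makes it a legitimate test function. A direct computation (using the equations $\partial_t u = \nabla\cdot (\ahom \nabla u)$ and \eqref{e.eq.phien} scaled by $\ep$) shows that
\begin{equation*}
\partial_t w^\ep - \nabla \cdot (\a^\ep \nabla w^\ep) = \nabla \cdot F^\ep + G^\ep,
\end{equation*}
where, schematically,
\begin{equation*}
F^\ep = \sum_k (\ahom e_k - \a^\ep(e_k + \nabla \phi^\ep_{e_k,n}))\, \eta_r\,\partial_{x_k} \tilde u + \ep\,\a^\ep \sum_k \phi^\ep_{e_k,n}\, \nabla(\eta_r \partial_{x_k} \tilde u),
\end{equation*}
and $G^\ep$ collects the commutator terms and those coming from replacing $u$ by $\tilde u$ (plus the boundary-layer residual near the support of $1-\eta_r$).

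The third step is to test the equation for $w^\ep - u^\ep$ against itself and use a standard parabolic energy identity (as in Proposition~\ref{p.parabolic.min}) to deduce
\begin{equation*}
\|\nabla(w^\ep - u^\ep)\|_{L^2(V)}^2 \leq C \int_V F^\ep \cdot \nabla(w^\ep - u^\ep) + C \int_V G^\ep (w^\ep - u^\ep).
\end{equation*}
Each term in $F^\ep$ is a product of a ``rapidly oscillating'' factor (a finite-volume quantity whose~$\hat H^{-1}_\pa$ norm is encoded by $\mathcal{E}'(n)$, after rescaling) and a slowly-varying factor (a smooth function of $\tilde u$ and $\eta_r$). I would estimate such integrals by duality, pulling out $\|\cdot\|_{\hat H^{-1}_\pa(V)}\|\cdot\|_{H^1_\pa(V)}$: the first factor gives $\ep\,\mathcal{E}'(n)$ after rescaling (the correctors live on $\cud_n$ of unit physical size), while the second factor produces the loss $r^{-3-(2+d)/2}$ coming from the combined loss of two parabolic derivatives on $\tilde u$ and $\eta_r$ and the Meyers loss $r^{-(2+d)/2}$. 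The $G^\ep$ terms and the boundary layer are estimated similarly, with the boundary contribution bounded by $C r^\beta \|f\|_{W^{1,2+\delta}_\pa(V)}$ via Meyers.

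Finally, the triangle inequality converts the bound on $\|\nabla(w^\ep - u^\ep)\|_{L^2(V)}$ into the desired bound on $\|\nabla(u^\ep - u)\|_{\hat H^{-1}_\pa(V)}$, since $\|w^\ep - u\|_{L^2(V)} \leq C \ep \sum_k \|\phi^\ep_{e_k,n}\|_{L^2(V)} \|\partial_{x_k} \tilde u\|_{L^\infty}$ is already controlled by $\ep\, \mathcal{E}'(n) r^{-(2+d)/2}$, and analogous reasoning handles the flux $\a^\ep \nabla u^\ep - \ahom \nabla u$ (writing it as $\a^\ep \nabla(u^\ep - w^\ep) + (\a^\ep \nabla w^\ep - \ahom \nabla u)$, the second piece being directly the flux error of the two-scale expansion). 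The main obstacle is the careful estimation of the dual norm $\hat H^{-1}_\pa$, since unlike the elliptic case it also constrains the time derivative: integration by parts in time against $\eta_r \partial_{x_k}\tilde u$ produces an extra $\partial_t \tilde u$ factor that must be absorbed into the $r^{-3-(2+d)/2}$ loss. Apart from this, the argument is a deterministic parabolic analogue of~\cite[Section 6.1]{AKMBook}, and invoking Proposition~\ref{p.controlofEm} together with the definition of $\mathcal{E}'(n)$ yields Theorem~\ref{t.CDP} from this proposition.
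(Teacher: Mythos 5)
Your overall skeleton (cut-off two-scale expansion built from the finite-volume correctors, Meyers estimate for the boundary layer, corrector errors encoded in $\mathcal{E}'(n)$, triangle inequality at the end) is the same as the paper's, but the central analytic step fails as you have written it. The problematic term in your $F^\ep$ is
$\sum_k \bigl(\ahom e_k - \a^\ep(e_k+\nabla\phi^\ep_{e_k,n})\bigr)\,\eta_r\,\partial_{x_k}\tilde u$,
which is of order one in $L^2(V)$ and is small \emph{only} in the weak norm $\hat H^{-1}_\pa(V)$. In your energy identity, testing the equation for $z:=w^\ep-u^\ep$ against $z$ itself produces the pairing $\int_V F^\ep\cdot\nabla z$, and the energy estimate only gives you $\nabla z$ in $L^2(V)$. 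To "pull out $\|\cdot\|_{\hat H^{-1}_\pa}\|\cdot\|_{H^1_\pa}$" as you propose, you would need $\nabla z\in H^1_\pa(V)$, i.e.\ control of $\nabla^2 z$ and $\partial_t\nabla z$, which is not available (and cannot be, for a merely $H^1_\pa$ solution); pairing instead with $\|\nabla z\|_{L^2}$ forces you to use $\|F^\ep\|_{L^2}$, which is not small. So the Gr\"onwall-type energy argument cannot see the weak-norm smallness of the flux error, and the estimate does not close. This is precisely the point where the parabolic setting is more delicate than the elliptic one, and your closing remark about "an extra $\partial_t\tilde u$ factor" acknowledges the symptom without supplying the cure.

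The paper's mechanism for this step is different and is the missing ingredient: the weakly small right-hand side $f^*$ is first rewritten, via Lemma~\ref{l.identif.H-1par} (identification of $\hat H^{-1}_\pa$), as $\partial_t v + v^*$ with $v\in L^2(I;H^1_0(U))$ and $v^*\in L^2(I;H^{-1}(U))$ both bounded by $\|f^*\|_{\hat H^{-1}_\pa(V)}$ and vanishing near the parabolic boundary; the full residual then has the form $\partial_t F+G$, and the well-posedness estimate~\eqref{e.generalsolutionestimate} for Cauchy--Dirichlet problems with right-hand sides of exactly this form (coming from the variational solvability theory of Appendix~\ref{s.app}) converts the smallness of $(F,G)$ into $\|u^\ep-w^\ep\|_{L^2(I;H^1(U))}\lesssim r^{\delta/(4+2\delta)}+r^{-3-(2+d)/2}\mathcal{E}'(n)$. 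Without this decomposition-plus-solvability step (or an equivalent substitute), your argument has a genuine gap. Two smaller remarks: mollifying $u$ is unnecessary, since $u$ solves the constant-coefficient homogenized equation and interior pointwise derivative estimates give all the smooth-factor bounds you need (and your mixed expansion $u+\ep\eta_r\sum_k\phi^\ep_{e_k,n}\partial_{x_k}\tilde u$ would generate an additional mismatch term $\a^\ep(\nabla u-\nabla\tilde u)$ to control); and the final flux and $L^2$ comparisons between $w^\ep$ and $u$ are handled in the paper exactly as you indicate, so that part of your outline is fine.
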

\begin{proof}
With $n$ fixed as in the statement of the proposition, we let $\phi_{e} = \phi_{e,n}$ denote, for each~$e\in\Rd$, the (finite-volume) corrector defined in the previous subsection (we will not display its dependence on~$n$). We also use the notation $\phi_{e}^\ep=\phi_{e,n}^\ep$ as in~\eqref{e.rescaleeps}. 

\smallskip

We will argue that~$u^\ep$ is close to its modified two-scale expansion suitably cut off near the boundary. The latter is defined by
\begin{align} 
\label{e.twoscale.def}
w^\ep(t,x)
&
:= 
u(t,x) + \ep \zeta_r (t,x) \sum_{k=1}^d \partial_{x_k} u(t,x) \phi_{e_k} \left( \frac t{\ep^2},\frac x \ep \right)
\\ & \notag
= u(t,x) +  \zeta_r (t,x) \sum_{k=1}^d \partial_{x_k} u(t,x) \phi_{e_k}^\ep (t,x),
\end{align}
where $r\in (0,1)$ is the free parameter (representing a mesoscopic scale) given in the proposition, and we denote
\begin{equation*} \label{}
U_r:= \left\{ x\in U \,:\, \dist (x,\partial U) > r \right\}
\quad \mbox{and} \quad 
I_r:= \left( I_- +r^2,I_+\right),
\end{equation*}
where the cutoff function $\zeta_r$ is selected so that
\begin{equation} 
\label{e.cutoffzeta.parab}
\left\{ 
\begin{aligned}
& 0\leq \zeta_r \leq 1,  \ \  
\zeta_r = 1  \ \mbox{in} \ I_{2r} \times U_{2 r}, \\  & 
\zeta_r \equiv 0  \  \mbox{in} \ (I\times U) \setminus \left( I_r \times U_{r} \right),
\\ & 
\forall k,l\in\N, \quad  \left|  \nabla^k \partial^l_t \zeta_r \right| \leq C_{k+2l} r^{-(k+2l)}.
\end{aligned}
\right.
\end{equation}
Note that the constant $C_m$ here depends on~$(I,U,d)$ in addition to~$m\in\N$.

\smallskip

\emph{Step 0.} We record some standard estimates from the deterministic regularity theory for uniformly parabolic equations that are needed below. The global Meyers estimate (see Proposition~\ref{p.globalmeyers}) gives us~$\delta_0(U,d,\Lambda)>0$ such that $\delta \leq \delta_0$ implies that 
\begin{equation} 
\label{e.globalmeyers.app}
\left\| \nabla u^\ep \right\|_{{L}^{2+\delta}(I\times U)} 
+ 
\left\| \nabla u \right\|_{{L}^{2+\delta}(I\times U)} 
\leq C  \left\| f \right\|_{W^{1,2+\delta}_\pa(I\times U)} .
\end{equation}
We henceforth assume without loss of generality that $\delta\leq \delta_0$ so that~\eqref{e.globalmeyers.app} holds. We also need pointwise derivative estimates for constant-coefficient parabolic equations. These can be found for instance in~\cite[Section 2.3.3.c]{Evans} (note that estimates for the operator $\partial_t - \nabla \cdot \ahom\nabla$ are implied by estimates for the heat equation, by a simple affine change of variables), and they yield, for every $m,l\in\N$, 
\begin{align} 
\label{e.nablam.u.pointswise}
\left\| \partial_t^l \nabla^m u \right\|_{L^\infty(I_r \times U_{r})}
&
\leq C_{m+2l} r^{-m-2l} r^{-(2+d)/2} \left\| u \right\|_{ L^2(I\times U)}
\\ & \notag
\leq C_{m+2l} r^{-m-2l}r^{1-(2+d)/2} \left\| \nabla u \right\|_{ L^2(I\times U)}. 
\end{align}
Here~$C_k$ depends only on~$(d,\Lambda)$ in addition to~$k\in\N$. 

\smallskip

The main step in the proof is to obtain an estimate on~$\left\| 
u^\ep - w^\ep 
\right\|_{H^1_\pa(I\times U)}$, which is stated below in~\eqref{e.bigH1bound}.

\emph{Step 1.} We plug $w^\ep$ into the heterogeneous equation and estimate the error. The  claim is that we can write $\left( \partial_t - \nabla \cdot \a^\ep \nabla  \right)  w^\ep$ in the form 
\begin{equation*}
\left( \partial_t - \nabla \cdot \a^\ep \nabla  \right)  w^\ep
= \partial_t F + G
\end{equation*}
where $F\in H^1_{\pa,\sqcup}(I\times U)$ and $G \in L^2(I;H^{-1}(U))$ satisfy the estimates
\begin{equation}
\label{e.bigHminusone}
\left\| F \right\|_{L^2(I;H^1(U))} + \left\| G \right\|_{L^2(I;H^{-1}(U))} 
\leq 
C\left(r^{\frac{\delta}{4+2\delta}}  +  r^{-3-(2+d)/2}\mathcal{E}'(n)  \right) \left\| f \right\|_{W^{1,2+\delta}_\pa(I\times U)}. 
\end{equation}
We begin by computing
\begin{equation*} \label{}
\left\{ 
\begin{aligned}
& \nabla w^\ep =  \zeta_r \sum_{k=1}^d \left( e_k + \nabla \phi_{e_k}^\ep \right) \partial_{x_k}u 
+ \sum_{k=1}^d \phi_{e_k}^\ep \nabla \left( \zeta_r \partial_{x_k} u\right) 
+ (1-\zeta_r) \nabla u, 
\\ & 
\partial_t w^\ep = \partial_t u + \zeta_r  \sum_{k=1}^d \partial_t \phi_{e_k}^\ep \partial_{x_k} u + \sum_{k=1}^d \phi_{e_k}^\ep \partial_t \left( \zeta_r \partial_{x_k} u  \right). 
\end{aligned}
\right.
\end{equation*}
According to~\eqref{e.eqnforuen}, the map $\hat{u}^\ep_{e}(t,x) := e\cdot x + \phi_e^\ep(t,x)$ is a solution of the equation
\begin{equation*} \label{}
\partial_t \hat{u}^\ep_{e} - \nabla \cdot\left( \a^\ep \nabla \hat{u}^\ep_e \right) = 0 
\quad \mbox{in} \ I\times U. 
\end{equation*}
Therefore we find that 
\begin{align*}
\partial_t w^\ep - \nabla \cdot \left( \a^\ep\nabla w^\ep \right)
& =
\partial_t u + \sum_{k=1}^d \phi_{e_k}^\ep \partial_t \left( \zeta_r \partial_{x_k} u  \right)
- \sum_{k=1}^d \nabla \left( \zeta_r \partial_{x_k}u \right) \cdot \a^\ep \left( e_k+\nabla \phi^\ep_{e_k} \right)
\\ & \qquad
- \nabla \cdot \left( \a^\ep \left(  \sum_{k=1}^d \phi_{e_k}^\ep \nabla \left( \zeta_r \partial_{x_k} u\right) + (1-\zeta_r)\nabla u \right) \right) .
\end{align*}
Since $u$ satisfies the homogenized equation, we have furthermore that 
\begin{equation*} \label{}
\partial_t u = \nabla \cdot \ahom \nabla u = \sum_{k=1}^{d} \nabla \left( \zeta_r \partial_{x_k}u \right) \cdot \ahom e_k + \nabla \cdot \left( ( 1-\zeta_r) \ahom \nabla u \right),
\end{equation*}
and this gives us the identity 
\begin{align*}
\partial_t w^\ep - \nabla \cdot \left( \a^\ep\nabla w^\ep \right)
& =
 \sum_{k=1}^d \phi_{e_k}^\ep \partial_t \left( \zeta_r \partial_{x_k} u  \right)
- \sum_{k=1}^d \nabla \left( \zeta_r \partial_{x_k}u \right) \cdot \left( \a^\ep \left( e_k+\nabla \phi^\ep_{e_k} \right) - \ahom e_k \right)
\\ & \quad
- \nabla \cdot \left( \a^\ep \sum_{k=1}^d \phi_{e_k}^\ep \nabla \left( \zeta_r \partial_{x_k} u\right) \right)
- \nabla \cdot \left( \left( \a^\ep - \ahom \right) (1-\zeta_r)\nabla u \right).
\end{align*}
According to Lemma~\ref{l.identif.H-1par}, we can find~$v\in L^2(I;H^1_0(U))$ and $v^* \in L^2(I;H^{-1}(U))$ such that 
\begin{equation}
\label{e.sumdecomp}
f^* := - \sum_{k=1}^d \nabla \left( \zeta_r \partial_{x_k}u \right) \cdot \left( \a^\ep \left( e_k+\nabla \phi^\ep_{e_k} \right) - \ahom e_k \right)
=
\partial_t v + v^*
\end{equation}
and 
\begin{equation*}
\left\| v \right\|_{L^2(I;H^1(U))} + \left\| v^*\right\|_{L^2(I;H^{-1}(U))} 
\leq 
C \left\| f^* \right\|_{{\hat H}^{-1}_\pa(I\times U)}.
\end{equation*}
The lemma allows us to take~$v$ and $v^*$ to vanish in a neighborhood of the parabolic boundary of~$I\times U$. Since the left side of~\eqref{e.sumdecomp} belongs to $L^2(I;H^{-1}(U))$, we have also that $v \in H^1_{\pa,\sqcup}(I\times U)$. 
Therefore we obtain that 
\begin{equation*}
\partial_t w^\ep - \nabla \cdot \left( \a^\ep\nabla w^\ep \right)
= \partial_t F + G
\end{equation*}
where 
\begin{equation*}
F:= v
\end{equation*}
and 
\begin{align*}
G & 
:= 
v^* + 
\sum_{k=1}^d \phi_{e_k}^\ep \partial_t \left( \zeta_r \partial_{x_k} u  \right)
- \nabla \cdot \left( \a^\ep \sum_{k=1}^d \phi_{e_k}^\ep \nabla \left( \zeta_r \partial_{x_k} u\right) \right)
- \nabla \cdot \left( \left( \a^\ep - \ahom \right) (1-\zeta_r)\nabla u \right).
\end{align*}
It is clear that 
\begin{align*}
\lefteqn{
\left\| F \right\|_{L^2(I;H^1(U))} + \left\| G \right\|_{L^2(I;H^{-1}(U))} 
}  & 
\\ & 
\leq
C\sum_{k=1}^d \left\|  \phi_{e_k}^\ep \partial_t \left( \zeta_r \partial_{x_k} u  \right) \right\|_{L^2(I\times U)}
+ C\sum_{k=1}^d \left\|  \nabla \left( \zeta_r \partial_{x_k}u \right) \cdot \left( \a^\ep \left( e_k+\nabla \phi^\ep_{e_k} \right) - \ahom e_k \right) \right\|_{{\hat H}^{-1}_\pa(I\times U)}
\\ & \quad
+ C\left\| \a^\ep \sum_{k=1}^d \phi_{e_k}^\ep \nabla \left( \zeta_r \partial_{x_k} u\right)  \right\|_{L^2(I\times U)}
+C\left\|  \left( \a^\ep - \ahom \right) (1-\zeta_r)\nabla u \right\|_{L^2(I\times U)}
\\ & 
= T_1 + T_2 + T_3 + T_4. 
\end{align*}
We will now show that each of the four terms~$T_i$ can be estimated by the right side of~\eqref{e.bigHminusone}, using the definition of~$\mathcal{E}'(n)$ and the bounds~\eqref{e.cutoffzeta.parab},~\eqref{e.globalmeyers.app} and~\eqref{e.nablam.u.pointswise}. For $T_1$, we use~\eqref{e.cutoffzeta.parab} and~\eqref{e.nablam.u.pointswise} to find that, for each $e\in\partial B_1$, 
\begin{align*} \label{}
\left\|  \phi_{e_k}^\ep \partial_t \left( \zeta_r \partial_{x_k} u  \right) \right\|_{L^2(I\times U)}
&
\leq C \left\| \partial_t \left( \zeta_r \partial_{x_k} u  \right) \right\|_{L^\infty(I\times U)} \left\| \phi_{e}^\ep \right\|_{L^2(I\times U)}
\\ &
\leq Cr^{-3-(2+d)/2} \mathcal{E}'(n) \left\| f \right\|_{H^{1}_\pa(I\times U)}. 
\end{align*}
For $T_2$, we have 
\begin{align*} \label{}
\lefteqn{
\left\|  \nabla \left( \zeta_r \partial_{x_k}u \right) \cdot \left( \a^\ep \left( e+\nabla \phi^\ep_{e} \right) - \ahom e \right) \right\|_{{\hat H}^{-1}_\pa(I\times U)}
} \quad & 
\\ & 
\leq
C \left\| 
\nabla \left( \zeta_r \partial_{x_k}u \right)
\right\|_{W^{1,\infty}(I\times U)} 
\left\| \a^\ep \left( e+\nabla \phi^\ep_{e} \right) - \ahom e  \right\|_{{\hat H}^{-1}_\pa(I\times U)}
\\ & 
\leq Cr^{-3-(2+d)/2}  \mathcal{E}'(n) \left\| f \right\|_{H^{1}_\pa(I\times U)}.
\end{align*}
For $T_3$, we use~\eqref{e.cutoffzeta.parab} and~\eqref{e.nablam.u.pointswise} again to get
\begin{align*} \label{}
\left\| \a^\ep  \phi_{e}^\ep \nabla \left( \zeta_r \partial_{x_k} u\right)  \right\|_{L^2(I\times U)}
&
\leq C \left\| \nabla \left( \zeta_r \partial_{x_k} u\right)  \right\|_{L^\infty(I\times U)} \left\|\phi_{e}^\ep  \right\|_{L^2(I\times U)}
\\ &
\leq C r^{-2-(2+d)/2} \mathcal{E}'(n) \left\| f \right\|_{H^{1}_\pa(I\times U)}. 
\end{align*}
Finally, for $T_4$, we use~\eqref{e.cutoffzeta.parab},~\eqref{e.globalmeyers.app} and H\"older's inequality to get
\begin{align*}
\left\|  \left( \a^\ep - \ahom \right) (1-\zeta_r)\nabla u \right\|_{L^2(I\times U)}
&
\leq C\left| \left\{x\in I\times U \,:\, \zeta_r(x) \neq 1  \right\}\right|^{\frac{\delta}{4+2\delta}} \left\| \nabla u  \right\|_{L^{2+\delta}(I\times U)} 
\\ & 
\leq 
C r^{\frac\delta{4+2\delta}}
\left\| f \right\|_{W^{1,2+\delta}_\pa(I\times U)}.
\end{align*}
This completes the proof of~\eqref{e.bigHminusone}.

\smallskip

\emph{Step 2.} We deduce that
\begin{equation} 
\label{e.bigH1bound}
\left\| 
u^\ep - w^\ep 
\right\|_{L^2(I ; H^1(U))} 
\leq C \left(r^{\frac{\delta}{4+2\delta}} +  r^{-3-(2+d)/2} \mathcal{E}'(n)  \right) \left\| f \right\|_{W^{1,2+\delta}_\pa(I\times U)}.
\end{equation}
This is an immediate consequence of the estimate~\eqref{e.bigHminusone} proved in the previous step, the fact that~$u^\ep - w^\ep, F \in H^1_{\pa,\sqcup}(I\times U)$ and the estimate~\eqref{e.generalsolutionestimate} proved in the appendix. 

\smallskip

At this point, we have succeeded in comparing~$u^\ep$ to~$w^\ep$. What is left is to compare $w^\ep$ to $u$ by showing that the second term on the right side of~\eqref{e.twoscale.def} is small. This is relatively straightforward to obtain from~\eqref{e.weakphicontrol} and~\eqref{e.L2phicontrol}. 

\smallskip

\emph{Step 3.} We show that 
\begin{multline} 
\label{e.gradsandfluxes}
\left\| u - w^\ep  \right\|_{L^2(I\times U)} 
+ \left\| \nabla u - \nabla w^\ep \right\|_{{\hat H}^{-1}_\pa(I\times U)} 
+ \left\| \ahom \nabla u - \a^\ep \nabla w^\ep \right\|_{{\hat H}^{-1}_\pa(I\times U)} 
\\
\leq 
C  \left(r^{\frac{\delta}{4+2\delta}} +  r^{-3-(2+d)/2} \mathcal{E}'(n)  \right) \left\| f \right\|_{W^{1,2+\delta}_\pa(I\times U)}.
\end{multline}
We use the formula 
\begin{multline*} \label{}
\nabla w^\ep(t,x) - \nabla u(t,x)
\\ 
=
\ep  \sum_{k=1}^d \nabla \left( \zeta_r \partial_{x_k} u \right)(t,x) \phi_{e_k} \left( \tfrac t{\ep^2},\tfrac x \ep \right)
+ \zeta_r(t,x) \sum_{k=1}^d \partial_{x_k} u(t,x) \nabla \phi_{e_k}  \left( \tfrac t{\ep^2},\tfrac x \ep \right) 
\end{multline*}
to get
\begin{align*} \label{}
\lefteqn{
\left\| \nabla u - \nabla w^\ep \right\|_{{\hat H}^{-1}_\pa(I\times U)} 
} \ & 
\\ & 
 \leq \left\| \nabla \left( \zeta_r \nabla u \right) \right\|_{L^\infty(I\times U)} \ep  \sum_{k=1}^d \left\| \phi_{e_k} \right\|_{\underline{L}^2\left(Q_{\ep^{-1}}\right)}
+ C\left\| \zeta_r \nabla u \right\|_{W^{1,\infty}(I\times U)}  \left\| \nabla \phi_{e_k}\left( \tfrac\cdot{\ep^2},\tfrac \cdot{\ep}\right) \right\|_{{\hat H}^{-1}_\pa(I\times U)} 
\\ & 
\leq Cr^{-2} \left\| f \right\|_{H^1_\pa(I\times U)} \mathcal{E}'(n). 
\end{align*}
For the fluxes, we find it convenient to use coordinates. We have 
\begin{align*}
\left( \a^\ep \nabla w^\ep \right)_i(t,x)
&
= \sum_{j,k=1}^d 
\zeta_r(t,x)  \a^\ep_{ij}(t,x) \partial_{x_k} u(t,x) \left( \delta_{jk} +  \partial_{x_j} \phi_{e_k}\left( \tfrac t{\ep^2}, \tfrac x\ep \right)  \right)
\\ & \quad
+\ep \sum_{j,k=1}^d
\a^\ep_{ij} (t,x) \partial_{x_j} \left( \zeta_r \partial_{x_k} u \right) (t,x)  \phi_{e_k}\left( \tfrac t{\ep^2}, \tfrac x\ep \right).
\end{align*}
Thus
\begin{align*}
\lefteqn{
\left( \a^\ep \nabla w^\ep \right)_i(t,x) - \left( \ahom\nabla u\right)_i(t,x)
} \quad & 
\\ & 
=  \sum_{j,k=1}^d 
\zeta_r(t,x)  \partial_{x_k} u(t,x) \left(  \a^\ep_{ij}(t,x) \left( \delta_{jk} +  \partial_{x_j} \phi_{e_k}\left( \tfrac t{\ep^2}, \tfrac x\ep \right)  \right) - \ahom_{ik} \right) 
\\ & \quad
+ \sum_{j,k=1}^d 
\left( 1 - \zeta_r(t,x) \right) \ahom_{ik} \partial_{x_k} u(t,x)
+\ep \sum_{j,k=1}^d
\a^\ep_{ij} (t,x) \partial_{x_j} \left( \zeta_r \partial_{x_k} u \right) (t,x)  \phi_{e_k}\left( \tfrac t{\ep^2}, \tfrac x\ep \right).
\end{align*}
We can easily estimate the last two terms on the right side using~\eqref{e.cutoffzeta.parab},~\eqref{e.globalmeyers.app},~\eqref{e.nablam.u.pointswise} and the H\"older inequality. We have
\begin{align*}
\sum_{j,k=1}^d 
\left\| \left( 1 - \zeta_r  \right) \ahom_{ik} \partial_{x_k} u \right\|_{L^2(I\times U)}
&
\leq C \left\| \nabla u \right\|_{L^2\left( (I\times U) \setminus (I_r\times U_r) \right)}
\\ & 
\leq C r^{\frac{\delta}{4+2\delta}} \left\| \nabla u \right\|_{L^{2+\delta}\left(I\times U\right)}
\leq
C r^{\frac{\delta}{4+2\delta}}  \left\| f \right\|_{W^{1,2+\delta}_\pa(I\times U)} 
\end{align*}
and
\begin{align*}
\ep \sum_{j,k=1}^d
\left\| \a^\ep_{ij}  \partial_{x_j} \left( \zeta_r \partial_{x_k} u \right)  \phi_{e_k}\left( \tfrac \cdot{\ep^2}, \tfrac \cdot\ep \right) \right\|_{L^2(I\times U)} 
&
\leq C \left\| \nabla \left( \zeta_r \nabla u \right) \right\|_{L^\infty(I\times U)} \ep \sum_{k=1}^d \left\| \phi_{e_k} \right\|_{\underline{L}^2\left(Q_{1/2}\right)}
\\ & 
\leq Cr^{-2-(2+d)/2} \left\| f \right\|_{H^1_\pa(I\times U)} \mathcal{E}'(n). 
\end{align*}
For the first term, we have 
\begin{align*}
\lefteqn{
\sum_{j,k=1}^d 
\left\| 
\zeta_r   \partial_{x_k} u  \left(  \a^\ep_{ij}  \left( \delta_{jk} +  \partial_{x_j} \phi_{e_k}\left( \tfrac \cdot{\ep^2}, \tfrac \cdot \ep \right)  \right) - \ahom_{ik} \right) 
\right\|_{{\hat H}^{-1}_\pa(I\times U)} 
} \quad &
\\ & 
\leq 
C \sum_{j,k=1}^d 
\left\| \zeta_r \partial_{x_k} u \right\|_{W^{1,\infty}(I\times U)} 
\left\| \a^\ep_{ij}  \left(\delta_{jk} +  \partial_{x_j} \phi_{e_k}\left( \tfrac \cdot{\ep^2}, \tfrac \cdot \ep \right)  \right) - \ahom_{ik} \right\|_{{\hat H}^{-1}_\pa(I\times U)} 
\\ & 
\leq Cr^{-3} \left\| f \right\|_{H^1_\pa(I\times U)}  \sum_{k=1}^d \left\| \a^\ep\left( e_k + \nabla \phi_{e_k} \right) -\ahom e_k \right\|_{{\hat H}^{-1}_\pa\left(Q_{1/2}\right)} 
\\ &
\leq Cr^{-3-(2+d)/2}\left\| f \right\|_{H^1_\pa(I\times U)}  \mathcal{E}'(n). 
\end{align*}
Combining the previous four displays, we obtain
\begin{equation*} \label{}
\left\| \a^\ep \nabla w^\ep - \ahom \nabla u \right\|_{{\hat H}^{-1}_\pa(I\times U)} 
\leq C  \left(r^{\frac{\delta}{4+2\delta}} +  r^{-3-(2+d)/2} \mathcal{E}'(n)  \right) \left\| f \right\|_{W^{1,2+\delta}_\pa(I\times U)}.
\end{equation*}
Finally, for the estimate of $w^\ep - u$, we have 
\begin{align*} \label{}
\left\| w^\ep - u \right\|_{L^2(I\times U)} 
& \leq C \left\| \nabla u \right\|_{L^\infty((I\times U) \setminus (I_r\times U_r))} \ep \sum_{k=1}^d \left\| \phi_{e_k} \left( \tfrac \cdot{\ep^2},\tfrac \cdot \ep \right) \right\|_{L^2(I\times U)} 
\\ & 
\leq C r^{-1-(2+d)/2}\left\| f \right\|_{H^1_\pa(I\times U)} \mathcal{E}'(n).
\end{align*}
This completes the proof of~\eqref{e.gradsandfluxes}.

\smallskip

\emph{Step 4.} We summarize and conclude the argument. According to~\eqref{e.bigH1bound},~\eqref{e.gradsandfluxes} and the triangle inequality, we have 
\begin{align*}
\left\| \nabla u^\ep - \nabla u \right\|_{{\hat H}^{-1}_\pa(I\times U)} 
& 
\leq \left\| \nabla u^\ep - \nabla w^\ep \right\|_{{\hat H}^{-1}_\pa(I\times U)} 
+ \left\| \nabla w^\ep - \nabla u \right\|_{{\hat H}^{-1}_\pa(I\times U)} 
\\ & 
\leq C \left\| u^\ep - w^\ep \right\|_{L^2(I; H^1( U))} + \left\| \nabla w^\ep - \nabla u \right\|_{{\hat H}^{-1}_\pa(I\times U)} 
\\ & 
\leq C \left(r^{\frac{\delta}{4+2\delta}} +  r^{-3-(2+d)/2} \mathcal{E}'(n)  \right) \left\| f \right\|_{W^{1,2+\delta}_\pa(I\times U)}.
\end{align*}
Similarly, for the fluxes we have 
\begin{align*} \label{}
\left\| \a^\ep \nabla u^\ep - \ahom \nabla u \right\|_{{\hat H}^{-1}_\pa(I\times U)} 
&
\leq \left\| \a^\ep \nabla u^\ep - \a^\ep \nabla w^\ep \right\|_{{\hat H}^{-1}_\pa(I\times U)} 
+ \left\| \a^\ep \nabla w^\ep - \ahom \nabla u \right\|_{{\hat H}^{-1}_\pa(I\times U)} 
\\ & 
\leq C  \left\| u^\ep - w^\ep \right\|_{L^2(I; H^1( U))} + \left\| \a^\ep \nabla w^\ep -\ahom \nabla u \right\|_{{\hat H}^{-1}_\pa(I\times U)} 
\\ & 
\leq C \left(r^{\frac{\delta}{4+2\delta}} +  r^{-3-(2+d)/2} \mathcal{E}'(n)  \right) \left\| f \right\|_{W^{1,2+\delta}_\pa(I\times U)},
\end{align*}
and, for the homogenization error, we have 
\begin{align*}
\left\| u^\ep - u \right\|_{L^2(I\times U)} 
& 
\leq \left\| u^\ep - w^\ep \right\|_{L^2(I\times U)} 
+ \left\| w^\ep - u \right\|_{L^2(I\times U)} 
\\ & 
\leq C \left(r^{\frac{\delta}{4+2\delta}} +  r^{-3-(2+d)/2} \mathcal{E}'(n)  \right) \left\| f \right\|_{W^{1,2+\delta}_\pa(I\times U)}.
\end{align*}
This completes the proof of the proposition. 
\end{proof}

To complete the proof of Theorem~\ref{t.CDP}, we just need to estimate the random variables on the right side of~\eqref{e.blackbox} using Proposition~\ref{p.controlofEm} and the estimates~\eqref{e.weakphicontrol} and~\eqref{e.L2phicontrol} for the correctors. 

\begin{proof}[{Proof of Theorem~\ref{t.CDP}}]
Fix $s \in \left( 0, 2+d\right)$ and put $s':=\frac12 s + \frac12(2+d)$ and $s'':=\frac12 s' + \frac12(2+d)$. Thus $s < s' < s'' < 2+d$ and the gaps are at least of size $\frac14(2+d-s)$.
Observe that~\eqref{e.weakphicontrol} and~\eqref{e.L2phicontrol} imply that 
\begin{equation*} \label{}
\mathcal{E}'(n) \leq C3^{-n} + C \sum_{m = 0}^{n-1} 3^{m-n} \Ll(  \left|\mcl Z_m\right|^{-1} \sum_{z\in \mcl Z_m} \left( \mathcal{E}\left( z+\cud_m \right) \right) \Rr)^{\frac 12}.
\end{equation*}
Thus, by Proposition~\ref{p.controlofEm},
\begin{equation*} \label{}
\mathcal{E}'(n)\leq C3^{-n\beta(2+d-s'')} + \O_1\left(C3^{-ns''} \right).
\end{equation*}
Thus
\begin{equation*} \label{}
3^{ns'} \left( \mathcal{E}'(n) -  C3^{-n\beta(2+d-s'')} \right)_+ \leq \O_1\left(C3^{-n(s''-s')} \right).
\end{equation*}
By~\eqref{e.Osums}, 
\begin{equation*} \label{}
\X:= \sum_{n\in\N} 3^{ns'} \left( \mathcal{E}'(n) -  C3^{-n\beta(2+d-s'')} \right)_+ \leq \O_1\left(C \right).
\end{equation*}
Proposition~\ref{p.blackbox} yields therefore that, for every $\ep \in \left( 0,\tfrac12 \right]$ and $r\in (0,1)$,
\begin{multline*}
\left\|  \nabla u^\ep - \nabla u \right\|_{{\hat H}^{-1}_\pa(I\times U)}
+ \left\| \a^\ep \nabla u^\ep - \ahom \nabla u \right\|_{{\hat H}^{-1}_\pa(I\times U)}
+ \left\| u^\ep - u \right\|_{\underline{L}^2(I\times U)}
\\
\leq
C \left\| f \right\|_{W^{1,p}_\pa(I\times U)} 
\left(
r^\beta + \frac1{r^{3+(2+d)/2}} \left( \ep^{\beta(2+d-s'')} + \ep^{s'} \X'\right) 
\right).
\end{multline*}
We now select $r\in (0,1)$ as small as possible (it must be no larger than a positive power of $\ep$) such that $r^{-3-(2+d)/2} \ep^{s'} \leq \ep^s$ and $r^{-3-(2+d)/2} \leq \ep^{-\beta(2+d-s'')/2}$. We can take for example
\begin{equation*} \label{}
r:= \ep^{\beta(2+d-s'')/(3+(2+d)/2)} \vee \ep^{(s'-s)/(3+(2+d)/2)}. 
\end{equation*}
Recalling that $2+d-s'' \geq \frac14 (2+d-s)$ and $s'-s\geq \frac14 (2+d-s)$, we obtain the theorem. 
\end{proof}

\section{Regularity theory}
\label{s.regularity}

In this section, we sketch the proof of Theorem~\ref{t.regularity}, following along the lines of the argument given in the proof of~\cite[Theorem 3.6]{AKMBook} in the elliptic case. We do not give full details, since this would involve an almost verbatim repetition of the proof of the latter.

\smallskip

We begin by reformulating Theorem~\ref{t.CDP} in a slightly different way in terms of \emph{caloric approximation} which is more convenient for its application in this section. The next statement can be compared to its elliptic analogue in~\cite[Proposition 3.2]{AKMBook}.

\begin{proposition}
[{Caloric approximation}]
\label{p.caloricapproximation}
Fix $s\in (0,2+d)$. There exist an exponent~$\alpha(d,\Lambda)>0$, a constant~$C(s,d,\Lambda)<\infty$,  and a random variable $\X_s:\Omega \to [1,\infty]$ satisfying 
the estimate
\begin{equation}
\label{e.Xcaloricapproximation}
\X_s = \O_s\left( C \right),
\end{equation}
such that the following holds: for every $R\geq \X_s$ and weak solution $u\in H^1_\pa(Q_{R})$ of
\begin{equation} 
\label{e.wksrt.parab}
\partial_t u -\nabla \cdot \left( \a\nabla u \right) = 0 \quad \mbox{in} \ Q_{R},
\end{equation}
there exists a solution $\overline{u} \in H^1_\pa(Q_{R/2})$ of the equation 
\begin{equation*} \label{}
\partial_t \bar{u} -\nabla \cdot \left( \ahom \nabla \bar u \right) = 0 \quad \mbox{in} \ Q_{R/2}
\end{equation*}
such that 
\begin{equation}
\label{e.caloricapproximation}
\left\| u - \overline{u}  \right\|_{\underline{L}^2(Q_{R/2})} \leq CR^{-\alpha(2+d-s)} \left\|  u - \left( u \right)_{Q_{R}} \right\|_{\underline{L}^2(Q_{R})}.
\end{equation}
\end{proposition}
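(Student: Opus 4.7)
The plan is to rescale to unit size and apply Theorem~\ref{t.CDP} using $u$ itself, restricted to a sub-cylinder, as the boundary data. Set $\epsilon := 1/R$ and $v(t,x) := u(R^2 t, R x)$, so that $v \in H^1_{\pa}(Q_1)$ solves $\partial_t v - \nabla \cdot (\a^\epsilon \nabla v) = 0$ in $Q_1$, with $\a^\epsilon(t,x) := \a(t/\epsilon^2, x/\epsilon)$. Set $V := (-1/4, 0) \times B_{1/2}$. Fix $\delta_0 = \delta_0(d,\Lambda) > 0$ small enough so that the interior Meyers estimate from Appendix~\ref{s.meyers}, combined with the Caccioppoli inequality applied to $v$ on $Q_1$ and the identity $\partial_t v = \nabla \cdot (\a^\epsilon \nabla v)$, yields
\[
\|v - (v)_{Q_1}\|_{\un W^{1,2+\delta_0}_\pa(V)} \le C(d,\Lambda) \, \|v - (v)_{Q_1}\|_{\un L^2(Q_1)}.
\]

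Fix $s' := (s + 2 + d)/2 \in (s, 2+d)$, and let $\bar v \in v + H^1_{\pa,\sqcup}(V)$ be the solution of the homogenized Cauchy-Dirichlet problem $\partial_t \bar v - \nabla \cdot (\ahom \nabla \bar v) = 0$ in $V$ with $\bar v = v$ on $\partial_\sqcup V$. Since $v$ itself solves the corresponding heterogeneous Cauchy-Dirichlet problem on $V$, Theorem~\ref{t.CDP} applied with parameter $s'$, domain $V$ and exponent $\delta_0$ yields $\beta = \beta(d,\Lambda) > 0$, a constant $C(s,d,\Lambda) < \infty$ and a random variable $\X = \O_1(C)$ (independent of $u$) such that
\[
\|v - \bar v\|_{\un L^2(V)} \le C \bigl( R^{-\beta(2+d-s')} + R^{-s'} \X \bigr) \, \|u - (u)_{Q_R}\|_{\un L^2(Q_R)}.
\]
Setting $\bar u(t,x) := \bar v(t/R^2, x/R)$ defines a solution of the homogenized equation on $Q_{R/2}$, and the invariance of the normalized $L^2$ norm under parabolic rescaling converts the bound above into the analogous bound with $u - \bar u$ on the left and the $\un L^2$ norm taken over $Q_{R/2}$.

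Finally, set $\alpha := (\beta \wedge 1)/4$, which depends only on $(d,\Lambda)$. Since $2+d-s' = s' - s = (2+d-s)/2$, one checks that $\beta(2+d-s') - \alpha(2+d-s) \ge (2+d-s)\beta/4 > 0$ and $s' - \alpha(2+d-s) \ge s$. Define
\[
\X_s := C(s,d,\Lambda) + (2\X)^{1/(s' - \alpha(2+d-s))},
\]
where the additive constant is chosen large enough that $R \ge C(s,d,\Lambda)$ implies $R^{-\beta(2+d-s')} \le \tfrac 1 2 R^{-\alpha(2+d-s)}$. For $R \ge \X_s$, both $R^{-\beta(2+d-s')}$ and $R^{-s'} \X$ are bounded by $\tfrac 1 2 R^{-\alpha(2+d-s)}$, yielding \eqref{e.caloricapproximation}. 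Since $\X \le \O_1(C)$ and $s' - \alpha(2+d-s) \ge s$, a direct Chebyshev computation (using that $\O_r$ control is monotone in $r$) confirms $\X_s \le \O_s(C')$, as required by \eqref{e.Xcaloricapproximation}.

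The main obstacle is the conversion of the $\O_1$-type stochastic integrability provided by Theorem~\ref{t.CDP} into the sharper $\O_s$-type bound needed here. This is circumvented by applying Theorem~\ref{t.CDP} with a parameter $s' > s$ large enough that the fractional power $1/(s' - \alpha(2+d-s))$ appearing in the definition of $\X_s$ is no larger than $1/s$, while absorbing the resulting loss in the deterministic rate into the slightly smaller final exponent $\alpha(2+d-s)$ in \eqref{e.caloricapproximation}.
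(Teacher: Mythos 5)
Your argument is correct and takes essentially the same route as the paper: rescale to unit size, use the interior parabolic Meyers estimate (plus Caccioppoli and the equation) to control the $W^{1,2+\delta}_\pa$ norm of the mean-subtracted solution serving as its own Cauchy--Dirichlet data, apply Theorem~\ref{t.CDP} with an intermediate exponent $s'\in(s,2+d)$, and convert the resulting $\O_1$ random variable into a minimal scale $\X_s$ with $\O_s$ integrability. This is precisely the adaptation of the elliptic argument of \cite[Proposition~3.2]{AKMBook} that the paper's proof invokes, with the bookkeeping of exponents carried out explicitly.
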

\begin{proof}
This is a simple application of Theorem~\ref{t.CDP} combined with the parabolic Meyers estimate. 
The argument is almost the same as in the elliptic case presented in~\cite[Proposition 3.2]{AKMBook}, we just need to replace the elliptic interior Meyers estimate  with its parabolic analogue proved in Proposition~\ref{p.interiormeyers} below. The latter gives us~$\delta(d,\Lambda)>0$ and $C(d,\Lambda)>0$ such that, for every $u\in H^1_\pa (Q_{R})$ satisfying~\eqref{e.wksrt.parab}, we have that $\nabla u \in L^{2+\delta}(Q_{R/2})$ and the estimate 
\begin{equation}
\label{e.meyersrt.parab}
\left\| \nabla u \right\|_{\underline{L}^{2+\delta}(Q_{R/2})} \leq \frac{C}{R} \left\|  u - \left( u \right)_{Q_{R}} \right\|_{\underline{L}^2(Q_{R})}. 
\end{equation}
Following the proof of~\cite[Proposition 3.2]{AKMBook}, using~\eqref{e.meyersrt.parab}, substituting Theorem~\ref{t.CDP} in place of~\cite[Theorem 2.16]{AKMBook}  and making obvious changes to the notation, we obtain the proposition.
\end{proof} 

We next state a parabolic counterpart of~\cite[Lemma 3.5]{AKMBook}.

\begin{lemma}
\label{l.u:decayestimate.parab}
Fix $\alpha \in [0,1]$,  $K\geq 1$ and $X\geq 1$. 
Let $R\geq 2X$ and $u\in L^2(Q_{R})$ have the property that, for every $r\in \left[ X, R \right]$, there exists $w_r \in H^1_{\pa}(Q_{r/2})$ which is a solution of 
\begin{equation*} \label{}
\partial_t w_r -\nabla \cdot \left( \ahom \nabla w_r \right) = 0 \quad \mbox{in} \ Q_{r/2}
\end{equation*}
and satisfies 
\begin{equation} 
\label{e.wharmapproxit02.parab}
\left\| u - w_r \right\|_{\underline{L}^2(Q_{r/2})} \leq K r^{-\alpha} \left\| u - \left( u \right)_{Q_{r}} \right\|_{\underline{L}^2(Q_{r})}. 
\end{equation}
Then, for every $k \in \N$, there exists $\theta(\alpha,k,d,\Lambda) \in (0,\frac12)$  and $C(\alpha,k,d,\Lambda)<\infty$ such that, for every $r\in \left[ X, R\right]$, 
\begin{multline}
\label{e.u:decayestimate.parab}
\inf_{p \in \Ahom_k(Q_\infty) } \left\| u - p \right\|_{\underline{L}^2(Q_{\theta r})}
\\
\leq
\frac14 \theta^{k+1 - \alpha/2} \inf_{p \in \Ahom_k(Q_\infty) } \left\| u - p  \right\|_{\underline{L}^2(Q_{r})}   + C K r^{-\alpha} \left\| u - (u)_{Q_r} \right\|_{\underline{L}^2(Q_{r})} .
\end{multline}
\end{lemma}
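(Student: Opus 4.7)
The plan is to combine the hypothesis (a quantitative $\ahom$-caloric approximation at every scale) with interior regularity of constant-coefficient parabolic equations (Taylor expansion in the space $\Ahom_k$), then balance the two errors by choosing $\theta$ sufficiently small. The argument is deterministic and parallels the elliptic proof of \cite[Lemma~3.5]{AKMBook}.

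First I would record the following ``caloric Taylor expansion'' lemma: there exists $C(k,d,\Lambda) < \infty$ such that, for every $\bar w$ satisfying $\partial_t \bar w - \nabla \cdot (\ahom \nabla \bar w) = 0$ in $Q_\rho$ and every $\sigma \in (0, 1/2]$, there exists $q \in \Ahom_k(Q_\infty)$ with
\begin{equation*}
\left\| \bar w - q \right\|_{\underline{L}^2(Q_{\sigma \rho})} \le C \sigma^{k+1} \left\| \bar w - (\bar w)_{Q_\rho} \right\|_{\underline{L}^2(Q_\rho)}.
\end{equation*}
This is the caloric analogue of the classical Schauder estimate: one takes $q$ to be the $k$-th caloric Taylor polynomial of $\bar w$ at the origin (well-defined since $\bar w$ is smooth inside $Q_\rho$), uses the identity \eqref{e.identifyAhomkQinfty} to see $q \in \Ahom_k(Q_\infty)$, bounds the Taylor remainder by $\sigma^{k+1}$ times a pointwise bound on the $(k+1)$-st parabolic derivatives of $\bar w$ on $Q_{\rho/2}$, and invokes interior estimates for $\partial_t - \nabla \cdot \ahom \nabla$ to control these derivatives by an $L^2$ norm on $Q_\rho$. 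The freedom to replace $\bar w$ by $\bar w - (\bar w)_{Q_\rho}$ (since constants lie in $\Ahom_k$) produces the centered norm on the right.

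Next, fix $r \in [X,R]$ and $\theta \in (0,1/4]$ (to be specified at the end), and let $p^* \in \Ahom_k(Q_\infty)$ achieve $\inf_{p \in \Ahom_k} \|u - p\|_{\underline{L}^2(Q_r)}$. Applying the hypothesis at scale $r$ produces an $\ahom$-caloric $w_r$ on $Q_{r/2}$ satisfying $\|u - w_r\|_{\underline{L}^2(Q_{r/2})} \le K r^{-\alpha} \|u - (u)_{Q_r}\|_{\underline{L}^2(Q_r)}$. Since $p^*$ is itself $\ahom$-caloric, so is $\tilde w_r := w_r - p^*$, and the Taylor lemma (with $\rho = r/2$ and $\sigma = 2\theta$) yields $q \in \Ahom_k(Q_\infty)$ with
\begin{equation*}
\|\tilde w_r - q\|_{\underline{L}^2(Q_{\theta r})} \le C \theta^{k+1} \|\tilde w_r - (\tilde w_r)_{Q_{r/2}}\|_{\underline{L}^2(Q_{r/2})}.
\end{equation*}
The right-hand factor is controlled by writing $\|\tilde w_r\|_{\underline{L}^2(Q_{r/2})} \le \|u - w_r\|_{\underline{L}^2(Q_{r/2})} + \|u - p^*\|_{\underline{L}^2(Q_{r/2})}$ and using the domain comparison $\|u - p^*\|_{\underline{L}^2(Q_{r/2})} \le 2^{(d+2)/2}\|u - p^*\|_{\underline{L}^2(Q_r)} = 2^{(d+2)/2} \inf_p \|u - p\|_{\underline{L}^2(Q_r)}$. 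Testing $p^* + q \in \Ahom_k$ as a candidate at scale $\theta r$ and using $Q_{\theta r}\subset Q_{r/2}$ to bound $\|u - w_r\|_{\underline{L}^2(Q_{\theta r})} \le C\theta^{-(2+d)/2} K r^{-\alpha} \|u - (u)_{Q_r}\|_{\underline{L}^2(Q_r)}$, the triangle inequality gives
\begin{equation*}
\inf_p \|u - p\|_{\underline{L}^2(Q_{\theta r})} \le C \theta^{-(2+d)/2} K r^{-\alpha} \|u - (u)_{Q_r}\|_{\underline{L}^2(Q_r)} + C \theta^{k+1} \inf_p \|u - p\|_{\underline{L}^2(Q_r)},
\end{equation*}
after absorbing the cross term $\theta^{k+1} K r^{-\alpha}\|u - (u)_{Q_r}\|_{\underline{L}^2(Q_r)}$ into the first summand.

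To conclude, I would choose $\theta = \theta(\alpha,k,d,\Lambda) \in (0,1/4]$ small enough that $C\theta^{\alpha/2} \le 1/4$; then $C\theta^{k+1} = C\theta^{\alpha/2}\,\theta^{k+1-\alpha/2} \le \tfrac{1}{4} \theta^{k+1-\alpha/2}$, and the residual factor $\theta^{-(2+d)/2}$ in the first term is absorbed into $C = C(\alpha,k,d,\Lambda)$, giving exactly \eqref{e.u:decayestimate.parab}. The main obstacle is the caloric Taylor expansion lemma of the first step, which is the only place where genuinely parabolic content intervenes: it must be formulated with the correct parabolic scaling (one time derivative weighted like two space derivatives) consistent with the definition of $\Ahom_k$. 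Once that interior estimate is in hand, the rest is a purely algebraic balancing between the Taylor error $\theta^{k+1}$ and the hypothesis error $K r^{-\alpha}$, identical in spirit to the elliptic case.
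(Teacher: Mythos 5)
Your argument is correct and is essentially the proof the paper intends: the paper's own treatment of this lemma is a two-line deferral to the elliptic case \cite[Lemma 3.5]{AKMBook}, whose proof is precisely your combination of the hypothesis with a parabolic Taylor-type approximation of $\ahom$-caloric functions by elements of $\Ahom_k(Q_\infty)$ (applied to $w_r-p^*$ so that the inf over polynomials appears in the first term), followed by the balancing choice of $\theta$. The only caveat is the degenerate case $\alpha=0$, where your requirement $C\theta^{\alpha/2}\le \tfrac14$ cannot be met; but there the conclusion is trivial, since $\inf_{p}\|u-p\|_{\underline{L}^2(Q_{\theta r})}\le \theta^{-(2+d)/2}\|u-(u)_{Q_r}\|_{\underline{L}^2(Q_r)}$ shows the second term alone suffices after enlarging $C$.
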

\begin{proof} 
The proof is essentially the same as that of~\cite[Lemma 3.5]{AKMBook}. We just have to substitute balls for parabolic cylinders and use Proposition~\ref{p.caloricapproximation} in place of its elliptic version. These changes cause no additional complexity in the proof. 
\end{proof}

With Lemma~\ref{l.u:decayestimate.parab} in hand, 
the proof of Theorem~\ref{t.regularity} is now completed in the same way as the one of~\cite[Theorem 3.6]{AKMBook}, by following the argument almost verbatim and making only obvious modifications. We refer to~\cite{AKMBook} for the details.

\appendix

\section{Variational structure of uniformly parabolic equations}
\label{s.app}

The aim of this appendix is to show that the solution of the parabolic equation~\eqref{e.parab.rhs} can be obtained as the minimizer of a uniformly convex functional. We will prove this result in the more general context of uniformly monotone operators, since this causes no modification to the proof. Although our statement differs in detail, it is close to the main result of \cite{ghoussoub-tzou} (see also the monograph \cite{ghoussoub-book}). The proof we give is also relatively close to that of \cite{ghoussoub-tzou}; we hope that the reader will appreciate the short and self-contained presentation in this appendix. The fact that a parabolic equation can be cast as the first variation of a uniformly convex integral functional was first discovered in~\cite{BE1,BE2}.


\smallskip

Let $I :=(0,T) \subset \R$ and $U \subset \Rd$ be a bounded Lipschitz domain. For a given right-hand side $w^*$ and boundary condition (both of which will be made precise below), we study the solvability of the parabolic equation 
\begin{equation}  
\label{e.parab.app}
\partial_t u - \nabla \cdot (\a(\nabla u,\cdot)) = w^* \qquad \text{in } I \times U,
\end{equation}
where the dot ``$\, \cdot\, $'' represents the time-space variable in $I\times U \subset \R^{1+d}$, and $\a \in L^\infty_{\mathrm{loc}}(\Rd \times \R^{1+d};\Rd)$ is Lipschitz and uniformly monotone in its first argument. That is, we assume that there exists a constant $\lambda  < \infty$ such that, for every $p_1, p_2 \in \Rd$ and $z \in \R^{1+d}$,
\begin{equation}
\label{e.unifmonotone}
\Ll\{
\begin{aligned}  
& |\a(p_1,z) - \a(p_2,z)| \le \lambda |p_1 - p_2|, \\
& \Ll( \a(p_1,z) - \a(p_2,z) \Rr) \cdot (p_1 - p_2) \ge \lambda^{-1} \left|p_1 - p_2\right|^2.
\end{aligned}
\Rr.
\end{equation}
As a first step, we introduce a variational representation of the mapping $p \mapsto \a(p,z)$, for each $z \in \R^{1+d}$. This idea is often attributed to Fitzpatrick \cite{Fitz}, although it actually  appeared in the work of Krylov~\cite{Krylov} several years earlier.

\smallskip

By \cite[Theorem~2.9]{AM}, there exists $A \in L^\infty_{\mathrm{loc}}(\Rd\times \Rd\times \R^{1+d})$ satisfying the following properties, for $\Lambda := 2\lambda + 1$ and for each $z \in \R^{1+d}$:
\begin{itemize}
\item the mapping 
\begin{equation}  
\label{e.A.unif.convex}
(p,q) \mapsto A(p,q,z) - \frac 1 {2\Lambda}(|p|^2 + |q|^2) \quad \text{is convex};
\end{equation}
\item the mapping
\begin{equation}  
\label{e.A.C11}
(p,q) \mapsto A(p,q,z) - \frac \Lambda {2}(|p|^2 + |q|^2) \quad \text{is concave};
\end{equation}
\item for every $p,q \in \Rd$, we have
\begin{equation}  
\label{e.A.ineq}
A(p,q,z) \ge p\cdot q,
\end{equation}
and
\begin{equation} 
\label{e.A.eqiff}
A(p,q,z) = p\cdot q 
\iff
q = \a(p,z).
\end{equation}
\end{itemize}
In the particular case when $p \mapsto \a(p,z)$ is linear, we can define the mapping $(p,q) \mapsto A(p,q,z)$ according to \eqref{e.def.F}, see Lemma~\ref{l.lin.Fitz}. Another familiar example is when $\a(\cdot,z)$ is the gradient of a uniformly convex Lagrangian $L(p,z)$, that is, $\a(\cdot,z) = \nabla_p L(\cdot,z)$ where $p \mapsto L(p,z)$ is uniformly convex. In this case, we can take
\begin{equation*} \label{}
A(p,q,z):= L(p,z)+L^*(q,z),
\end{equation*}
where $L^*$ is the Legendre-Fenchel transform of~$L$. We remark that the choice of $A$ is in general not unique. 

\smallskip

We define the function space
\begin{equation*}
Z(I\times U) := \Ll\{ (u,u^*) \ : \  u \in L^2(I;H^1(U)) \ \text{ and } \ (u^*- \partial_t u) \in L^2(I;H^{-1}(U))\Rr\},
\end{equation*}
with norm
\begin{equation*}  
\|(u,u^*)\|_{Z(I\times U)} := \|u\|_{L^2(I;H^1(U))} + \| u^* - \partial_t u\|_{L^2(I;H^{-1}(U))}.
\end{equation*}
The function space $H^1_\pa(I\times U)$ is defined in \eqref{e.def.X}-\eqref{e.def.Xnorm}. We denote by $H^1_{\pa,\sqcup}(I\times U)$ the closure in $H^1_\pa(I\times U)$ of the set of smooth functions with compact support in $(0,T] \times U$. %
For every $(u,u^*) \in Z(I\times U)$, we set
\begin{equation}
\label{e.def.mclJ.2}
\mcl J[u,u^*] := \inf \Ll\{ \int_{I \times U} \Ll(A(\nabla u,\g,\cdot) - \nabla u \cdot \g\Rr) \ : \ -\nabla \cdot \g = u^* - \partial_t u \Rr\}.
\end{equation}
In the infimum above, we understand that $\g \in L^2(I\times U;\Rd)$, and the last condition is interpreted as
\begin{equation}  
\label{e.interp.div}
\forall \phi \in L^2(I;H^1_0(U)), \quad \int_{I\times U} \nabla \phi \cdot \g = \int_{I \times U} \phi \Ll( u^* - \partial_t u \Rr) .
\end{equation}
Note that the set of candidates for $\g$ is not empty; indeed, denoting by $\Delta_U^{-1}$ the solution operator for the Laplacian in $U$ with a null Dirichlet boundary condition, we verify that 
\begin{equation*}  
\g = \nabla \Delta_U^{-1}(u^* - \partial_t u)
\end{equation*}
is a suitable candidate, by the assumption of $u^* - \partial_t u \in L^2(I;H^{-1}(U))$. 

\smallskip

The goal of this appendix is to prove the following proposition.
\begin{proposition}
\label{p.parabolic.min.app}
For each $(w,w^*) \in Z(I\times U)$, the mapping
\begin{equation}  
\label{e.parab.min.app}
\Ll\{
\begin{array}{ccc}
w + H^1_{\pa,\sqcup}(I\times U)  & \to & \R \\
u & \mapsto & \mcl J[u,w^*]
\end{array}
\Rr.
\end{equation}
is uniformly convex. Moreover, its minimum is zero, and the associated minimizer is the unique $u \in w + H^1_{\pa,\sqcup}(I\times U)$ solution of \eqref{e.parab.app}, in the sense that
\begin{equation*}  
\forall \phi \in L^2(I;H^1_0(U)), \quad \int_{I \times U} \nabla \phi \cdot \a( \nabla u,\cdot) = \int_{I \times U} \phi \Ll( w^* - \partial_t u \Rr).
\end{equation*}
\end{proposition}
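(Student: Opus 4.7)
My plan rests on two pointwise properties of $A$ in~\eqref{e.A.ineq} and~\eqref{e.A.eqiff}. First I would observe that the integrand defining $\mcl J[u, w^*]$ in~\eqref{e.def.mclJ.2} is pointwise nonnegative, so $\mcl J[u, w^*] \geq 0$ for every admissible pair $(u, \g)$. Moreover, if $\mcl J[u, w^*] = 0$, an optimal $\g$ must satisfy $A(\nabla u, \g, \cdot) = \nabla u \cdot \g$ a.e., which by~\eqref{e.A.eqiff} forces $\g = \a(\nabla u, \cdot)$ a.e.; combined with the divergence constraint $-\nabla \cdot \g = w^* - \partial_t u$, this is exactly the parabolic equation~\eqref{e.parab.app}. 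Conversely, a weak solution of~\eqref{e.parab.app} together with $\g := \a(\nabla u, \cdot)$ gives $\mcl J[u, w^*] = 0$. Uniqueness of such a $u$ in $w + H^1_{\pa,\sqcup}$ is a standard energy argument using the uniform monotonicity~\eqref{e.unifmonotone}.

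The technical heart of the argument is the following integration-by-parts identity, valid whenever $u - w \in H^1_{\pa,\sqcup}(I\times U)$ and $\g \in L^2(I\times U;\Rd)$ satisfies the constraint:
\begin{equation}
\label{e.key.ibp}
\int_{I\times U} \nabla u \cdot \g = \int_{I\times U} \nabla w \cdot \g + \langle u-w, w^* - \partial_t w \rangle - \tfrac{1}{2}\| (u-w)(T, \cdot) \|_{L^2(U)}^2,
\end{equation}
where $\langle \cdot, \cdot \rangle$ denotes the duality pairing of $L^2(I; H^1_0(U))$ with $L^2(I; H^{-1}(U))$. I would derive~\eqref{e.key.ibp} by testing the constraint against $\phi = u - w \in L^2(I; H^1_0(U))$ and integrating $\int (u-w) \partial_t(u - w) = \tfrac{1}{2}\|(u-w)(T, \cdot)\|_{L^2(U)}^2$ by parts in time using the initial condition $(u-w)(0, \cdot) = 0$. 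With~\eqref{e.key.ibp} in hand, $\mcl J[u, w^*]$ rewrites as an infimum over $\g$ of an integrand jointly convex in $(\nabla u, \g)$, plus a linear term and the nonnegative term $\tfrac{1}{2}\|(u-w)(T, \cdot)\|^2_{L^2(U)}$; this makes $u \mapsto \mcl J[u, w^*]$ convex. The direct method on $w + H^1_{\pa,\sqcup}$ then yields a minimizer $u_*$, coercivity following from the quadratic lower bound in~\eqref{e.A.unif.convex} together with a parabolic Poincar\'e inequality on $H^1_{\pa,\sqcup}$. The first variation at $u_*$ (simultaneously varying $u$ and $\g$ to preserve the constraint) forces $\g_* = \a(\nabla u_*, \cdot)$ a.e., whence $\mcl J[u_*, w^*] = 0$ and $u_*$ solves~\eqref{e.parab.app}.

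The uniform convexity is obtained by applying~\eqref{e.key.ibp} to differences. Given $u_1, u_2 \in w + H^1_{\pa,\sqcup}$ with corresponding optimal $\g_1, \g_2$, the midpoint $\g_m = \tfrac{1}{2}(\g_1 + \g_2)$ is admissible for $u_m = \tfrac{1}{2}(u_1 + u_2)$; inserting this admissible pair into $\mcl J[u_m, w^*]$ and applying~\eqref{e.A.unif.convex} to the $A$-integrand produces a quadratic gain $\tfrac{1}{8\Lambda}\int(|\nabla(u_1-u_2)|^2 + |\g_1 - \g_2|^2)$, while the bilinear cross-term $\tfrac{1}{4}\int \nabla(u_1 - u_2) \cdot (\g_1 - \g_2)$ equals $-\tfrac{1}{8}\|(u_1 - u_2)(T, \cdot)\|_{L^2(U)}^2 \leq 0$ by applying~\eqref{e.key.ibp} to the difference of the two constraints (whose $w, w^*$ contributions cancel). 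Combining these yields
\begin{equation*}
\mcl J\Ll[\tfrac{u_1+u_2}{2}, w^*\Rr] \leq \tfrac{1}{2}\Ll(\mcl J[u_1, w^*] + \mcl J[u_2, w^*]\Rr) - c(\lambda, d) \, \|\nabla(u_1 - u_2)\|_{L^2(I\times U)}^2,
\end{equation*}
which is the required uniform convexity. The main obstacle I anticipate is the rigorous justification of~\eqref{e.key.ibp}: $\partial_t(u-w)$ lives only in $L^2(I; H^{-1}(U))$, and interpreting the final-time trace $(u-w)(T, \cdot) \in L^2(U)$ requires a parabolic trace theorem. I would handle this by density, approximating $u - w$ by smooth functions compactly supported in $(0, T] \times U$ and passing to the limit.
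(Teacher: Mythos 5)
Most of your proposal is sound as far as it goes: the pointwise inequality \eqref{e.A.ineq} gives $\mcl J\ge 0$; your integration-by-parts identity is exactly the paper's key algebraic step (see \eqref{e.null.lagr}); your midpoint computation, using that the cross term $\int \nabla(u_1-u_2)\cdot(\g_1-\g_2)=-\tfrac12\|(u_1-u_2)(T,\cdot)\|_{L^2(U)}^2\le 0$, does yield uniform convexity; and existence of a minimizer then follows. The trace and density issues you flag at the end are real but routine (cf.\ Lemma~\ref{l.continuity}).

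The gap is the sentence ``the first variation at $u_*$ \ldots forces $\g_*=\a(\nabla u_*,\cdot)$ a.e., whence $\mcl J[u_*,w^*]=0$.'' Stationarity of the constrained problem only yields an integral identity, namely $\int\big[(\nabla_p A(\nabla u_*,\g_*,\cdot)-\g_*)\cdot\nabla v+(\nabla_q A(\nabla u_*,\g_*,\cdot)-\nabla u_*)\cdot\h\big]=0$ for all $(v,\h)$ with $v\in H^1_{\pa,\sqcup}(I\times U)$ and $\nabla\cdot\h=\partial_t v$; it is a statement about the derivative at the minimizer, whereas the conclusion you need concerns the \emph{value} there, namely that it equals the pointwise lower bound $0$ (equality in \eqref{e.A.ineq}, which via \eqref{e.A.eqiff} is what encodes $\g_*=\a(\nabla u_*,\cdot)$). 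A nonnegative uniformly convex functional can perfectly well have a strictly positive minimum, and nothing in the Euler--Lagrange computation rules this out. Even in the linear symmetric case, where $A(p,q,\cdot)=\tfrac12 p\cdot\a p+\tfrac12 q\cdot\a^{-1}q$, testing with $v=0$ and spatially solenoidal $\h$ only gives $\a^{-1}\g_*-\nabla u_*=\nabla\psi$ for some $\psi\in L^2(I;H^1(U))$, and the remaining directions couple $\psi$ to a backward-in-time adjoint problem; concluding $\psi\equiv0$ requires first proving $\partial_t\psi\in L^2(I;H^{-1}(U))$, identifying the final-time and lateral boundary conditions, and running a backward energy/uniqueness argument --- and for the general uniformly monotone $\a$ of the appendix this reduction is even less direct. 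This is precisely the classical difficulty of the Brezis--Ekeland principle: proving the infimum is zero is equivalent to proving existence. The paper handles it by convex duality (Steps 2--4 of its proof): introduce the perturbed value function $G(u^*)$, check that it is convex and locally bounded above, hence lower semicontinuous, so $G=G^{**}$ and $G(0)=\sup_v(-G^*(v))$; then show that $G^*(v)<\infty$ forces $\partial_t v\in L^2(I;H^{-1}(U))$, relax the initial-time constraint on the competitors so that one may take $u=v$, and finally choose $\g=\a(\nabla(w+v),\cdot)$ to obtain $G^*(v)\ge\tfrac12\|v(0,\cdot)\|_{L^2(U)}^2\ge0$. Unless you supply an argument of this kind (or import existence of a weak solution from elsewhere, which would defeat the purpose of the proposition, whose role is precisely to furnish solvability for all $(w,w^*)\in Z(I\times U)$), your proof establishes only that the minimum is nonnegative and that, \emph{if} it vanishes, the minimizer solves \eqref{e.parab.app}.
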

\begin{remark}
\label{r.Dirichlet}
By the inclusion 
\begin{equation}  
\label{e.fs.inclusions}
H^1_\pa(I\times U) \times L^2(I;H^{-1}(U)) \subset Z(I\times U) ,
\end{equation}
Proposition~\ref{p.parabolic.min.app} ensures in particular the solvability of the parabolic equation \eqref{e.parab.app} for every right-hand side $w^* \in L^2(I;H^{-1}(U))$ and every boundary condition $w \in H^1_\pa(I\times U)$; the solution thus obtained then belongs to $H^1_\pa(I\times U)$.  

\smallskip

More generally, for every $w^*$ of the form
\begin{equation}
\label{e.wstarform}
w^* = \partial_t f + v, \quad f \in L^2(I;H^1_0(U)), \ v \in L^2(I;H^{-1}(U)),
\end{equation}
we have that $(f,w^*) \in Z(I\times U)$ and hence 
Proposition~\ref{p.parabolic.min.app} yields the existence of a unique solution~$u \in f+H^1_{\pa,\sqcup}(I\times U)$ of~\eqref{e.parab.app} which satisfies the estimate
\begin{equation}
\label{e.generalsolutionestimate}
\left\| u - f \right\|_{H^1_\pa(I\times U) } 
\leq
C \left( \left\| f \right\|_{L^2(I;H^1(U))} + \left\| w^* -\partial_t f \right\|_{L^2(I;H^{-1}(U))} \right). 
\end{equation}
In other words, we have identified a mapping
\begin{equation}  
\label{e.sol.map.expansion}
\partial_t f + v \longmapsto f + P(f,v),
\end{equation}
where $f \in L^2(I;H^1_0(U))$, $v \in L^2(I;H^{-1}(U))$, and $P$ is a bounded linear operator from $L^2(I;H^1_0(U)) \times L^2(I;H^{-1}(U))$ to $H^1_{\pa,\sqcup}(I\times U)$. If we moreover restrict our attention, say, to the set of functions $f$ which vanish in a neighborhood of $\{0\}\times U$, then this mapping provides with a notion of solution of \eqref{e.parab.app} with null Dirichlet boundary condition on the parabolic boundary of $I\times U$. This additional regularity assumption on the behavior of $f$ near the initial time can of course be weakened as desired.

\smallskip

Note that every $w^*$ of the form~\eqref{e.wstarform} belongs to $\hat{H}^{-1}_\pa(I\times U)$, but the latter space is strictly larger than the set of such~$w^*$. This may at first glance appear at odds with Lemma~\ref{l.identif.H-1par}, however that lemma required that $u^*$ belong to $L^2(I\times U)$. This hypothesis rules out certain singular distributions which belong to~$\hat{H}^{-1}_\pa(I\times U)$ but cannot be written in the form~\eqref{e.wstarform}. 
\end{remark}

\begin{remark}  
\label{r.no.reflexive}
One may wonder if, in analogy with the elliptic setting, one can identify a reflexive subspace $E$ of the space of distributions such that the standard heat operator $(\partial_t - \Delta)$ maps $E$ to its dual $E^*$ surjectively. This is however not possible, as we now explain briefly. Observe first that by Proposition~\ref{p.parabolic.min.app}, the heat operator is a bijective mapping from $H^1_{\pa,\sqcup}(I\times U)$ to $L^2(I;H^{-1}(U))$, and that $L^2(I;H^{-1}(U))$ is strictly smaller than the dual of $H^1_{\pa,\sqcup}(I\times U)$. Indeed, the dual of $H^1_{\pa,\sqcup}(I\times U)$ contains all elements of the form $\partial_t v$, for $v \in L^2(I;H^{1}_0(U))$.  Hence, the space~$E$ should be strictly between the spaces $H^1_{\pa,\sqcup}(I\times U)$ and $L^2(I;H^1_0(U))$. Using the decomposition of the solution operator in \eqref{e.sol.map.expansion}, one can then verify that such a space $E$ does not exist.
\end{remark}

Before turning to the proof of Proposition~\ref{p.parabolic.min.app}, we first recall the following continuity result for elements of a space intermediate between $H^1_{\pa,\sqcup}(I\times U)$ and $H^1_\pa(I\times U)$ where the null boundary condition is only imposed in the space direction. We refer to \cite[Section III.1.4]{temam} for a proof.
\begin{lemma}
\label{l.continuity}
Let $u \in L^2(I;H^1_0(U))$ be such that $\partial_t u \in L^2(I;H^{-1}(U))$. There exists $\td u \in C\Ll(\bar I; L^2(U)\Rr)$ such that, for almost every $t \in I$, we have $u(t,\cdot) = \td u(t,\cdot)$.
\end{lemma}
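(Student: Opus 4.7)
The plan is to establish the lemma by the standard density and energy-identity argument, which I outline below.

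First I would reduce to the case of functions that are smooth in time by a mollification/extension procedure. Starting from $u \in L^2(I;H^1_0(U))$ with $\partial_t u \in L^2(I;H^{-1}(U))$, I would extend $u$ to a function $\bar u$ defined on a slightly larger time interval $I' \supset \bar I$, still taking values in $H^1_0(U)$, with $\partial_t \bar u \in L^2(I';H^{-1}(U))$; this can be done by reflection across the endpoints of $I$. Then I would define $u_\varepsilon := \bar u \ast_t \rho_\varepsilon$, the convolution in the time variable with a standard mollifier $\rho_\varepsilon$. For $\varepsilon$ small enough, $u_\varepsilon$ is defined on $I$, smooth in $t$ with values in $H^1_0(U)$, and $u_\varepsilon \to u$ in $L^2(I;H^1_0(U))$ while $\partial_t u_\varepsilon \to \partial_t u$ in $L^2(I;H^{-1}(U))$.

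The heart of the proof is the energy identity. For $u_\varepsilon$ (which is differentiable in $t$ with values in $H^1_0(U)$) we have the pointwise-in-$t$ identity
\begin{equation*}
\frac{d}{dt}\|u_\varepsilon(t,\cdot)\|_{L^2(U)}^2 = 2\,\langle \partial_t u_\varepsilon(t,\cdot),\,u_\varepsilon(t,\cdot)\rangle,
\end{equation*}
where $\langle\cdot,\cdot\rangle$ is the $H^{-1}(U)$--$H^1_0(U)$ duality pairing. Integrating from $s \in I$ to $t\in I$ and applying Cauchy--Schwarz gives
\begin{equation*}
\|u_\varepsilon(t,\cdot)\|_{L^2(U)}^2 \leq \|u_\varepsilon(s,\cdot)\|_{L^2(U)}^2 + \|\partial_t u_\varepsilon\|_{L^2(I;H^{-1}(U))}^2 + \|u_\varepsilon\|_{L^2(I;H^1_0(U))}^2.
\end{equation*}
Averaging in $s$ over $I$ and then taking the supremum in $t \in \bar I$ yields the energy estimate
\begin{equation*}
\sup_{t \in \bar I}\|u_\varepsilon(t,\cdot)\|_{L^2(U)} \leq C\bigl(\|u_\varepsilon\|_{L^2(I;H^1_0(U))} + \|\partial_t u_\varepsilon\|_{L^2(I;H^{-1}(U))}\bigr),
\end{equation*}
with $C$ depending only on $|I|$.

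To conclude, I would apply the same estimate to the difference $u_\varepsilon - u_{\varepsilon'}$, which is again smooth in time with values in $H^1_0(U)$. Since $(u_\varepsilon)$ is Cauchy in $L^2(I;H^1_0(U))$ and $(\partial_t u_\varepsilon)$ is Cauchy in $L^2(I;H^{-1}(U))$, the energy bound shows that $(u_\varepsilon)$ is Cauchy in $C(\bar I; L^2(U))$. Its limit $\widetilde u$ is continuous from $\bar I$ into $L^2(U)$. Since $u_\varepsilon \to u$ in $L^2(I;L^2(U))$ as well, the uniform limit $\widetilde u$ must coincide with $u$ almost everywhere on $I$, proving the claim. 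The main technical point is the justification of the energy identity, which is why the mollification step is needed; once this is in hand, the rest is a routine completeness argument.
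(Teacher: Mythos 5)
Your argument is correct: the paper does not prove this lemma itself but refers to Temam (Section III.1.4), and the proof given there is precisely the classical one you outline --- extension and mollification in time, the energy identity $\frac{d}{dt}\|u_\varepsilon(t,\cdot)\|_{L^2(U)}^2 = 2\langle \partial_t u_\varepsilon(t,\cdot), u_\varepsilon(t,\cdot)\rangle$, and a completeness argument in $C(\bar I;L^2(U))$ applied to differences of mollifications. So your proposal is essentially the same as the cited proof and is complete as sketched.
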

From now on, whenever a function $u$ satisfies the conditions of Lemma~\ref{l.continuity}, we identify it with its continuous representative.

\begin{proof}[Proof of Proposition~\ref{p.parabolic.min.app}] We decompose the proof into four steps.

\smallskip

\emph{Step 1.} We show that the mapping in \eqref{e.parab.min.app} is uniformly convex. We will in fact prove the stronger statement that the mapping
\begin{equation}  
\label{e.inner.map}
(u,\g)  \mapsto  \int_{I \times U} \Ll(A(\nabla u,\g,\cdot) - \nabla u \cdot \g\Rr) ,
\end{equation}
defined over all pairs $(u,\g)$ in the set
\begin{equation}  
\label{e.set.inner.map}
\Ll\{(u,\g) \in \Ll(w + H^1_{\pa,\sqcup}(I\times U)\Rr) \times L^2(I\times U;\Rd) \ \ \text{ and } \ -\nabla \cdot \g = w^* - \partial_t u \Rr\},
\end{equation}
is uniformly convex. We first show that the mapping
\begin{equation*}  
(u,\g) \mapsto -\int_{I \times U} \nabla u \cdot \g
\end{equation*}
is convex over the set defined in \eqref{e.set.inner.map}. By \eqref{e.interp.div} (with $u^*$ replaced by $w^*$), we have
\begin{align}  
\notag
-\int_{I \times U} \nabla u \cdot \g & = -\int_{I \times U} \nabla w \cdot \g + \int_{I \times U} (w-u)(w^* - \partial_t u) \\
\label{e.null.lagr}
& = -\int_{I \times U} \nabla w \cdot \g + \int_{I \times U} (w-u) (w^* - \partial_t w) + \frac 1 2 \|(u-w)(T,\cdot)\|_{L^2(U)}^2.
\end{align}
This expression is clearly convex in the pair $(u,\g)$. We now complete this step by showing that the mapping
\begin{equation*}  
(u,\g) \mapsto \int_{I \times U} A(\nabla u,\g,\cdot)
\end{equation*}
is uniformly convex over the set defined in \eqref{e.set.inner.map}. By \eqref{e.A.unif.convex}, for every $(u,\g)$ in the set defined in \eqref{e.set.inner.map} and 
\begin{equation}  
\label{e.tangent.space}
(v,\h) \in H^1_{\pa,\sqcup}(I\times U) \times L^2(I\times U;\Rd) \ \ \text{s.t.} \ \ \nabla \cdot \h = \partial_t v,
\end{equation}
we have
\begin{equation*}  
\frac 1 2 A(\nabla (u+v),\g + \h,\cdot) + \frac 1 2 A(\nabla (u-v),\g - \h,\cdot) - A(\nabla u,\g,\cdot) \ge \frac 1 {2\Lambda} \Ll( |\nabla v|^2 + |\h|^2 \Rr) .
\end{equation*}
Moreover, by \eqref{e.tangent.space}, 
\begin{align*}  
\|\partial_t v\|_{L^2(I;H^{-1}(U))} & = \sup \Ll\{\int_{I\times U} \phi \, \partial_t v \ : \ \phi \in L^2(I;H^1_0(U)), \ \|\nabla \phi\|_{L^2(I\times U)} \le 1 \Rr\}\\
& = \sup \Ll\{ \int_{I \times U} \nabla \phi \cdot \h \ : \ \phi \in L^2(I;H^1_0(U)), \ \|\nabla \phi\|_{L^2(I\times U)} \le 1 \Rr\}\\
& \le \|\h\|_{L^2(I\times U)}.
\end{align*}
We have thus shown that
\begin{multline*}  
\int_{I \times U} \Ll(\frac 1 2 A(\nabla (u+v),\g+\h,\cdot) + \frac 1 2 A(\nabla (u-v),\g-\h,\cdot) - A(\nabla u,\g,\cdot)\Rr)\\
\ge
\frac{1}{4\Lambda} \Ll( \|\nabla v\|_{L^2(I\times U)}^2 + \|\partial_t v\|_{L^2(I;H^{-1}(U))}^2 + \|\h\|_{L^2(I\times U)}^2 \Rr) ,
\end{multline*}
so the proof of uniform convexity is complete.

\smallskip

\emph{Step 2.} By the result of the previous step, there exists a unique pair $(u_0,\g_0)$ in the set defined by \eqref{e.set.inner.map} which minimizes the functional in \eqref{e.inner.map}. In order to complete the proof, it suffices to show that 
\begin{equation}  
\label{e.null.min}
\int_{I \times U} \Ll(A(\nabla u_0,\g_0,\cdot) - \nabla u_0 \cdot \g_0\Rr) = 0.
\end{equation}
Indeed, by \eqref{e.A.ineq}, the identity \eqref{e.null.min} implies that 
\begin{equation*}  
\g_0 = \a(\nabla u_0,\cdot) \quad \text{a.e. in } I\times U,
\end{equation*}
and moreover, by \eqref{e.set.inner.map},
\begin{equation*}  
\nabla \cdot \g_0 = w^* - \partial_t u_0,
\end{equation*}
so that $u_0$ indeed solves
\begin{equation*}  
\partial_t u_0 - \nabla \cdot \Ll( \a(\nabla u_0,\cdot) \Rr) = w^*
\end{equation*}
in the weak sense. Our goal is therefore to show \eqref{e.null.min}. The fact that the left side of \eqref{e.null.min} is non-negative is immediate from \eqref{e.A.ineq}. There remains to show that this quantity is non-positive, that is,
\begin{equation}
\label{e.null.inf.explicit}
\inf_{u \in H^1_{\pa,\sqcup}(I\times U)} \mcl J[w+u,w^*] \le 0.
\end{equation}
In order to do so, we consider the perturbed convex minimization problem defined for every $u^* \in L^2(I;H^{-1}(U))$ by
\begin{equation*}  
G(u^*) := \inf_{u \in H^1_{\pa,\sqcup}(I\times U)} \Ll( \mcl J[w+u,w^*+u^*] + \int_{I \times U} u \, u^* \Rr).
\end{equation*}
Note that \eqref{e.null.inf.explicit} is equivalent to the statement that $G(0) \le 0$. By the computation in \eqref{e.null.lagr}, for every $u^* \in L^2(I;H^{-1}(U))$ and 
\begin{equation}  
\label{e.set.perturbed}
(u,\g) \in H^1_{\pa,\sqcup}(I\times U)  \times L^2(I\times U;\Rd)  \ \ \text{s.t.} \ -\nabla \cdot \g = w^* + u^* - \partial_t (w+u),
\end{equation}
we have
\begin{multline}  
\label{e.expand.perturbed}
 \int_{I \times U} \Ll(A(\nabla (w+u),\g,\cdot) - \nabla (w+u) \cdot \g\Rr) + \int_{I \times U} u \, u^* \\
= \int_{I \times U} \Ll(A(\nabla(w+ u),\g,\cdot) - \nabla w \cdot \g -u(w^*-\partial_t w)\Rr) + \frac 1 2  \|u(T,\cdot)\|_{L^2(U)}^2,
\end{multline}
and hence the function $G$ is convex over $L^2(I;H^{-1}(U))$. Moreover, one can check that it is also locally bounded above, which implies that $G$ is lower semi-continuous, by convexity (see e.g.\ \cite[Lemma~I.2.1 and Corollary~I.2.2]{ET}). Denoting by $G^*$ the convex dual of $G$, defined for every $v \in L^2(I;H^1_0(U))$ by
\begin{equation*}  
G^*(v) := \sup_{u^* \in L^2(I;H^{-1}(U))} \Ll(  - G(u^*) + \int_{I \times U} v \, u^* \Rr) ,
\end{equation*}
and by $G^{**}$ its bidual, we deduce that $G = G^{**}$ (see \cite[Proposition~I.4.1]{ET}), and in particular,
\begin{equation*}  
G(0) = G^{**}(0) = \sup_{v \in L^2(I;H^{1}_0(U))} \Ll( -G^*(v) \Rr) .
\end{equation*}
The statement \eqref{e.null.inf.explicit} is therefore equivalent to
\begin{equation}
\label{e.nonneg.G*}
\forall v \in L^2(I;H^1_0(U)), \ \ G^*(v) \ge 0.
\end{equation}
Ths proof of this fact occupies the next two steps.

\smallskip

\emph{Step 3.} For each $v \in L^2(I;H^1_0(U))$, we have $G^*(v) \in \R \cup \{+\infty\}$. In this step, we show that
\begin{equation}  
\label{e.nonneg.G*.1}
G^*(v) < +\infty \quad \implies \quad \partial_t v \in L^2(I;H^{-1}(U)).
\end{equation}
We note that
\begin{multline}  
\label{e.expr.G*}
G^*(v) = \sup \bigg\{ \int_{I\times U} \Ll( (v-u) \, u^* - A(\nabla (w+u),\g,\cdot) + \nabla (w+u) \cdot \g \Rr)  \ : \\ u^* \in L^2(I;H^{-1}(U)), \ (u ,\g) \ \text{satisfy} \ \eqref{e.set.perturbed} \bigg\} .
\end{multline}
Specifying to $u^* = \partial_t u$ and to a fixed $\g \in L^2(I\times U;\Rd)$ satisfying $-\nabla \cdot \g = w^* - \partial_t w$ (which can be constructed as the gradient of the solution of a Dirichlet problem) yields the lower bound
\begin{multline*}  
G^*(v) \ge \sup \bigg\{ \int_{I\times U} \Ll(v \, \partial_t u - A(\nabla (w+ u),\g,\cdot) + \nabla (w+u) \cdot \g \Rr) - \frac 1 2 \|u(T,\cdot)\|_{L^2(U)}^2 \ : 
\\
 u \in H^1_{\pa,\sqcup}(I\times U)\bigg\} .
\end{multline*}
The assumption of $G^*(v) < \infty$ thus implies that
\begin{equation*}  
\sup \Ll\{ \int_{I\times U} v \, \partial_t u \ : \ u \in H^1_{\pa,\sqcup}(I\times U), \ \|\nabla u\|_{L^2(I\times U)} \le 1, \ \|u(T,\cdot)\|_{L^2(U)} \le 1 \Rr\}  < \infty.
\end{equation*}
Denoting the supremum above by $C < \infty$, we infer that for every smooth test function $u$ with compact support in $I \times U$, 
\begin{equation*}  
\Ll|\int_{I\times U} u \, \partial_t v \Rr|\le C \|\nabla u\|_{L^2(I\times U)}.
\end{equation*}
By density, we deduce that $\partial_t v$ can be identified with an element of the dual of $L^2(I;H^1_0(U))$. Since this dual space is $L^2(I;H^{-1}(U))$, the proof of \eqref{e.nonneg.G*.1} is complete.

\smallskip

\emph{Step 4.} In this step, we show that 
\begin{equation}
\label{e.nonneg.G^*.2}
v \in L^2(I;H^1_0(U))  \ \text{ and } \ \partial_t v \in L^2(I;H^{-1}(U)) \quad \implies \quad G^*(v) \ge 0.
\end{equation}
Together with \eqref{e.nonneg.G*.1}, this would complete the proof of \eqref{e.nonneg.G*} and therefore of the proposition. 

\smallskip

The fact that $G^*(v) \ge 0$ would follow immediately from \eqref{e.expr.G*} if we could choose $u = v$ and then ensure the equality of the last two terms under the integral. The difficulty we face is that the function $u$ is allowed to range in $H^1_{\pa,\sqcup}(I\times U)$, while the function $v$ does not belong to this space in general, due to the boundary condition at the initial time. We therefore wish to argue that this constraint on $u$ can be relaxed. 

\smallskip

Replacing $u^*$ by $u^* + \partial_t u$ in the supremum in \eqref{e.expr.G*}, we can rewrite $G^*(v)$ as
\begin{equation}  
\label{e.expr.G*.2}
G^*(v) = \sup \Ll\{ \int_{I\times U} \Ll( (v-u) \,( u^* + \partial_t u) - A(\nabla (w+u),\g,\cdot) + \nabla (w+u) \cdot \g \Rr)  \Rr\},
\end{equation}
where the supremum is taken over every $u^* \in L^2(I;H^{-1}(U))$, $u \in H^1_{\pa,\sqcup}(I\times U)$ and $\g \in L^2(I\times U;\Rd)$ satisfying
\begin{equation}  
\label{e.set.perturbed.2}
-\nabla \cdot \g = w^* + u^* - \partial_t w.
\end{equation}
Integrating by parts, we can rewrite the term involving $\partial_t u$ on the right side of \eqref{e.expr.G*.2} as 
\begin{equation*}  
\int_{I\times U} (v-u) \,\partial_t u = -\int_{I\times U} u \, \partial_t v + \int_{U} u(T,\cdot)  v(T,\cdot) - \frac 1 2 \|u(T,\cdot)\|_{L^2(U)}^2.
\end{equation*}
The functional under the supremum in \eqref{e.expr.G*.2} can thus be decomposed into the sum of
\begin{equation}  
\label{e.G.inside.bit}
I_1(u,u^*,\g) := \int_{I\times U} \Ll( (v-u)u^* - u \partial_t v  - A(\nabla (w+u),\g,\cdot) + \nabla (w+u) \cdot \g \Rr) 
\end{equation}
and
\begin{equation}
\label{e.G.boundary.bit}
I_2(u(T,\cdot)) := \int_{U} u(T,\cdot)  v(T,\cdot) - \frac 1 2 \|u(T,\cdot)\|_{L^2(U)}^2.
\end{equation}
Moreover, for each given $u^* \in L^2(I;H^{-1}(U))$ and $\g \in L^2(I\times U;\Rd)$, the mapping $u \mapsto I_1(u,u^*,\g)$ is continuous for the topology of $L^2(I;H^1(U))$. For any given $b \in H^1_0(U)$, and $\td u \in L^2(I;H^1_0(U))$, one can find elements of the space
\begin{equation*}  
\{u \in H^1_{\pa,\sqcup}(I\times U) \ : \ u(T,\cdot) = b\}
\end{equation*}
which approximate $\td u$ with arbitrary precision, for the topology of $L^2(I;H^1(U))$. Hence, for each given $u^* \in L^2(I;H^{-1}(U))$ and $\g \in L^2(I\times U;\Rd)$, we have
\begin{multline*}  
\sup \Ll\{I_1(u,u^*,\g) + I_2(u(T,\cdot)) \ : \ u \in H^1_{\pa,\sqcup}(I\times U)\Rr\} \\
\ge \sup \Ll\{ I_1(u,u^*,\g) + I_2(b) \ : \ u \in L^2(I;H^1_0(U)) \ \text{ and } b \in H^1_0(U)\Rr\}.
\end{multline*}
Moreover, the mapping $b \mapsto I_2(b)$ is continuous for the topology of $L^2(U)$, and thus we have in fact
\begin{multline*}  
\sup \Ll\{I_1(u,u^*,\g) + I_2(u(T,\cdot)) \ : \ u \in H^1_{\pa,\sqcup}(I\times U)\Rr\} \\
= \sup \Ll\{ I_1(u,u^*,\g) + I_2(b) \ : \ u \in L^2(I;H^1_0(U)) \ \text{ and } b \in L^2(U)\Rr\}.
\end{multline*}
Selecting $u = v$ and $b = v(T,\cdot)$, we have thus shown that
\begin{equation*}  
G^*(v) \ge \sup \Ll\{ \frac 1 2 \|v(T,\cdot)\|_{L^2(U)}^2 + \int_{I\times U} \Ll( -v \, \partial_t v - A(\nabla (w+v),\g,\cdot) + \nabla (w+v) \cdot \g \Rr)   \Rr\} ,
\end{equation*}
where the supremum is taken over every $u^* \in L^2(I;H^{-1}(U))$ and $\g \in L^2(I\times U;\Rd)$ satisfying \eqref{e.set.perturbed.2}. Note that
\begin{equation*}  
\frac 1 2 \|v(T,\cdot)\|_{L^2(U)}^2 - \int_{I\times U}  v \, \partial_t v = \frac 1 2 \|v(0,\cdot)\|_{L^2(U)}^2 \ge 0.
\end{equation*}
Selecting $u^*$ such that 
\begin{equation*}  
-\nabla \cdot \Ll( \a(\nabla (w+v),\cdot) \Rr) = w^* + u^* - \partial_t w,
\end{equation*}
and then 
\begin{equation*}  
\g = \a(\nabla (w+v),\cdot)
\end{equation*}
ensures that the constraint \eqref{e.set.perturbed.2} is satisfied, and by \eqref{e.A.eqiff}, that
\begin{equation*}  
\int_{I\times U} \Ll( A(\nabla (w+v),\g,\cdot) - \nabla (w+v) \cdot \g  \Rr) =0.
\end{equation*}
The proof of \eqref{e.nonneg.G^*.2} is therefore complete.
\end{proof}

\section{Meyers-type estimates}
\label{s.meyers}

In this appendix, we present local and global versions of the Meyers improvement of integrability estimate for gradients of solutions of linear, uniformly parabolic equations with measurable coefficients. 

\smallskip

The interior Meyers estimate in the parabolic case was first proved in~\cite{GS}. We follow their argument to obtain Proposition~\ref{p.interiormeyers}, below, which is included for completeness and since the same ideas are needed to prove the global version in Proposition~\ref{p.globalmeyers}. The statement of the latter will certainly not come as a surprise to experts, but we do not believe it has appeared before. Global versions of the Meyers estimate in the parabolic setting have been previously considered in~\cite{Parv}, but the statement of  Proposition~\ref{p.globalmeyers} is stronger than the results of~\cite{Parv} since we do not require any additional regularity of the boundary condition in time---a modest technical improvement, but it gives a more natural statement and one which is useful for the application in this paper. 

\smallskip

In what follows, we use the same notation for parabolic cylinders as in Section~\ref{s.regularity}, see~\eqref{e.alternativQr}. That is, for $(t,x)\in \R\times\Rd$, we denote
\begin{equation*} \label{}
\tilde I_r:= (-r^2,0], \ \quad \ 
Q_r(t,x):= (t,x) + \tilde I_r \times B_r, \quad \mbox{and} \quad Q_r := Q_r(0,0).
\end{equation*}
We fix a coefficient field $\a=\a(t,x)$ satisfying~\eqref{e.unifellip-1} for every $(t,x) \in \R \times \Rd$, and consider the linear parabolic equation
\begin{equation} 
\label{e.appendixBpde}
\partial_t u -\nabla \cdot \left( \a(t,x) \nabla u \right) = u^*. 
\end{equation}
We remark that the argument we present only makes mild use of linearity and can be adapted to give similar estimates for solutions of nonlinear parabolic equations like the ones considered in Appendix~\ref{s.app}. 

\smallskip

We first present the interior Meyers estimate. Recall that the space~$W^{1,p}_\pa$ is defined in~\eqref{e.def.W1p.par} and~\eqref{e.def.W1p.par.norm}.

\begin{proposition}[{Interior Meyers estimate~\cite[Theorem 2.1]{GS}}]
\label{p.interiormeyers}
Fix $r>0$, $p\geq 2$ and suppose that $u\in H^1_\pa(Q_{2r})$ and $u^*\in L^p\left(I_{2r};W^{-1,p}(B_{2r})\right)$ satisfy equation~\eqref{e.appendixBpde} in $Q_{2r}$. There exist an exponent $\delta(d,\Lambda)>0$ and a constant $C(d,\Lambda)<\infty$ such that $u\in W^{1,p\wedge (2+\delta)}_\pa(Q_r)$ and we have the estimate
\begin{equation} 
\label{e.interiorMeyers}
\left\| \nabla u \right\|_{\underline{L}^{p\wedge (2+\delta)}\left(Q_r\right)}
\leq 
C \left(
\left\| \nabla u \right\|_{\underline{L}^{2}\left(Q_{2r} \right)}
+ 
\left\| u^* \right\|_{\underline{L}^{p\wedge (2+\delta)}\left(I_{2r};\underline{W}^{-1,p\wedge (2+\delta)}(B_{2r})\right)}
\right).
\end{equation}
\end{proposition}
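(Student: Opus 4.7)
The plan is to follow the classical Giaquinta--Struwe strategy: establish a reverse H\"older inequality for $|\nabla u|$ on parabolic cylinders and then apply a parabolic version of Gehring's lemma to upgrade the exponent from $2$ to $2+\delta$. A preliminary reduction shows that it suffices to treat the case $p=2+\delta$ for $\delta$ small, since for $p\geq 2+\delta$ the $L^{2+\delta}(I;W^{-1,2+\delta})$ norm of $u^*$ is controlled by its $L^p(I;W^{-1,p})$ norm via H\"older's inequality and the bounded volume of $Q_{2r}$, while for $p \in (2,2+\delta]$ the desired conclusion is weaker than $L^{2+\delta}$ integrability and follows by interpolation. To work with an honest divergence form, we solve an auxiliary spatial Poisson problem on each time slice to write $u^* = \nabla\cdot F$ with $\|F\|_{\underline{L}^{p}(Q_{2r})} \leq C\|u^*\|_{\underline{L}^{p}(I_{2r};\underline{W}^{-1,p}(B_{2r}))}$, so that the equation becomes $\partial_t u = \nabla\cdot(\a\nabla u + F)$.

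The first key ingredient is a Caccioppoli inequality: testing the equation against $\eta^2(u-c)$ with a standard parabolic cut-off $\eta$ supported in $Q_\rho(z)$ and equal to one on $Q_{\rho/2}(z)$, together with the uniform ellipticity of $\a$ and Young's inequality, yields, for every $c\in\R$ and every cylinder $Q_\rho(z)\subseteq Q_{2r}$,
\begin{equation*}
\fint_{Q_{\rho/2}(z)} |\nabla u|^2 \leq \frac{C}{\rho^2}\fint_{Q_{\rho}(z)} |u-c|^2 + C\fint_{Q_{\rho}(z)} |F|^2.
\end{equation*}
The second ingredient is a parabolic Sobolev--Poincar\'e inequality: thanks to the equation, $\partial_t u = \nabla\cdot(\a\nabla u + F)$, one can invoke an estimate of the type of Corollary~\ref{c.parab.poincare} (more precisely its Sobolev-improved version), obtained by combining the energy bound $u\in L^\infty(I_\rho; L^2(B_\rho))\cap L^2(I_\rho; H^1(B_\rho))$ with a parabolic Gagliardo--Nirenberg interpolation, to choose $c$ as a suitable time-slice spatial average of $u$ and derive
\begin{equation*}
\Ll(\fint_{Q_\rho(z)}|u-c|^2\Rr)^{1/2} \leq C\rho \Ll(\fint_{Q_\rho(z)}|\nabla u|^q\Rr)^{1/q} + C\rho\Ll(\fint_{Q_\rho(z)}|F|^q\Rr)^{1/q},
\end{equation*}
for an exponent $q=q(d)\in\bigl(\frac{2(d+2)}{d+4},2\bigr)$.

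Combining the two previous displays produces the reverse H\"older inequality
\begin{equation*}
\Ll(\fint_{Q_{\rho/2}(z)} |\nabla u|^2\Rr)^{1/2} \leq C\Ll(\fint_{Q_\rho(z)}|\nabla u|^{q}\Rr)^{1/q} + C\Ll(\fint_{Q_\rho(z)}|F|^{p}\Rr)^{1/p},
\end{equation*}
valid on every parabolic cylinder $Q_\rho(z) \subseteq Q_{2r}$, where we have also used $|F|^q \leq 1 + |F|^p$ to upgrade the norm of $F$ to its natural $L^p$ scale. A standard application of the parabolic Gehring--Giaquinta--Modica lemma (see~\cite{GS}) then yields an exponent $\delta(d,\Lambda)>0$ such that $\nabla u \in L^{2+\delta}(Q_r)$ together with the quantitative estimate~\eqref{e.interiorMeyers}. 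The main obstacle is Step 2: the parabolic Poincar\'e inequality requires exploiting both the $L^\infty_t L^2_x$ and $L^2_t H^1_x$ regularity of $u$ simultaneously, which is why the natural right-hand side exponent $q$ is strictly below $2$ (in contrast to the purely spatial Poincar\'e inequality, where one can only replace $\nabla u$ on the right by an $L^2$ quantity); this is precisely the non-trivial gain that makes the Gehring step applicable.
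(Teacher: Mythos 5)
Your overall route is the same as the paper's: reduce $u^*$ to divergence form $\nabla\cdot F$ with $\|F\|_{L^p}$ controlled by the $W^{-1,p}$ norm (this is the Riesz-representation step the paper also uses), prove a parabolic Caccioppoli inequality (Lemma~\ref{l.cacciopp.this}), derive a reverse H\"older inequality with a sub-quadratic exponent $q<2$ on the right (Lemma~\ref{l.reverse.Holder}), and conclude with a parabolic Gehring lemma and a covering argument (Lemma~\ref{l.parab.Gehring}). So there is no divergence in strategy; the issue is with the precise form of your Step 2.

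The intermediate inequality you claim,
\begin{equation*}
\Ll(\fint_{Q_\rho(z)}|u-c|^2\Rr)^{1/2} \leq C\rho \Ll(\fint_{Q_\rho(z)}|\nabla u|^q\Rr)^{1/q} + C\rho\Ll(\fint_{Q_\rho(z)}|F|^q\Rr)^{1/q},
\end{equation*}
does not follow from the ingredients you cite. Combining the energy bound $u\in L^\infty_t L^2_x\cap L^2_t H^1_x$ with slice-wise Sobolev and interpolation produces a bound in which the $L^\infty_t L^2_x$ factor enters, and that factor is itself controlled (via Caccioppoli on a larger cylinder) by the \emph{$L^2$} norm of $\nabla u$, not its $L^q$ norm. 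What actually comes out is a product estimate of the schematic form $\|u-c\|_{L^2(Q_\rho)}\lesssim \rho^{\kappa}\,\|\nabla u\|_{L^q(Q_{2\rho})}^{\theta}\bigl(\|\nabla u\|_{L^2(Q_{4\rho})}+\|F\|_{L^2(Q_{4\rho})}\bigr)^{1-\theta}$ for some $\theta\in(0,1)$ (the paper gets $\theta=\tfrac14$ at the level of squared norms in the proof of Lemma~\ref{l.reverse.Holder}). A clean same-cylinder bound with only $L^q$ norms of $\nabla u$ on the right would amount to a genuine anisotropic space--time embedding (using $\partial_t u\in L^q(W^{-1,q})$ at the critical parabolic exponent $q>2(d+2)/(d+4)$), which is a much more delicate statement than parabolic Gagliardo--Nirenberg and is not justified in your write-up. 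The consequence is that your final reverse H\"older inequality is missing a term: after Young's inequality one obtains, for every $\alpha>0$, an additional contribution $\alpha\,\fint_{Q_{4\rho}}|\nabla u|^2$ on the right, and the Gehring step must therefore use the version of the lemma that tolerates a small multiple $\ep\fint |\nabla u|^2$ of the function itself (this is exactly the hypothesis with the $\ep$-term in Lemma~\ref{l.parab.Gehring}, with $\ep\le\ep_0(d,m)$). With the product/absorption structure and the $\ep$-tolerant Gehring lemma, your argument closes and coincides with the paper's proof; as literally stated, the Sobolev--Poincar\'e step is over-claimed and the subsequent reverse H\"older inequality is not established.
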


We next give a global statement of the Meyers estimate with respect to a Cauchy-Dirichlet initial-boundary condition. 

\begin{proposition}[Global Meyers estimate]
\label{p.globalmeyers}
Fix $p\geq 2$. Let $U\in\Rd$ be a bounded Lipschitz domain, $I \subseteq \R$ a bounded interval and set $V:=I\times U$. Fix $f\in W^{1,p}_\pa(V)$, $u^* \in L^p\left(I;W^{-1,p}(V) \right)$ and suppose that
\begin{equation*} \label{}
u\in f + H^1_{\pa,\sqcup}(V)
\end{equation*}
is the unique solution of the Cauchy-Dirichlet problem
\begin{equation*} \label{}
\left\{
\begin{aligned}
& \partial_t u - \nabla \cdot \left( \a \nabla u  \right) = u^* & \mbox{in} & \ V, 
\\ 
& u = f & \mbox{on} & \ \partial_\sqcup V.
\end{aligned}\right.
\end{equation*}
There exist $\delta(V,d,\Lambda)>0$ and a constant $C(V,d,\Lambda)<\infty$ such that $
u\in W^{1,p\wedge(2+\delta)}_\pa(V)$ 
and we have the estimate
\begin{equation} \label{}
\left\| u \right\|_{W^{1,p\wedge(2+\delta)}_\pa(V)}
\leq
C \left( \left\| f \right\|_{W^{1,p\wedge(2+\delta)}_\pa(V)}
+ 
\left\| u^* \right\|_{L^{p\wedge(2+\delta)}\left(I;W^{-1,p\wedge(2+\delta)}(V) \right)}
\right).
\end{equation}
\end{proposition}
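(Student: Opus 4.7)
The plan is to establish a reverse H\"older inequality on parabolic cylinders that intersect $V$, handling interior, lateral, and initial-time cylinders uniformly, and then to conclude by a parabolic version of Gehring's lemma. First I would reduce to the case of homogeneous Cauchy-Dirichlet data. Writing $v := u - f \in H^1_{\pa,\sqcup}(V)$, one sees that $v$ solves
\begin{equation*}
\partial_t v - \nabla\cdot(\a \nabla v) = u^* + \nabla\cdot(\a \nabla f) - \partial_t f \quad \text{in } V,
\end{equation*}
with $v$ vanishing on $\partial_\sqcup V$. The new right-hand side can be written as $\td u^* - \partial_t f$ with $\td u^* \in L^p(I;W^{-1,p}(U))$, and by extending $v$ by zero to all of $\R\times\Rd$ (possible precisely because $v \in H^1_{\pa,\sqcup}$), we obtain a function to which we can apply intrinsic-scale cylinder arguments uniformly.

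The core technical step will be a reverse H\"older inequality of the form
\begin{equation*}
\Ll(\fint_{Q_r(z)\cap V}|\nabla v|^2\Rr)^{1/2} \le C\Ll(\fint_{Q_{2r}(z)\cap V}|\nabla v|^{2q}\Rr)^{1/(2q)} + (\text{data})
\end{equation*}
for some $q = q(d,\Lambda,U) \in (\frac{d+1}{d+2},1)$ and every $z \in \overline V$, $r \in (0,r_0]$ with $r_0$ depending on $V$. To obtain it I would combine two ingredients. The first is a Caccioppoli-type inequality, obtained by testing the equation for $v$ against $\eta^{2}(v-c)$ for a suitable constant $c$ and a cutoff $\eta$ supported in $Q_{2r}(z)$, which, after controlling the time-derivative contribution via the representation of $\partial_t v$ in $L^2(I;H^{-1})$ (the trace at the initial time is nonnegative when $z$ touches the initial slice, which actually helps), yields
\begin{equation*}
\fint_{Q_r(z)\cap V}|\nabla v|^2 \le \frac{C}{r^2}\fint_{Q_{2r}(z)\cap V}|v-c|^2 + C\|\td u^* - \partial_t f\|_{\un W^{-1,2}_\pa(Q_{2r}(z)\cap V)}^2 + (\text{data for }f).
\end{equation*}
The second is a parabolic Sobolev--Poincar\'e inequality on $Q_{2r}(z)\cap V$. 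For interior cylinders one picks $c = (v)_{Q_{2r}(z)}$; for lateral or initial-time cylinders one picks $c = 0$, using that $v = 0$ on $\partial_\sqcup V$ and the Lipschitz character of $\partial U$ to obtain the zero-boundary Poincar\'e inequality on $B_{2r}(x)\cap U$ uniformly in the base point. This produces the sub-quadratic right-hand side.

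I would then apply the parabolic Gehring / Giaquinta--Modica self-improvement lemma on cylinders to upgrade the reverse H\"older estimate to integrability of $\nabla v$ in $L^{2+\delta}_{\mathrm{loc}}$ up to the boundary of $V$, with $\delta(V,d,\Lambda)>0$. A covering of $V$ by finitely many cylinders (interior plus boundary charts flattening $\partial U$) and interpolation with the given bound $\nabla u \in L^p$ whenever $p > 2+\delta$ then yield the stated global estimate, where the time-derivative bound $\partial_t u \in L^{p\wedge(2+\delta)}(I;W^{-1,p\wedge(2+\delta)}(U))$ follows directly from the equation once $\nabla u$ has the improved integrability. The main obstacle I anticipate is the boundary Caccioppoli inequality, and specifically treating $\partial_t v$ on cylinders straddling $\partial_\sqcup V$ without losing the parabolic scaling; the key observation is that for cylinders anchored at the initial time, the boundary term $\int |v|^2 \eta^2 \,dx$ at time $I_-$ vanishes, so the usual integration-by-parts in time produces only controllable terms, while for lateral cylinders the zero lateral trace of $v$ plays the analogous role.
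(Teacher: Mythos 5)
Your proposal follows essentially the same route as the paper: reduce to $v=u-f\in H^1_{\pa,\sqcup}(V)$ with the modified right-hand side, prove Caccioppoli and sub-quadratic reverse H\"older inequalities on cylinders $Q_r(z)\cap V$ (exploiting the vanishing of $v$ on $\partial_\sqcup V$ for lateral and initial-time cylinders), apply the parabolic Gehring lemma, and then read off the $\partial_t u$ bound from the equation. The only point worth making explicit is that your ``parabolic Sobolev--Poincar\'e'' step for interior cylinders must use the equation to control the time oscillation of spatial averages (as in \lref{reverse.Holder}, where one subtracts the time-dependent weighted average $w(t)=\int\xi\,u(t,\cdot)$ and bounds $\partial_t w$ via $\partial_t u=\nabla\cdot\g+u^*$), rather than a purely spatial Poincar\'e inequality with a fixed constant; with that understood, your sketch matches the paper's argument.
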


The Meyers estimates are consequences of the Caccioppoli inequality, the most basic regularity estimate for divergence-form equations.

\begin{lemma}[parabolic Caccioppoli inequality]
\label{l.cacciopp.this}
Suppose that $u\in H^1_\pa(Q_{2r})$ and $u^* \in L^2(I_{2r};H^{-1}(B_{2r}))$ satisfy
\begin{equation*} \label{}
\partial_t u - \nabla \cdot \left(\a \nabla u \right) = u^* \quad \mbox{in} \ Q_{2r}. 
\end{equation*}
Then there exists $C(d,\Lambda)<\infty$ such that
\begin{equation} 
\label{e.cacciopp.this}
 \left\| \nabla u \right\|_{L^2(Q_r)}  
\leq Cr^{-1}\left\|  u \right\|_{L^2(Q_{2r})}  +  C \left\| u^* \right\|_{L^2 \left( I_{2r};H^{-1}(B_{2r}) \right) }
\end{equation}
and
\begin{equation} 
\label{e.Cacciopp.thetime}
\sup_{s\in I_{r}} \left\|  u(s,\cdot) \right\|_{L^2(B_r)}  
\leq C\left\|  \nabla u \right\|_{L^2(Q_{2r})} + C \left\| u^* \right\|_{L^2\left(I_{2r};H^{-1}(B_{2r})\right)}.
\end{equation}
\end{lemma}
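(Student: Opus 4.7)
The plan is to run the classical energy argument of De Giorgi--Nash--Moser type, testing the equation against $\eta^2 u$ for a well-chosen space-time cutoff. I would fix a smooth function $\eta = \eta(t,x)$ satisfying $\eta \equiv 1$ on $Q_r$, $\supp \eta \subseteq Q_{2r}$, $0 \le \eta \le 1$, $\eta(-4r^2, \cdot) \equiv 0$, with derivative bounds $\|\nabla \eta\|_{L^\infty} \le Cr^{-1}$ and $\|\partial_t \eta\|_{L^\infty} \le Cr^{-2}$. Since $\eta^2 u \in L^2(I_{2r}; H^1_0(B_{2r}))$, it is an admissible test function in the weak formulation of~\eqref{e.appendixBpde}.

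Testing on the slab $(-4r^2, s) \times B_{2r}$ for $s \in I_{2r}$ yields the identity
\begin{equation*}
\int_{-4r^2}^s \langle \partial_t u, \eta^2 u \rangle \, dt + \int_{-4r^2}^s \!\int_{B_{2r}} \nabla(\eta^2 u) \cdot \a \nabla u = \int_{-4r^2}^s \langle u^*, \eta^2 u \rangle \, dt.
\end{equation*}
I would then use the pointwise identity $\eta^2 u \, \partial_t u = \tfrac{1}{2} \partial_t((\eta u)^2) - \eta (\partial_t \eta) u^2$ to rewrite the first term as
\begin{equation*}
\tfrac{1}{2} \int_{B_{2r}} (\eta u)^2(s, \cdot) \, dx - \int_{(-4r^2, s) \times B_{2r}} \eta (\partial_t \eta) u^2,
\end{equation*}
the initial trace vanishing by $\eta(-4r^2,\cdot) \equiv 0$. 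For the spatial term, expanding $\nabla(\eta^2 u) = \eta^2 \nabla u + 2\eta u \nabla \eta$ and applying uniform ellipticity~\eqref{e.unifellip-1} followed by Young's inequality gives the coercive bound
\begin{equation*}
\int \nabla(\eta^2 u) \cdot \a \nabla u \geq \tfrac{1}{2\Lambda} \int \eta^2 |\nabla u|^2 - C \int |\nabla \eta|^2 u^2.
\end{equation*}

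For the right-hand side, I would estimate by duality
\begin{equation*}
\Bigl| \int_{-4r^2}^s \langle u^*, \eta^2 u \rangle \, dt \Bigr| \le \|u^*\|_{L^2(I_{2r}; H^{-1}(B_{2r}))} \, \|\eta^2 u\|_{L^2(I_{2r}; H^1_0(B_{2r}))},
\end{equation*}
with $\|\eta^2 u\|_{L^2(I; H^1_0)}^2 \le C \|\eta \nabla u\|_{L^2}^2 + C \|u\nabla \eta\|_{L^2}^2$ obtained by expanding the gradient and using $\eta \le 1$, followed by a further Young's inequality to absorb the $\|\eta \nabla u\|_{L^2}^2$ piece into the coercive term on the left. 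Inserting the pointwise bounds on $\eta, \nabla \eta, \partial_t \eta$ and rearranging gives, for every $s \in I_{2r}$,
\begin{equation*}
\int_{B_{2r}} (\eta u)^2(s, \cdot) + \int_{Q_r} |\nabla u|^2 \le C r^{-2} \int_{Q_{2r}} u^2 + C \|u^*\|_{L^2(I_{2r}; H^{-1}(B_{2r}))}^2.
\end{equation*}
Estimate~\eqref{e.cacciopp.this} then follows immediately (taking $s=0$). For~\eqref{e.Cacciopp.thetime}, taking the supremum over $s \in I_r$ on the left and, if needed, invoking~\eqref{e.cacciopp.this} on a slightly enlarged cylinder to trade $r^{-1}\|u\|_{L^2(Q_{2r})}$ for $\|\nabla u\|_{L^2(Q_{2r})} + \|u^*\|_{L^2(I;H^{-1})}$ yields the result.

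The one delicate point is the rigorous justification of the time integration by parts when $\partial_t u$ belongs only to $L^2(I_{2r}; H^{-1}(B_{2r}))$: one needs that $t \mapsto \|(\eta u)(t, \cdot)\|_{L^2(B_{2r})}^2$ is absolutely continuous with the expected derivative, which follows by an approximation argument analogous to Lemma~\ref{l.continuity} applied to $\eta u$ (whose spatial trace on $\partial B_{2r}$ vanishes by the support condition on $\eta$). Apart from this technicality, all remaining steps are routine applications of Cauchy--Schwarz and Young's inequalities.
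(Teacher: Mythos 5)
Your derivation of \eqref{e.cacciopp.this} is correct and is essentially the paper's own argument: the same space--time cutoff, the same test function $\eta^2 u$, coercivity of $\a$ plus Young's inequality for the elliptic term, duality for $u^*$, and the time integration by parts that you rightly flag as the only delicate point (the paper treats it the same way, implicitly via the continuity statement of Lemma~\ref{l.continuity}).

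The gap is in the passage to \eqref{e.Cacciopp.thetime}. Your energy inequality yields $\sup_{s}\|(\eta u)(s,\cdot)\|_{L^2(B_{2r})}^2 \le C r^{-2}\|u\|_{L^2(Q_{2r})}^2 + C\|u^*\|_{L^2(I_{2r};H^{-1}(B_{2r}))}^2$, i.e.\ the sup-in-time bound with $r^{-1}\|u\|_{L^2(Q_{2r})}$ on the right, and your proposed repair --- invoking \eqref{e.cacciopp.this} on a slightly larger cylinder to ``trade'' $r^{-1}\|u\|_{L^2(Q_{2r})}$ for $\|\nabla u\|_{L^2(Q_{2r})}+\|u^*\|$ --- goes in the wrong direction: Caccioppoli controls the gradient by $r^{-1}$ times the function, never conversely, and the converse would be a reverse Poincar\'e inequality for $u$ itself, which fails already for nonzero constant solutions ($u\equiv c$, $u^*=0$). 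The paper obtains the $\|\nabla u\|_{L^2(Q_{2r})}$ term by estimating the elliptic term differently: it tests with $\eta^2u\,\indc_{\{t<s\}}$ and uses only the upper bound on $\a$ (Cauchy--Schwarz and Young) to get
\begin{equation*}
\left|\int_{Q_{2r}} \nabla(\eta^2u)\cdot\a\nabla u\,\indc_{\{t<s\}}\right|
\le C\|\nabla u\|_{L^2(Q_{2r})}^2 + \tfrac1{16}\,r^{-2}\int_{Q_{2r}}\eta^2u^2
\le C\|\nabla u\|_{L^2(Q_{2r})}^2 + \tfrac14\sup_{t\in I_{2r}}\int_{B_{2r}}(\eta^2u^2)(t,\cdot),
\end{equation*}
and the last term is absorbed into the left side after taking the supremum over $s$; that absorption device is the missing idea in your write-up. (You may take some comfort, though: even with it, the contribution $\int\eta|\partial_t\eta|\,u^2\le Cr^{-2}\int_{Q_{2r}}u^2$ from the time cutoff remains, and \eqref{e.Cacciopp.thetime} as literally stated fails for constants, so the honest conclusion carries an extra $Cr^{-1}\|u\|_{L^2(Q_{2r})}$ term --- exactly what your argument produces. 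In the paper's application, Lemma~\ref{l.reverse.Holder}, the estimate is applied to a function with weighted spatial mean zero, where the Poincar\'e inequality converts that term into a gradient term, so nothing downstream is affected.)
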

\begin{proof}
We take $\eta_r \in C^\infty_c(Q_{2r})$ to be a test function satisfying 
\begin{equation*} \label{}
0\leq \eta \leq 1, \quad
\eta \equiv 1 \ \mbox{on} \ Q_r,  \quad
\left| \partial_t \eta \right| + \left| \nabla \eta \right|^2 \leq Cr^{-2}. 
\end{equation*}
We test the weak formulation 
\begin{equation*} \label{}
\forall \phi \in L^2(I_{2r};H^{1}_0(B_{2r})), \quad \int_{Q_{2r}} \phi \left( u^*-\partial_tu \right) 
= \int_{Q_{2r}}\nabla \phi\cdot \a\nabla u 
\end{equation*}
with the function $\phi := \eta^2_r u \in L^2(I_{2r};H^{1}_0(B_{2r}))$.
We estimate the right side from below by 
\begin{align*} \label{}
\int_{Q_{2r}}\nabla \phi\cdot \a \nabla u 
& 
\geq \frac{1}{\Lambda} \int_{Q_{2r}} \eta_r^2 \left| \nabla u \right|^2 -  C \int_{Q_{2r}} \eta_r \left| \nabla \eta_r \right| |u| \left| \nabla u \right|
\\ &
\geq \frac{1}{2\Lambda} \int_{Q_{2r}} \eta_r^2 \left| \nabla u \right|^2 - C\int_{Q_{2r}} \left| \nabla \eta_r \right|^2 |u|^2 
\\ & 
\geq  \frac{1}{2\Lambda} \int_{Q_{2r}} \eta_r^2 \left| \nabla u \right|^2 - Cr^{-2} \int_{Q_{2r}}  |u|^2 
\end{align*}
and the left side from above by 
\begin{align*}
\int_{Q_{2r}} \eta^2_r u \left( u^*-\partial_tu \right) 
&
\leq - \int_{Q_{2r}} \partial_t \left( \frac12 \eta^2_r u^2 \right) + \int_{Q_{2r}} \eta_r \left| \partial_t\eta_r \right| u^2 
\\ & \quad
+ \int_{-4r^2}^0 \left\| (\eta^2_r u)(t,\cdot) \right\|_{H^1(B_{2r})} \left\| u^*(t,\cdot) \right\|_{H^{-1}(B_{2r})} \,dt 
\\ & 
\leq - \frac12 \int_{B_{2r}} \eta_r^2(0,x) u^2(0,x)\,dx + Cr^{-2} \int_{Q_{2r}} u^2 
\\ & \quad
+C  \left\| \eta^2_r u \right\|_{L^2\left(I_{2r};H^1(B_{2r})\right)} \left\| u^* \right\|_{L^2\left(I_{2r};H^{-1}(B_{2r})\right)}.
\end{align*}
Using that 
\begin{equation*} \label{}
\left\| \eta^2_r u \right\|_{L^2\left(I_{2r};H^1(B_{2r})\right)}
\leq Cr^{-1} \left\| u \right\|_{L^2(I_{2r}\times B_{2r})} + C \left\| \eta_r \nabla u \right\|_{L^2(I_{2r}\times B_{2r})},
\end{equation*}
we get 
\begin{multline*} \label{}
C \left\| \eta^2_r u \right\|_{L^2\left(I_{2r};H^1(B_{2r})\right)} \left\| u^* \right\|_{L^2\left(I_{2r};H^{-1}(B_{2r})\right)}
 \\
 \leq r^{-2} \left\| u \right\|_{L^2(I_{2r}\times B_{2r})}^2 + \frac1{4\Lambda} \left\| \eta_r \nabla u \right\|_{L^2(I_{2r}\times B_{2r})}^2
 + C  \left\| u^* \right\|_{L^2\left(I_{2r};H^{-1}(B_{2r})\right)}^2.
\end{multline*}
Combining the above, we get that 
\begin{equation*} \label{}
\frac12 \int_{B_{2r}} \eta_r^2(0,x) u^2(0,x)\,dx +  \frac1{4\Lambda} \int_{Q_{2r}} \eta_r^2 \left| \nabla u \right|^2 
\leq
Cr^{-2} \int_{Q_{2r}}  |u|^2 +  C \left\| u^* \right\|_{L^2\left(I_{2r};H^{-1}(B_{2r})\right)}^2.
\end{equation*}
This yields~\eqref{e.cacciopp.this}.

\smallskip

By repeating the above computation, using instead the test function
$\phi := \eta^2_r u \indc_{\{ t < s\}}$ for fixed $s\in I_{2r}$, and estimating the right side of the weak formulation from below differently, namely
\begin{align*} \label{}
\int_{Q_{2r}}\nabla \phi\cdot \a \nabla u
& 
\geq -C \left\| \eta_r \nabla u \right\|_{L^2(Q_{2r})}^2 -C \left\| \nabla \eta_r \nabla u \right\|_{L^2(Q_{2r})}  \left\| u  \eta_r \right\|_{L^2(Q_{2r})} 
\\ & 
\geq -C  \left\| \nabla u \right\|_{L^2(Q_{2r})}^2 - \frac1{16}r^{-2}  \int_{Q_{2r}} \eta_r^2 u^2
\\ & 
\geq -C  \left\|  \nabla u \right\|_{L^2(Q_{2r})}^2 - \frac14 \sup_{t\in I_{2r}} \int_{B_{2r}} \eta_r^2(t,x) u^2(t,x)\,dx,
\end{align*}
we get the bound 
\begin{multline*} \label{}
\frac12 \int_{B_{2r}} \eta_r^2(s,x) u^2(s,x)\,dx 
\\
\leq C\left\|  \nabla u \right\|_{L^2(Q_{2r})}^2 +  \frac14 \sup_{t\in I_{2r}}  \int_{B_{2r}} \eta_r^2(t,x) u^2(t,x)\,dx + C \left\| u^* \right\|_{L^2\left(I_{2r};H^{-1}(B_{2r})\right)}^2.
\end{multline*}
Taking the supremum over $s\in I_{2r}$ and rearranging, we get~\eqref{e.Cacciopp.thetime}. 
\end{proof}

In the following statement, what is important is that $q<2$. It is convenient to use the Sobolev exponent~$q:=2_*$, although the choice $q=1$ in $d=2$ causes technical problems so in that case we just take $q \in \left(\frac54,\tfrac74\right)$.

\begin{lemma}[Reverse H\"older inequality]
\label{l.reverse.Holder}
Suppose that $u\in H^1_\pa(Q_{4r})$ and $u^* \in L^2\left(I_{4r};H^{-1}(B_{4r})\right)$ satisfy
\begin{equation*} \label{}
\partial_t u - \nabla \cdot \left( \a(x) \nabla u \right) = u^* \quad \mbox{in} \ Q_{4r}. 
\end{equation*}
Denote $q:=2_* = 2d/(2+d)$ if~$d>2$ or let~$q$ be any element of $\left(\frac54,\tfrac74\right)$ if~$d=2$. 
Then there exists $C(d,\Lambda)<\infty$ such that, for every $\alpha>0$,
\begin{equation} 
\label{e.reverse.Holder}
\left\| \nabla u \right\|_{\underline{L}^2(Q_r)}^2
\leq \frac{C}{\alpha} \left\| \nabla u \right\|_{\underline{L}^q(Q_{4r})}^2 + \alpha \left\|  \nabla u \right\|_{\underline{L}^2(Q_{4r})}^{2}
+ C \left\| u^* \right\|_{\underline{L}^2\left(I_{4r};\underline{H}^{-1}(B_{4r})\right)}^2,
\end{equation}
\end{lemma}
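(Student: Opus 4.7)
The strategy is to combine a Caccioppoli-type computation with a parabolic Poincar\'e--Sobolev interpolation, following the classical approach of Giaquinta--Struwe~\cite{GS}. By parabolic scaling I may assume $r=1$, so all $\underline{L}^p$ and $L^p$ norms are equivalent up to constants depending only on $d$. The first step is to redo the Caccioppoli argument that proved Lemma~\ref{l.cacciopp.this} using the test function $\zeta^2(u-\bar u(t))$, where $\zeta$ is a smooth cutoff with $\zeta\equiv 1$ on $Q_1$, $\supp\zeta\subset Q_{2}$, and $|\partial_t\zeta|+|\nabla\zeta|^2\leq C$, and $\bar u(t)$ is a time-dependent spatial average. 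The natural choice $(u(t,\cdot))_{B_{2}}$ leads to a spurious term $\int \zeta^2(u-\bar u)\,\partial_t\bar u\,dx\,dt$ that is hard to control; I plan to bypass this difficulty by taking the $\zeta^2$-weighted mean
\[
\bar u(t) := \frac{\int \zeta^2(t,x)\,u(t,x)\,dx}{\int \zeta^2(t,x)\,dx},
\]
which satisfies $\int \zeta^2(u-\bar u)\,dx \equiv 0$ for each $t$; this identity precisely kills the offending term in the integration by parts in time. The standard Caccioppoli computation then yields
\[
\|\nabla u\|_{L^2(Q_1)}^2 \leq C \|u-\bar u\|_{L^2(Q_2)}^2 + C \|u^*\|_{L^2(I_{2};H^{-1}(B_{2}))}^2,
\]
and inserting a time indicator into the test function produces the companion bound
\[
\sup_{t\in I_{2}} \|u(t,\cdot)-\bar u(t)\|_{L^2(B_{2})} \leq C \|\nabla u\|_{L^2(Q_4)} + C \|u^*\|_{L^2(I_4;H^{-1}(B_4))}.
\]

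The second step is the Poincar\'e--Sobolev interpolation that generates the $L^q$ norm on the right-hand side of~\eqref{e.reverse.Holder}. The exponent $q$ is chosen so that the Sobolev conjugate $q^{*}$ equals $2$, giving the slice-wise inequality $\|u(t,\cdot)-(u(t,\cdot))_{B_2}\|_{L^2(B_2)}^q \leq C \|\nabla u(t,\cdot)\|_{L^q(B_2)}^q$. The same bound holds with $\bar u(t)$ in place of $(u(t,\cdot))_{B_2}$ because $\zeta^2\equiv 1$ on $B_1$ gives $\int\zeta^2\geq c>0$, so $|\bar u(t)-(u(t,\cdot))_{B_2}|\leq C\|u(t,\cdot)-(u(t,\cdot))_{B_2}\|_{L^1(B_2)}$, which is again controlled by $C\|\nabla u(t,\cdot)\|_{L^q(B_2)}$. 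I will then use the elementary identity $\|v\|_{L^2}^2 = \|v\|_{L^2}^{2-q}\cdot\|v\|_{L^2}^q$ with $v=u(t,\cdot)-\bar u(t)$: the factor $\|v\|_{L^2}^q$ is bounded by the slice-wise Sobolev--Poincar\'e estimate, while $\|v\|_{L^2}^{2-q}$ is bounded by its supremum in $t$. Integrating over $t\in I_2$ yields
\[
\|u-\bar u\|_{L^2(Q_2)}^2 \leq C\Bigl(\sup_{t\in I_2}\|u(t,\cdot)-\bar u(t)\|_{L^2(B_2)}\Bigr)^{2-q}\|\nabla u\|_{L^q(Q_2)}^q.
\]

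To conclude, I substitute the sup-in-time bound from Step~1 into this interpolation and apply Young's inequality $A^{2-q}B^q \leq \epsilon A^2 + C\epsilon^{-(2-q)/q}B^2$ with $\epsilon$ proportional to $\alpha$, which separates the right-hand side into a small $\alpha\|\nabla u\|_{L^2(Q_4)}^2$ term, a large $C\alpha^{-1}\|\nabla u\|_{L^q(Q_4)}^2$ term, and lower-order $u^*$ contributions. Substituting back into the Caccioppoli of Step~1 and passing from $r=1$ to the general case via scaling produces~\eqref{e.reverse.Holder}. The main technical obstacle is the weighted-mean Caccioppoli in Step~1: one must verify that the nonstandard test function $\zeta^2(u-\bar u(t))$ is admissible in the weak formulation of the equation and that the boundary contributions from the time integration by parts vanish thanks to the support properties of $\zeta$ together with the defining identity $\int\zeta^2(u-\bar u)\,dx\equiv 0$.
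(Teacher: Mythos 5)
Your argument is correct in outline and follows the same Giaquinta--Struwe skeleton as the paper's proof: a Caccioppoli estimate, a slice-wise Sobolev--Poincar\'e bound with the exponent $q=2_*$ chosen so that $q^*=2$, a sup-in-time factor supplied by the Caccioppoli inequality, and absorption via Young's inequality. The genuine difference is how the time-dependent spatial average is handled. The paper subtracts the fixed-weight average $w(t)=\int_{B_r}\xi(y)u(t,y)\,dy$ with $\xi\in C^\infty_c(B_r)$, $\int\xi=1$, notes that $v=u-w$ solves the equation with the extra right-hand side $-\partial_t w$, applies the already-proved \lref{cacciopp.this} to $v$, and pays for this with the additional estimate $\|\partial_t w\|_{L^2(I_{4r};H^{-1}(B_{4r}))}\le C\bigl(\|\nabla u\|_{L^1(Q_{4r})}+\|u^*\|_{L^2(I_{4r};H^{-1}(B_{4r}))}\bigr)$, obtained by testing the equation against $\xi$. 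You instead eliminate the term $\int\zeta^2(u-\bar u)\,\partial_t\bar u$ identically by adapting the weight defining $\bar u$ to the cutoff, at the cost of redoing the Caccioppoli computation with the nonstandard test function $\zeta^2(u-\bar u)$; this buys a leaner argument with no estimate of $\partial_t\bar u$ at all, and your splitting $\|v\|_{L^2}^2=\|v\|_{L^2}^{2-q}\|v\|_{L^2}^{q}$ is a cleaner version of the paper's H\"older interpolation between $L^q$ and $L^{2^*}$ on slices.

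One concrete slip in the execution: defining $\bar u(t)$ with the full space-time cutoff, $\bar u(t)=\int\zeta^2(t,\cdot)u(t,\cdot)\,dx/\int\zeta^2(t,\cdot)\,dx$, is problematic because $\zeta(t,\cdot)$ vanishes (or is very small) for $t$ near the bottom of its support, so the denominator degenerates; your claim that ``$\zeta^2\equiv1$ on $B_1$ gives $\int\zeta^2\ge c>0$'' holds only on the upper portion of the time interval, whereas the interpolation in your Step~2 and the $|\partial_t\zeta|$- and $|\nabla\zeta|$-terms in Step~1 require the comparison between $\bar u(t)$ and the plain average uniformly on exactly the transition slices where the weight may degenerate. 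The standard repair is to take a product cutoff $\zeta(t,x)=\chi(t)\eta(x)$ and weight by the spatial factor only, $\bar u(t):=\int\eta^2u(t,\cdot)\,dx/\int\eta^2\,dx$ (or a fixed $\psi$ supported in the inner ball as in \lref{Poincare.psi}); the key cancellation $\int\zeta^2(u-\bar u)\,\partial_t\bar u\,dx=\chi^2(t)\,\partial_t\bar u(t)\int\eta^2(u-\bar u)\,dx=0$ persists, and all slice-wise Poincar\'e comparisons become uniform. With this repair, note also that your stated sup-in-time bound implicitly uses \lref{Poincare.psi} to convert the $\int|\partial_t(\zeta^2)|\,|u-\bar u|^2$ and cross terms into $\|\nabla u\|_{L^2(Q_4)}^2$ (this is legitimate precisely because $u-\bar u$ has weighted mean zero on slices, which is also how the paper applies its sup-in-time estimate to $v$), and the exponent $\alpha^{-(2-q)/q}$ you get from Young is at least as strong as the stated $\alpha^{-1}$ for $\alpha\le1$, the case $\alpha\ge1$ being trivial; so the remaining steps go through.
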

\begin{proof}
By subtracting a constant, we may suppose that $\left( u \right)_{Q_{2r}}=0$. Let $\xi\in C^\infty_c(B_r)$ with $\int_{B_r} \xi = 1$ and $\left| \nabla \xi \right| \leq Cr^{-1}$. Denote
\begin{equation*} \label{}
v(t,x) := u(t,x) - w(t), \quad w(t):= \int_{B_r} \xi(y)u(t,y)\,dy.
\end{equation*}
Then $v$ satisfies
\begin{equation*} \label{}
\partial_t v - \nabla \cdot\left( \a\nabla v \right) = u^* - \partial_t w.
\end{equation*}
Applying~\eqref{e.Cacciopp.thetime} to $v$, we find that
\begin{align*}
\int_{Q_{2r}} \left| v  \right|^2 
& 
\leq  \left(\sup_{s\in I_{2r}} \int_{B_{2r}} \left| v(s,x)  \right|^2 \,dx \right)^{\frac12} \int_{I_{2r}} \left( \int_{B_{2r}} \left| v(t,x)  \right|^2 \,dx\right)^{\frac12} \,dt
\\ & 
\leq C \left( \left\|  \nabla u \right\|_{L^2(Q_{4r})} + \left\| u^* - \partial_tw\right\|_{L^2\left(I_{4r};H^{-1}(B_{4r})\right)} \right) 
\int_{I_{2r}} \left( \int_{B_{2r}} \left| v(t,x)  \right|^2 \,dx\right)^{\frac12} \,dt.
\end{align*}
Denote by $q'$ the H\"older conjugate exponent to $q$ and notice that $q' = 2^*$ in $d>2$ and~$q'<\infty$ in $d=2$. Using the H\"older and Sobolev inequalities, we find that  
\begin{align*}
\lefteqn{ 
\int_{I_{2r}} \left( \int_{B_{2r}} \left| v(t,x)  \right|^2 \,dx\right)^{\frac12} \,dt
} \qquad & 
\\ & 
\leq \int_{I_{2r}} \left( \int_{B_{2r}} \left| v(t,x)\right|^q \,dx \right)^{\frac{1}{2q}} \left(\int_{B_{2r} } \left| v(t,x)\right|^{q'}\,dx \right)^{\frac{1}{2q'}}\,dt
\\ & 
\leq C r^{1 + d\left( \frac14-\frac1{2q}\right) } 
\int_{I_{2r}} 
\left( \int_{B_{2r}} \left| \nabla v(t,x)\right|^q \,dx \right)^{\frac{1}{2q}}  \left(\int_{B_{2r} } \left| \nabla v(t,x)\right|^{2}\,dx \right)^{\frac{1}{4}}\,dt
\\ &
\leq C r^{1 + d\left( \frac14-\frac1{2q}\right) }  
\left\| \nabla v \right\|_{L^q(Q_{2r})}^{\frac12} \left( \int_{I_{2r}} \left( \int_{B_{2r}} \left| \nabla v(t,x) \right|^2 \,dx \right)^{\frac{(2q)'}{4}} \,dt  \right)^{\frac{1}{(2q)'}}.
\end{align*}
As $(2q)'/4 \leq \frac12 < 1$, we can use H\"older's inequality in time and then~\eqref{e.cacciopp.this} and Lemma~\ref{l.Poincare.psi} to get 
\begin{align*}
\lefteqn{ 
\left( \int_{I_{2r}} \left( \int_{B_{2r}} \left| \nabla v(t,x) \right|^2 \,dx \right)^{\frac{(2q)'}{4}} \,dt  \right)^{\frac{2}{(2q)'}}
} \qquad & 
\\ &
\leq 
Cr^{\frac{2}{(2q)'} -\frac12} \left(    \int_{I_{2r}}  \int_{B_{2r}}  \left| \nabla v (t,x) \right|^2  \,dx \,dt  \right)^{\frac12}
\\ &
\leq Cr^{\frac{2}{(2q)'} -\frac12} \left( \left\|  \nabla u \right\|_{L^2(Q_{4r})} + \left\| u^* - \partial_tw\right\|_{L^2\left(I_{4r};H^{-1}(B_{4r})\right)} \right).
\end{align*}
Let $\kappa:=d\left(\frac14-\frac1{2q}\right) + \frac{1}{(2q)'} +\frac34$. Combining the above, we get
\begin{equation*} \label{}
\left\| v \right\|_{L^2(Q_{2r})}^2 
\leq Cr^{\kappa} 
 \left\| \nabla v \right\|_{L^q(Q_{2r})}^{\frac12} \left( \left\|  \nabla u \right\|_{L^2(Q_{4r})} + \left\| u^* - \partial_tw\right\|_{L^2\left(I_{4r};H^{-1}(B_{4r})\right)} \right)^{\frac32}.
\end{equation*}
Combining~\eqref{e.cacciopp.this} and the previous inequality, we obtain
\begin{multline*} \label{}
\left\| \nabla v \right\|_{L^2(Q_r)}^2
\leq Cr^{\kappa-2} 
  \left\| \nabla v \right\|_{L^q(Q_{2r})}^{\frac12} 
 \left( \left\|  \nabla v \right\|_{L^2(Q_{4r})} + \left\| u^* - \partial_tw\right\|_{L^2\left(I_{4r};H^{-1}(B_{4r})\right)} \right)^{\frac32}
\\ 
+ C \left\| u^* - \partial_tw\right\|_{L^2\left(I_{4r};H^{-1}(B_{4r})\right)}^2.
\end{multline*}
Normalizing the norms, we find that this is the same as 
\begin{multline*} 
\label{}
\left\| \nabla v \right\|_{\underline{L}^2(Q_r)}^2
\leq
  \left\| \nabla v \right\|_{\underline{L}^q(Q_{2r})}^{\frac12} 
 \left( \left\|  \nabla v \right\|_{\underline{L}^2(Q_{4r})} + \left\| u^* - \partial_tw\right\|_{\underline{L}^2\left(I_{4r};\underline{H}^{-1}(B_{4r})\right)} \right)^{\frac32}
\\
+
C\left\| u^* - \partial_tw\right\|_{\underline{L}^2\left(I_{4r};\underline{H}^{-1}(B_{4r})\right)}^2.
\end{multline*}
Applying Young's inequality, we obtain, for every $\alpha >0$, 
\begin{equation} 
\label{e.paused}
\left\| \nabla v \right\|_{\underline{L}^2(Q_r)}^2
\leq \frac{C}{\alpha} \left\| \nabla v \right\|_{\underline{L}^q(Q_{4r})}^2 + \alpha \left\|  \nabla v \right\|_{\underline{L}^2(Q_{4r})}^{2}
+ C \left\| u^* - \partial_tw\right\|_{\underline{L}^2\left(I_{4r};\underline{H}^{-1}(B_{4r})\right)}^2.
\end{equation}
It is not difficult to show, by using the equation and the definition of $w$, that 
\begin{equation*} \label{}
\left\|  \partial_tw\right\|_{\underline{L}^2\left(I_{4r};\underline{H}^{-1}(B_{4r})\right)}
\leq
C \left( 
\left\| \nabla u \right\|_{\underline{L}^1(Q_{4r})}
+
\left\| u^* \right\|_{\underline{L}^2\left(I_{4r};\underline{H}^{-1}(B_{4r})\right)} \right).
\end{equation*}
Combining the previous two displays yields 
\begin{equation*} \label{}
\left\| \nabla v \right\|_{\underline{L}^2(Q_r)}^2
\leq \frac{C}{\alpha} \left\| \nabla v \right\|_{\underline{L}^q(Q_{4r})}^2 + \alpha \left\|  \nabla v \right\|_{\underline{L}^2(Q_{4r})}^{2}
+ C \left\| u^* \right\|_{\underline{L}^2\left(I_{4r};\underline{H}^{-1}(B_{4r})\right)}^2.
\end{equation*}
Since $\nabla v = \nabla u$, this completes the argument. 
\end{proof}

To complete the proof of the interior Meyers estimate, we need the following version of Gehring's lemma for parabolic cylinders which states that a reverse H\"older inequality implies an improvement of integrability. This result is standard and so we do not give the proof here. See for instance~\cite[Proposition 5.1]{GM}, where the statement is given in cubes rather than parabolic cylinders (which makes no difference in its proof).

\begin{lemma}[Gehring-type lemma]
\label{l.parab.Gehring}
Assume that $R>0$, $q>1$, $F \in L^1(Q_{4R})$, $G\in L^{q}(Q_{4R})$, $m\in (0,1)$ and $A\in [1,\infty)$. Suppose that, for every $(t,x) \in Q_{R}$ and $r\in \left(0,\tfrac 12R \right]$,
\begin{equation*} \label{}
\left\| F \right\|_{L^1(Q_r(t,x))} \leq A \left(  \left\| F^m \right\|_{L^1(Q_{4r}(t,x))}^{\frac1m} + \left\| G \right\|_{L^1(Q_{4r}(t,x))}   \right) + \ep \left\| F \right\|_{L^1(Q_{4r} (t,x))}.
\end{equation*}
Then there exists $\ep_0(d,m)\in \left(0,\tfrac 12 \right]$ such that $\ep \leq \ep_0$ implies the existence of an exponent~$\delta(\ep,A,m,q,d)\in\left(0,\tfrac12\right]$ and~$C(\ep,A,m,d) <\infty$ such that $F\in L^{1+\delta}(Q_R)$ and 
\begin{equation*} \label{}
\left\| F \right\|_{L^{1+\delta}(Q_R)} \leq C \left( \left\| F \right\|_{L^{1}(Q_{4R}) }  + \left\| G \right\|_{L^{1+\delta} (Q_{4R}) } \right). 
\end{equation*}
\end{lemma}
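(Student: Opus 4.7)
The plan is to establish this as a parabolic version of the classical self-improving property for reverse H\"older inequalities, via a Calder\'on--Zygmund stopping-time decomposition followed by a good-$\lambda$ / integration argument. Since parabolic cylinders form a doubling family with respect to the parabolic (pseudo-)metric $\max(|x-y|, |t-s|^{1/2})$, the essential tools (Vitali covering, noncentered maximal function, CZ decomposition) transfer with only cosmetic changes from the Euclidean setting.

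First I would set up the parabolic maximal function
\begin{equation*}
(M_p F)(z) := \sup_{\substack{Q_r(z') \ni z \\ Q_{4r}(z') \subset Q_{4R}}} \fint_{Q_r(z')} F,
\end{equation*}
and record its weak-$(1,1)$ boundedness via a standard Vitali covering argument for parabolic cylinders. Fix $\lambda_0 := A_0 \fint_{Q_{4R}} F$ with $A_0$ large enough that the level sets $\{M_p F > \lambda\}$ are strict subsets of $Q_R$ for all $\lambda \geq \lambda_0$. For such $\lambda$, a CZ stopping-time procedure produces a countable disjoint family $\{Q_{r_i}(z_i)\}$ with $z_i \in Q_R$, $r_i \leq R/2$, covering (up to null sets) $\{M_p F > \lambda\} \cap Q_R$, and satisfying the standard dichotomy
\begin{equation*}
\fint_{Q_{r_i}(z_i)} F > \lambda, \qquad \fint_{Q_{4r_i}(z_i)} F \leq C_0 \lambda,
\end{equation*}
for a dimensional constant $C_0$. (Here one uses the ``factor 4'' built into the hypothesis, which provides the room needed for the stopping argument.)

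Next I would apply the hypothesis on each selected cylinder $Q_{r_i}(z_i)$ and split the $F^m$ term at height $\mu \lambda$ for a small parameter $\mu \in (0,1)$ to be chosen. Using $F^m = F^m \mathbf{1}_{F \leq \mu \lambda} + F^m \mathbf{1}_{F > \mu \lambda}$ together with the elementary bound $F^{m} \leq (\mu \lambda)^{m-1} F$ on the large set, one arrives, after rearrangement and summing over $i$, at a good-$\lambda$ inequality of the form
\begin{equation*}
\int_{\{F > \lambda\} \cap Q_R} F \leq C \bigl(A \mu^{1-m} + A C_0 \mu + \epsilon C_0\bigr) \, \lambda \, \bigl|\{F > \mu \lambda\} \cap Q_{4R}\bigr| + C \int_{\{G > \mu\lambda\} \cap Q_{4R}} G,
\end{equation*}
valid for every $\lambda \geq \lambda_0$, with $C$ depending only on $d, A, m$.

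Finally, multiplying by $\lambda^{\delta - 1}$, integrating in $\lambda$ over $(\lambda_0, \infty)$, applying Fubini, and using the layer-cake identity $\int F^{1+\delta} = (1+\delta) \int_0^\infty \lambda^{\delta} |\{F > \lambda\}| \, d\lambda /\lambda \cdot (\text{const})$, one obtains
\begin{equation*}
\int_{Q_R} F^{1+\delta} \leq C_1 \bigl(A \mu^{1-m} + A C_0 \mu + \epsilon C_0\bigr) \mu^{-\delta} \int_{Q_{4R}} F^{1+\delta} + C \int_{Q_{4R}} G^{1+\delta} + C \lambda_0^{\delta} \int_{Q_{4R}} F.
\end{equation*}
The point is that the prefactor in front of the $\int F^{1+\delta}$ term on the right can be driven below $1/2$: first choose $\mu$ small so that $AC_0 \mu$ is small; the constraint $\epsilon \leq \epsilon_0$ in the hypothesis controls $\epsilon C_0$; then the remaining $A\mu^{1-m}\mu^{-\delta}$ is handled by taking $\delta$ sufficiently small (since $1-m > 0$). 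A standard truncation / iteration step on the level sets $\{F > N\}$ (to ensure the left-hand side is a priori finite before absorbing) completes the argument.

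The main obstacle, and the reason the result is genuinely non-trivial, is the choice of parameters in the good-$\lambda$ inequality: one must exploit the gap $1-m > 0$ between the reverse-H\"older exponent $m$ and the integrability exponent $1$ to absorb the self-coupling term, which is precisely what forces $\delta$ to be small and dependent on $m$. The restriction $\epsilon \leq \epsilon_0(d,m)$ appears for exactly the same reason: it is needed so that $\epsilon C_0$ fits inside the small budget available on the right-hand side.
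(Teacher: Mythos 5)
The paper does not actually prove this lemma---it cites \cite[Proposition 5.1]{GM} and notes the argument for cubes transfers to parabolic cylinders---so the comparison is with that standard proof. Your skeleton (parabolic Vitali/maximal function, Calder\'on--Zygmund stopping time giving disjoint cylinders with $\fint_{Q_{r_i}}F>\lambda$ and $\fint_{Q_{4r_i}}F\le C_0\lambda$, applying the reverse H\"older hypothesis on each, then multiplying by $\lambda^{\delta-1}$ and integrating) is indeed that route, and the one-sided-in-time cylinders are only a cosmetic issue (use two-sided dilates in the covering step). However, the pivotal absorption step fails as written. Your good-$\lambda$ inequality with prefactor $C(A\mu^{1-m}+AC_0\mu+\epsilon C_0)$ that ``can be driven below $1/2$'' by choosing $\mu$ small is not obtainable and is in fact false: the bound $F^m\le(\mu\lambda)^{m-1}F$ on $\{F>\mu\lambda\}$ carries the \emph{large} constant $(\mu\lambda)^{m-1}$ (since $m-1<0$), and after taking the outer power $1/m$ one gets a factor of order $(C_0/\mu)^{(1-m)/m}$ multiplying $\fint_{Q_{4r_i}}F\,\1_{\{F>\mu\lambda\}}$, not a small factor $A\mu^{1-m}$ multiplying $\lambda|\{F>\mu\lambda\}|$. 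Concretely, taking $G\equiv 0$, $\epsilon=0$ and $F\equiv c$ constant (which satisfies the hypothesis), at level $\lambda=c/2$ your inequality reads $c|Q_R|\le \kappa\,(c/2)\,|Q_{4R}|$, so the prefactor $\kappa$ can never be pushed below $2\cdot 4^{-(d+2)}$; no choice of $\mu,\epsilon$ makes it small. In the actual Gehring argument the level-set estimate one derives has a \emph{large} constant, of the form
\begin{equation*}
\int_{\{F>\lambda\}\cap Q_R}F\le C\,\lambda^{1-m}\int_{\{F>\lambda\}\cap Q_{4R}}F^m+C\int_{\{G>\lambda\}\cap Q_{4R}}G ,
\end{equation*}
and the smallness needed for absorption comes instead from $\delta$: after multiplying by $\lambda^{\delta-1}$ and integrating, the left side carries a factor $1/\delta$ while the dangerous term carries only $C/(1+\delta-m)\le C/(1-m)$, so choosing $\delta$ small (depending on $m$) closes the loop; $\epsilon$ enters through the term $\epsilon\,C_0\,(\text{same quantity})$, which must be absorbed before this and is what fixes $\epsilon_0(d,m)$. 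You correctly identify that the gap $1-m>0$ is the engine, but your write-up routes it through $\mu$, where it does not work.

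A second gap: even granting your integrated inequality, you cannot absorb $\tfrac12\int_{Q_{4R}}F^{1+\delta}$ into $\int_{Q_R}F^{1+\delta}$, because the right-hand integral is over the strictly larger cylinder. The standard fix is either to arrange the Calder\'on--Zygmund decomposition so that both sides of the level-set inequality live over the same cylinder, or to prove the estimate for all intermediate pairs $Q_\rho\subset Q_{\rho'}$, $R\le\rho<\rho'\le 4R$, with constants depending on $\rho'-\rho$, and then invoke a hole-filling/iteration lemma in the radius. Your closing remark about truncating at levels $\{F>N\}$ addresses a priori finiteness of the $L^{1+\delta}$ quantity (which is indeed needed), but not this domain mismatch; without one of these devices the final absorption is not justified.
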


The statement of Proposition~\ref{p.interiormeyers} can now be obtained as a consequence of Lemmas~\ref{l.reverse.Holder} and~\ref{l.parab.Gehring} and a routine covering argument. Indeed, any element of $W^{-1,p}(U)$ can be represented as the divergence of an element of $L^p(U;\Rd)$ by the Riesz representation theorem (see~\cite[Theorem 3.9]{AF}).
This allows us to obtain the estimate~\eqref{e.interiorMeyers}. The statement that $\partial_t u$ belongs to $L^{p\wedge (2+\delta)}\left(I_r;W^{-1,p\wedge (2+\delta)}(B_r)\right)$, with an appropriate estimate, follows from~\eqref{e.interiorMeyers} and the equation~\eqref{e.appendixBpde}. 

\smallskip

We next give a sketch of the proof of Proposition~\ref{p.globalmeyers}, which requires us to first revisit the proof of the Caccioppoli inequality to obtain a global version.

\begin{lemma}[global Caccioppoli inequality]
\label{l.cacciopp.this.global}
Let $U\subseteq\Rd$ be a bounded Lipschitz domain and denote $V := I_2 \times U$. 
Suppose that $f\in H^1_\pa(V)$, $u\in f+ H^1_{\pa,\sqcup}(V)$ and $u^* \in L^2(I_2;H^{-1}(U))$ satisfy
\begin{equation*} \label{}
\partial_t u - \nabla \cdot \left( \a(x) \nabla u  \right) = u^* \quad \mbox{in} \ V.
\end{equation*}
Then there exists $C(V,d,\Lambda)<\infty$ such that, for every $r\in (0,1)$ and $(t,x) \in I_1\times U$,
\begin{multline} 
\label{e.cacciopp.this.global}
 \left\| \nabla (u-f) \right\|_{L^2\left(Q_r(t,x) \cap V\right)}  
\leq 
Cr^{-1}\left\|  u - f \right\|_{L^2\left(Q_{2r}(t,x) \cap V\right)} 
\\
+ C\left\| \nabla f \right\|_{L^2\left(Q_{2r}(t,x) \cap V\right)}  
+  C \left\| u^* \right\|_{L^2 \left( (t+I_{2r})\cap I_2;H^{-1}(B_{2r}(x)\cap U) \right) }
\end{multline}
and
\begin{multline} 
\label{e.Cacciopp.thetime.global}
\sup_{s\in (t+ I_{2r})\cap I_2} \left\|  (u-f)(s,\cdot) \right\|_{L^2(B_r(x) \cap U)}  
\leq 
C\left\|  \nabla (u-f) \right\|_{L^2\left(Q_{2r}(t,x) \cap V\right)} 
\\
+ C\left\| \nabla f \right\|_{L^2\left(Q_{2r}(t,x) \cap V\right)}  
+ C \left\| u^* \right\|_{L^2\left((t+I_{2r})\cap I_2;H^{-1}(B_{2r}(x)\cap U)\right)}.
\end{multline}
\end{lemma}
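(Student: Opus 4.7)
The plan is to adapt the proof of Lemma~\ref{l.cacciopp.this} to the boundary setting by working with the difference $v := u - f$ rather than $u$ itself. Since $u$ and $f$ agree on $\partial_\sqcup V$, we have $v \in H^1_{\pa,\sqcup}(V)$; in particular, $v(s,\cdot) \in H^1_0(U)$ for a.e.\ $s$ and $v$ vanishes at the initial time slice. The function $v$ solves, weakly on $V$,
\begin{equation*}
\partial_t v - \nabla \cdot (\a \nabla v) = u^* - \partial_t f + \nabla \cdot (\a \nabla f).
\end{equation*}
Since $(t,x) \in I_1\times U$ and $r \in (0,1)$, the time interval $(t-4r^2,t]$ sits well inside $I_2$, so only the lateral part of $\partial V$ is relevant.

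Next I would select a smooth cutoff $\eta_r$ of product form $\eta_r(s,y)=\chi_1(s)\chi_2(y)$ with $\eta_r \equiv 1$ on $Q_r(t,x)$, $\eta_r \equiv 0$ outside $Q_{2r}(t,x)$, $0 \le \eta_r \le 1$ and $|\partial_t \eta_r| + |\nabla \eta_r|^2 \le Cr^{-2}$, using the same convention at the top time slice as in Lemma~\ref{l.cacciopp.this}. The crucial observation is that $\eta_r$ need \emph{not} vanish on $\partial U \cap B_{2r}(x)$: because $v(s,\cdot)\in H^1_0(U)$, the product $\phi := \eta_r^2 v$ already lies in $L^2(I_2;H^1_0(U))$ and so is an admissible test function. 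Testing the equation for $v$ with $\phi$ and following the steps of Lemma~\ref{l.cacciopp.this}, I would (i) use uniform ellipticity to bound $\int \eta_r^2 \nabla v \cdot \a \nabla v$ from below by $\Lambda^{-1}\int \eta_r^2|\nabla v|^2$ and absorb the cross term $\int 2\eta_r v \nabla \eta_r \cdot \a \nabla v$ via Young's inequality, producing a $Cr^{-2}\|v\|_{L^2(Q_{2r}\cap V)}^2$ contribution; (ii) integrate by parts in time to write $\langle \partial_t v,\eta_r^2 v\rangle = \tfrac{1}{2}\int_U \eta_r^2(t,\cdot) v^2(t,\cdot) - \int \eta_r \partial_t \eta_r v^2$, the initial boundary term vanishing because $v|_{\{I_-\}}=0$, and the top-time trace furnishing the non-negative quantity needed for~\eqref{e.Cacciopp.thetime.global}; (iii) bound $|\int \nabla(\eta_r^2 v)\cdot \a \nabla f| \le \|\eta_r\nabla v\|_{L^2}\|\eta_r\nabla f\|_{L^2}+Cr^{-1}\|v\|_{L^2}\|\eta_r\nabla f\|_{L^2}$ and absorb the former via Young's inequality, leaving the clean $\|\nabla f\|_{L^2(Q_{2r}\cap V)}^2$ term; and (iv) handle $\langle u^*, \eta_r^2 v\rangle$ by duality against $\|\eta_r^2 v\|_{L^2(I;H^1_0(U))}$, bounded by $\|\eta_r\nabla v\|_{L^2}+Cr^{-1}\|v\|_{L^2}$. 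The $\partial_t f$ contribution is rewritten via integration by parts in time as $\langle \partial_t f,\eta_r^2 v\rangle = -\int 2\eta_r\partial_t\eta_r fv - \langle \eta_r^2 \partial_t v, f\rangle$; combining this with the energy identity for $v$ and carefully regrouping moves the time-derivative pieces onto the good side of the estimate, so that only the spatial gradient $\nabla f$ survives on the right. The sup-in-time bound~\eqref{e.Cacciopp.thetime.global} is then obtained by repeating the calculation with test function $\eta_r^2 v\,\mathbf 1_{\{s'<s\}}$ for each fixed $s\in (t+I_{2r})\cap I_2$ and taking the supremum over $s$, exactly as in the second half of the proof of Lemma~\ref{l.cacciopp.this}.

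The main obstacle, compared to the interior case, is bookkeeping the contributions from $f$: the non-homogeneous boundary data injects a $\partial_t f$ piece into the source, and this must be absorbed into the spatial gradient $\nabla f$ via integration by parts in time together with a careful balance against the energy term $\int \eta_r^2 |\nabla v|^2$ and the boundary term at the top time slice. The admissibility of cutoffs that do not vanish on $\partial U$---which holds here precisely because $v$ does---is what makes the argument go through without any additional regularity assumption on $\partial U$ beyond Lipschitz, and without introducing correcting terms supported in a neighborhood of the lateral boundary.
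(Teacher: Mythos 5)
Your starting point coincides with the paper's: pass to $v := u-f \in H^1_{\pa,\sqcup}(V)$, note that $\phi=\eta_r^2 v$ is an admissible test function even though $\eta_r$ does not vanish on $\partial U$, and rerun the interior argument of Lemma~\ref{l.cacciopp.this}. Indeed the paper's proof is exactly this reduction: replace $u$ by $u-f$ and $u^*$ by $\tilde u^* := u^* - (\partial_t-\nabla\cdot\a\nabla)f$, then repeat the interior proof. Your steps (i), (iii), (iv) and the sup-in-time argument are fine and match that. The gap is in your treatment of the $\partial_t f$ part of the source. The integration by parts you propose produces the pairing $\langle \eta_r^2\,\partial_t v, f\rangle$, which is not even defined: $\partial_t v\in L^2(I_2;H^{-1}(U))$ acts on $L^2(I_2;H^1_0(U))$, and $\eta_r^2 f$ does not vanish on $\partial U$ (neither factor does). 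Even proceeding formally, bounding that term requires $\|\partial_t v\|_{L^2(H^{-1})}$, which via the equation reintroduces $\partial_t f$, so the "regrouping" is circular; and the leftover pieces ($\int \eta_r\,\partial_t\eta_r\, f\, v$ and the top-slice trace, which you dropped) involve $\|f\|_{L^2}$, not $\|\nabla f\|_{L^2}$.

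More seriously, no rearrangement can trade $\partial_t f$ for $\nabla f$: take $\a$ to be the identity, $u^*=0$, and $f(t)=\ep\sin(t/\ep^2)$ depending on time only (phase chosen so $f$ vanishes at the initial time). Then $\nabla f\equiv 0$ and $|u-f|\le C\ep$ by the maximum principle, but the oscillating lateral data creates a boundary layer of width $\sim\ep$ along $\partial U$ in which $|\nabla u|\sim 1$ (compare the half-space thermal-wave solution $\ep\, e^{-y/(\sqrt2\ep)}\sin(t/\ep^2-y/(\sqrt2\ep))$), so for $x\in U$ at distance less than $r/2$ from $\partial U$ and $r$ of order one, $\|\nabla(u-f)\|_{L^2(Q_r(t,x)\cap V)}\sim \ep^{1/2}$ while every term you allow on the right is $O(\ep)$. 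The quantity that blows up is precisely $\|\partial_t f\|_{L^2(I_2;H^{-1}(U))}\sim\ep^{-1}$, which is exactly what the paper's substitution keeps on the right-hand side: what its one-line proof actually yields is \eqref{e.cacciopp.this.global} with $\|u^*-(\partial_t-\nabla\cdot\a\nabla)f\|_{L^2(H^{-1})}$ (equivalently, an additional $\|\partial_t f\|_{L^2(H^{-1})}$ term) in place of the last two terms, and the statement should be read in that way; this is harmless downstream, since in Proposition~\ref{p.globalmeyers} the full $W^{1,p}_\pa$ norm of $f$, which controls $\partial_t f$, appears anyway. So: keep the $f$-dependent contribution as an $H^{-1}$-type source, as the paper does, rather than attempting to convert $\partial_t f$ into $\nabla f$.
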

\begin{proof}
By replacing~$u$ by $\tilde{u} : = u-f$ and $u^*$ by $\tilde{u}^*:= u^* - \left( \partial_t   - \nabla \cdot \a \nabla \right)f$, we may assume without loss of generality that $f=0$. The lemma is then obtained by repeating the argument of Lemma~\ref{l.cacciopp.this} and making obvious adjustments to the notation. 
\end{proof}

Folllowing the proof of Lemma~\ref{l.reverse.Holder}, we obtain a global version of the reverse H\"older inequality. 

\begin{lemma}[Reverse H\"older inequality]
\label{l.reverse.Holder.global}
Let $U\subseteq\Rd$ be a bounded Lipschitz domain and denote $V := I_2 \times U$. 
Suppose that $f\in H^1_\pa(V)$, $u\in f+ H^1_{\pa,\sqcup}(V)$ and $u^* \in L^2(I_2;H^{-1}(U))$ satisfy
\begin{equation*} \label{}
\partial_t u - \nabla \cdot \left( \a(x) \nabla u  \right) = u^*  \quad \mbox{in} \ V.
\end{equation*}
Denote $q:=2_* = 2d/(2+d)$ if~$d>2$ or let~$q$ be any element of $\left(\frac54,\tfrac74\right)$ if~$d=2$. 
Then there exists $C(V,d,\Lambda)<\infty$ such that, for every $r\in (0,1)$, $(t,x) \in  I_{1} \times U$ and~$\alpha>0$,
\begin{multline*} \label{}
\left\| \nabla (u-f) \right\|_{\underline{L}^2(Q_r(t,x)\cap V)}^2
\\
\leq \frac{C}{\alpha} \left\| \nabla (u-f) \right\|_{\underline{L}^q(Q_{4r}(t,x)\cap V)}^2 
+ \alpha \left\|  \nabla (u-f) \right\|_{\underline{L}^2(Q_{4r}(t,x) \cap V)}^{2}
\\
+ \alpha \left\|  \nabla f \right\|_{\underline{L}^2(Q_{4r}(t,x) \cap V)}^{2}
+ C \left\| u^* \right\|_{\underline{L}^2\left(t+I_{4r}\cap I_2;\underline{H}^{-1}(B_{4r}(x)\cap U)\right)}^2.
\end{multline*}
\end{lemma}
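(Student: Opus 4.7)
The strategy is to adapt the proof of Lemma~\ref{l.reverse.Holder} to the boundary setting by working with $\bar v := u - f$, which belongs to $H^1_{\pa,\sqcup}(V)$ and hence vanishes on the parabolic boundary $\partial_\sqcup V$. I would split the analysis into two cases based on whether $Q_{4r}(t,x) \cap \partial_\sqcup V$ is empty or not, and in each case carry through the Caccioppoli-plus-Sobolev-interpolation scheme from the proof of Lemma~\ref{l.reverse.Holder}.

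In the \emph{interior case} $Q_{4r}(t,x) \subset V$, I would apply Lemma~\ref{l.reverse.Holder} directly to $u$ with source $u^*$ and then convert the result via $\nabla u = \nabla \bar v + \nabla f$, using the triangle inequality together with Jensen's inequality for normalized norms (valid since $q \le 2$, so $\|\nabla f\|_{\un L^q} \le \|\nabla f\|_{\un L^2}$). Any resulting $\|\nabla f\|_{\un L^2}^2$ contribution with coefficient $C/\alpha$ can then be absorbed into a term with coefficient $\alpha$ by rescaling the free parameter $\alpha \mapsto \alpha^2$. Alternatively, one can redo the proof of Lemma~\ref{l.reverse.Holder} starting from $\bar v$ directly so that $\nabla f$ enters through the global Caccioppoli inequality (Lemma~\ref{l.cacciopp.this.global}), where it is naturally paired with an $\alpha$ coefficient after a Young's inequality step.

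In the \emph{boundary case} $Q_{4r}(t,x) \cap \partial_\sqcup V \neq \emptyset$, the key observation is that $\bar v$ vanishes on $Q_{4r}(t,x) \cap \partial_\sqcup V$. Writing $Q_\rho^V := Q_\rho(t,x) \cap V$, I would apply the global Caccioppoli inequality to $u$ to obtain
\[
\|\nabla \bar v\|_{L^2(Q_r^V)}^2 \le C r^{-2} \|\bar v\|_{L^2(Q_{2r}^V)}^2 + C \|\nabla f\|_{L^2(Q_{2r}^V)}^2 + C \|u^*\|_{L^2(\un H^{-1})}^2,
\]
together with the analogous time-slice supremum bound from~\eqref{e.Cacciopp.thetime.global}. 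The remaining step mirrors the interpolation argument in the proof of Lemma~\ref{l.reverse.Holder}: bound $\|\bar v\|_{L^2(Q_{2r}^V)}^2$ by the product of $\sup_s \|\bar v(s,\cdot)\|_{L^2(B_{2r}(x) \cap U)}$ and $\int \|\bar v(s,\cdot)\|_{L^2(B_{2r}(x) \cap U)} \, ds$, then apply H\"older in space followed by a Sobolev-Friedrichs inequality on $B_{2r}(x) \cap U$ for $W^{1,q}$ functions vanishing on $B_{2r}(x) \cap \partial U$. In this boundary regime, no temporal mean needs to be subtracted: $\bar v(s, \cdot)$ already vanishes on a substantial portion of the spatial boundary of the half-ball $B_{2r}(x) \cap U$. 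A final Young's inequality distributes the resulting terms into the $\frac{C}{\alpha} \|\nabla \bar v\|_{\un L^q}^2$ piece on one hand and the $\alpha \|\nabla \bar v\|_{\un L^2}^2 + \alpha \|\nabla f\|_{\un L^2}^2 + C \|u^*\|^2$ pieces on the other.

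The main obstacle will be the Sobolev-Friedrichs inequality used in the boundary case, namely
\[
\|w\|_{L^{q'}(B_{2r}(x) \cap U)} \le C r \|\nabla w\|_{L^q(B_{2r}(x) \cap U)}
\]
for $w \in W^{1,q}(B_{2r}(x) \cap U)$ vanishing on $B_{2r}(x) \cap \partial U$, with the constant $C$ uniform in $x \in \bar U$ and in $r \in (0,r_0]$ for some scale $r_0$ depending only on the Lipschitz character of $\partial U$. This is a standard fact for Lipschitz domains but requires care: the uniformity in the position of the window $B_{2r}(x)$ along $\partial U$ is essential, and is obtained via a finite covering of $\partial U$ by balls in which $\partial U$ can be flattened by bi-Lipschitz transformations. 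A secondary technical point is handling the sub-case where $Q_{4r}(t,x)$ meets only the initial-time boundary (so $\bar v(I_-, \cdot) \equiv 0$ while $\bar v(s,\cdot)$ need not vanish on $\partial B_{2r}(x)$); there one still subtracts a smooth temporal mean as in the interior proof and exploits the vanishing of $\bar v$ at $t = I_-$ to control its size, integrating $\partial_t$ from the initial time.
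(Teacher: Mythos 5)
The paper gives no written proof of this lemma (it is dismissed as an easy adaptation of Lemma~\ref{l.reverse.Holder} via Lemma~\ref{l.cacciopp.this.global}), and your plan is indeed that intended adaptation: pass to $\bar v := u-f \in H^1_{\pa,\sqcup}(V)$, use the global Caccioppoli inequality, and replace the mean-subtraction/Sobolev step by a Friedrichs-type inequality on boundary windows, with uniformity coming from the Lipschitz character of $\partial U$. (Also note that since $(t,x)\in I_1\times U$ and $r<1$, the cylinder $Q_{4r}(t,x)$ never reaches the initial face of $V=I_2\times U$, so only the lateral boundary is ever relevant.) However, two steps as you have written them do not go through.

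First, your dichotomy leaves the intermediate regime $2r<\dist(x,\partial U)\le 4r$ uncovered. There $Q_{4r}(t,x)\not\subset V$, so you cannot invoke Lemma~\ref{l.reverse.Holder} (or the interior Caccioppoli inequality) on $Q_{4r}(t,x)$; but also $B_{2r}(x)\cap\partial U=\emptyset$, so $\bar v$ does \emph{not} vanish on any portion of $\partial\bigl(B_{2r}(x)\cap U\bigr)$ and your Sobolev--Friedrichs step fails (constants lie in its kernel). The standard repair is to base the case split on $\dist(x,\partial U)$ versus $2r$: in the boundary case apply Friedrichs on $B_{4r}(x)\cap U$, whose intersection with $\partial U$ contains a surface ball of radius comparable to $r$ by the Lipschitz property; in the complementary case $Q_{2r}(t,x)\subset V$, and one runs the interior argument at reduced scale, e.g.\ by covering $Q_r(t,x)$ with boundedly many cylinders of radius $r/4$ whose fourfold dilates stay inside $Q_{2r}(t,x)$, or by reproving Lemma~\ref{l.reverse.Holder} with the dilation factor $4$ replaced by $2$.

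Second, the absorption of the $\nabla f$ term is not correct as stated. Replacing $\alpha$ by $\alpha^2$ (or any reparametrization) cannot turn a coefficient $C/\alpha$ into a coefficient $\alpha$, since $C/\alpha\ge\alpha$ for small $\alpha$; and the alternative route through Lemma~\ref{l.cacciopp.this.global} does not "naturally pair $\nabla f$ with an $\alpha$" either: the cross term $\int \eta^2\,\nabla\bar v\cdot\a\nabla f$ must be absorbed against the ellipticity, which forces a fixed constant $C(d,\Lambda)$ in front of $\|\nabla f\|_{L^2(Q_{2r}\cap V)}^2$ at the very first Caccioppoli application; only the $\nabla f$ contributions entering through the interpolation/sup-bound factors acquire an $\alpha$ from Young's inequality. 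What your scheme honestly yields is therefore the inequality with $C\|\nabla f\|_{\un L^2(Q_{4r}\cap V)}^2$ in place of $\alpha\|\nabla f\|_{\un L^2(Q_{4r}\cap V)}^2$. This weaker form is exactly what the application requires---in Proposition~\ref{p.globalmeyers} the $|\nabla f|^2$ term plays the role of $G$ in Lemma~\ref{l.parab.Gehring}, whose coefficient $A$ need not be small---but you should either prove the $\alpha$-form as stated or explicitly record that the $C$-form suffices for the global Meyers estimate, rather than rely on the rescaling argument.
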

\begin{proof}
The argument is omitted, since it is an easy adaptation of the proof of Lemma~\ref{l.reverse.Holder}. 
\end{proof}

Proposition~\ref{p.globalmeyers} is now a straightforward consequence of Lemmas~\ref{l.parab.Gehring} and~\ref{l.reverse.Holder.global}.

\subsection*{Acknowledgments} 
SA was partially supported by the NSF Grant DMS-1700329.
JCM was partially supported by the ANR grant LSD (ANR-15-CE40-0020-03).

\small
\bibliographystyle{abbrv}
\bibliography{parab}

\end{document}